\documentclass[times,3p]{elsarticle}
\biboptions{sort,compress}
\usepackage[labelfont=bf]{caption}

\newcommand{\comment}[1]{}

\usepackage{booktabs}
\usepackage{csvsimple}
\usepackage{multirow}
\usepackage{amsmath,amsfonts,amssymb,amsthm,booktabs,color,epsfig,graphicx,hyperref,url}
\usepackage{dutchcal}
\usepackage{enumitem}
\theoremstyle{plain}
\newtheorem{theorem}{Theorem}

\newtheorem{lemma}{Lemma}
\newtheorem{corollary}{Corollary}

\theoremstyle{definition}

\newtheorem{remark}{Remark}
\newtheorem{example}{Example}

\newtheorem{assump}{Assumption}
\usepackage{tocloft}

\begin{document}

\begin{frontmatter}

\title{Identifiability, Convergence and Nonparametric Estimation of Bivariate Archimax Copulas}

\author[1]{Nicolas Dietrich}
\author[2]{Wolfgang Trutschnig}

\address[1]{IDA Lab Salzburg, Department for Artificial Intelligence \& Human Interfaces, Jakob-Haringer-Straße 6, Techno 6, 5020 Salzburg, Austria, 
\url{nicolaspascal.dietrich@plus.ac.at}}
\address[2]{IDA Lab Salzburg, Department for Artificial Intelligence \& Human Interfaces, Hellbrunnerstrasse 34, 5020 Salzburg, Austria, \url{wolfgang@trutschnig.net}}

\cortext[mycorrespondingauthor]{Corresponding author. Email address: \url{nicolaspascal.dietrich@plus.ac.at}}

\begin{abstract}
Considering that the family of bivariate Archimax copulas contains both the Archimedean and the extreme-value class, Archimax copulas constitute a flexible family allowing to model extreme and moderate levels of dependence. Despite their natural appeal, no fully nonparametric, consistent estimator which itself is an element of the Archimax family $\mathcal{C}_{am}$ has been established in the literature yet -- the main reason being that Archimax copulas are not identifiable from the involved Archimedean generator and Pickands dependence function alone. Trying to close this gap, here we resolve the identifiability issue by working with transformed generators and transformed Pickands dependence functions, and show that these functions indeed identify the Archimax copula uniquely. Building upon this result, we then prove that uniform convergence of Archimax copulas is equivalent to uniform convergence of the correspon\-ding transformed generators and Pickands dependence functions. Moreover, as it is the case for Archimedean and extreme-value copulas, in $\mathcal{C}_{am}$, uniform convergence can be shown to be equivalent to weak convergence of almost all conditional distributions. Exploiting these equivalences and working with the Kendall distribution function, we then construct two nonparametric estimators for Archimax copulas -- a Pickands as well as a CFG type estimator, both being elements of $\mathcal{C}_{am}$ -- and show that these estimators are strongly consistent under mild regularity conditions. As another consequence of the afore-mentioned weak conditional convergence, we obtain strongly consistent plug-in estimators for measures of directed dependence such as Chatterjee's $\xi$ and Trutschnig's $\zeta_1$. A large-scale simulation study demonstrates that the proposed CFG type estimator outperforms both the standard empirical co\-pula estimator and the Pickands type estimator; an application to precipitation data from Bregenz and Dornbirn (Austria) illustrates the practical usage of our estimators on real data.
\end{abstract}

\begin{keyword} 
Archimax copula \sep
extreme-value copula \sep
identifiability \sep
nonparametric estimation \sep
Pickands dependence function
\MSC[2020] Primary 62H05, 62G05, 62G32 \sep
Secondary 60B10, 60G70, 60H20
\end{keyword}

\end{frontmatter}

\tableofcontents
\section{Introduction\label{sec:1}}
Given an i.i.d. sample $(X_1,Y_1), (X_2,Y_2), \ldots, (X_n,Y_n)$ of a random vector $(X,Y)$, understanding the dependence structure between the componentwise block maxima $\max_{i\leq n} X_i$ and $\max_{i\leq n} Y_i$ is of substantial interest in applications such as hydrology and finance (see \citep{longin, mcneil2005,salvadori}). The asymptotic regime, i.e., the behavior of this dependence structure as $n \rightarrow \infty$, has been studied extensively in \citep{bucher2011, caperaa1997, dur_princ, genest2009, haan1977, Mai2011, nelsen2006, pickands1981, evc-mass}. The limit dependence structure is well known to correspond 
to so-called extreme-value copulas, which arise as the copulas corresponding to the weak limit of the distribution of the componentwise maxima. 
The fundamental results of \citep{dur_princ,haan1977,nelsen2006,pickands1981} show that bivariate extreme-value copulas admit a particularly convenient analytic representation: every such copula can be characterized in terms of a Pickands dependence function $A$ (see Subsection~\ref{subsec:intro_evc} for a precise definition). Specifically, a bivariate copula $C \colon [0,1]^2 \rightarrow [0,1]$ is an extreme-value copula if and only if there exists a Pickands dependence function $A$ such that
\begin{equation}\label{eq:def.evc.long.and.short}
   C(x,y) = \exp\left((\log(x) + \log(y))A\left(\frac{\log(x)}{\log(x) + \log(y)}\right)\right) = (xy)^{A\left(\frac{\log(x)}{\log(x) + \log(y)}\right)}
\end{equation}
holds for every $(x,y) \in (0,1)^2$.\\
In practice, however, sample sizes are always finite, and the limiting extreme-value model may fail to adequately capture the dependence structure between block maxima. Motivated by this, \citep{caperaa2000a} studied copulas lying in the domain of attraction of extreme-value distributions. One prominent class of copulas with this property (under mild regularity assumptions on the Archimedean generator) is the 
so-called Archimax family $\mathcal{C}_{am}$ first introduced in \citep{caperaa2000b}, which is obtained by replacing the exponential function and the logarithm in the extreme-value representation \eqref{eq:def.evc.long.and.short}
by a distortion function $\psi$, called an Archimedean generator, and its pseudo-inverse $\varphi$. According to \citep{caperaa2000b}, a bivariate 
copula $C$ is called an Archimax copula if there exist an Archimedean generator $\psi$ with pseudo-inverse $\varphi$, and a Pickands dependence function $A$, such that
\begin{equation}\label{eq:am_cop_formula}
C(x,y) = \psi\left((\varphi(x) + \varphi(y))A\left(\frac{\varphi(x)}{\varphi(x) + \varphi(y)}\right)\right)
\end{equation}
holds for every $(x,y) \in (0,1)^2$.\\
While nonparametric estimation of bivariate extreme-value copulas is well studied, 
with consistency and asymptotic results derived, inter alia, in
\citep{bucher2011, caperaa1997, cormier2014, filsvilletard2008, genest2009, ghorbal2009, segers2004}, in the Archimax setting, to the best of the authors' knowledge, 
even in the bivariate case fully nonparametric estimators have not been 
established in the literature yet. This gap is most likely attributable to the fact that, unlike in the extreme-value case, the Archimax framework requires 
simultaneous estimation of both, the generator $\psi$ and the Pickands dependence function $A$. The existing literature offers a semiparametric estimator for the multivariate Archimax family, whose asymptotic behavior has been studied under suitable regularity conditions (see \citep{est-archimax}), as well as a neural-network-based estimator for $\psi$ and $A$ jointly (see \citep{ng2022}); the latter, however, lacks a formal proof of consistency.
Going beyond the bivariate setting, extensions have been proposed recently (see \citep{mult-archimax, mesiar2013}). Moreover, non-exchangeable Archimax copulas and their fitting to real data has been considered in \citep{bacigal2011}, clustered Archimax copulas have been studied in \citep{chatelain2025}, and hierarchical Archimax copulas in \citep{hofert2018}.\\

The main objective of this contribution is to fill this gap by deriving a fully nonparametric estimator for elements in $\mathcal{C}_{am}$, which itself is of 
Archimax type. To this end, we first address the issue of identifiability 
in $\mathcal{C}_{am}$. As shown in \citep{est-archimax}, an Archimax copula $C_{\psi,A}$ can, in general, not be identified from the generator $\psi$ and the Pickands dependence function $A$ alone. We resolve this issue by 
representing Archimax copulas in terms of so-called transformed generators $(\psi)_{1-\tau_A}$ and transformed Pickands dependence functions $(A)_{1-\tau_A}$, where $\tau_A$ is Kendall's $\tau$ of the extreme-value copula $C_A$ associated with $A$. 
More precisely, we show that every Kendall distribution function $F_{\psi,A}^K$ of an Archimax copula $C_{\psi,A}$ -- except for the case when the Pickands dependence function corresponds to the Fr\'echet--Hoeffding upper bound $M(x,y) = \min\{x,y\}$ -- induces an Archimedean copula whose generator is $(\psi)_{1-\tau_A}$.
Using these transformed functions it can be shown that $\mathcal{C}_{am}$ 
is indeed identifiable, and that, as a direct consequence, $C_{\psi,A}$ is identified by $\psi$ and $A$ alone, only if Kendall's $\tau$ of $C_A$ is held fixed.\\
Motivated by the fact that for both, the Archimedean and the extreme-value family, 
uniform convergence is equivalent to uniform convergence of the corresponding generators and Pickands dependence functions, respectively (see \citep{charpentier2008, bernoulli}), we then establish an analogous result for the Archimax family and the corresponding transformed generators and transformed Pickands dependence functions.
Moreover, we characterize convergence in $\mathcal{C}_{am}$ in several 
ways, so-called weak conditional convergence being key. 
Following \citep{bernoulli} we say that a sequence $(C_n)_{n \in \mathbb{N}}$ 
of copulas converges weakly conditional to a copula $C$ if and only
if almost all (regular) conditional distributions converge weakly. 
It is straightforward to verify that weak conditional 
convergence is a much stronger concept than standard uniform convergence 
of copulas. As shown in \citep{bernoulli}, however, the two notions turn out 
to be equivalent in the Archimedean and the extreme-value family. 
Considering that $\mathcal{C}_{am}$ contains both of the afore-mentioned classes, 
as one of our key results we show that weak conditional convergence 
and uniform convergence are equivalent in $\mathcal{C}_{am}$. 
This very result has direct implications to nonparametric estimation and to 
the behavior of dependence measures such as Chatterjee's $\xi$ or Trutschnig's $\zeta_1$ (see \citep{chatterjee2021, JGT, tru06}) -- 
both of these measures compare conditional distributions with unconditional 
ones and can therefore be expressed in terms of Markov kernels, if 
almost all conditional distributions converge, then so do the dependence measures. 
As a direct consequence, in $\mathcal{C}_{am}$ uniform convergence of 
a sequence $(C_n)_{n \in \mathbb{N}}$ implies convergence of 
$(\xi(C_n))_{n \in \mathbb{N}}$ and of $(\zeta_1(C_n))_{n \in \mathbb{N}}$, 
a property that we will directly apply by simply plugging-in our estimators 
in the expressions for $\xi$ and $\zeta_1$. \\
Since the Kendall distribution function $F_{\psi,A}^K$ of $C_{\psi,A}$ 
can be viewed as the Kendall distribution function of an Archimedean copula 
with generator $(\psi)_{1-\tau_A}$, our natural first step in the estimation 
procedure consists of estimating this pseudo-inverse via the empirical Kendall distribution function $K_n$, yielding an estimator $\varphi_n$. Using $\varphi_n$ as a plug-in, we then estimate the transformed Pickands dependence function $(A)_{1-\tau_A}$ via both a Pickands type estimator $B_{n,c}^\mathbf{P}$ and a CFG type estimator $B_{n,c}^\mathbf{CFG}$, following the approaches in \citep{caperaa1997, est-archimax, genest2009}. 
Plugging in these estimators into \eqref{eq:am_cop_formula} yields strongly consistent estimators $D_n^\mathbf{P}$ and $D_n^\mathbf{CFG}$ for the copula $C_{\psi,A}$; these estimators, however, are not even proper distribution functions as they may fail to be $2$-increasing.\\
Ensuring that our estimators are not only copulas
but even elements of $\mathcal{C}_{am}$, we regularize 
$\varphi_n$, $B_{n,c}^\mathbf{P}$, and $B_{n,c}^\mathbf{CFG}$ so that they take the transformed forms $(\varphi)_{1-\tau_A}$ and $(A)_{1-\tau_A}$, and 
proceed as follows: we first project $B_{n,c}^\mathbf{P}$ and $B_{n,c}^\mathbf{CFG}$ into the space of Pickands dependence functions $\mathcal{A}$, and then map the result back into the space of transformed Pickands dependence functions $\{(A)_{1-\tau_A} \colon A \in \mathcal{A}\}$ by computing Kendall's $\tau$ of the extreme-value copula induced by the projected Pickands dependence function. Analogously, we adjust $\varphi_n$ to the transformed form $(\varphi)_{1-\tau_A}$ working with the greatest convex minorant. After an optimization step (assuring that we are as close as possible to our original estimators $D_n^\mathbf{P}$ and $D_n^\mathbf{CFG}$), this altogether yields estimators $(A_n)_{1-\tau_{A_n}}$ and $(\varphi_n)_{1-\tau_{A_n}}$ of the desired form, which converge to the transformed functions $(A)_{1-\tau_A}$ and $(\varphi)_{1-\tau_A}$, respectively. Using these as plug-in estimators for the copula itself finally yields strongly consistent estimators which are themselves elements of $\mathcal{C}_{am}$, i.e., we remain within the $\mathcal{C}_{am}$ family.\\
In order to assess the performance of our estimators, we conduct a large-scale
simulation study comparing different Archimax copula models across various Archimedean generators, Pickands dependence functions (both parametric and nonparametric), sample sizes, and types of association. 
Comparing the Pickands type and CFG type estimators, we find that the CFG type estimator outperforms both the empirical copula estimator and the Pickands type estimator. We further assess the estimators' performance by using them as plug-in estimators for dependence measures mentioned before, comparing our plug-in estimators for Chatterjee's $\xi$ to the nearest-neighbor estimator of \citep{chatterjee2021}. Unsurprisingly, the CFG type estimator again outperforms both the Pickands type and the nearest-neighbor approach.\\
Finally, we apply the CFG type estimator to real rainfall data, consisting of
the daily maximal precipitation per month from the two cities of 
Bregenz and Dornbirn (Austria) to model their dependence structure. 
Doing so we proceed as follows: Considering that we are working with maxima, 
it might seem natural to assume that the dependence structure is of 
extreme-value type -- we therefore fit both our estimator and the nonparametric CFG estimator for extreme-value copulas presented in \citep{genest2009} to the data, 
and compare both to the empirical copula estimator. 
It turns out that our CFG type estimator is, overall, closer to the empirical estimator than the CFG estimator of \citep{genest2009}. To further quantify the 
extent of dependence in the data, we also use the extreme-value estimator and our own estimator as plug-in estimators for the dependence measures $\xi$ and 
$\zeta_1$. Again, our CFG type estimator yields values closer to those obtained from the estimators proposed by \citep{chatterjee2021} and \citep{JGT}.\\[4pt]
The remainder of this paper is organized as follows. Section \ref{section:notation} collects the notation used throughout the paper and is organized into two subsections: the first covers general notation, the second introduces the notation specific to Archimedean, extreme-value, and Archimax copulas. 
We also included a simplified version of the Markov kernel for Archimax copulas, originally derived in \citep{dietrich2026}.
Section \ref{sec:identifiability} addresses identifiability in 
$\mathcal{C}_{am}$. After introducing transformed generators and transformed Pickands dependence functions and establishing their key analytical properties, we prove that two Archimax copulas coincide if and only if their transformed generators and Pickands dependence functions coincide -- yielding a new identifiability result for the Archimax copula model. Several 
examples illustrate the underlying ideas and results obtained.\\
Section \ref{sec:conv_archimax} establishes various convergence results 
in the class $\mathcal{C}_{am} \setminus{\{M\}}$. 
In particular we show that the following assertions are equivalent for 
$C,C_1,C_2,\ldots \in \mathcal{C}_{am}$: (i) $(C_n)_{n \in \mathbb{N}}$ 
converges uniformly to $C$; (ii) the corresponding transformed generators and Pickands dependence functions converge; (iii)  $(C_n)_{n \in \mathbb{N}}$ 
converges weakly conditional $C$.\\
Section \ref{sec:estimator} introduces our novel nonparametric 
copula estimators and proves strong consistency under 
mild regularity assumptions. For the sake of readability, all technical 
proofs are deferred to \ref{sec:proof_consistency}.\\
Section \ref{sec:sim_study} contains the afore-mentioned 
simulation study, using the Integrated Squared Error, the Integrated Relative Absolute Error, and the uniform metric as performance indicators. 
The results obtained are illustrated through a series of plots and figures, with full results collected in various tables in \ref{sec:sim_study_app}.
Finally, in Section \ref{sec:real_data}, we apply the CFG type estimator to 
real rainfall data.
\section{Notation and preliminaries}\label{section:notation}
\subsection{General notation and conventions}\label{subsection:general_notation}
For an arbitrary metric space $(S,d)$, we denote the Borel $\sigma$-field on $S$ by $\mathcal{B}(S)$, and write $\mathcal{P}(S)$ for the family of all probability measures on $\mathcal{B}(S)$. For $s \in S$, the Dirac measure at $s$ is denoted by $\delta_s$. Furthermore, for $\nu \in \mathcal{P}(S)$, the support $\mathrm{supp}(\nu)$ of $\nu$ is the complement of the union of all open sets $U$ satisfying $\nu(U) = 0$. \\
For metric spaces $(S,d)$ and $(S',d')$, a Borel measurable transformation \mbox{$T:S \rightarrow S'$}, and $\nu \in \mathcal{P}(S)$, we denote by $\nu^T$ the push-forward of $\nu$ under $T$, defined by $\nu^T(F):=\nu(T^{-1}(F))$ for every $F \in \mathcal{B}(S')$.
Writing $\mathbb{I}:=[0,1]$, the Lebesgue measure on $\mathcal{B}(\mathbb{I}^2)$ and $\mathcal{B}(\mathbb{I})$ is denoted by $\lambda_2$ and $\lambda$, respectively.\\
We call a function $G\colon \mathbb{I} \rightarrow \mathbb{I}$ a distribution function/measure generating function if its extension to $\mathbb{R}$, obtained by setting $G(x) = 0$ for $x < 0$ and $G(x) = 1$ for $x > 1$, is itself a distribution function/measure generating function. \comment{Similarly, a function $H\colon [0,\infty) \rightarrow \mathbb{I}$ is called a distribution function if its extension to $\mathbb{R}$, obtained by setting $H(x) = 0$ for $x < 0$, is itself a distribution function.} Moreover, for an interval $J \subseteq \mathbb{R}$ and an arbitrary function $u \colon J \rightarrow \mathbb{R}$, we denote the right-hand and left-hand derivatives of $u$, when they exist, by $D^+u$ and $D^-u$, respectively.
In what follows, we consistently adopt the convention that $a\cdot\infty = \infty$, $\frac{a}{\infty} = 0$, and $\frac{a}{0} = \infty$ for every $a \in \mathbb{R} \setminus \{0\}$.\\[4pt]
Throughout this contribution $\mathcal{C}$ denotes the family of all bivariate copulas. For a given random vector $(X,Y)$ on a probability space $(\Omega, \mathcal{F}, \mathbb{P})$ and some copula $C \in \mathcal{C}$, we will write $(X,Y) \sim C$ if $C$ is the joint distribution function of $(X,Y)$ restricted to $\mathbb{I}^2$.
Analogously, for an arbitrary bivariate distribution 
function $H$ writing $(X,Y)\sim H$ indicates that $H$ is the distribution function 
of $(X,Y)$. The doubly stochastic measure corresponding to the copula $C\in\mathcal{C}$ will be denoted by $\mu_C$, i.e., $\mu_C([0,x] \times [0,y]) := C(x,y)$ 
for every $(x,y) \in \mathbb{I}^2$. 
$M$ will denote the minimum copula, 
$W$ the lower Fréchet--Hoeffding bound, and $\Pi$ the product 
(independence) copula.
Equipping the family $\mathcal{C}$ with the uniform metric $d_\infty \colon \mathcal{C}^2 \rightarrow \mathbb{I}$, defined by
$$
d_\infty(C_1,C_2) = \max_{(x,y) \in \mathbb{I}^2}|C_1(x,y)- C_2(x,y)|
$$
for every $C_1,C_2 \in \mathcal{C}$, yields a compact metric space
$(\mathcal{C},d_\infty)$. To keep notation simple, in 
the sequel we will at some points also write $d_\infty(f,g)$ for arbitrary 
functions $f,g: \mathbb{I}^2 \rightarrow \mathbb{R}$ which are not necessarily copulas.\\
Suppose that $(X,Y)$ has continuous distribution function 
$H$, marginals $F,G$, and underlying (unique) copula $C$. 
For every sample $(X_1,Y_1),\ldots,(X_n,Y_n)$ of $(X,Y)$, writing
$H_n$ for the bivariate, and $F_n,G_n$ for the marginal distribution functions, 
Sklar's theorem yields the existence of some copula $C_n \in \mathcal{C}$ 
fulfilling $H_n(x,y)=C_n(F_n(x),G_n(y))$ for all $(x,y) \in \mathbb{R}^2$. 
Formally, $C_n$ is only uniquely determined on 
$\textrm{Range}(F_n) \times \textrm{Range}(G_n)$ and many extensions to 
a copula are possible. In fact, considering 
$I^n_i:=[\frac{i-1}{n},\frac{i}{n}]$ for every $i \in \{1,\ldots,n\}$, setting
$R^n_{i,j}:=I^n_i \times I^n_j$ for all $i,j \in \{1,\ldots,n\}$, 
defining the similarity $f^n_{i,j}: \mathbb{I}^2 \rightarrow R^n_{i,j}$ by 
$f^n_{i,j}(x,y)=(\frac{x+i-1}{n}, \frac{y+j-1}{n})$, and
letting $(U_i,V_i):=(F_n(X_i),G_n(Y_i)) $ denote the 
$i$-th pseudo-observation, the following property holds: for every 
copula $B \in \mathcal{C}$, the measure $\mu \in \mathcal{P}(\mathbb{I}^2)$, defined
by 
\begin{equation}\label{eq:emp.general}
    \mu(E_1 \times E_2):= \frac{1}{n} \sum_{i=1}^n \mu_B^{f^n_{(nU_i,nV_i)}}(E_1 \times E_2) = \frac{1}{n} \sum_{i=1}^n 
    \mu_B \left((f^n_{(nU_i,nV_i)})^{-1} (E_1 \times E_2)\right),\qquad E_1,E_2 \in 
    \mathcal{B}(\mathbb{I})
\end{equation}
is doubly stochastic and therefore corresponds to a unique copula $C_n^B$
to which we will refer to as the empirical $B$-copula, i.e., with the 
notation from before we have $\mu=\mu_{C_n^B}$. 
Whenever the `interpolation' $B$ is relevant, we will explicitly write $C_n^B$,
if not, we will simply consider the standard bilinear interpolation 
and write $C_n:=C_n^\Pi$.
For further information on copulas and doubly stochastic measures we refer to \citep{dur_princ,nelsen2006}.

In the sequel, conditional distributions and Markov kernels will be key concepts.
A map $K: \mathbb{R}\times\mathcal{B}(\mathbb{R}) \rightarrow \mathbb{I}$ is called a Markov kernel from $\mathbb{R}$ to $\mathbb{R}$ if, and only if the function $x\mapsto K(x,E)$ is Borel-measurable for every $E\in\mathcal{B}(\mathbb{R})$ and the map $E\mapsto K(x,E)$ is a probability measure on $\mathcal{B}(\mathbb{R})$ for every $x\in\mathbb{R}$. 
Given a random vector $(X,Y)$ on a probability space $(\Omega,\mathcal{F},\mathbb{P})$, we will call a Markov kernel $K$ a regular conditional distribution of
$Y$ given $X$ if for every set $E \in \mathcal{B}(\mathbb{R})$ the equation
$$
K(X(\omega), E) = \mathbb{E}[\mathbf{1}_E \circ Y | X](\omega)
$$
holds for $\mathbb{P}$-almost every $\omega \in \Omega$.
It is a well known result that for every random vector $(X,Y)$, there exists 
a regular conditional 
distribution $K$ of $Y$ given $X$, which is unique for $\mathbb{P}^{X}$-almost every $x\in \mathbb{R}$.\\
In the case that $C \in \mathcal{C}$ and  $(X,Y) \sim C$, we will let $K_{C}:\mathbb{I} \times \mathcal{B}(\mathbb{I}) \to \mathbb{I}$ denote 
(a version of) the regular conditional 
distribution of $Y$ given $X$ and simply refer to it as 
the Markov kernel of $C$.
Considering $x \in \mathbb{I}$, we define the $x$-section $G_x$ of a given set $G \in\mathcal{B}(\mathbb{I}^2)$ by $G_{x}:=\{y
\in \mathbb{I}: (x,y) \in G\}\in\mathcal{B}(\mathbb{I})$. Using \emph{disintegration} (see \citep[Section 5]{kallenberg} and \citep[Section 8]{klenke}), the subsequent identity holds for all $G \in \mathcal{B}(\mathbb{I}^2)$:
\begin{align}\label{eq:DI}
	\mu_C(G) = \int_{\mathbb{I}} K_{C}(x,G_x)
	\, \mathrm{d}\lambda(x).
\end{align}
\comment{
Considering $d \in \{1,2\}$, in this contribution, a measure $\nu$ on $\mathcal{B}(S)$, whereby $S=\mathbb{I}^d$ or $S=[0,\infty)$, is referred to as singular (w.r.t. $\lambda_d$ or $\lambda$)
if, and only if it has no point-masses and there exists a set $G \in \mathcal{B}(S)$ such that 
$\lambda_d(G) = 0$  and $\nu(G) = \nu(S)$. For the sake of simplicity, we will use the term 'singular' for the afore-mentioned measures, rather than the longer expression 'singular without point-masses'. This aligns with the concept of singular distribution functions and is justified by the following straightforward observation: 
The doubly stochastic measure $\mu_C$ corresponding to the copula $C \in \mathcal{C}$ has no point masses and therefore always has degenerated discrete component.\\[4pt] 
}
Markov kernels were used in \cite{tru06} in order to construct metrics 
capable of strictly separating independence from complete dependence. 
In fact, defining $D_p \colon \mathcal{C}^2 \rightarrow \mathbb{I}$ by
\begin{equation}\label{eq:metric_d_p}
D_p(C_1,C_2) := \left(\int_{\mathbb{I}^2}|K_{C_1}(x,[0,y]) - K_{C_2}(x,[0,y])|^p \mathrm{d}\lambda_2(x,y)\right)^\frac{1}{p}
\end{equation}
for every $p \in [1,\infty)$ and 
\begin{equation}\label{eq:metric_infty}
D_\infty(C_1,C_2) := \sup_{y \in \mathbb{I}}\int_{\mathbb{I}}|K_{C_1}(x,[0,y]) - K_{C_2}(x,[0,y])| \mathrm{d}\lambda(x),
\end{equation}
for $p = \infty$ yields complete and separable metric spaces $(\mathcal{C},D_p)$. 
Using $D_p$, it is straightforward to construct dependence 
measures, i.e., mappings $\eta: \mathcal{C} \rightarrow \mathbb{I}$ with the following properties:
(i) $\eta(C)=0$ if and only if $C=\Pi$, (ii) $\eta(C)=1$ if and only if $C$ 
is completely dependent, i.e., if there exists some $\lambda$-preserving transformation
$g: \mathbb{I} \rightarrow \mathbb{I}$ such that for $(X,Y) \sim C$ we have
$Y=g \circ X$ almost surely. The resulting measure from \cite{tru06} is defined by
$\zeta_1(C)=3 D_1(C, \Pi)$, switching to its $L^2$-version yields 
Chatterjee's `New Coefficient of Correlation' $\xi$ studied in \cite{chatterjee2021}. 
Since then, various other measures of dependence have appeared in the literature, 
we refer to \cite{ALFT} and the references therein. All of these dependence 
measures have in common that they are continuous with respect to weak conditional convergence: 
We will say that a sequence $(C_n)_{n \in \mathbb{N}}$ of copulas
converges weakly conditional to $C \in \mathcal{C}$, if and only if there exists some 
$\Lambda \in \mathcal{B}(\mathbb{I})$ with $\lambda(\Lambda)=1$ such that 
for every $x \in \Lambda$ we have that $(K_{C_n}(x,\cdot))_{n \in \mathbb{N}}$
converges weakly to $K_C(x,\cdot)$. According to \cite{tru06} weak conditional 
converges implies convergence with respect to $D_p$, and the latter implies 
convergence with respect to $d_\infty$.
While the two dependence measures $\xi$ and 
$\zeta_1$ are conceptually very similar, their proposed estimators 
build upon different ideas - the estimator for $\xi$ is a nearest neighbor
approach directly estimating $\xi$ (see \cite{chatterjee2021}), while
the one proposed for $\zeta_1$ is just an aggregation of the underlying so-called 
checkerboard approximation/aggregation of the empirical copula $C_n$ (see \cite{JGT}).

For a copula $C \in \mathcal{C}$ with $(U,V) \sim C$, following \citep[Definition 3.9.5]{dur_princ}, the Kendall distribution function $F_C^K: \mathbb{I} \rightarrow \mathbb{I}$ of $C$ is given by
\begin{equation}\label{eq:kendall_dist}
    F_C^K(t) := \mathbb{P}[C(U,V) \leq t], \qquad t \in \mathbb{I}.
\end{equation}
Furthermore, following \citep{dur_princ, GR, joe, nelsen2006}, Kendall's $\tau$ of $C$ is defined as
\begin{equation}\label{eq:kendalls_tau_general}
    \tau(C) := 4\int_{\mathbb{I}^2} C(s,t) \, \mathrm{d}\mu_C(s,t) - 1.
\end{equation}
It is well known (see \citep{dur_princ}) that Kendall's $\tau$ can be obtained from the Kendall distribution function via
\begin{equation}\label{eq:kendalls_tau_alter}
\tau(C)=3-4\int_{\mathbb{I}} F_C^K(t)\,\mathrm{d}\lambda(t).
\end{equation}
Moreover, using \cite[Proposition 5]{GNZ}  or the extended continuous mapping theorem \cite[Theorem 18.11]{vdv}, uniform convergence of a sequence of 
copulas $(C_n)_{n \in \mathbb{N}}$ to a copula $C$ implies weak convergence
of the corresponding Kendall distribution functions, i.e., for every point of continuity $t \in \mathbb{I}$ of $F^K_C$ we have 
$\lim_{n \rightarrow \infty} F^K_{C_n}(t)=F^K_C(t)$. 
Given a sample $(X_1,Y_1),(X_2,Y_2),\ldots$ from 
$(X,Y) \sim H$ with continuous $H$ and underlying copula $C$, it is well known 
(see, e.g., \cite{janssen2012}) that the sequence 
$(C_n^B)_{n \in \mathbb{N}}$ of empirical copulas (for any choice of $B \in \mathcal{C}$) 
converges uniformly to $C$ with probability $1$. As a direct consequence, we
get that the sequence $(F_{C_n^B}^K)_{n \in \mathbb{N}}$ of 
corresponding Kendall distributions converges weakly to $F^K_C$ with probability $1$. We will use this property in Section \ref{sec:estimator} in the construction of our estimators. Moreover, related results on the weak convergence of Kendall's process can be found in \citep{barbe1996}.
\subsection{Preliminaries on Archimedean, extreme-value, and Archimax copulas}
In the following, we give an overview of extreme-value, Archimedean, and Archimax copulas. We show how to characterize these copulas via Archimedean generators and Pickands dependence functions, and define several other functions and measures that are important throughout this work.
\subsubsection{Characterizing Archimedean copulas}\label{subsec:intro_archimedean}
Let $\psi \colon [0,\infty) \rightarrow \mathbb{I}$ be non-increasing and continuous. We call $\psi$ an Archimedean generator (or simply a generator, the terminology we adopt throughout) whenever the following three conditions hold: $(i)$ $\psi(0) = 1$; $(ii)$ $\lim_{z \rightarrow \infty}\psi(z) = 0$; and $(iii)$ $\psi$ is strictly decreasing on the interval $[0,\inf\{x\colon \psi(x) = 0\})$, where $\inf(\emptyset):=+\infty$ by convention.\\
Associated to any such generator $\psi$ is its pseudo-inverse $\varphi \colon \mathbb{I} \rightarrow [0,\infty]$, given by
$$\varphi(y) := \inf\{z \in [0,\infty] \colon \psi(z) = y\}, \qquad y \in \mathbb{I}.$$
As shown in \citep{mult_arch}, $\varphi$ inherits several regularity properties from $\psi$: it is strictly decreasing on $(0,1]$, right-continuous at $0$, and satisfies $\varphi(1)=0$. Depending on whether $\varphi(0) = \infty$ or $\varphi(0) < \infty$, we call $\psi$ and $\varphi$ strict or non-strict, respectively -- equivalently, $\psi$ is strict if and only if $\psi(z) > 0$ holds for all $z \in [0,\infty)$.\\
(Archimedean) generators give rise to the family of Archimedean copulas: $C \in \mathcal{C}$ is Archimedean if it can be written as
\begin{equation}\label{eq:def_arch_cop}
C(x,y) := \psi\left(\varphi(x) + \varphi(y)\right), \qquad (x,y) \in \mathbb{I}^2,
\end{equation}
for some generator $\psi$ with pseudo-inverse $\varphi$. We write $\mathcal{C}_{ar}$ for the class of bivariate Archimedean copulas, and where the choice of generator matters, denote the copula in \eqref{eq:def_arch_cop} by $C_\psi$ rather than $C$.\\
Not every generator yields a copula via \eqref{eq:def_arch_cop}: by \citep[Proposition 2.1]{neslehova}, $C_\psi$ is a bivariate copula if and only if $\psi$ is convex. This convexity has several useful consequences. First, it guarantees that the one-sided derivatives $D^-\psi$ and $D^+\psi$ exist everywhere on $(0,\infty)$; moreover, by \citep[Theorem 3.7.4]{Kannan1996} and \citep[Appendix C]{Pollard2001}, these two derivatives can differ only on a countable subset of $(0,\infty)$, and agree, $D^-\psi(z) = D^+\psi(z)$, at every point of continuity $z$ of $D^-\psi$. Second, in \citep{mult_arch} it was proved that $\lim_{z \rightarrow \infty} D^-\psi(z) = 0$ and that in fact $D^-\psi(z)=0$ already for every $z > \varphi(0)$. Finally, convexity propagates from $\psi$ to its pseudo-inverse $\varphi$ as well.\\
To establish a one-to-one correspondence between the family of convex generators and the family of Archimedean copulas, we restrict ourselves throughout this contribution to normalized generators, i.e., those generators 
satisfying $\psi(1) = \frac{1}{2}$ (equivalently, $\varphi(\frac{1}{2}) = 1$), and write $\Psi$ for the resulting family of convex, normalized generators.\\[4pt]
An alternative route to Archimedean copulas, developed in \citep{mult_arch,neslehova,schilling2012}, proceeds via univariate probability measures rather than generators directly. We call $\gamma \in \mathcal{P}([0,\infty))$ a Williamson measure if $\gamma(\{0\}) = 0$ and $\int_\mathbb{I} (1-t) \mathrm{d}\gamma(t) = \frac{1}{2}$, and denote the family of all such measures by
$$
\mathcal{P}_\mathcal{W} := \left\{\gamma \in \mathcal{P}([0,\infty)) \colon \gamma(\{0\}) = 0, \int_\mathbb{I} (1-t) \mathrm{d}\gamma(t) = \frac{1}{2}\right\}.
$$
The contributions \citep{mult_arch,neslehova,schilling2012} establish a one-to-one correspondence between $\Psi$ and $\mathcal{P}_\mathcal{W}$, in the sense that every $\psi \in \Psi$ arises from a unique $\gamma \in \mathcal{P}_\mathcal{W}$ via
\begin{equation}\label{eq:rel_psi_gamma}
    \psi(z) = \int_{[0,\infty)}(1- tz)_+ \mathrm{d}\gamma(t), \qquad z>0,
\end{equation}
as shown in \citep[Theorem 5.1]{mult_arch}.
\comment{Moreover, if $\gamma \in \mathcal{P}_\mathcal{W}$ and $\psi \in \Psi$ is its corresponding generator, then, according to \citep{mult_arch}, the following representation of the left-hand derivative $D^-\psi$ of $\psi$ in terms of $\gamma$ holds for every $z > 0$:
\begin{equation}\label{eq:rel_deriv_psi_gamma}
    D^-\psi(z) = - \int_{[0,\frac{1}{z}]}t \mathrm{d}\gamma(t).
\end{equation}
Finally, we refer to $\gamma$ as strict if its support contains the point $0$ and otherwise as non-strict. It has been established in \citep[Lemma 5.5]{mult_arch} that $\gamma$ is strict if, and only if its corresponding generator $\psi$ is strict. If $\gamma$ is non-strict and $\varphi$ is the pseudo-inverse of the corresponding generator $\psi$, then $\gamma([0,z]) = 0$ holds for every $z < \frac{1}{\varphi(0)}$ \textcolor{red}{[Kasper, Dietrich, Trutschnig2024]}.}
At last, we define the function
$\beta \colon \mathbb{I} \rightarrow[-1,0]$ by
\begin{equation}\label{eq:beta}
\beta(t) := D^-\psi(\varphi(t))\varphi(t).
\end{equation}
for every $t \in (0,1)$ and $\beta(0) = 0 = \beta(1)$.
It is well known (see \cite{mult_arch}) that for every bivariate 
Archimedean copula $C_\psi$ the Kendall distribution 
function is given by 
\begin{equation}\label{eq:kendall.arch}
   F_{\psi}^K(t) := F_{C_\psi}^K(t)=\gamma\left(\left[0,\tfrac{1}{\varphi(t)}\right]\right) = t-\beta(t) = 
t-\tfrac{\varphi(t)}{D^+\varphi(t)}
\end{equation}
and that the Kendall distribution function characterizes the copula
and the normalized generator (see \citep{dietrich2024, GNZ, GR, bernoulli, mult_arch}).\\
Lastly, using the formula in eq. \eqref{eq:kendalls_tau_general}, for a given generator $\psi \in \Psi$, Kendall's $\tau$ of the Archimedean copula $C_\psi \in \mathcal{C}_{ar}$ is given by
$$
\tau_\psi := \tau(C_\psi) = 1 + 4\int_\mathbb{I}\beta(t) \mathrm{d}\lambda(t).
$$
\subsubsection{Characterizing extreme-value copulas}\label{subsec:intro_evc}
A copula $C \in \mathcal{C}$ is called an extreme-value copula (EVC) if it arises as the limit
$$
C(x,y) = \lim_{n \rightarrow \infty} B^n\!\left(x^\frac{1}{n},y^\frac{1}{n}\right), \qquad x,y \in \mathbb{I},
$$
for some $B \in \mathcal{C}$. We write $\mathcal{C}_{ev}$ for the family of all bivariate EVCs. As established in \citep{dur_princ,haan1977,nelsen2006,pickands1981}, the following 
characterization holds:
\begin{enumerate}
    \item $C \in \mathcal{C}_{ev}$.
    \item $C$ is max-stable, i.e., $C(x,y) = C^k\!\left(x^\frac{1}{k},y^\frac{1}{k}\right)$ for all $k \in \mathbb{N}$ and $x,y \in \mathbb{I}$.
    \item $C$ admits a representation of the form
    \begin{equation}\label{eq:map_eq_pick_copula}
    C(x,y) = (xy)^{A\left(\frac{\log(x)}{\log(xy)}\right)}, \qquad x,y \in (0,1),
    \end{equation}
    for some Pickands dependence function $A$, that is, a convex function $A \colon \mathbb{I} \rightarrow \mathbb{I}$ with 
    $\max\{1-x,x\} \leq A(x) \leq 1$ for every $x \in \mathbb{I}$.
\end{enumerate}
We denote by $\mathcal{A}$ the family of all Pickands dependence functions, and for $A \in \mathcal{A}$ write $C_A$ for the unique EVC it induces. Furthermore, setting $A_M(t):=\max\{t,1-t\}$ and $A_\Pi(t):= 1$ for every $t \in \mathbb{I}$, one readily verifies that $C_{A_M} = M$ and $C_{A_\Pi} = \Pi$. It is straightforward to verify (and, in fact, a direct consequence of the Arzel\`a-Ascoli theorem) that $(\mathcal{A},\Vert \cdot \Vert_\infty)$ is compact, where
$\Vert \cdot \Vert_\infty$ denotes the uniform norm. \\
Convexity implies that for every Pickands dependence function $A \in \mathcal{A}$ the right-hand derivative $D^+A(x)$ exists for every $x \in [0,1)$ and the left-hand derivative $D^-A(x)$ exists for every $x \in (0,1]$.\\ Moreover, $D^+A(x)=D^-A(x)$ holds for all but at most countably many $x \in (0,1)$ and therefore $A$ is differentiable on the complement of a countable set in $(0,1)$. Furthermore, $D^+A$ is non-decreasing and right-continuous on $[0,1)$ and $D^-A$ is non-decreasing and 
left-continuous on $(0,1]$ (see \citep{Kannan1996,Pollard2001}).
In the sequel, we write $\mathrm{Cont}(D^+A)$ for the set of points at which $D^+A$ is continuous. If a sequence $(A_n)_{n \in \mathbb{N}}$ in $\mathcal{A}$ converges to $A \in \mathcal{A}$, then, using convexity and monotonicity it is straightforward to verify that the following assertion holds: for every sequence 
$(t_n)_{n \in \mathbb{N}}$ in $\mathbb{I}$ converging to some $t_0 \in \textrm{Cont}(D^+A) \cap (0,1)$ we have that $D^+A_n(t_n) \overset{n \rightarrow \infty}{\longrightarrow} D^+A(t_0)$.\\ 
Defining $D^+A(1):=D^-A(1)$, we obtain that $D^+A$ is a non-decreasing and right-continuous function on the interval $\mathbb{I}$. Considering that $\max\{1-x,x\} \leq A(x) \leq 1$ for every $x \in \mathbb{I}$, it follows that $D^+A(x) \in [-1,1]$ holds for every $x \in \mathbb{I}$.\\[4pt]
An alternative characterization of EVCs, developed in \citep{beirlant2004, dietrich2024, haan1977, pickands1981}, is given in terms of probability measures on $\mathbb{I}$. We call $\vartheta \in \mathcal{P}(\mathbb{I})$ a Pickands dependence measure if its expected value satisfies $\mathbb{E}[\vartheta] := \int_\mathbb{I}x \, \mathrm{d}\vartheta(x) = \frac{1}{2}$, and define the family of all such measures by
$$
\mathcal{P}_\mathcal{A} := \left\{\vartheta \in \mathcal{P}(\mathbb{I}) \colon \int_\mathbb{I}x \, \mathrm{d}\vartheta(x) = \frac{1}{2}\right\}.
$$
As shown in \citep{dietrich2024}, this yields a one-to-one correspondence with Pickands dependence functions: given $\vartheta \in \mathcal{P}_\mathcal{A}$, \citep[Lemma Appendix A.2]{dietrich2024} shows that the corresponding $A \in \mathcal{A}$ can be expressed via
\begin{equation}\label{eq:rel_pickands_fct_measure}
    A(t) = 1-t +2\int_{[0,t]}\vartheta([0,z]) \, \mathrm{d}\lambda(z) = 
    2\int_{\mathbb{I}} \max\{t(1-s),(1-t)s\}  \, \mathrm{d}\vartheta(s), \qquad t \in \mathbb{I}.
\end{equation}

Fix now $\vartheta \in \mathcal{P}_\mathcal{A}$ with corresponding Pickands dependence function $A \in \mathcal{A}$. Proceeding as in \citep{evc-mass}, associate to $A$ the two boundary points
\begin{equation}\label{eq:definition_L_R}
    L := \max\{s \in \mathbb{I} \colon A(s) = 1-s\}, \qquad R := \min\{s \in \mathbb{I} \colon A(s) = s\}.
\end{equation}

We next introduce a function that plays a central role both in defining the $t$-level-set functions in eq. \eqref{eq:fct_f_t} and in several proofs later in the paper. For $A \in \mathcal{A}$, define $h_A \colon \mathbb{I} \rightarrow [1,\infty]$ by
\begin{equation}\label{eq:fct_h}
h_A(t) := \begin{cases}
    \infty,&\text{if } t = 0,\\
    \frac{A(t)}{t}, &\text{if } t\in (0,1]
\end{cases}
\end{equation}
for every $t \in \mathbb{I}$. By Lemma \ref{lem:regularity_fcts}, $h_A$ is strictly decreasing on $(0,\inf\{x \in \mathbb{I} \colon h_A(x) = 1\}) = (0,R)$, hence invertible there. Following \cite{dietrich2026}, we extend this inverse to a pseudo-inverse $h_A^{[-1]} \colon [1,\infty] \rightarrow \mathbb{I}$ via
$$
h_A^{[-1]}(z) :=
\begin{cases}
    R, &\text{if } z = 1,\\
    h_A^{-1}(z), &\text{if } z \in (1,\infty),\\
    0, &\text{if } z = \infty,
\end{cases}
$$
for $z \in [1,\infty]$; note that when $R=1$, this simplifies to $h_A^{[-1]}(z) = h_A^{-1}(z)$ for all $z \in [1,\infty)$.

To simplify notation, and following \citep{fuchs_tschimpke,evc-mass,dietrich2026}, we further define, for fixed $A \in \mathcal{A}$, the function $G_A \colon \mathbb{I} \rightarrow \mathbb{I}$ by
\begin{equation}\label{eq:funct_G_A}
G_A(t) :=
\begin{cases}
A(t) + D^+A(t)(1-t),& \text{if } t \in [0,1),\\
1,& \text{if } t = 1
\end{cases}
\end{equation}
for every $t \in \mathbb{I}$; note that $G_A(t)$ is simply the value at $1$ of the tangent line to $A$ at $t$.

Considering properties of the functions $h_A$, $h_A^{[-1]}$, and $G_A$, we recall the following lemma, which has already been established in \citep[Lemma 2.1]{dietrich2026}; we restate it for the sake of completeness.
\begin{lemma}\label{lem:regularity_fcts}
Let $A \in \mathcal{A}$, $h_A$ be defined as in eq. \eqref{eq:fct_h} and let $L$, $R$ be defined according to eq. \eqref{eq:definition_L_R}. Then the following assertions hold:
   \begin{enumerate}[label=(\roman*)]
        \item $h_A$ is a non-increasing, convex function on $(0,1]$, which is strictly decreasing on $(0,R]$ and fulfills $h_A(t) = 1$ for every $t \in [R,1]$;
        \item $h_A^{[-1]}$ is a strictly decreasing and convex function on $[1,\infty)$;
        \item $G_A$ is a non-negative, non-decreasing and right-continuous function on $\mathbb{I}$, which is positive on $(L,1]$. Furthermore, if $L>0$, $G_A(t) = 0$ holds for every $t \in [0,L)$ and $G_A(t) = 1$ holds for every $t \in [R,1]$.
    \end{enumerate}
\end{lemma}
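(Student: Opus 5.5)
We verify the three assertions directly from the defining properties of a Pickands dependence function: convexity of $A$ and $\max\{1-t,t\}\le A(t)\le 1$.

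\emph{Assertion (i).} For $t\in(0,1]$ we write $h_A(t)=A(t)/t$, which is the slope of the secant of $A$ between the point $(0,A(0))=(0,1)$ and $(t,A(t))$, shifted by $1/t$. More convenient is the identity
\[
h_A(t)=\frac{A(t)-A(0)}{t}+\frac{1}{t}.
\]
Convexity of $h_A$ follows from the perspective-type representation $h_A(t)=tA(1/t\cdot t)/t^2$; concretely, $t\mapsto A(t)/t$ is the perspective $g(t,1)$ of the convex function $g(s,u)=uA(s/u)$ evaluated along a line, so it is convex. Alternatively, we check that $t\mapsto A(t)/t$ has non-decreasing right derivative
\[
\frac{tD^+A(t)-A(t)}{t^2}=-\frac{A(t)-tD^+A(t)}{t^2}.
\]
Here $A(t)-tD^+A(t)$ is the intercept at $0$ of the tangent line at $t$, which is non-increasing in $t$ by convexity and positive. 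Dividing a positive non-increasing function by the decreasing factor $t^2$ gives a non-increasing quotient, so the negative of it is non-decreasing.

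Monotonicity follows from the sign of the same derivative. The tangent intercept satisfies $A(t)-tD^+A(t)\ge 0$, because the tangent line lies below $A$ and $A(0)=1$; more precisely, it lies below the line through $(1,1)$ extended, and the intercept is $\ge 1-(\ldots)$. Hence $h_A$ is non-increasing.

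For $t\ge R$ we have $A(t)=t$: convexity together with $A(R)=R$, $A(1)=1$ and $A\ge t$ forces $A(t)=t$ on $[R,1]$. Thus $h_A\equiv 1$ there.

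For strictness on $(0,R]$, suppose $h_A$ were constant $=c$ on some interval $[a,b]\subset(0,R]$. Then $A(t)=ct$ on $[a,b]$, so the tangent intercept there is $0$. Since the intercept is non-increasing and always $\ge 0$, it would stay $0$ on $[a,1)$. That makes $A$ linear through the origin on $[a,1]$, hence $A(t)=t$ on $[a,1]$, contradicting $a<R$. Since convex functions are non-increasing and not constant on subintervals, they are strictly decreasing, which completes (i).

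\emph{Assertion (ii).} This follows from (i): the inverse of a strictly decreasing convex function is convex and decreasing. Indeed, the epigraph reflection across the diagonal of a decreasing convex function yields a decreasing convex function. The continuous extension at $z=1$ (value $R$) preserves both properties on $[1,\infty)$.

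\emph{Assertion (iii).} $G_A(t)$ is the value at $1$ of the tangent at $t$. For $s>t$, convexity gives
\[
A(s)+D^+A(s)(1-s)\ \ge\ A(t)+D^+A(t)(1-t),
\]
since the tangent at $t$ lies below the tangent at $s$ at the point $1\ge s$. Hence $G_A$ is non-decreasing. Right-continuity follows from right-continuity of $D^+A$ and continuity of $A$ on $[0,1)$.

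Since the tangent at $t$ lies below $A$, we get $G_A(t)\le A(1)=1$. For nonnegativity, $A\ge 1-s$ and the tangent touches $A$ at $t$, so the tangent slope is $\ge -1$; therefore $G_A(t)\ge A(t)-(1-t)\ge 0$.

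\begin{itemize}
\item On $[0,L)$, $A$ coincides with $1-s$ (by convexity, as for $R$), so $D^+A=-1$ and $G_A=0$.
\item On $[R,1]$, $A(s)=s$, so $G_A=1$.
\item For $t>L$, we have $A(t)>1-t$, which gives positivity: if $G_A(t)=0$, the tangent line at $t$ would pass through $(1,0)$ while lying below $A$ and touching it at $t$. Combined with $A(0)=1$, this forces the slope to be $\ge -1$ and hence equal to $-1$ (since $A(t)\ge 1-t$ means the line through $(1,0)$ and $(t,A(t))$ has slope $\le -1$). Then $A(t)=1-t$, contradicting $t>L$.
\end{itemize}

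The main obstacle is the strict monotonicity of $h_A$ on $(0,R]$, which is handled by the tangent-intercept argument above.
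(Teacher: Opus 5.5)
The paper does not prove this lemma itself; it is restated from \citep[Lemma 2.1]{dietrich2026}. There is therefore no in-paper argument to compare with, and your proposal has to stand on its own. Its structure is sound and parts (ii) and (iii) are essentially fine. In part (i), however, one remark is wrong, and the inequality that everything in (i) rests on is justified incorrectly.

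\emph{The perspective remark.} Drop it. For $g(s,u)=uA(s/u)$ one has $g(t,1)=A(t)$, not $A(t)/t$, and $tA(\tfrac1t\cdot t)/t^2=1/t$.

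\emph{The intercept inequality.} The tangent intercept $I(t):=A(t)-tD^+A(t)$ is not positive: it vanishes on $[R,1)$, where $A(t)=t$ and $D^+A(t)=1$. Your reason for $I\ge 0$ also does not work, since the tangent lying below $A$ together with $A(0)=1$ only gives the upper bound $I(t)\le 1$. You need $I\ge0$ twice:
\begin{itemize}
\item for $D^+h_A=-I/t^2\le 0$, which gives monotonicity;
\item to conclude that $I(t)\,t^{-2}$ is non-increasing as a product of two non-negative non-increasing functions, which gives convexity. Note that it is $t^{-2}$, not $t^{2}$, that decreases.
\end{itemize}
The fix is one line. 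For $t\in(0,1)$, convexity, $A(1)=1$ and $A(t)\ge t$ give $D^+A(t)\le \frac{A(1)-A(t)}{1-t}\le 1$, hence
\[
I(t)\;\ge\; A(t)-t\;\ge\;0 .
\]
With this, your computation that $I$ is non-increasing, the convexity conclusion and the strictness argument on $(0,R]$ all go through.

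\emph{Tacit facts to state.} In the convexity and strictness steps you use that a continuous function with finite right derivative everywhere on an interval is the integral of that derivative. So a non-decreasing $D^+h_A$ yields convexity, and $D^+h_A=0$ on $[a,1)$ yields $h_A\equiv h_A(1)=1$ on $[a,1]$, i.e.\ $A(a)=a$, contradicting $a<R$. Say this explicitly. Similarly, in (iii) the slope bound $D^+A(t)\ge -1$ comes from $D^+A(t)\ge D^+A(0)\ge -1$ (equivalently, the tangent lies below $A$ at $0$, where $A(0)=1$). It does not follow from $A(s)\ge 1-s$ alone.

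\emph{A derivative-free alternative.} Eq.~\eqref{eq:rel_pickands_fct_measure} gives
\[
h_A(t)=2\int_{\mathbb{I}}\max\Bigl\{1-s,\ \bigl(\tfrac{1}{t}-1\bigr)s\Bigr\}\,\mathrm{d}\vartheta(s),\qquad t\in(0,1].
\]
This is a mixture of functions that are convex and non-increasing in $t$, so it delivers convexity and monotonicity of $h_A$ at once.

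\emph{Two smaller points.} In (ii), note that $h_A$ maps $(0,R]$ continuously onto $[1,\infty)$, because $h_A(t)\ge \frac{1}{2t}$. Then $h_A^{[-1]}$ is the genuine inverse of $h_A|_{(0,R]}$ on all of $[1,\infty)$, and $h_A^{[-1]}(1)=R$ needs no extension argument. In (iii), positivity on $(L,1)$ follows directly from the bound you already have, $G_A(t)\ge A(t)-(1-t)>0$, so the argument via the line through $(1,0)$ is unnecessary.
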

Finally, as it will be key for identifiability and the construction of our  estimator in Sections \ref{sec:identifiability} and \ref{sec:estimator}, respectively, for a fixed $A \in \mathcal{A}$, we note that Kendall's $\tau$ of the associated EVC $C_A$ is given as
\begin{equation}\label{eq:kendallstau_ex}
\tau_A := \tau(C_A)= \int_\mathbb{I}\frac{t(1-t)}{A(t)} \mathrm{d}D^+A(t).
\end{equation}
\subsubsection{Bivariate Archimax copulas}
Combining a convex generator with a Pickands dependence function yields a broader family of copulas known as Archimax copulas, which includes both the Archimedean and extreme-value families as subfamilies. Formally, $C \in \mathcal{C}$ belongs to this family if there exist $\psi \in \Psi$, with pseudo-inverse $\varphi$, and $A \in \mathcal{A}$ such that
\begin{equation}\label{eq:def_archimax}
C(x,y) = \psi\left((\varphi(x) + \varphi(y))A\left(\frac{\varphi(x)}{\varphi(x) + \varphi(y)}\right)\right)
\end{equation}
holds for all $(x,y) \in (0,1)^2$; we call $C$ an Archimax copula generated by the pair $(\psi,A)$. The fact that eq. \eqref{eq:def_archimax}, suitably extended to $\mathbb{I}^2$, indeed defines a proper copula is shown in \citep{caperaa2000b,durante2018}. Stressing the connection to the generator $\psi$ and the Pickands dependence function $A$, we sometimes write $C_{\psi,A}$ for the copula in \eqref{eq:def_archimax} rather than $C$. As already mentioned in 
the introduction, $\mathcal{C}_{am}$ will denote the class of all bivariate Archimax copulas.\\
Three particular choices of $(\psi,A)$ recover familiar copula families as special cases of $C_{\psi,A}$:
\begin{enumerate}
    \item Setting $A(t) = 1$ for every $t \in \mathbb{I}$ yields the Archimedean family: for any $\psi \in \Psi$, one directly verifies that $C_{\psi,A} = C_\psi \in \mathcal{C}_{ar}$.
    \item Considering $\psi(z) = \exp(-z\log(2))$ we obtain the extreme-value family: as shown in \citep{caperaa2000b}, $C_{\psi,A} = C_A \in \mathcal{C}_{ev}$ for every $A \in \mathcal{A}$.
    \item Taking $A(x) = \max\{1-x,x\}$ collapses the dependence entirely, regardless of $\psi$: as shown in \citep{est-archimax}, $C_{\psi,A}(x,y) = M(x,y) = \min\{x,y\}$ for every $(x,y) \in \mathbb{I}^2$ and every $\psi \in \Psi$.
\end{enumerate}
Notice that using eq. \eqref{eq:def_archimax}
and the results from \cite{bernoulli}, it is straightforward to prove the following implication: If a sequence $(\psi_n)_{n \in \mathbb{N}}$ of Archimedean 
generators converges uniformly to $\psi \in \Psi$ and if a sequence 
$(A_n)_{n \in \mathbb{N}}$ of Pickands dependence functions converges to 
$A \in \mathcal{A}$, then $C_{\psi_n,A_n} \overset{n \rightarrow \infty}{\longrightarrow} C_{\psi,A}$.
\\
We now turn to a description of the level sets of Archimax copulas, which will be central to establishing Corollary \ref{cor:markov_kernel}. For a formal definition of the $t$-level sets of a copula $C \in \mathcal{C}$, we refer to \citep{dur_princ}. We follow \citep{dietrich2026} and fix $\psi \in \Psi$ with pseudo-inverse $\varphi$, and recall the function $h_A$ from eq. \eqref{eq:fct_h}, together with its pseudo-inverse $h_A^{[-1]}$. For $t \in (0,1)$, if $\psi$ is strict, or $t \in [0,1)$ if $\psi$ is non-strict, we define the $t$-level-set function $f^t \colon [t,1] \rightarrow \mathbb{I}$ by
\begin{equation}\label{eq:fct_f_t}
f^t(x) := 
\begin{cases}
\psi\left(\left(\frac{1}{h_A^{[-1]}\left(\frac{\varphi(t)}{\varphi(x)}\right)}-1\right)\varphi(x)\right), &\text{if } x \in [t,1),\\
t, &\text{if } x =1
\end{cases}
\end{equation}
for every $x \in [t,1]$. The two boundary cases are handled separately: when $\psi$ is strict, we set $f^0(x) := 0$ for every $x \in \mathbb{I}$; and in all cases, $f^1 \colon \{1\} \rightarrow \mathbb{I}$ is given by $f^1(1) := 1$. These functions were shown in \citep{dietrich2026} to parametrize the $t$-level sets of Archimax copulas.\\
Since the identifiability result and the estimators we develop in this paper both hinge on the Kendall distribution function of Archimax copulas, we recall its form here. For $\psi \in \Psi$, $A \in \mathcal{A}$, and the corresponding copula $C_{\psi,A} \in \mathcal{C}_{am}$, according to \citep{caperaa2000b, dietrich2026}, the Kendall distribution function of $C_{\psi,A}$ takes the form
\begin{equation}\label{eq:kendall_archimax}
F_{\psi,A}^K(t) = \tau_A t + (1-\tau_A)F_\psi^K(t) = t + (\tau_A-1)\beta_\psi(t), \qquad t\in \mathbb{I},
\end{equation}
where $\tau_A$ is defined in eq. \eqref{eq:kendallstau_ex}. In other words, the Kendall distribution function of $C_{\psi,A}$ is a convex combination of the Kendall distribution function of $M$ and the Kendall 
distribution function of the corresponding Archimedean copula.
\subsection{Markov kernels of Archimax copulas}
\noindent Proving the results in the next sections, Markov kernels will be an essential tool. Let $\psi \in \Psi$, $A \in \mathcal{A}$ and $C_{\psi,A} \in \mathcal{C}_{am}$. Recall that we consistently use the convention that $a\cdot\infty = \infty$, $\frac{a}{\infty} = 0$ and $\frac{a}{0} = \infty$ for $a \in \mathbb{R} \setminus \{0\}$.\\
We propose the following condensed version of the Markov kernel $K_{\psi,A}$ of $C_{\psi,A}$ proposed in \cite[Theorem 4.1]{dietrich2026}:
\begin{corollary}\label{cor:markov_kernel}
Let $\psi \in \Psi$, $A \in \mathcal{A}$, $G_A$ be defined according to eq. \eqref{eq:funct_G_A} and $C_{\psi,A} \in \mathcal{C}_{am}$. Then the function $K_{\psi,A} \colon \mathbb{I}^2 \rightarrow \mathbb{I}$ defined by
\begin{align}\label{eq:markov-kernel}
K_{\psi,A}(x,[0,y]) := \begin{cases}
1,& \text{if } (x,y) \in\{0,1\} \times \mathbb{I},\\
\frac{D^{-}\psi\left((\varphi(x) + \varphi(y)) A\left(\frac{\varphi(x)}{\varphi(x) + \varphi(y)}\right)\right)}{D^-\psi(\varphi(x))}\cdot G_A\left(\frac{\varphi(x)}{\varphi(x) + \varphi(y)}\right),& \text{if } (x,y) \in (0,1) \times \mathbb{I}
\end{cases}
\end{align}
for every $(x,y) \in \mathbb{I}^2$ is a version of the Markov-kernel of $C_{\psi,A}$.
\end{corollary}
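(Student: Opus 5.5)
One way to verify the formula directly, independent of the exact form of the kernel in \cite[Theorem 4.1]{dietrich2026}, is through partial derivatives. For a copula $C$, a version of the Markov kernel is obtained by setting $K_C(x,[0,y]) := \partial_1 C(x,y)$ wherever this derivative exists. The requirements are that $y\mapsto K(x,[0,y])$ be a distribution function for every $x$ and that $\int_0^{x} K(s,[0,y])\,\mathrm{d}\lambda(s) = C(x,y)$ for all $x,y$. The disintegration identity \eqref{eq:DI} then identifies $K$ as a version of $K_C$. On the null set $\{0,1\}\times\mathbb{I}$ the values can be set arbitrarily, which justifies the first case of \eqref{eq:markov-kernel}.

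\textbf{Step 1: the derivative on $(0,1)^2$.} Write $\ell(x,y):=(\varphi(x)+\varphi(y))A\big(\tfrac{\varphi(x)}{\varphi(x)+\varphi(y)}\big)$. This is the homogeneous function $\ell(u,v)=(u+v)A(u/(u+v))$ evaluated at $u=\varphi(x)$, $v=\varphi(y)$. Euler-type differentiation gives
\[
\partial_u^+ \ell(u,v)=A(w)+D^+A(w)(1-w)=G_A(w),\qquad w=\tfrac{u}{u+v}.
\]
Since $\varphi$ is strictly decreasing and convex, the chain rule for one-sided derivatives gives
\[
\partial_x^- C(x,y)=D^-\psi(\ell)\,G_A(w)\,D^-\varphi(x),\qquad D^-\varphi(x)=\frac{1}{D^-\psi(\varphi(x))}.
\]
The direction of the one-sided derivative flips under the decreasing maps, so the left/right bookkeeping has to be tracked. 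The result is precisely the second case of \eqref{eq:markov-kernel}. This holds for all $x$ outside a countable set, because convex functions are differentiable off countable sets and $\varphi$ maps countable sets to countable sets. Lebesgue-almost everywhere this suffices, since $x\mapsto C(x,y)$ is Lipschitz and hence absolutely continuous. Therefore
\[
C(x,y)=\int_0^x \partial_1 C(s,y)\,\mathrm{d}s .
\]

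\textbf{Step 2: distribution-function property for every $x\in(0,1)$.} As $y$ increases, $\varphi(y)$ decreases, so $w$ increases. Then $G_A(w)$ is non-decreasing by Lemma \ref{lem:regularity_fcts}(iii). Also $\ell$ is non-increasing in $y$, and since $D^-\psi$ is non-decreasing and negative, the quotient $D^-\psi(\ell)/D^-\psi(\varphi(x))$ is non-decreasing and nonnegative. Hence the product is non-decreasing.

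At $y=1$ we have $w=1$, $G_A=1$ and $\ell=\varphi(x)$, so the value is $1$. As $y\downarrow0$ we have $\varphi(y)\to\varphi(0)$:
\begin{itemize}[label=\textbullet]
\item In the strict case $w\to0$, and either $G_A(w)\to G_A(0)$ or $\ell\to\infty$ forces $D^-\psi(\ell)\to0$.
\item In the non-strict case $\ell\ge\varphi(0)$ eventually makes $D^-\psi=0$.
\end{itemize}
The value at $y=0$ should be checked against $C(x,0)=0$. If it is not $0$, it can be redefined (mass-preserving) without affecting the integral identity.

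Right-continuity in $y$ follows from the left-continuity of $D^-\psi$ combined with the decreasing argument $\ell$, and from the right-continuity of $G_A$ combined with the increasing argument $w$. Measurability in $x$ is clear from monotonicity and continuity of the ingredients.

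\textbf{Main obstacle.} The delicate point is the one-sided derivative bookkeeping together with the boundary/degenerate regions. These are $w\in[0,L)$, where $G_A=0$; $w\in[R,1]$; and $\ell\ge\varphi(0)$ in the non-strict case, where conventions like $0\cdot\infty$ arise. In each of these regions I would verify right-continuity and the correct limits directly using Lemma \ref{lem:regularity_fcts}(iii). Alternatively, one can simply check that the expression agrees with \cite[Theorem 4.1]{dietrich2026} case by case, since there both $G_A$ and the derivative quotient collapse consistently.
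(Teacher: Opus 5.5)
Your route is genuinely different from the paper's. The paper does not differentiate $C_{\psi,A}$ at all. It starts from the kernel already established in \cite[Theorem 4.1]{dietrich2026}, which is written with $D^-\psi(\varphi(x)h_A(w))$ and stated for $y\ge f^0(x)$. It then uses the identity $\varphi(x)h_A(w)=(\varphi(x)+\varphi(y))A(w)$ for $w=\varphi(x)/(\varphi(x)+\varphi(y))$. For $y<f^0(x)$ (non-strict case) it shows that the argument of $D^-\psi$ exceeds $\varphi(0)$, so the expression vanishes there. Your plan is different: take the formula as given, check that $y\mapsto K(x,[0,y])$ is a distribution function for every $x$, and check $\int_0^x K(s,[0,y])\,\mathrm{d}\lambda(s)=C_{\psi,A}(x,y)$ via Lipschitz continuity and a.e.\ differentiation. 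This is self-contained and avoids the level-set machinery, and the monotonicity and right-continuity arguments in your Step~2 are correct. The price is exactly the one-sided bookkeeping you defer, and as written it is wrong.

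In fact $D^-\varphi(x)=1/D^+\psi(\varphi(x))$ and $D^+\varphi(x)=1/D^-\psi(\varphi(x))$, not $1/D^-\psi$. Tracking directions gives
\[
\partial_x^-C_{\psi,A}(x,y)=\frac{D^+\psi(\ell)}{D^+\psi(\varphi(x))}\,G_A(w),\qquad
\partial_x^+C_{\psi,A}(x,y)=\frac{D^-\psi(\ell)}{D^-\psi(\varphi(x))}\,\bigl(A(w)+D^-A(w)(1-w)\bigr).
\]
So the formula in the statement is neither one-sided partial derivative but a hybrid of the two. In general neither one-sided derivative is right-continuous in $y$ (at kinks of $\psi$, resp.\ of $A$), which is precisely why the statement pairs $D^-\psi$ with $D^+A$.

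This does not break your strategy, since for each fixed $y$ you only need agreement with $\partial_1 C_{\psi,A}(\cdot,y)$ off a $\lambda$-null set. However, your justification of that null set is incomplete. For fixed $y$, the map $x\mapsto\ell$ equals $\varphi(y)$ on the whole interval $\{w<L\}$. So if $\psi$ has a kink at $\varphi(y)$, the factor $D^-\psi(\ell)$ sits at a non-differentiability point for uncountably many $x$. You need to add two facts:
\begin{itemize}
\item on $\{w<L\}$ one has $G_A(w)=0$ and $C_{\psi,A}(\cdot,y)\equiv y$ locally, so both sides vanish;
\item on $\{w>L\}$ the map $x\mapsto\ell$ is strictly monotone, because $G_A>0$ on $(L,1]$.
\end{itemize}

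Similarly, your claim that ``$\ell\ge\varphi(0)$ makes $D^-\psi(\ell)=0$'' fails at $\ell=\varphi(0)$, where $D^-\psi(\varphi(0))$ need not vanish (e.g.\ for piecewise linear $\psi$). At $y=0$ in the non-strict case one has $\ell\ge\varphi(0)$, with equality iff $w\le L$. Hence $K(x,\{0\})=0$ except possibly for the single $x$ with $w=L$. That is a null set, so no redefinition is needed. Redefining would in any case not prove the statement, which concerns this specific formula.
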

\begin{proof}
    We fix $\psi \in \Psi$, $A \in \mathcal{A}$ arbitrarily and let $h_A$ be defined as in eq. \eqref{eq:fct_h} and $f^0$ be the function parametrizing the $0$-level set as defined in eq. \eqref{eq:fct_f_t}. Then according to \cite[Theorem 4.1]{dietrich2026} we obtain that
    \begin{align*}
    K_{\psi,A}(x,[0,y]) &= 
    \frac{D^{-}\psi\left(\varphi(x) h_A\left(\frac{\varphi(x)}{\varphi(x) + \varphi(y)}\right)\right)}{D^-\psi(\varphi(x))}\cdot G_A\left(\frac{\varphi(x)}{\varphi(x) + \varphi(y)}\right)\\&=
    \frac{D^{-}\psi\left((\varphi(x) + \varphi(y)) A\left(\frac{\varphi(x)}{\varphi(x) + \varphi(y)}\right)\right)}{D^-\psi(\varphi(x))}\cdot G_A\left(\frac{\varphi(x)}{\varphi(x) + \varphi(y)}\right)
    \end{align*}
    holds for every $(x,y) \in (0,1) \times \mathbb{I}$ fulfilling $f^0(x) \leq y$. Fix $(x,y) \in (0,1) \times \mathbb{I}$ with $y < f^0(x)$. Then we have that $\frac{\varphi(x)}{\varphi(x) + \varphi(y)} < h_A^{[-1]}\left(\frac{\varphi(0)}{\varphi(x)}\right)$ and therefore $\varphi(0) < \varphi(x) h_A\left(\frac{\varphi(x)}{\varphi(x) + \varphi(y)}\right)$. This implies that 
    $$
    0 = D^{-}\psi\left(\varphi(x) h_A\left(\frac{\varphi(x)}{\varphi(x) + \varphi(y)}\right)\right) = D^{-}\psi\left((\varphi(x) + \varphi(y)) A\left(\frac{\varphi(x)}{\varphi(x) + \varphi(y)}\right)\right),
    $$
    hence $K_{\psi,A}$ as given in eq.~\eqref{eq:markov-kernel} is a Markov kernel
    of $C_{\psi,A}$.
\end{proof}
As consequence of Corollary \ref{cor:markov_kernel}, we have the following corollary for the Archimedean case, simplifying the representation of the Markov-kernel given in \citep{fernandezsanchez2015, fuchs_tschimpke}:
\begin{corollary}
    Let $\psi \in \Psi$ and $C_\psi \in \mathcal{C}_{ar}$. Then the function $K_\psi \colon \mathbb{I}^2 \rightarrow \mathbb{I}$, defined by
    \begin{align}
    K_{\psi}(x,[0,y]) := \begin{cases}
    1,& \text{if } (x,y) \in\{0,1\} \times \mathbb{I},\\
    \frac{D^{-}\psi\left(\varphi(x) + \varphi(y)\right)}{D^-\psi(\varphi(x))},& \text{if } (x,y) \in (0,1) \times \mathbb{I}
    \end{cases}
\end{align}
for every $(x,y) \in \mathbb{I}^2$ is a version of the Markov kernel of $C_\psi$.
\end{corollary}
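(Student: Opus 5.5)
The plan is to obtain the statement as the special case $A = A_\Pi \equiv 1$ of Corollary \ref{cor:markov_kernel}. First, recall from the list of special cases following eq. \eqref{eq:def_archimax} that for $A(t)=1$ on $\mathbb{I}$ we have $C_{\psi,A} = C_\psi$. Any version of the Markov kernel of $C_{\psi,A_\Pi}$ is therefore a version of the Markov kernel of $C_\psi$. We apply Corollary \ref{cor:markov_kernel} with $A=A_\Pi$ and simplify the two factors appearing in eq. \eqref{eq:markov-kernel}.

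For the first factor, $A_\Pi\equiv 1$ gives
$$
(\varphi(x)+\varphi(y))\,A_\Pi\!\left(\tfrac{\varphi(x)}{\varphi(x)+\varphi(y)}\right)=\varphi(x)+\varphi(y).
$$
The numerator of the quotient thus becomes $D^-\psi(\varphi(x)+\varphi(y))$. For the second factor we compute $G_{A_\Pi}$ from eq. \eqref{eq:funct_G_A}. Since $A_\Pi$ is constant, $D^+A_\Pi(t)=0$ for all $t\in[0,1)$, so $G_{A_\Pi}(t)=1+0\cdot(1-t)=1$ on $[0,1)$, and $G_{A_\Pi}(1)=1$ by definition. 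This is consistent with Lemma \ref{lem:regularity_fcts}(iii): here $L=0$ and $R=0$, since $A_\Pi(s)=s$ only at $s=1$, which would give $R=1$. Either way, the direct computation gives $G_{A_\Pi}\equiv 1$. The product in eq. \eqref{eq:markov-kernel} therefore reduces to $\frac{D^-\psi(\varphi(x)+\varphi(y))}{D^-\psi(\varphi(x))}$ for $(x,y)\in(0,1)\times\mathbb{I}$. The value $1$ on $\{0,1\}\times\mathbb{I}$ carries over unchanged.

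The only point needing care is the boundary conventions, and this is the step I expect to require the most checking. When $y=0$ and $\psi$ is strict, we have $\varphi(0)=\infty$, so the argument of $h_A$ in the corollary is $\varphi(x)/\infty=0$, and $G_{A_\Pi}(0)=1$. Meanwhile $D^-\psi(\infty)$ must be read as $0$, consistent with $\lim_{z\to\infty}D^-\psi(z)=0$ and the conventions $a\cdot\infty=\infty$, $a/\infty=0$. The same reading is already used in Corollary \ref{cor:markov_kernel}, so no new issue arises.

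Similarly, the denominator $D^-\psi(\varphi(x))$ is nonzero for $x\in(0,1)$, because $\varphi(x)<\varphi(0)$ and $\psi$ is strictly decreasing and convex on $[0,\varphi(0))$. This was implicitly required already for eq. \eqref{eq:markov-kernel}, so the simplified formula inherits well-definedness from it. The claim follows.
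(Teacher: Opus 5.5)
Your proposal is correct and matches the paper, which obtains this corollary directly by specializing Corollary~\ref{cor:markov_kernel} to $A=A_\Pi\equiv 1$. In that case $G_{A_\Pi}\equiv 1$ and the argument of $D^-\psi$ collapses to $\varphi(x)+\varphi(y)$. There is one harmless slip in your side remark: for $A_\Pi$ one has $L=0$ and $R=1$, not $R=0$, but your direct computation of $G_{A_\Pi}$ does not depend on this.
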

\section{Identifiability in $\mathcal{C}_{am}$}\label{sec:identifiability}
This section addresses identifiability in the Archimax family. 
As shown in \cite{neslehova}, the generator $\psi$ and Pickands dependence function $A$ do not identify the Archimax copula $C_{\psi,A}$ uniquely. 
Tackling and overcoming this issue, we introduce a new approach to 
identify Archimax copulas via (unique) so-called transformed generators and transformed Pickands dependence functions. We proceed in three steps: 
first we introduce and study properties of these transformed functions, 
then we derive several analytical properties, and finally we show 
that they indeed assure identifiability in the Archimax family. 
As a by-product we prove that the original generator $\psi$ and Pickands dependence 
function $A$ only identify the model, if Kendall's $\tau$ of the EVC induced by 
the Pickands dependence function is fixed.\\
\noindent Consider $\psi \in \Psi$ with pseudo inverse $\varphi$ and let $\alpha \in (0,\infty)$ be arbitrary but fixed. We define the transformed function $(\psi)_\alpha$ as
\begin{equation}\label{eq:trans_arch}
(\psi)_\alpha(z) := \psi(z^\alpha)
\end{equation}
for every $z \in [0,\infty)$ and denote its pseudo-inverse by 
$$
(\varphi)_\alpha(x) = \varphi^\frac{1}{\alpha}(x)
$$
for every $x \in \mathbb{I}$.\\
Moreover, for $A\in \mathcal{A}$ and $\alpha \in (0,\infty)$, the transformed function $(A)_\alpha$ is defined as
\begin{equation}\label{eq:trans_pickands}
(A)_\alpha(t) := (t^\alpha + (1-t)^\alpha)^\frac{1}{\alpha} A^\frac{1}{\alpha}\left(\frac{t^\alpha}{t^\alpha + (1-t)^\alpha}\right)
\end{equation}
for every $t \in \mathbb{I}$. Alternatively, we may express the function $(A)_\alpha$ via the function $h_A$ as defined in eq. \eqref{eq:fct_h} - for every $t \in \mathbb{I}$ the following identity holds:
\begin{equation}\label{eq:alter_trans_pick}
(A)_\alpha(t) = t \cdot h_A^\frac{1}{\alpha}\left(\frac{t^\alpha}{t^\alpha + (1-t)^\alpha}\right).
\end{equation}
To simplify notation, in the sequel we will work with the function $\kappa_\alpha: \mathbb{I} \rightarrow \mathbb{I}$, given by
\begin{equation}\label{eq:trans_kappa}
\kappa_\alpha(t):=\frac{t^\alpha}{t^\alpha + (1-t)^\alpha}
\end{equation}
for every $t \in \mathbb{I}$ and every $\alpha \in (0,\infty)$. Obviously $\kappa_\alpha$ is an increasing
homeomorphism of $\mathbb{I}$, which has $\frac{1}2$ as fixed point. 
For $\alpha > 1$ the mapping $\kappa_\alpha$ is strictly convex on $[0,\frac{1}{2}]$
and strictly concave on $[\frac{1}{2},1]$. For $\alpha<1$ it is strictly 
convex on $[\frac{1}{2},1]$ and strictly concave on $[0,\frac{1}{2}]$.
The next lemma was proved in \citep[Lemma 2.3]{est-archimax}. It shows that for specific values of $\alpha \in (0,\infty)$, $(\psi)_\alpha$, and $(A)_\alpha$ are indeed Archimedean generators and Pickands dependence functions, respectively.
\begin{lemma}\label{lem:trans_gen}
    Let $\psi \in \Psi$ and $A \in \mathcal{A}$. Then the following two assertions hold:
    \begin{enumerate}[label=(\roman*)]
        \item $(\psi)_\alpha \in \Psi$ for every $\alpha \in (0,1]$.
        \item $(A)_\alpha \in \mathcal{A}$ for every $\alpha \in [1,\infty)$.
    \end{enumerate}
\end{lemma}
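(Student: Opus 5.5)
For part (i) I will check the defining properties of a normalized convex generator directly for $(\psi)_\alpha(z)=\psi(z^\alpha)$ with $\alpha\in(0,1]$. Continuity, monotonicity, $(\psi)_\alpha(0)=\psi(0)=1$, $\lim_{z\to\infty}(\psi)_\alpha(z)=0$ and $(\psi)_\alpha(1)=\psi(1)=\tfrac12$ are immediate. Strict decrease on $[0,\inf\{z\colon(\psi)_\alpha(z)=0\})=[0,\varphi(0)^{1/\alpha})$ follows from strict decrease of $\psi$ and strict increase of $z\mapsto z^\alpha$.

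The only real point in (i) is convexity. Let $g(z)=z^\alpha$. Then $g$ is concave and non-decreasing on $[0,\infty)$, while $\psi$ is convex and non-increasing. For $z_1,z_2\ge0$ and $\lambda\in\mathbb{I}$, concavity of $g$ gives
\[
g(\lambda z_1+(1-\lambda)z_2)\ge \lambda g(z_1)+(1-\lambda)g(z_2).
\]
Since $\psi$ is non-increasing and then convex, it follows that
\[
\psi\bigl(g(\lambda z_1+(1-\lambda)z_2)\bigr)\le \psi\bigl(\lambda g(z_1)+(1-\lambda)g(z_2)\bigr)\le \lambda\psi(g(z_1))+(1-\lambda)\psi(g(z_2)).
\]
Hence $(\psi)_\alpha\in\Psi$.

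For part (ii), I will avoid differentiating the explicit formula and instead use the extreme-value copula $C_A$ together with the characterization of $\mathcal{C}_{ev}$. The idea is to show that the function
\[
C'(x,y):=\exp\Bigl(-\bigl((-\log x)^\alpha+(-\log y)^\alpha\bigr)^{1/\alpha}A^{1/\alpha}\bigl(\kappa_\alpha(\cdot)\bigr)\Bigr)
\]
is the EVC whose Pickands function is $(A)_\alpha$. Substitute $u=(-\log x)^\alpha$ and $v=(-\log y)^\alpha$, and write $\ell_A(u,v)=(u+v)A\bigl(u/(u+v)\bigr)$ for the stable tail dependence function of $A$. A short computation shows that $(A)_\alpha$ corresponds to $\ell'(s,w)=\ell_A(s^\alpha,w^\alpha)^{1/\alpha}$, that is,
\[
\ell'(s,w)=(s+w)\,(A)_\alpha\bigl(s/(s+w)\bigr).
\]
This is a verification of eq. \eqref{eq:trans_pickands} with $t=s/(s+w)$.

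It is then convenient to check the three defining properties of $\mathcal{A}$ through $\ell'$. Homogeneity of degree one is clear. Next, $\ell'$ is convex if and only if $(A)_\alpha$ is convex, since $\ell'$ is the perspective of $(A)_\alpha$. Finally, the bounds $\max\{s,w\}\le\ell'(s,w)\le s+w$ translate into $\max\{t,1-t\}\le(A)_\alpha(t)\le1$.

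The bounds follow from the corresponding bounds for $\ell_A$:
\[
\bigl(\max\{s^\alpha,w^\alpha\}\bigr)^{1/\alpha}\le\ell'(s,w)\le(s^\alpha+w^\alpha)^{1/\alpha}\le s+w,
\]
where the last inequality uses $\alpha\ge1$. Also $(A)_\alpha(0)=(A)_\alpha(1)=1$.

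The main obstacle is convexity of $\ell'$. My plan is to write it as
\[
\ell'(s,w)=\bigl\|(s,w)\bigr\|_{\ell_A,\alpha}
\]
and argue that it is a norm-like function. Since $\ell_A$ is convex and $1$-homogeneous with $\ell_A\ge\max$, it is a norm on $\mathbb{R}_+^2$ that is monotone in each coordinate. Then $(s,w)\mapsto\ell_A(s^\alpha,w^\alpha)^{1/\alpha}$ is convex on $\mathbb{R}_+^2$ by the standard argument that the $\alpha$-power construction of a monotone norm is again a norm. This amounts to a Minkowski-type inequality: from convexity and monotonicity of $\ell_A$, apply the ordinary Minkowski inequality coordinatewise and then the triangle inequality of $\ell_A$.

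If this step turns out to be delicate, I will instead use max-stability. Take $(X,Y)\sim C_A$ and consider the transformed vector $(\exp(-(-\log X)^{1/\alpha}),\dots)$, which yields copulas of the form $C'$. Then I verify directly that $C'$ is max-stable and $2$-increasing, and invoke the characterization in Subsection \ref{subsec:intro_evc} to conclude $(A)_\alpha\in\mathcal{A}$.
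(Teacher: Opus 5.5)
Your proof is correct in substance. It takes a genuinely different route from the paper, which gives no argument of its own for this lemma and simply imports it from \citep[Lemma 2.3]{est-archimax}.

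Part (i) is complete as written. In part (ii), the following steps are all fine:
\begin{itemize}
\item passing to $\ell'(s,w)=\ell_A(s^\alpha,w^\alpha)^{1/\alpha}=(s+w)\,(A)_\alpha\bigl(\tfrac{s}{s+w}\bigr)$, which is the stable-tail-dependence-function form used in \citep{est-archimax};
\item reading off the bounds from $\max\{u,v\}\le\ell_A(u,v)\le u+v$ together with $\alpha\ge1$;
\item deducing convexity of $(A)_\alpha$ from convexity of $\ell'$ by restricting to the segment $s+w=1$.
\end{itemize}
What this buys is a self-contained analytic proof that needs neither the copula property of Archimax functions nor the max-stability characterization.

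Two steps should be tightened.

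First, coordinatewise monotonicity of $\ell_A$ does not follow from convexity, homogeneity and $\ell_A(u,v)\ge\max\{u,v\}$ alone. The function $(u,v)\mapsto\max\{u,v\}+|u-v|$ has all three properties but equals $2v-u$ for $u<v$. You also need $\ell_A(0,v)=v$ and $\ell_A(u,0)=u$, i.e.\ $A(0)=A(1)=1$. Then $u\mapsto\ell_A(u,v)$ is convex and never below its value at $u=0$, hence non-decreasing. For the first coordinate this is just $G_A\ge0$ from Lemma~\ref{lem:regularity_fcts}.

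Second, your description of the Minkowski step is too vague to check. Its naive reading fails because $(s_1+s_2)^\alpha\ge s_1^\alpha+s_2^\alpha$ points the wrong way. A correct version runs as follows.
\begin{itemize}
\item For $x=(s_1,w_1)\neq0$ and $y=(s_2,w_2)\neq0$, put $a=\ell'(x)$, $b=\ell'(y)$ and $\lambda=a/(a+b)$.
\item Convexity of $u\mapsto u^\alpha$ gives $(s_1+s_2)^\alpha\le(a+b)^\alpha\bigl(\lambda s_1^\alpha a^{-\alpha}+(1-\lambda)s_2^\alpha b^{-\alpha}\bigr)$, and likewise for the $w$'s.
\item Monotonicity, convexity and homogeneity of $\ell_A$ then give $\ell_A\bigl((s_1+s_2)^\alpha,(w_1+w_2)^\alpha\bigr)\le(a+b)^\alpha$, i.e.\ $\ell'(x+y)\le\ell'(x)+\ell'(y)$.
\end{itemize}
There is an even shorter route using the paper itself. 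By eq.~\eqref{eq:A.alpha.via.measure}, $(A)_\alpha(t)$ is the $L^\alpha(2\vartheta)$-norm of $s\mapsto\max\{t(1-s)^{1/\alpha},(1-t)s^{1/\alpha}\}$. This function is non-negative and convex in $t$ for every $s$, so Minkowski's inequality yields convexity of $(A)_\alpha$ at once.

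Your fallback, as written, does not work. Applying a componentwise increasing map to $(X,Y)\sim C_A$ leaves the copula equal to $C_A$: the transformed vector has distribution function $\exp\bigl(-\ell'(-\log x,-\log y)^\alpha\bigr)$ with non-uniform margins. Moreover, checking $2$-increasingness of $C'$ directly is as hard as the convexity you are after.

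The working form of that idea is short. By part (i), $\psi(z)=2^{-z^{1/\alpha}}$ belongs to $\Psi$. Hence $C_{\psi,A}(x,y)=\exp\bigl(-\ell'(-\log x,-\log y)\bigr)$ is a copula, because Archimax functions with convex generators are copulas. It is max-stable by the homogeneity of $\ell'$, hence an EVC, and its Pickands dependence function is $(A)_\alpha$.
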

\noindent As the next example shows, the previous lemma does not extend to arbitrary $\alpha \in (0,\infty)$.
\begin{example}
Consider the (normalized) generator $\psi(z) = \exp(\log(\frac{1}{2})z)$ for every $z \in (0,\infty)$. Then for arbitrary $\alpha \in [1,\infty)$ the corresponding transformed function $(\psi)_\alpha$ is given by
$$
(\psi)_\alpha(z) = \psi(z^\alpha) = \exp(\log(\tfrac{1}{2})z^\alpha) 
$$
for every $z \in [0,\infty)$. For the second derivative we then obtain that
$$
(\psi)_\alpha''(z) = \log(\tfrac{1}{2})\alpha\exp(\log(\tfrac{1}{2})z^\alpha)z^{\alpha-2}[(\alpha - 1) + \log(\tfrac{1}{2})\alpha z^\alpha]
$$
for every $z \in (0,\infty)$. It is easy to see that $\psi_\alpha''(z) < 0$ if and only if $z \in \left(0, \left(\frac{\alpha-1}{\log(2)\,\alpha}\right)^{\frac{1}{\alpha}}\right)$, so $\psi_\alpha$ is not convex and, consequently, is not a generator of an Archimedean copula.
\\
Considering $A_\Pi(t)=1$, for arbitrary $\beta \in (0,\infty]$ we obtain that
    $$
    (A_\Pi)_\beta(t) = (t^\beta + (1-t)^\beta)^\frac{1}{\beta}
    $$
    for every $t \in \mathbb{I}$. Calculating the second derivative of $(A)_\beta$ then yields that
    $$
    (A)_\beta''(t) = (\beta-1)(t^\beta + (1-t)^\beta)^{\frac{1}{\beta}-2}t^{\beta -2}(1-t)^{\beta -2}
    $$
    for every $t \in (0,1)$. Therefore, $(A)_\beta''(t) < 0$ holds if and only if $\beta < 1$, implying that $A_\beta$ is not convex for $\beta < 1$ and thus is not a Pickands dependence function.
\end{example}
\begin{example}\label{ex2}
Again considering $A_\Pi$, for every $\alpha \in [1,\infty)$
we obviously have $(A_\Pi)_\alpha=A_{G_\alpha}$, where $A_{G_\alpha}$ denotes the 
Pickands dependence function of the Gumbel copula $G_\alpha$. Setting $G_\infty=M$ it follows
that the full Gumbel family can be constructed from $A_\Pi$ by applying 
the transformation $\alpha \mapsto (A_\Pi)_\alpha$. Moreover, 
for every $\alpha \geq 1$ and arbitrary $A \in \mathcal{A}$ we obviously have
$(A)_\alpha(t) \leq A_{G_\alpha}(t)$ for all $t \in \mathbb{I}$. 
A straightforward calculation (or applying Lemma \ref{lem:properties_trans_pickands}) 
shows that for every $\alpha \in [1,\infty)$ 
we have that 
$$
A_\Pi=((A_\Pi)_\alpha)_{\frac{1}{\alpha}}=(A_{G_\alpha})_{\frac{1}{\alpha}}, 
$$
implying that $(A)_{\alpha}$ may be a Pickands dependence function for some $\alpha \in (0,1)$.
\end{example}
\noindent Next, we derive the right-hand derivative $D^+(A)_{\alpha}$ of 
$(A)_\alpha$ on the interval $(0,1)$ for arbitrary $\alpha \in (0,\infty)$:
\begin{lemma}
Let $A \in \mathcal{A}$, $\alpha \in (0,\infty)$ be arbitrary and let $\kappa_\alpha$ be defined as in eq. \eqref{eq:trans_kappa}. Then $D^+(A)_\alpha$ exists on the whole interval $(0,1)$. Moreover, the following identity holds for every $t \in (0,1)$:
\begin{equation}\label{eq:right-hand_der}
    D^+(A)_{\alpha}(t) =
    (A)_{\alpha}(t) \cdot \left[\frac{t^{\alpha -1} - (1-t)^{\alpha -1}}{t^{\alpha} + (1-t)^{\alpha}} + \frac{D^+A\left(\kappa_\alpha(t)\right)t^{\alpha - 1}(1-t)^{\alpha - 1}}{A\left(\kappa_\alpha(t)\right)(t^\alpha + (1-t)^\alpha)^2}\right]
\end{equation}
\end{lemma}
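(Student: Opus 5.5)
The plan is to write $(A)_\alpha$ as a composition and product of functions whose right-hand derivatives exist on $(0,1)$, and then apply the one-sided chain and product rules. For $t\in(0,1)$ write
$$
(A)_\alpha(t) = s(t)^{\frac{1}{\alpha}}\, A\big(\kappa_\alpha(t)\big)^{\frac{1}{\alpha}}, \qquad s(t):=t^\alpha+(1-t)^\alpha .
$$
The function $s$ is $C^\infty$ and strictly positive on $(0,1)$. The map $\kappa_\alpha$ is $C^\infty$ there with $\kappa_\alpha'(t)>0$. Also $A\ge \max\{t,1-t\}\ge \tfrac12>0$, so $u\mapsto u^{1/\alpha}$ is smooth near all values taken by $s$ and $A$.

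The one step needing care is the one-sided chain rule for $t\mapsto A(\kappa_\alpha(t))$. Since $A$ is convex on $\mathbb{I}$, $D^+A$ exists on $[0,1)$. Because $\kappa_\alpha$ is strictly increasing and differentiable, for $h\downarrow 0$ we have $\kappa_\alpha(t+h)\downarrow\kappa_\alpha(t)$ with $\kappa_\alpha(t+h)>\kappa_\alpha(t)$. Factor the difference quotient as
$$
\frac{A(\kappa_\alpha(t+h))-A(\kappa_\alpha(t))}{\kappa_\alpha(t+h)-\kappa_\alpha(t)}\cdot\frac{\kappa_\alpha(t+h)-\kappa_\alpha(t)}{h}.
$$
As $h\downarrow0$ the first factor tends to $D^+A(\kappa_\alpha(t))$ and the second to $\kappa_\alpha'(t)$. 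Hence $D^+(A\circ\kappa_\alpha)(t)=D^+A(\kappa_\alpha(t))\,\kappa_\alpha'(t)$. Composing with the differentiable map $u\mapsto u^{1/\alpha}$ on $(0,\infty)$, and multiplying by the differentiable factor $s^{1/\alpha}$, preserves existence of right derivatives by the usual product and chain rules. This proves existence on all of $(0,1)$.

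It remains to compute the derivative. Use logarithmic differentiation, which is legitimate for right derivatives of positive functions:
$$
\frac{D^+(A)_\alpha(t)}{(A)_\alpha(t)} = \frac{1}{\alpha}\frac{s'(t)}{s(t)} + \frac{1}{\alpha}\frac{D^+A(\kappa_\alpha(t))\,\kappa_\alpha'(t)}{A(\kappa_\alpha(t))}.
$$
Here $s'(t)=\alpha\,(t^{\alpha-1}-(1-t)^{\alpha-1})$, so the first term equals $\frac{t^{\alpha-1}-(1-t)^{\alpha-1}}{t^\alpha+(1-t)^\alpha}$. For the second term, the quotient rule gives
$$
\kappa_\alpha'(t)=\frac{\alpha t^{\alpha-1}\big(t^\alpha+(1-t)^\alpha\big)-t^\alpha\,\alpha\big(t^{\alpha-1}-(1-t)^{\alpha-1}\big)}{\big(t^\alpha+(1-t)^\alpha\big)^2}=\frac{\alpha\,t^{\alpha-1}(1-t)^{\alpha-1}}{\big(t^\alpha+(1-t)^\alpha\big)^2}.
$$
The factor $\alpha$ cancels the $\frac1\alpha$. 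Multiplying through by $(A)_\alpha(t)$ then gives exactly \eqref{eq:right-hand_der}.

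The main obstacle is the justification of the one-sided chain rule, specifically that strict monotonicity of $\kappa_\alpha$ turns right limits into right limits. The rest is routine algebra.
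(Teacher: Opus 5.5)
Your proof is correct and follows the same route as the paper: you write $(A)_\alpha=s^{1/\alpha}(A\circ\kappa_\alpha)^{1/\alpha}$, obtain $D^+(A\circ\kappa_\alpha)$ from the strict monotonicity of $\kappa_\alpha$, and conclude with the product and chain rules. The only differences are that you prove the one-sided chain rule directly where the paper cites \cite[Lemma 7.1]{dietrich2026}, and that you carry out the computation of $\kappa_\alpha'$ and the resulting formula explicitly.
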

\begin{proof}
    First, the functions $s \mapsto (s^\alpha + (1-s)^\alpha)^\frac{1}{\alpha}$, $s \mapsto s^\frac{1}{\alpha}$ and $\kappa_\alpha$ are smooth on $(0,1)$. Since $A$ is convex, its right-hand derivative $D^+A$ exists. Observing that $\kappa_\alpha$ is strictly increasing on $(0,1)$ and applying \cite[Lemma 7.1]{dietrich2026} yields that $D^+(A\circ \kappa_\alpha)(s)$ exists for every $s \in (0,1)$. Applying the product and chain rule, existence of $D^+(A)_\alpha$, and eq. \eqref{eq:right-hand_der} follow.
\end{proof}
\noindent As it will be of use later on, we derive some additional useful properties of the function $(A)_\alpha$ (if no restriction on $\alpha$ is stated, then 
we always consider $\alpha \in (0,\infty)$). 
Assuring well-definedness, notice that the definition of 
$(A)_\alpha$ in eq. \eqref{eq:trans_pickands} can be extended to every 
function $f: \mathbb{I} \rightarrow (0,\infty)$ and every $\alpha \in (0,\infty)$ by
setting
$$
(f)_\alpha(t)=(t^\alpha + (1-t)^\alpha)^\frac{1}{\alpha} f^\frac{1}{\alpha}\left(\kappa_\alpha(t)\right)
$$
for every $t \in \mathbb{I}$. With this extension, expressions like $((A)_\alpha)_\beta$ make sense for 
all $\alpha,\beta \in (0,\infty)$.
\begin{lemma}\label{lem:properties_trans_pickands}
    Let $A \in \mathcal{A}$ be arbitrary but fixed. Then the following assertions hold:
     \begin{enumerate}[label=(\roman*)]
        \item For all $\alpha, \beta \in (0,\infty)$ we have $((A)_\alpha)_\beta=(A)_{\alpha\beta}=((A)_\beta)_\alpha$ as well as 
        $\kappa_\alpha\circ \kappa_\beta=\kappa_{\alpha\beta}=\kappa_\beta\circ \kappa_\alpha$.
       \item The function $D^+(A)_{\alpha}$ has a discontinuity at $t \in (0,1)$ if, and only if 
       $D^+A$ has a discontinuity at $\kappa_{\alpha}(t)$.
       \item The set of discontinuity points of $(A)_{\alpha}$ is at most 
       countably infinite. 
       \item $\lim_{\alpha \rightarrow \infty}(A)_\alpha(t) = A_M(t)$ for 
       every $t \in \mathbb{I}$ and the convergence is uniform in $t \in \mathbb{I}$.
       \item $A_M(t)\leq (A)_\alpha(t) \leq 2^\frac{1-\alpha}{\alpha}$ for every $t \in \mathbb{I}$ and every $\alpha \in (0,1]$.
       \item Suppose that $A$ is linear on some non-degenerated interval 
       $[a,b] \subseteq \mathbb{I}$ and there exists some $t_0 \in [a,b]$ 
       with $A_M(t_0)<A(t_0)$. Then $(A)_\alpha$ is strictly 
       convex on the interval $\kappa_{\frac{1}{\alpha}}([a,b])$ for $\alpha>1$ and 
       strictly concave for $\alpha \in (0,1)$.
       \item If $\alpha \geq 1$, then $(A)_\alpha(t) \leq A(t)$ for every 
       $t \in \mathbb{I}$. Moreover, for every $t \in (0,1)$ with 
       $A(t) > A_M(t)$ even $(A)_\alpha(t)<A(t)$ holds. 
       \item If $(\alpha_n)_{n \in \mathbb{N}}$ converges to $\alpha \in (0,\infty)$,
       then $(\kappa_{\alpha_n})_{n \in \mathbb{N}}$ converges uniformly 
       to $\kappa_\alpha$ on $\mathbb{I}$.
       \item If $(\alpha_n)_{n \in \mathbb{N}}$ converges to $\alpha \in (0,\infty)$
       and the sequence $(A_n)_{n\in \mathbb{N}}$ of Pickands dependence functions
       converges uniformly to $A \in \mathcal{A}$, then 
       $((A_n)_{\alpha_n})_{n \in \mathbb{N}}$ converges uniformly to $(A)_\alpha$.
        \end{enumerate}
\end{lemma}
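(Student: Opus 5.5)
Most of the nine assertions reduce to the multiplicative form \eqref{eq:alter_trans_pick}, $(f)_\alpha(t)=t\,h_f^{1/\alpha}(\kappa_\alpha(t))$, together with the elementary identity $\frac{\kappa_\alpha(t)}{1-\kappa_\alpha(t)}=\left(\frac{t}{1-t}\right)^\alpha$.

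For (i), this identity gives $\kappa_\alpha\circ\kappa_\beta=\kappa_{\alpha\beta}$ at once, since the odds are first raised to the power $\beta$ and then to the power $\alpha$. Next I compute $h_{(A)_\beta}(s)=(A)_\beta(s)/s=h_A^{1/\beta}(\kappa_\beta(s))$. Hence
$$((A)_\beta)_\alpha(t)=t\,h_{(A)_\beta}^{1/\alpha}(\kappa_\alpha(t))=t\,h_A^{1/(\alpha\beta)}(\kappa_\beta(\kappa_\alpha(t)))=t\,h_A^{1/(\alpha\beta)}(\kappa_{\alpha\beta}(t))=(A)_{\alpha\beta}(t).$$
Symmetry in $\alpha,\beta$ gives the other equality.

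For (ii), I read off the explicit formula \eqref{eq:right-hand_der}. All factors except $D^+A(\kappa_\alpha(t))$ are continuous and positive on $(0,1)$, and $\kappa_\alpha$ is a homeomorphism. So $D^+(A)_\alpha$ jumps at $t$ exactly when $D^+A$ jumps at $\kappa_\alpha(t)$. Statement (iii), which presumably concerns discontinuities of $D^+(A)_\alpha$, then follows from (ii), because $D^+A$ is monotone and has only countably many jumps.

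For (iv), the bounds $A_M\le A\le1$ give
$$\max\{t^\alpha,(1-t)^\alpha\}^{1/\alpha}\le (A)_\alpha(t)\le (t^\alpha+(1-t)^\alpha)^{1/\alpha}.$$
The left side is exactly $A_M(t)$, because $A(\kappa_\alpha)\ge\max\{\kappa_\alpha,1-\kappa_\alpha\}$. The right side is an $\ell^\alpha$-norm, which converges uniformly to $A_M$, with error at most $A_M(t)(2^{1/\alpha}-1)$. The same sandwich gives (v) for $\alpha\le1$, using concavity of the power mean: $(t^\alpha+(1-t)^\alpha)^{1/\alpha}\le 2^{1/\alpha-1}$.

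For (vii), let $\alpha\ge1$ and write $(A)_\alpha=((A)_\alpha)_1$. The claim is equivalent to $(A)_\alpha\le A$. I plan to use the representation \eqref{eq:rel_pickands_fct_measure}, $A(t)=2\int\max\{t(1-s),(1-t)s\}\,d\vartheta$, which writes $A$ as a norm-like expression $\|(t,1-t)\|$. Then $(A)_\alpha(t)^\alpha=A(\kappa_\alpha(t))(t^\alpha+(1-t)^\alpha)$ is a positively homogeneous stable tail function evaluated at $(t^\alpha,(1-t)^\alpha)$. The inequality I need is $\ell(t^\alpha,u^\alpha)^{1/\alpha}\le\ell(t,u)$ for a stable tail dependence function $\ell$, and I expect this to be the main obstacle. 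I would prove it via Minkowski/Jensen: $\ell$ is a sup of linear forms with nonnegative coefficients, so it suffices to treat each max of linear terms and use $(a^\alpha+b^\alpha)^{1/\alpha}\le a+b$. For strictness I would check when that inequality is sharp; equality forces $A=A_M$ at $\kappa_\alpha(t)$.

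For (vi), I would differentiate twice on the interval where $A$ is linear. There $(A)_\alpha(t)^\alpha=(c_1+c_2\kappa_\alpha)(t^\alpha+(1-t)^\alpha)=c_1'(1-t)^\alpha+c_2't^\alpha$ with $c_1',c_2'\ge0$, not both degenerate. So $(A)_\alpha$ is an $\ell^\alpha$-type weighted norm, which is strictly convex for $\alpha>1$ and strictly concave for $\alpha<1$ unless it collapses to $A_M$; the assumption $A(t_0)>A_M(t_0)$ rules that out.

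Statement (viii) is continuity of $(\alpha,t)\mapsto\kappa_\alpha(t)$ on the compact set $[\alpha/2,2\alpha]\times\mathbb{I}$, combined with uniform continuity. For (ix), I split
$$|(A_n)_{\alpha_n}-(A)_\alpha|\le|(A_n)_{\alpha_n}-(A)_{\alpha_n}|+|(A)_{\alpha_n}-(A)_\alpha|.$$
The first term is controlled by $\|A_n-A\|_\infty$, because $x\mapsto x^{1/\alpha_n}$ is uniformly Lipschitz on $[1/2,1]$ for $\alpha_n$ in a compact set. The second term goes to zero by (viii), the uniform continuity of $A$, and the joint continuity in $\alpha$ of the prefactor.
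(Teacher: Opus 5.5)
Your arguments for (i)--(vi), (viii) and (ix) are fine and essentially follow the paper. The explicit sandwich bound in (iv) and the joint uniform continuity of $(\alpha,t)\mapsto\kappa_\alpha(t)$ in (viii) are harmless variants of the paper's equicontinuity and P\'olya-type arguments.

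The genuine gap is in (vii), at exactly the step you flag as the main obstacle: the route you sketch cannot work.

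\textbf{The sup-of-linear-forms reduction overshoots.} Write $\ell$ as a supremum of linear forms $ax+by$ with $a,b\ge0$ and use $(at^\alpha+bu^\alpha)^{1/\alpha}\le a^{1/\alpha}t+b^{1/\alpha}u$. You would then still need $a^{1/\alpha}t+b^{1/\alpha}u\le\ell(t,u)$, and this is false in general. For $A=A_M$ the form $\tfrac12x+\tfrac12y$ is admissible, yet for $\alpha=2$ and $t=u=\tfrac12$ your bound equals $\tfrac{1}{\sqrt2}>\tfrac12=\ell(\tfrac12,\tfrac12)$.

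\textbf{The termwise Pickands route also fails.} One has $\max\{t^\alpha(1-s),u^\alpha s\}^{1/\alpha}=\max\{t(1-s)^{1/\alpha},us^{1/\alpha}\}\ge\max\{t(1-s),us\}$, which is the wrong direction. Jensen for the concave map $x\mapsto x^{1/\alpha}$ likewise gives a lower rather than an upper bound.

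\textbf{Why no such argument can succeed.} Convexity, homogeneity and nonnegative coefficients alone do not suffice. For the constant function $A\equiv\tfrac12$ one gets $(A)_\alpha(0)=2^{-1/\alpha}>\tfrac12=A(0)$ for every $\alpha>1$. The inequality hinges on the lower bound $A\ge A_M$, which your sketch never uses.

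\textbf{The missing step.} Trade $\alpha-1$ powers against $A_M$ pointwise in $s$:
\begin{itemize}
\item By \eqref{eq:rel_pickands_fct_measure}, $((A)_\alpha(t))^\alpha=2\int_{\mathbb{I}}\max\{t^\alpha(1-s),(1-t)^\alpha s\}\,\mathrm{d}\vartheta(s)$.
\item The inequality $\max\{ac,bd\}\le\max\{a,b\}\max\{c,d\}$ gives $\max\{t^\alpha(1-s),(1-t)^\alpha s\}\le A_M(t)^{\alpha-1}\max\{t(1-s),(1-t)s\}$.
\item Integrating yields $((A)_\alpha(t))^\alpha\le A_M(t)^{\alpha-1}A(t)\le A(t)^\alpha$.
\end{itemize}
Equivalently, in your supremum form: $at^\alpha+bu^\alpha\le\max\{t,u\}^{\alpha-1}(at+bu)\le\ell(t,u)^\alpha$.

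Strictness then follows directly from $A_M(t)^{\alpha-1}<A(t)^{\alpha-1}$ whenever $A(t)>A_M(t)$ and $\alpha>1$. Your proposed criterion, that equality forces $A=A_M$ at $\kappa_\alpha(t)$, is unproven and is placed at the wrong point. Note also that the strict part genuinely requires $\alpha>1$, since $(A)_1=A$.
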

\begin{proof}
     A straightforward calculation proves the first assertion, the second 
     one is a direct consequence of eq. \eqref{eq:right-hand_der}. 
     Assertion number three follows from the facts that, firstly, for 
     each $A \in \mathcal{A}$ we have that 
     $D^+A$ can have at most countably infinitely many discontinuities and, secondly, 
     that $\kappa_\alpha$ is a homeomorphism of $\mathbb{I}$. \\
     In order to prove the fourth assertion recall that for 
     every $\alpha \in [1,\infty)$ and every $t\in\mathbb{I}$ 
     we have $A_M(t)\leq (A)_\alpha(t) \leq (A_\Pi)_\alpha(t)=A_{G_\alpha}(t)$. Considering $\lim_{\alpha \rightarrow \infty}A_{G_\alpha}(t) = A_M(t)$
     therefore yields pointwise convergence, which, using the fact that 
     the family $\mathcal{A}$ is equi-continuous (all elements are Lipschitz continuous with Lipschitz constant $1$), proves the desired result. \\
     For $\alpha \in (0,1]$ we obviously have
    $$
    (A)_\alpha(t) = (t^\alpha + (1-t)^\alpha)^\frac{1}{\alpha}A^\frac{1}{\alpha}\left(\frac{t^\alpha}{t^{\alpha} + (1-t)^\alpha}\right) \leq (t^\alpha + (1-t)^\alpha)^\frac{1}{\alpha}.
    $$
    Since for $\alpha \in (0,1]$ the function $t \mapsto (t^\alpha + (1-t)^\alpha)^\frac{1}{\alpha}$ 
    is concave and symmetric on $\mathbb{I}$, we directly get 
    $(A)_\alpha(t) \leq 2^\frac{1-\alpha}{\alpha}$. \\
    In order to show assertion number six we proceed as follows: Assume that 
    $\alpha>1$ and set $[c,d]=\kappa_{\frac{1}{\alpha}}([a,b])$. Then, by assumption
    there are two constants $s_1 \in (0,1),s_2 \in (-1,1)$ fulfilling 
    $s_1+s_2 \in (0,1)$ such that for every 
    $t \in [c,d]$ we have
    \begin{align*}
       (A)_\alpha^\alpha(t) &= \left(t^\alpha + (1-t)^\alpha \right)
    \left(s_1 + s_2 \kappa_\alpha(t)\right) =
    \left(t^\alpha + (1-t)^\alpha \right)s_1 + t^\alpha s_2 \\
    &=t^\alpha(s_1+s_2) + (1-t)^\alpha s_1 = 
    \left\Vert \left(t(s_1+s_2)^{\frac{1}{\alpha}},(1-t)s_1^{\frac{1}{\alpha}}\right) \right\Vert_\alpha^\alpha,
    \end{align*}
    where $\Vert \cdot \Vert_\alpha$ denotes the $\alpha$-norm in $\mathbb{R}^2$.
    In other words, we have that $t\mapsto(A)_\alpha(t)$ can be expressed as 
    the composition of a strictly convex function (the $\alpha$-norm) and an injective affine 
    function (the mapping $t \mapsto (t(s_1+s_2)^{\frac{1}{\alpha}},(1-t)s_1^{\frac{1}{\alpha}})$). This shows that $(A)_\alpha$ is strictly convex on $[c,d]$. 
    Proving strict concaveness of $(A)_\alpha$ can be done analogously, e.g., 
    by directly 
    calculating the second derivative of the function 
    $t \mapsto (t^\alpha(s_1+s_2) + (1-t)^\alpha s_1)^{\frac{1}{\alpha}}$ 
    and showing that it is strictly smaller than $0$. \\
    For proving assertion number seven it suffices to consider $\alpha>1$. 
    Noticing that eq. \eqref{eq:rel_pickands_fct_measure} directly yields
    \begin{align}\label{eq:A.alpha.via.measure}
    ((A)_\alpha(t))^\alpha=  \left(t^\alpha + (1-t)^\alpha \right) \, 2\int_{\mathbb{I}} \max\{\kappa_\alpha(t)(1-s),(1-\kappa_\alpha(t))s\}  \mathrm{d}\vartheta(s) = 2\int_{\mathbb{I}} 
    \max\{t^\alpha(1-s),(1-t)^\alpha\,s\}  \mathrm{d}\vartheta(s)
    \end{align}
    we may proceed as follows: 
    Using the fact that $\max\{ac,bd\} \leq \max\{a,b\}\max\{c,d\}$ for $a,b,c,d \in [0,\infty)$, the last integral can be bounded from 
    above by $\max\{t^{\alpha-1},(1-t)^{\alpha -1}\} \,A(t)$. Altogether 
    it therefore follows that
    $$
    ((A)_\alpha(t))^\alpha \leq A(t) \max\{t^{\alpha-1},(1-t)^{\alpha -1}\}
    = A(t) A_M(t)^{\alpha-1}
    \leq A(t) \,(A(t))^{\alpha-1} = (A(t))^\alpha.
    $$
If $A_M(t) < A(t)$, the middle inequality above is strict, so $((A)_\alpha(t))^\alpha < (A(t))^\alpha$, and since both sides are non-negative and $\alpha>0$, taking the $\alpha$-th root yields $(A)_\alpha(t) < A(t)$.\\
    In order to prove assertion number eight, first notice that the sequence $(\kappa_{\alpha_n})_{n \in \mathbb{N}}$ obviously converges pointwise 
    to $\kappa_\alpha$. 
    Interpreting $\kappa_\alpha,\kappa_{\alpha_1},\kappa_{\alpha_2},\ldots$ as 
    continuous distribution functions (restricted to $\mathbb{I}$) and using the 
    fact that for continuous distribution functions pointwise and uniform 
    convergence are equivalent, it follows that 
    $(\kappa_{\alpha_n})_{n \in \mathbb{N}}$ converges to $\kappa_\alpha$
    uniformly on $\mathbb{I}$.   \\  
    The last assertion now follows easily by using the triangle inequality, the fact
    that all involved functions are continuous on $\mathbb{I}$, and that
    under the stated assumptions we have that 
    the functions $x \mapsto x^{\frac{1}{\alpha_n}}$ converge uniformly to 
    the function  $x \mapsto x^{\frac{1}{\alpha}}$ on $[\frac{1}{2},1]$.
\end{proof}
\begin{remark}\label{rem:illustration.A.alpha}
    Contrary to the case $\alpha\geq 1$, the function $(A)_\alpha$ may be quite irregular for $\alpha \in (0,1)$. In fact, starting with a discrete Pickands 
    dependence measure $\vartheta$ 
    with full support (such an example is straightforward to construct), 
    applying the second assertion in Lemma \ref{lem:properties_trans_pickands}
    it follows that the set of discontinuity points of $D^+(A)_\alpha$ 
    is dense in $\mathbb{I}$ for every $\alpha \in (0,1)$. 
    We conjecture that for such $\vartheta$ the function $(A_\vartheta)_\alpha$
    is not even locally concave or convex for $\alpha\in (0,1)$. 
    Proving such a property, however, 
    is outside the scope of this article, we here settle with  
    mentioning that approximating $\vartheta$ by discrete Pickands 
    measures $\vartheta_n$ with only finitely many point masses, Lemma \ref{lem:properties_trans_pickands}
    implies that for $\alpha \in (0,1)$ the corresponding function $(A_{\vartheta_n})_\alpha$ is piecewise strictly concave. 
    Figure \ref{fig:discrete.approx} depicts the (piecewise linear) 
    Pickands dependence function $A$ corresponding to a discrete Pickands dependence measure with finitely many point masses 
    as well the function $(A)_\alpha$ for $\alpha=1-\tau_A$.
\end{remark}
\begin{figure}[!ht]
	\centering
	\includegraphics[width=0.8\textwidth]{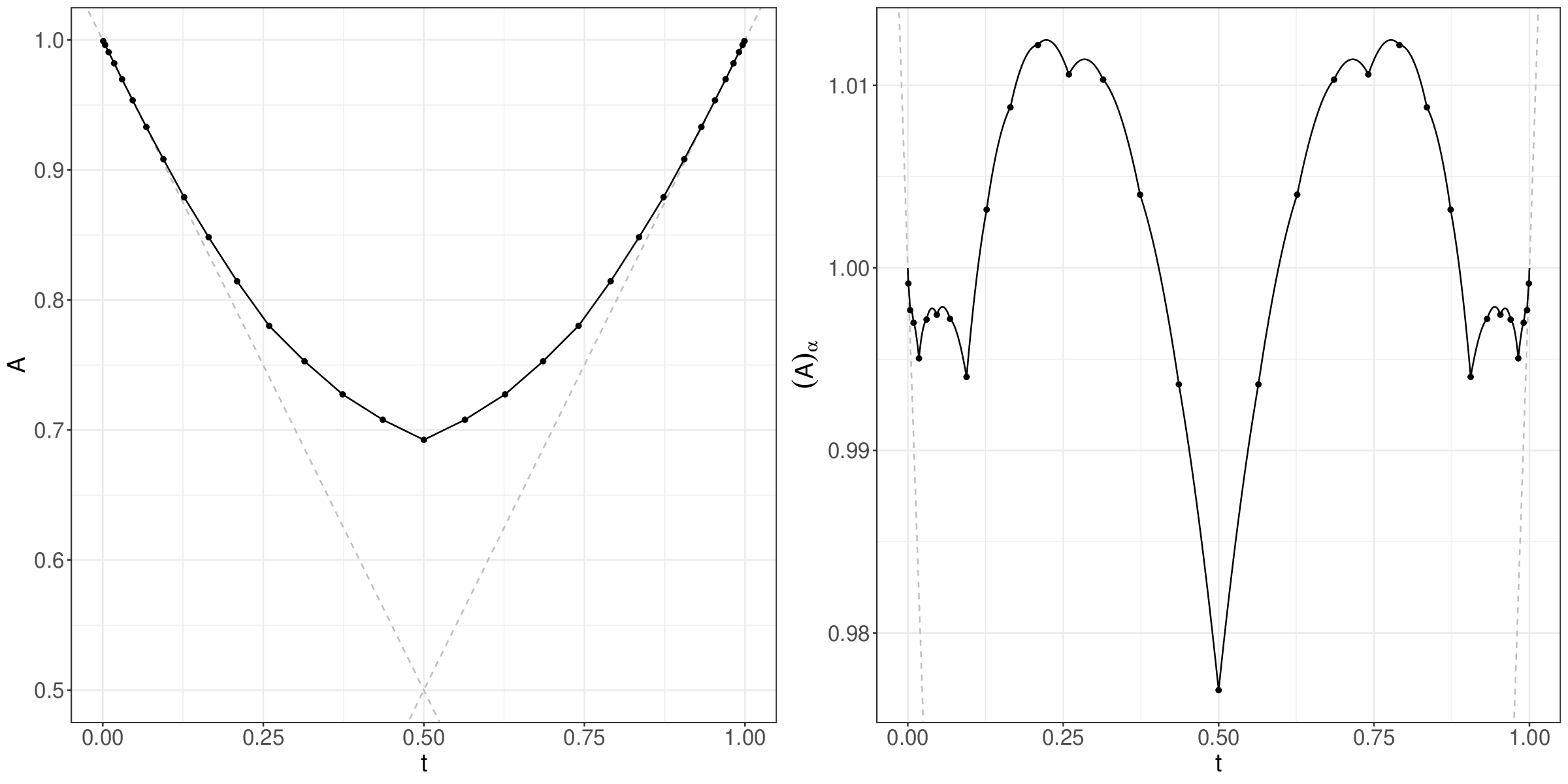}
	\caption{Symmetric, piecewise linear Pickands dependence function 
    $A \in \mathcal{A}$ corresponding to a Pickands measure with 
    31 point masses (left panel); 
    piecewise (strictly) concave function $(A_\alpha)$ for 
    $\alpha=1-\tau_A\approx 0.486$. The points in the left panel denote the edges, in the right panel the boundaries of the strictly concave segments. (right).}\label{fig:discrete.approx}
\end{figure}
\noindent Motivated by the fact that Kendall's $\tau$ plays a central role throughout this paper, we calculate Kendall's $\tau$ of a transformed Pickands dependence function $(A)_\alpha$ for $A \in \mathcal{A}$ and $\alpha \in (0,\infty)$. Since the proof of Lemma~\ref{lem:kendalls_tau_trans} is rather tedious, we defer it to~\ref{proof:lem_trans_kendall} for the sake of readability.
\begin{lemma}\label{lem:kendalls_tau_trans}
    For $A \in \mathcal{A}$ and $\alpha \in (0,\infty)$ such that $(A)_\alpha \in \mathcal{A}$ we have 
    $$
    \tau_{(A)_\alpha} = 1-\frac{1-\tau_A}{\alpha}.
    $$
Moreover, for $\psi \in \Psi$ and 
    $\alpha \in (0,1]$ the 
    identity $\tau_{(\psi)_\alpha}=(1-\alpha) + \alpha\tau_{\psi}$ holds.
\end{lemma}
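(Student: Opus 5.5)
The second assertion is the easier of the two, so I would treat it first. For $\psi\in\Psi$ and $\alpha\in(0,1]$, Lemma \ref{lem:trans_gen} gives $(\psi)_\alpha\in\Psi$, and I would use $\tau_\psi = 1+4\int_{\mathbb{I}}\beta(t)\,\mathrm{d}\lambda(t)$. Writing $\beta_\alpha$ for the function in eq. \eqref{eq:beta} belonging to $(\psi)_\alpha$, the pseudo-inverse is $\varphi^{1/\alpha}$. The chain rule for one-sided derivatives gives $D^-(\psi)_\alpha(z)=D^-\psi(z^\alpha)\,\alpha z^{\alpha-1}$. Evaluating at $z=\varphi^{1/\alpha}(t)$ yields
$$\beta_\alpha(t)=D^-\psi(\varphi(t))\,\alpha\,\varphi(t)^{(\alpha-1)/\alpha}\,\varphi(t)^{1/\alpha}=\alpha\beta(t),$$
so that $\tau_{(\psi)_\alpha}=1+4\alpha\int\beta\,\mathrm{d}\lambda=1+\alpha(\tau_\psi-1)=(1-\alpha)+\alpha\tau_\psi$. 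The only points needing care are the case $t$ with $\varphi(t)=0$ (only $t=1$) and the non-strict case $t=0$, both of which are $\lambda$-null.

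For the Pickands part I would avoid eq. \eqref{eq:kendallstau_ex} directly, because the Stieltjes measure $\mathrm{d}D^+(A)_\alpha$ is awkward to transform. Instead I would route everything through Archimax Kendall distributions, which keeps the computation short. The key identity to establish first is
$$C_{(\psi)_\alpha,(A)_\alpha}=C_{\psi,A}$$
for $\psi(z)=\exp(-z\log 2)$, or more generally for any $\psi$ for which both sides make sense. To verify it, substitute $\varphi^{1/\alpha}(x)=:u$ and $\varphi^{1/\alpha}(y)=:v$ and use eq. \eqref{eq:trans_pickands}:
$$(u+v)^\alpha\,(A)_\alpha^\alpha\!\left(\tfrac{u}{u+v}\right)=(u^\alpha+v^\alpha)\,A\!\left(\tfrac{u^\alpha}{u^\alpha+v^\alpha}\right).$$
This is exactly the argument of $\psi$ in $C_{\psi,A}$ at $(x,y)$, since $u^\alpha=\varphi(x)$ and $v^\alpha=\varphi(y)$.

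Now take $\psi$ to be the exponential generator, so that $C_{\psi,A}=C_A$, and set $\tilde\psi:=(\psi)_{1/\alpha}$. By the identity above (applied with exponent $1/\alpha$ to $(A)_\alpha$, using Lemma \ref{lem:properties_trans_pickands}(i)), $C_{(A)_\alpha}=C_{\psi,(A)_\alpha}=C_{\tilde\psi,A}$, where $\tilde\psi(z)=2^{-z^{1/\alpha}}$. I would then compare Kendall distributions via eq. \eqref{eq:kendall_archimax}:
$$F^K_{C_{(A)_\alpha}}(t)=t+(\tau_{(A)_\alpha}-1)\beta_\psi(t),\qquad F^K_{C_{\tilde\psi,A}}(t)=t+(\tau_A-1)\beta_{\tilde\psi}(t).$$
By the computation in the first step, now valid for any $\alpha>0$ as a pure calculus identity, $\beta_{\tilde\psi}=\tfrac1\alpha\beta_\psi$. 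Since $\beta_\psi(t)=t\log t\neq0$ on $(0,1)$, equating the two expressions gives $\tau_{(A)_\alpha}-1=(\tau_A-1)/\alpha$, which is the claim.

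I expect the main obstacle to be legitimacy. When $\alpha<1$, the generator $\tilde\psi$ is not convex, so $C_{\tilde\psi,A}$ is not a priori covered by eq. \eqref{eq:kendall_archimax}. My first attempt to get around this is to note that $C_{\tilde\psi,A}$ equals the genuine copula $C_{(A)_\alpha}$, so its Kendall distribution exists. The open question is whether the derivation of \eqref{eq:kendall_archimax} in \citep{caperaa2000b} uses convexity of $\psi$ or only the functional form together with the fact that the resulting function is a copula. If it turns out to need convexity, the fallback is a direct computation: apply $\tau=3-4\int F^K\,\mathrm{d}\lambda$ to $C_A$ and substitute $s=\kappa_\alpha(t)$ in eq. \eqref{eq:kendallstau_ex}, using eq. \eqref{eq:right-hand_der}. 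This is the tedious route the appendix presumably takes.
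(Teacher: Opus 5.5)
\textbf{Gap for $\alpha<1$, and how to close it.} Your second assertion is correct: the one-sided chain rule gives $\beta_{(\psi)_\gamma}=\gamma\,\beta_\psi$ for every $\gamma>0$, and inserting this into $\tau_\psi=1+4\int_{\mathbb{I}}\beta\,\mathrm{d}\lambda$ yields the claim.

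For the first assertion your argument is complete when $\alpha\geq 1$. In that case $\tilde\psi=(\psi)_{1/\alpha}\in\Psi$ by Lemma~\ref{lem:trans_gen}, so eq.~\eqref{eq:kendall_archimax} legitimately applies to $C_{\tilde\psi,A}$.

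For $\alpha<1$ the proposal as written has a gap, and your ``first attempt'' does not close it. Knowing that $C_{\tilde\psi,A}=C_{(A)_\alpha}$ is a genuine copula only tells you that its Kendall distribution exists and equals $t+(\tau_{(A)_\alpha}-1)\beta_\psi(t)$. Asserting that it also equals $t+(\tau_A-1)\beta_{\tilde\psi}(t)$ is equivalent to the statement you want to prove. So that step is circular unless you independently extend eq.~\eqref{eq:kendall_archimax} to non-convex generators.

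You need neither that extension nor the fallback. Apply your reparametrization identity in the other direction. For $\alpha\in(0,1)$ with $(A)_\alpha\in\mathcal{A}$ you have
\[
C_A=C_{\psi,A}=C_{(\psi)_\alpha,(A)_\alpha},
\]
where now $(\psi)_\alpha\in\Psi$ by Lemma~\ref{lem:trans_gen}(i) and $(A)_\alpha\in\mathcal{A}$ by hypothesis. Applying eq.~\eqref{eq:kendall_archimax} to both representations gives $t+(\tau_A-1)\beta_\psi(t)=t+(\tau_{(A)_\alpha}-1)\,\alpha\,\beta_\psi(t)$. Dividing by $\beta_\psi(t)=t\log t\neq 0$ yields $\tau_{(A)_\alpha}=1-\frac{1-\tau_A}{\alpha}$.

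Equivalently, put $B:=(A)_\alpha$. Then $A=(B)_{1/\alpha}$ by Lemma~\ref{lem:properties_trans_pickands}(i) with $1/\alpha>1$, and your $\alpha\geq 1$ case applied to $B$ gives the result. The rule is simply to always put the exponent that is at most $1$ on the generator.

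\textbf{Comparison with the paper's proof.} With this fix your route is genuinely different from the paper's, and much shorter. The paper works entirely with Pickands functions. It inserts $(A)_\alpha$ into an integral representation of $\tau_B$ in terms of $B$ and $D^+B$ taken from \citep{dietrich2026}. It then uses eq.~\eqref{eq:right-hand_der} to split the integrand into five terms and evaluates them via the substitution $s=\kappa_{1/\alpha}(t)$ and an integration by parts. That computation is uniform in $\alpha$ and needs no case distinction, but it is tedious and does not explain why the answer is so clean.

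Your argument explains the formula structurally. It is forced by three facts:
\begin{itemize}
\item the non-identifiability identity $C_{(\psi)_\gamma,(A)_\gamma}=C_{\psi,A}$, the mechanism behind Theorem~\ref{thm:identifiability};
\item eq.~\eqref{eq:kendall_archimax} separates $\tau_A$ from the generator;
\item $\beta$ scales linearly under $\psi\mapsto(\psi)_\gamma$.
\end{itemize}
It also turns the lemma's second assertion into the key ingredient of its first. Both proofs rest on one imported formula: the paper on the integral formula for $\tau_B$, you on eq.~\eqref{eq:kendall_archimax}. Yours costs only the short case split above.
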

\begin{proof}
    See \ref{proof:lem_trans_kendall}.
\end{proof}
\begin{remark}
According to Example \ref{ex2} it is possible that 
    for $A \in \mathcal{A}$ we even have $(A)_\alpha \in \mathcal{A}$ for 
    some $\alpha <1 $. Lemma \ref{lem:kendalls_tau_trans}, however, 
    implies that $\alpha$ can not be arbitrarily small for 
    $(A)_\alpha \in \mathcal{A}$ to hold. In fact, considering that 
    EVCs have non-negative Kendall's $\tau$, $0 \leq \tau_{(A)_\alpha}$ has to hold, 
    which, in turn, is equivalent to $\alpha \geq 1-\tau_A$. 
\end{remark}
\noindent The next lemma shows that the Kendall distribution function $F_{\psi,A}^K$ of an Archimax copula $C_{\psi,A} \in \mathcal{C}_{am}$ is the Kendall distribution function of an Archimedean copula $C_{(\psi)_{1-\tau_A}} \in \mathcal{C}_{ar}$ with corresponding generator $(\psi)_{1-\tau_A}$.
\begin{lemma}\label{lem:kendall_is_archimedean}
    Let $\psi \in \Psi$, $A\in \mathcal{A}\setminus \{A_M\}$ and let $\tau_A$ denote Kendall's $\tau$ of the corresponding EVC $C_A$. Furthermore, let $C_{\psi,A} \in \mathcal{C}_{am}$ denote the corresponding Archimax copula. 
    Then there exists a unique Archimedean copula
    whose Kendall distribution function coincides with $F^K_{\psi,A}$. 
    This Archimedean copula is $C_{(\psi)_{1-\tau_A}} \in \mathcal{C}_{ar}$.
\end{lemma}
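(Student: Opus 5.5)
The plan is to compute the Kendall distribution function of $C_{(\psi)_{1-\tau_A}}$ directly from eq. \eqref{eq:kendall.arch}. We then compare it with the formula for $F^K_{\psi,A}$ in eq. \eqref{eq:kendall_archimax}, and finally invoke the known fact that Kendall distribution functions characterize Archimedean copulas to obtain uniqueness.

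Set $\alpha := 1-\tau_A$. Since $A \neq A_M$, we have $\tau_A < 1$, because $\tau_A = 1$ only for $M$, i.e. for $A = A_M$. Moreover $\tau_A \geq 0$ for EVCs, so $\alpha \in (0,1]$. Lemma \ref{lem:trans_gen}(i) then gives $(\psi)_\alpha \in \Psi$, so $C_{(\psi)_\alpha}$ is a well-defined Archimedean copula with pseudo-inverse $(\varphi)_\alpha = \varphi^{1/\alpha}$.

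The core step is computing $\beta_{(\psi)_\alpha}$. For $t \in (0,1)$ with $\varphi(t)$ finite and positive, the chain rule for one-sided derivatives of the increasing map $z \mapsto z^\alpha$ gives $D^-(\psi)_\alpha(z) = D^-\psi(z^\alpha)\,\alpha z^{\alpha-1}$. Evaluating at $z = \varphi^{1/\alpha}(t)$ yields
\begin{equation*}
\beta_{(\psi)_\alpha}(t) = D^-\psi(\varphi(t))\,\alpha\,\varphi(t)^{\frac{\alpha-1}{\alpha}}\,\varphi(t)^{\frac{1}{\alpha}} = \alpha\,\beta_\psi(t).
\end{equation*}
The boundary values $t \in \{0,1\}$ and points where $\psi$ is non-strict require a short separate check. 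There both $\beta$'s vanish, or one uses that $D^-\psi = 0$ beyond $\varphi(0)$. By eq. \eqref{eq:kendall.arch} this gives
\begin{equation*}
F^K_{(\psi)_\alpha}(t) = t - \alpha\beta_\psi(t) = t + (\tau_A - 1)\beta_\psi(t) = F^K_{\psi,A}(t),
\end{equation*}
using eq. \eqref{eq:kendall_archimax}. A cross-check is available: this agrees with Lemma \ref{lem:kendalls_tau_trans}, since $\tau_{(\psi)_\alpha} = 1-\alpha + \alpha\tau_\psi$ matches $3 - 4\int F^K_{\psi,A}$.

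For uniqueness, suppose some Archimedean $C_{\tilde\psi}$ with $\tilde\psi \in \Psi$ has Kendall distribution function $F^K_{\psi,A}$. Then $F^K_{\tilde\psi} = F^K_{(\psi)_\alpha}$. Since the Kendall distribution function characterizes the Archimedean copula and its normalized generator (as recalled after eq. \eqref{eq:kendall.arch}), it follows that $\tilde\psi = (\psi)_\alpha$. We must also check that $(\psi)_\alpha$ is normalized: $(\psi)_\alpha(1) = \psi(1) = \tfrac12$.

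The main obstacle is making the derivative computation rigorous with one-sided derivatives and the boundary and non-strict cases. The rest is bookkeeping. One point deserves care: whether the characterization of Archimedean copulas by $F^K$ needs only that $F^K$ is of the form $t - \beta(t)$, which it is here.
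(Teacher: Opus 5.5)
Your proposal is correct and follows the paper's argument: the paper likewise uses eq.~\eqref{eq:kendall.arch} to obtain $F^K_{(\psi)_{1-\tau_A}}(t)=t-(1-\tau_A)\beta_\psi(t)$, matches this with eq.~\eqref{eq:kendall_archimax}, and relies on the fact that Kendall distribution functions characterize Archimedean copulas for uniqueness. What you add is detail the paper leaves to ``straightforward calculations'': the one-sided chain rule giving $\beta_{(\psi)_\alpha}=\alpha\beta_\psi$, and the observation that $\tau_A\in[0,1)$, so $(\psi)_{1-\tau_A}\in\Psi$ by Lemma~\ref{lem:trans_gen} and is normalized.
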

\begin{proof}
    Considering $A \neq A_M$ we have $\tau_A < 1$. Applying the formula in eq. \eqref{eq:kendall.arch} and using straightforward calculations yields
    $$
    F_{(\psi)_{1-\tau_A}}^K(t) = t-(1-\tau_A)\beta_\psi(t) = F_{\psi,A}^K(t)
    $$
    for every $t \in \mathbb{I}$, which proves the result.
\end{proof}
\begin{remark}
\noindent As direct consequence of Lemma \ref{lem:kendall_is_archimedean} we obtain that every EVC $C \in \mathcal{C}_{ev}\setminus \{M\}$ induces an Archimedean copula via its Kendall distribution function.
\end{remark}
The following example motivates the main theorem of this section.
\begin{example}
Consider the Pickands dependence functions $A_1(t) = A_\Pi(t) = 1$ and $A_2(t) = A_{G_\alpha}(t) = (t^\alpha + (1-t)^\alpha)^\frac{1}{\alpha}$ for every $t \in \mathbb{I}$, where $\alpha \in (1, \infty)$ is arbitrary but fixed. Let $\psi_2 \in \Psi$ be arbitrary but fixed as well, and define $\psi_1(z) = \psi_2(z^\frac{1}{\alpha})$ for every $z \in [0,\infty)$. For the corresponding Archimax copulas $C_{\psi_1,A_1}, C_{\psi_2,A_2} \in \mathcal{C}_{am}$, this immediately gives
\begin{equation*}
    C_{\psi_1,A_1}(x,y) = C_{\psi_2,A_2}(x,y)
\end{equation*}
for every $x,y \in \mathbb{I}$. Since $\tau_{A_1} = 0$ and $A_2=(A_1)_\alpha$, Lemma \ref{lem:kendalls_tau_trans} yields $\tau_{A_2} = 1- \frac{1}{\alpha}$, and consequently
\begin{equation*}
    \psi_1(z) = (\psi_2)_{\frac{1-\tau_{A_2}}{1-\tau_{A_1}}}(z), \qquad A_2(t) = (A_1)_{\frac{1-\tau_{A_1}}{1-\tau_{A_2}}}(t)
\end{equation*}
for every $z \in [0,\infty)$ and $t \in \mathbb{I}$.
\end{example}
\noindent The next theorem generalizes the previous example and 
characterizes identifiability in the Archimax class.
The theorem shows that two Archimax copulas are identical if and only if their generators and Pickands dependence functions are equal up to a transformation
depending on Kendall's $\tau$ of the associated EVCs.
\begin{theorem}\label{thm:identifiability}
Let $A_1,A_2 \in \mathcal{A} \setminus \{A_M\}$ be Pickands dependence functions and let
$\tau_{A_1}, \tau_{A_2}$ be the Kendall's $\tau$s of $C_{A_1}, C_{A_2} \in \mathcal{C}_{ev}$,
respectively. Furthermore, suppose that $\psi_1,\psi_2 \in \Psi$ and let
$C_{\psi_1,A_1}, C_{\psi_2,A_2} \in \mathcal{C}_{am}$. Then the following
equivalence holds:
$$
C_{\psi_1,A_1}(x,y) = C_{\psi_2,A_2}(x,y)
$$
for every $x,y \in \mathbb{I}$ if and only if one of the following two
cases holds:
\begin{enumerate}
\item If $\tau_{A_1} \le \tau_{A_2}$, then
$$
\psi_1(z) = (\psi_2)_{\frac{1-\tau_{A_2}}{1-\tau_{A_1}}}(z)
\quad \text{and} \quad
A_2(t) = (A_1)_{\frac{1-\tau_{A_1}}{1-\tau_{A_2}}}(t),
$$
\item if $\tau_{A_1} > \tau_{A_2}$, then
$$
\psi_2(z) = (\psi_1)_{\frac{1-\tau_{A_1}}{1-\tau_{A_2}}}(z)
\quad \text{and} \quad
A_1(t) = (A_2)_{\frac{1-\tau_{A_2}}{1-\tau_{A_1}}}(t)
$$
\end{enumerate}
for every $z \in [0,\infty)$ and $t \in \mathbb{I}$.
\end{theorem}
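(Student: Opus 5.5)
The argument has two directions. \emph{Sufficiency} is a direct computation. \emph{Necessity} goes through the Kendall distribution function, using Lemma \ref{lem:kendall_is_archimedean} to identify the generators and then the Archimax formula to identify the Pickands functions.

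For sufficiency, assume without loss of generality $\tau_{A_1}\le\tau_{A_2}$ and set $\alpha:=\frac{1-\tau_{A_1}}{1-\tau_{A_2}}\ge 1$. Suppose $\psi_1=(\psi_2)_{1/\alpha}$ and $A_2=(A_1)_\alpha$, so that $\varphi_1=\varphi_2^{\alpha}$. Write $u=\varphi_2(x)$, $v=\varphi_2(y)$ and $s=\kappa_\alpha\bigl(\frac{u}{u+v}\bigr)=\frac{u^\alpha}{u^\alpha+v^\alpha}$. Then
$$
C_{\psi_1,A_1}(x,y)=\psi_2\Bigl(\bigl((u^\alpha+v^\alpha)A_1(s)\bigr)^{1/\alpha}\Bigr).
$$
On the other hand, eq. \eqref{eq:trans_pickands} gives
$$
(u+v)(A_1)_\alpha\Bigl(\tfrac{u}{u+v}\Bigr)=(u^\alpha+v^\alpha)^{1/\alpha}A_1^{1/\alpha}(s),
$$
since the factor $(u+v)$ cancels against the normalization inside $(t^\alpha+(1-t)^\alpha)^{1/\alpha}$. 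The two expressions agree on $(0,1)^2$, and by continuity they agree on all of $\mathbb{I}^2$. The case $\tau_{A_1}>\tau_{A_2}$ is symmetric.

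For necessity, assume $C_{\psi_1,A_1}=C_{\psi_2,A_2}$. Then the Kendall distribution functions coincide. By Lemma \ref{lem:kendall_is_archimedean}, the unique Archimedean copula with this Kendall distribution function is both $C_{(\psi_1)_{1-\tau_{A_1}}}$ and $C_{(\psi_2)_{1-\tau_{A_2}}}$. Both transformed generators lie in $\Psi$ by Lemma \ref{lem:trans_gen}(i), and normalization is preserved since $1^\alpha=1$. Since normalized generators are identified by the copula, we get $\psi_1(z^{1-\tau_{A_1}})=\psi_2(z^{1-\tau_{A_2}})$ for all $z$. Substituting $z=w^{1/(1-\tau_{A_1})}$ yields $\psi_1=(\psi_2)_{\alpha^{-1}}$ with $\alpha$ as above, and equivalently $\varphi_1=\varphi_2^{\alpha}$.

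To identify the Pickands functions, insert $\varphi_1=\varphi_2^\alpha$ into the equality of copulas. By the computation in the sufficiency part, $C_{\psi_1,A_1}=C_{\psi_2,(A_1)_\alpha}$. Hence
$$
\psi_2\bigl((u+v)A_2(\tfrac{u}{u+v})\bigr)=\psi_2\bigl((u+v)(A_1)_\alpha(\tfrac{u}{u+v})\bigr)
$$
for all $u,v$ in the range of $\varphi_2$ on $(0,1)$.

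The main obstacle is cancelling $\psi_2$, which is injective only where it is positive. I would first restrict to $x,y$ close to $1$, so that $u,v$ are small positive numbers. Every ratio $t=u/(u+v)\in(0,1)$ is attainable with arbitrarily small $u+v$, because $\varphi_2$ is continuous, decreasing and onto $[0,\varphi_2(0)]$ near $1$. Both arguments lie in $[0,(u+v)c]$ for a bounded constant $c$; here $(A_1)_\alpha\le A_1\le 1$ by Lemma \ref{lem:properties_trans_pickands}(vii). For small $u+v$ these arguments therefore stay below $\inf\{\psi_2=0\}$, where $\psi_2$ is strictly decreasing. Cancelling $\psi_2$ and dividing by $u+v$ gives $A_2(t)=(A_1)_\alpha(t)$ for all $t\in(0,1)$, and at the endpoints the equality follows by continuity.

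The remaining step is to check the consistency claim $\tau_{A_2}=1-\frac{1-\tau_{A_1}}{\alpha}$ from Lemma \ref{lem:kendalls_tau_trans}. This holds by the choice of $\alpha$, and Lemma \ref{lem:trans_gen}(ii) confirms that $(A_1)_\alpha\in\mathcal{A}$ because $\alpha\ge1$.
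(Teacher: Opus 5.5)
Your proof is correct and follows the paper's route. Equality of the Kendall distribution functions together with Lemma \ref{lem:kendall_is_archimedean} gives $(\psi_1)_{1-\tau_{A_1}}=(\psi_2)_{1-\tau_{A_2}}$, and substituting this back into the Archimax formula gives $A_2=(A_1)_{\frac{1-\tau_{A_1}}{1-\tau_{A_2}}}$. Your version is more careful than the paper's, which calls the cancellation of $\psi_2$ ``easy to see'' and the converse ``straightforward calculations''; you restrict to small $u+v$ to justify that cancellation and carry out the converse computation explicitly.
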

\begin{proof}
    Consider $\psi_1,\psi_2 \in \Psi$ and $A_1,A_2 \in \mathcal{A}\setminus \{A_M\}$. We assume that $C_{\psi_1,A_1}(x,y) = C_{\psi_2,A_2}(x,y)$ for every $x,y \in \mathbb{I}$ and assume without loss of generality that $\tau_{A_1} \leq \tau_{A_2}$. The case $\tau_{A_1} > \tau_{A_2}$ follows similarly. We obviously have that the corresponding Kendall distribution functions $F_{\psi_1,A_1}$ and $F_{\psi_2,A_2}$ coincide, i.e.,
    $$
    F_{\psi_1,A_1}^K(t) = F_{\psi_2,A_2}^K(t)
    $$
    for every $t \in \mathbb{I}$.
    According to Lemma \ref{lem:kendall_is_archimedean}, $F_{\psi_1,A_1}^K$ and $F_{\psi_2,A_2}^K$ induce Archimedean generators $(\psi_1)_{1-\tau_{A_1}}$ and $(\psi_2)_{1-\tau_{A_2}}$, which obviously fulfill $(\psi_1)_{1-\tau_{A_1}}(s) = (\psi_2)_{1-\tau_{A_2}}(s)$ for every $s \in [0,\infty)$.
    Setting $s = z^\frac{1}{1-\tau_{A_1}}$, we obtain that
    $$
    \psi_1(z)= \psi_2\left(z^\frac{1-\tau_{A_2}}{1-\tau_{A_1}}\right) = (\psi_2)_{\frac{1-\tau_{A_2}}{1-\tau_{A_1}}}(z)
    $$
    for every $z \in [0,\infty)$. Having $\tau_{A_1} \leq \tau_{A_2}$, we obtain that $(\psi_2)_{\frac{1-\tau_{A_2}}{1-\tau_{A_1}}}$ is indeed an Archimedean generator. Using this fact, fixing $x,y \in \mathbb{I}$ and applying that $C_{\psi_1,A_1}(x,y) = C_{\psi_2,A_2}(x,y)$ yields
    $$
    \psi_2\left((\varphi_2(x)^\frac{1-\tau_{A_1}}{1-\tau_{A_2}} + \varphi_2(y)^\frac{1-\tau_{A_1}}{1-\tau_{A_2}})^\frac{1-\tau_{A_2}}{1-\tau_{A_1}}A_1^\frac{1-\tau_{A_2}}{1-\tau_{A_1}}\left(\frac{\varphi_2(x)^\frac{1-\tau_{A_1}}{1-\tau_{A_2}}}{\varphi_2(x)^\frac{1-\tau_{A_1}}{1-\tau_{A_2}} + \varphi_2(y)^\frac{1-\tau_{A_1}}{1-\tau_{A_2}}}\right)\right) = 
    \psi_2\left((\varphi_2(x) + \varphi_2(y))\,A_2\left(\frac{\varphi_2(x)}{\varphi_2(x) + \varphi_2(y)}\right)\right).
    $$
    Setting $t = \frac{\varphi_2(x)}{\varphi_2(x) + \varphi_2(y)}$, it is then easy to see that $A_2(t) = (A_1)_\frac{1-\tau_{A_1}}{1-\tau_{A_2}}(t)$. 
    Since the other implication is obtained by straightforward calculations,
    the proof is complete.
\end{proof}
\begin{remark}
    Notice that, using Lemma \ref{lem:properties_trans_pickands}, both cases in the previous theorem 
    boil down to  
    $$
    (\psi_1)_{1-\tau_{A_1}}=(\psi_2)_{1-\tau_{A_2}}, \quad  
    (A_1)_{1-\tau_{A_1}}=(A_2)_{1-\tau_{A_2}}.
    $$
    We distinguished the two cases in the formulation of the theorem, since
    (again see Lemma \ref{lem:properties_trans_pickands}) 
    for $A \in \mathcal{A}$ in general $(A)_{1-\tau_A}$ does not need to 
    be an element of $\mathcal{A}$.
Using this reparametrization, identifiability of the Archimax model can 
be achieved. However, the family $\{(A)_{1-\tau_A}\colon A \in \mathcal{A}\}$ does not seem to admit a straightforward analytical characterization.
\end{remark}
The following corollary shows that the Archimax model $C_{\psi,A}$ is only identified by the functions $\psi$ and $A$, if $\tau_A$ is fixed.
\begin{corollary}
    Let $\psi_1,\psi_2 \in \Psi$, $A_1,A_2 \in \mathcal{A} \setminus \{A_M\}$ and let $\tau_{A_1}, \tau_{A_2}$ be the Kendall's $\tau$s of $C_{A_1}, C_{A_2} \in \mathcal{C}_{ev}$. Moreover, let $C_{\psi_1,A_1}(x,y) = C_{\psi_2,A_2}(x,y)$ for all $x,y \in \mathbb{I}$. 
    Then $\psi_1 = \psi_2$ on $[0,\infty)$ and $A_1= A_2$ on $\mathbb{I}$ if and only if $\tau_{A_1} = \tau_{A_2}$.
\end{corollary}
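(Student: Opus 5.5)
Under the standing assumption $C_{\psi_1,A_1}=C_{\psi_2,A_2}$, the plan is to read this corollary off Theorem \ref{thm:identifiability}, combined with the fact that $\alpha=1$ makes both transformations trivial. Directly from eqs. \eqref{eq:trans_arch} and \eqref{eq:trans_pickands} (with $\kappa_1=\mathrm{id}$), we have $(\psi)_1=\psi$ and $(A)_1(t)=(t+1-t)A(t)=A(t)$.

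\emph{``If'' direction.} Suppose $\tau_{A_1}=\tau_{A_2}$. Then we are in the first case of Theorem \ref{thm:identifiability}, and both exponents satisfy
\[
\tfrac{1-\tau_{A_2}}{1-\tau_{A_1}}=\tfrac{1-\tau_{A_1}}{1-\tau_{A_2}}=1 .
\]
These exponents are well defined because $A_i\neq A_M$ implies $\tau_{A_i}<1$, as already noted in the proof of Lemma \ref{lem:kendall_is_archimedean}. The theorem therefore gives $\psi_1=(\psi_2)_1=\psi_2$ on $[0,\infty)$ and $A_2=(A_1)_1=A_1$ on $\mathbb{I}$.

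\emph{``Only if'' direction.} If $A_1=A_2$, then $C_{A_1}=C_{A_2}$. Hence $\tau_{A_1}=\tau_{A_2}$ follows immediately from eq. \eqref{eq:kendallstau_ex}, since $\tau_A$ depends only on $A$. This direction does not even need $\psi_1=\psi_2$ or the equality of the copulas.

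There is no real obstacle here. The only points needing care are that $A_i\neq A_M$ keeps $1-\tau_{A_i}>0$, so the exponents exist, and that the $\alpha=1$ transforms are genuinely the identity, which is the trivial computation above.
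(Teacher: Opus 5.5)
Your proof is correct and takes the same route as the paper, which simply calls the corollary an immediate consequence of Theorem \ref{thm:identifiability}. When $\tau_{A_1}=\tau_{A_2}$ both exponents equal $1$ and $(\psi)_1=\psi$, $(A)_1=A$; the converse is trivial, as you note, since $\tau_A$ depends only on $A$.
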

\begin{proof}
Immediate consequence of Theorem \ref{thm:identifiability}.
\end{proof}
The authors in \citep{est-archimax} studied identifiability of the Archimax model via the generator $\psi$ and the Pickands dependence function $A$, under the assumption that $\psi$ satisfies a regularity condition, namely that it is regularly varying. To interpret these results in light of Theorem \ref{thm:identifiability}, we first formally introduce the notion of regularly varying functions.\\
A function $f \colon [0,\infty) \rightarrow \mathbb{I}$ is said to be regularly varying with index $\alpha \in \mathbb{R}$ if, for every $x > 0$,
$$
\lim_{t \rightarrow \infty} \frac{f(xt)}{f(t)} = x^\alpha.
$$
The family of all such functions with fixed index $\alpha \in \mathbb{R}$ is denoted by $\mathcal{R}_\alpha$.\\
\noindent Since \citep{est-archimax} works with stable tail dependence functions rather than Pickands dependence functions, we translate their identifiability result for the Archimax model to the setting of Pickands dependence functions. As this translation follows immediately from \citep{est-archimax}, we omit a formal proof.
\begin{lemma}\label{lem:identifiability_nes}
    Suppose that $A_1,A_2 \in \mathcal{A}\setminus \{A_M\}$ and let $\psi_1,\psi_2 \in \Psi$ with the property that $(1-\psi_i(\frac{1}{\cdot})) \in \mathcal{R}_{-\frac{1}{m_i}}$ for $m_i \geq 1$ and $i \in \{1,2\}$. Then $C_{\psi_1,A_1}(x,y) = C_{\psi_2,A_2}(x,y)$ holds for every $x,y \in \mathbb{I}$ if and only if one of the following two cases holds:
    \begin{enumerate}
        \item If $m_1 \leq m_2$, then
        $$
        (\psi_1)_\frac{m_1}{m_2}(z) = \psi_2(z) \text{ and } A_1(t) = (A_2)_{\frac{m_2}{m_1}}(t),
        $$
        \item if $m_1 > m_2$, then
        $$
        (\psi_2)_\frac{m_2}{m_1}(z) = \psi_1(z) \text{ and } A_2(t) = (A_1)_{\frac{m_1}{m_2}}(t)
        $$
    \end{enumerate}
    for every $z \in [0,\infty)$ and every $t \in \mathbb{I}$.
\end{lemma}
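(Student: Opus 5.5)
The plan is to verify the ``if'' direction by direct substitution, and to obtain the ``only if'' direction from Theorem~\ref{thm:identifiability}, translating its $\tau$-ratio into the ratio $m_1/m_2$ by comparing regular-variation indices. Throughout, write $\alpha := m_2/m_1$ and use the algebra of transformations from Lemma~\ref{lem:properties_trans_pickands}(i), namely $((f)_a)_b=(f)_{ab}$. On the generator side, $(\psi)_a$ is defined for every $a>0$ and satisfies $((\psi)_a)_b=(\psi)_{ab}$ trivially, since $(\psi)_a(z)=\psi(z^a)$. Because of this, cases 1 and 2 of the statement are the same pair of identities,
$$\psi_2=(\psi_1)_{m_1/m_2},\qquad A_1=(A_2)_{m_2/m_1}.$$
Case 1 merely records that in the orientation $m_1\le m_2$ the right-hand sides are a genuine generator and a genuine Pickands function, by Lemma~\ref{lem:trans_gen}. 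In case 2, applying $(\cdot)_{m_2/m_1}$ to $\psi_1=(\psi_2)_{m_2/m_1}$ and $(\cdot)_{m_1/m_2}$ to $A_2=(A_1)_{m_1/m_2}$ turns it into the displayed pair. So it suffices to prove that $C_{\psi_1,A_1}=C_{\psi_2,A_2}$ holds iff this pair holds.

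\emph{``If'' direction.} Assume $\psi_2(z)=\psi_1(z^{1/\alpha})$, so that $\varphi_2=\varphi_1^{\alpha}$, and $A_1=(A_2)_\alpha$. Fix $x,y\in(0,1)$ and put $u=\varphi_1(x)$, $v=\varphi_1(y)$, $s=u/(u+v)$. Then $\varphi_2(x)/(\varphi_2(x)+\varphi_2(y))=\kappa_\alpha(s)$ and
$$C_{\psi_2,A_2}(x,y)=\psi_1\Bigl((u^\alpha+v^\alpha)^{\frac1\alpha}A_2^{\frac1\alpha}(\kappa_\alpha(s))\Bigr)=\psi_1\bigl((u+v)(A_2)_\alpha(s)\bigr)=C_{\psi_1,A_1}(x,y).$$
The middle equality is just eq.~\eqref{eq:trans_pickands} after factoring out $(u+v)$. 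The boundary values of both copulas agree trivially.

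\emph{``Only if'' direction.} Suppose $C_{\psi_1,A_1}=C_{\psi_2,A_2}$. By Theorem~\ref{thm:identifiability}, in the case $\tau_{A_1}\le\tau_{A_2}$ we have $\psi_1=(\psi_2)_r$ and $A_2=(A_1)_{1/r}$, where $r=(1-\tau_{A_2})/(1-\tau_{A_1})$; the case $\tau_{A_1}>\tau_{A_2}$ is symmetric. The key step is to show $r=m_1/m_2$.

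Set $f_i(x):=1-\psi_i(1/x)$, which is positive for large $x$. Then $f_1(x)=f_2(x^r)$, and for every $\lambda>0$
$$\frac{f_1(\lambda t)}{f_1(t)}=\frac{f_2(\lambda^r t^r)}{f_2(t^r)}\longrightarrow\lambda^{-r/m_2}\quad(t\to\infty),$$
because $t^r\to\infty$. Hence $f_1\in\mathcal{R}_{-r/m_2}$. The index of a regularly varying function is unique, since the limit $\lambda^{\rho}$ determines $\rho$. Therefore $1/m_1=r/m_2$, i.e.\ $r=m_2/m_1=\alpha$.

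This gives $\psi_1=(\psi_2)_\alpha$, equivalently $\psi_2=(\psi_1)_{1/\alpha}=(\psi_1)_{m_1/m_2}$. It also gives $A_2=(A_1)_{1/\alpha}$, equivalently $A_1=(A_2)_{\alpha}=(A_2)_{m_2/m_1}$ by Lemma~\ref{lem:properties_trans_pickands}(i). The case $\tau_{A_1}>\tau_{A_2}$ gives the same identities by the symmetric computation. Finally, the sign of $\tau_{A_2}-\tau_{A_1}$ equals the sign of $m_1-m_2$, since $r\le 1$ iff $m_2\le m_1$; so the $\tau$-cases of Theorem~\ref{thm:identifiability} match the $m$-cases of the statement.

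The main obstacle is the index-matching step: it requires the composition rule for regularly varying functions under the power substitution $x\mapsto x^r$, and the uniqueness of the index. Both are elementary, but they carry the whole reduction from $\tau$-ratios to $m$-ratios.
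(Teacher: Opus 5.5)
Your proof is correct, but it takes a different route from the one the paper relies on. The paper gives no argument of its own and defers to \citep{est-archimax}, where identifiability under the regular-variation hypothesis comes from the extreme-value attractor. If $1-\psi_i(1/\cdot)\in\mathcal{R}_{-1/m_i}$, then $C_{\psi_i,A_i}$ lies in the max-domain of attraction of the extreme-value copula with Pickands function $(A_i)_{m_i}$. Two equal Archimax copulas therefore have the same attractor, $(A_1)_{m_1}=(A_2)_{m_2}$. Lemma~\ref{lem:properties_trans_pickands}(i) then gives $A_1=(A_2)_{m_2/m_1}$, and the generator relation is extracted from the copula identity.

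You instead start from Theorem~\ref{thm:identifiability}, which already links the two pairs through the exponent $r=(1-\tau_{A_2})/(1-\tau_{A_1})$ via the Kendall distribution function. You use the regular-variation hypothesis only to identify that exponent, through two elementary facts: $x\mapsto f(x^r)$ has $r$ times the index of $f$, and the index is unique.

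What each route buys:
\begin{itemize}
\item Your route is self-contained within the paper, since no domain-of-attraction theorem is needed. It gives a complete argument for both directions, and as a by-product shows that $m_2/m_1=(1-\tau_{A_2})/(1-\tau_{A_1})$ whenever the copulas coincide.
\item The attractor route does not need Theorem~\ref{thm:identifiability} or the bivariate formula for the Kendall distribution function, and it explains conceptually where the exponents $m_i$ come from.
\end{itemize}

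One small slip: the exponents are swapped when you turn the case-2 identities into your displayed pair. You must apply $(\cdot)_{m_1/m_2}$ to $\psi_1=(\psi_2)_{m_2/m_1}$ and $(\cdot)_{m_2/m_1}$ to $A_2=(A_1)_{m_1/m_2}$, i.e.\ the reciprocals of the exponents you wrote. With that correction the reduction goes through.
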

Applying Theorem \ref{thm:identifiability} and Lemma \ref{lem:identifiability_nes}, we get the following corollary as consequence:
\begin{corollary}
    Let $\psi_1,\psi_2 \in \Psi$ fulfill $(1-\psi_i(\frac{1}{\cdot})) \in \mathcal{R}_{-\frac{1}{m_i}}$ for $m_i \geq 1$ and $i \in \{1,2\}$. Furthermore let $A_1, A_2 \in \mathcal{A}\setminus\{A_M\}$, then the following two assertions hold:
    \begin{enumerate}
        \item If $m_1 \leq m_2$ and $\frac{m_2}{m_1} \neq \frac{1- \tau_{A_2}}{1- \tau_{A_1}}$, then $C_{\psi_1,A_1} \neq C_{\psi_2,A_2}$.
        \item If $m_2 < m_1$ and $\frac{m_2}{m_1} \neq \frac{1- \tau_{A_1}}{1- \tau_{A_2}}$, then $C_{\psi_1,A_1} \neq C_{\psi_2,A_2}$.
    \end{enumerate}
\end{corollary}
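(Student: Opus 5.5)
We argue by contraposition in both cases: we assume $C_{\psi_1,A_1}=C_{\psi_2,A_2}$ and derive a forced relation between $m_1/m_2$ and the Kendall's $\tau$s. Two routes produce such a relation, and we run them in parallel.

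The first route uses Theorem \ref{thm:identifiability}. In the subcase $\tau_{A_1}\le\tau_{A_2}$ it gives $\psi_1=(\psi_2)_r$ with $r=\frac{1-\tau_{A_2}}{1-\tau_{A_1}}\in(0,1]$, so $1-\psi_1(\frac{1}{x})=1-\psi_2(x^{-r})$. If $1-\psi_2(\frac{1}{\cdot})\in\mathcal{R}_{-1/m_2}$, then composing with $x\mapsto x^{r}$ gives $1-\psi_1(\frac{1}{\cdot})\in\mathcal{R}_{-r/m_2}$. Uniqueness of the index of regular variation then yields $1/m_1=r/m_2$, that is, $\frac{m_2}{m_1}=\frac{1-\tau_{A_2}}{1-\tau_{A_1}}$. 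The subcase $\tau_{A_1}>\tau_{A_2}$ is symmetric and gives $\frac{m_1}{m_2}=\frac{1-\tau_{A_1}}{1-\tau_{A_2}}$. This is the same identity, so in every case equality of the copulas forces $m_2(1-\tau_{A_1})=m_1(1-\tau_{A_2})$.

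The second route is a cross-check via Lemma \ref{lem:identifiability_nes} and Lemma \ref{lem:kendalls_tau_trans}. In case $m_1\le m_2$ we get $A_1=(A_2)_{m_2/m_1}$ with exponent $\ge 1$, so $(A_1)$ lies in $\mathcal{A}$ and Lemma \ref{lem:kendalls_tau_trans} applies. It gives $1-\tau_{A_1}=\frac{m_1}{m_2}(1-\tau_{A_2})$, the same relation. In case $m_2<m_1$ we get $A_2=(A_1)_{m_1/m_2}$ and hence $1-\tau_{A_2}=\frac{m_2}{m_1}(1-\tau_{A_1})$, again the same relation.

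For assertion 1 the contrapositive is now immediate. Equality of the copulas forces $\frac{m_2}{m_1}=\frac{1-\tau_{A_2}}{1-\tau_{A_1}}$, so the hypothesis $\frac{m_2}{m_1}\neq\frac{1-\tau_{A_2}}{1-\tau_{A_1}}$ implies $C_{\psi_1,A_1}\neq C_{\psi_2,A_2}$.

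Assertion 2, as worded, is the step I expect to fail, and I do not expect it to be provable. Both routes force $\frac{m_2}{m_1}=\frac{1-\tau_{A_2}}{1-\tau_{A_1}}$ under equality, equivalently $\frac{m_1}{m_2}=\frac{1-\tau_{A_1}}{1-\tau_{A_2}}$. The stated hypothesis instead compares $\frac{m_2}{m_1}$ with $\frac{1-\tau_{A_1}}{1-\tau_{A_2}}$. When $m_2<m_1$, the forced relation gives $\frac{1-\tau_{A_1}}{1-\tau_{A_2}}=\frac{m_1}{m_2}>1>\frac{m_2}{m_1}$. So the stated hypothesis is automatically satisfied by every pair with equal copulas, and the contrapositive gives nothing.

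I would first try to close the gap by relabeling $1\leftrightarrow2$. This does not help: both sides of the stated inequality transform consistently, and the relation becomes $\frac{m_1}{m_2}\ne\frac{1-\tau_{A_2}}{1-\tau_{A_1}}$, which is just the reciprocal.

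I would then test whether assertion 2 is actually false. Take $\psi_1=(\psi_2)_{m_2/m_1}$ and $A_2=(A_1)_{m_1/m_2}$, which is the example preceding Theorem \ref{thm:identifiability} with $A_1=A_\Pi$. Here $m_2<m_1$, $\frac{m_2}{m_1}<1<\frac{1-\tau_{A_1}}{1-\tau_{A_2}}$, and yet $C_{\psi_1,A_1}=C_{\psi_2,A_2}$. This exhibits assertion 2, as stated, as false.

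The only proof I can deliver is therefore of the corrected form of assertion 2: if $m_2<m_1$ and $\frac{m_2}{m_1}\neq\frac{1-\tau_{A_2}}{1-\tau_{A_1}}$ (equivalently $\frac{m_1}{m_2}\neq\frac{1-\tau_{A_1}}{1-\tau_{A_2}}$), then $C_{\psi_1,A_1}\neq C_{\psi_2,A_2}$. This follows by the contraposition above.
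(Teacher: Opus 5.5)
Your proof of assertion~1 is correct and is essentially the paper's argument. Both assume $C_{\psi_1,A_1}=C_{\psi_2,A_2}$, apply Theorem~\ref{thm:identifiability} in both orderings of $\tau_{A_1},\tau_{A_2}$, and derive the forced relation $\frac{m_2}{m_1}=\frac{1-\tau_{A_2}}{1-\tau_{A_1}}$. The difference is how the exponent relation is extracted:
\begin{itemize}
\item The paper matches $\psi_1=(\psi_2)_r$ against $(\psi_1)_{m_1/m_2}=\psi_2$ from Lemma~\ref{lem:identifiability_nes}. It tacitly uses that $\psi_1$ is strictly decreasing on $[0,2)$ (Lemma~\ref{lem:positivity_generator}).
\item You compare indices of regular variation directly. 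This makes Lemma~\ref{lem:identifiability_nes}, which the paper quotes without proof, unnecessary.
\item Your second route, via Lemmas~\ref{lem:identifiability_nes} and~\ref{lem:kendalls_tau_trans}, instead avoids Theorem~\ref{thm:identifiability} altogether.
\end{itemize}

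On assertion~2 you are right and the paper is not. Its proof only says the second assertion is ``analogous''. But the analogous argument forces the same relation as in assertion~1. For $m_2<m_1$, Lemma~\ref{lem:identifiability_nes} gives $A_2=(A_1)_{m_1/m_2}$. Lemma~\ref{lem:kendalls_tau_trans} then gives $1-\tau_{A_2}=\frac{m_2}{m_1}(1-\tau_{A_1})$, i.e.\ again $\frac{m_2}{m_1}=\frac{1-\tau_{A_2}}{1-\tau_{A_1}}$. So what the paper's argument actually proves is your corrected version, and the printed ratio $\frac{1-\tau_{A_1}}{1-\tau_{A_2}}$ is a misprint. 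The two assertions in fact merge into one symmetric statement that does not depend on how $m_1$ and $m_2$ are ordered.

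The one thing to tighten is that your counterexample should exhibit a concrete $\psi_2$ satisfying the regular-variation hypothesis. For instance, take $m_2=1$, $m_1=2$, $\psi_2(z)=2^{-z}$, $A_2=A_{G_2}$, $\psi_1(z)=2^{-\sqrt{z}}$ and $A_1=A_\Pi$. Then:
\begin{itemize}
\item $1-\psi_2(1/x)\sim(\log 2)\,x^{-1}$ and $1-\psi_1(1/x)\sim(\log 2)\,x^{-1/2}$, so the hypotheses hold with $m_2=1<2=m_1$;
\item both copulas equal the Gumbel copula with parameter $2$;
\item $\frac{m_2}{m_1}=\frac12\neq 2=\frac{1-\tau_{A_1}}{1-\tau_{A_2}}$.
\end{itemize}
So the printed assertion~2 fails.
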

\begin{proof}
We prove the first assertion, the proof of the second assertion is analogous. Suppose that
$$
C_{\psi_1,A_1}(x,y) = C_{\psi_2,A_2}(x,y)
$$
holds for every $x,y \in \mathbb{I}$. We distinguish two cases.\\
\textit{Case 1: $\tau_{A_1} \leq \tau_{A_2}$.} By Theorem \ref{thm:identifiability}, we obtain
$$
\psi_1(z) = (\psi_2)_{\frac{1-\tau_{A_2}}{1-\tau_{A_1}}}(z) \quad \text{for every } z \in [0,\infty).
$$
Since $m_1 \leq m_2$, Lemma \ref{lem:identifiability_nes} yields
$$
(\psi_1)_{\frac{m_1}{m_2}}(z) = \psi_2(z) \quad \text{for every } z \in [0,\infty),
$$
and therefore $\frac{m_2}{m_1} = \frac{1-\tau_{A_2}}{1-\tau_{A_1}}$.

\textit{Case 2: $\tau_{A_1} > \tau_{A_2}$.} By Theorem \ref{thm:identifiability}, we obtain
$$
\psi_2(z) = (\psi_1)_{\frac{1-\tau_{A_1}}{1-\tau_{A_2}}}(z) \quad \text{for every } z \in [0,\infty),
$$
and Lemma \ref{lem:identifiability_nes} then yields $\frac{m_2}{m_1} = \frac{1-\tau_{A_2}}{1-\tau_{A_1}}$, a contradiction.
\end{proof}
The function $(A)_{1-\tau_A}$ will be key both for convergence and for estimation - the following 
examples show that $(A)_{1-\tau_A}$ can, but need not be a Pickands dependence 
function.
\begin{example}[Gumbel]\label{ex:gumbel_pickands_vs_transformed}
    We again consider the Gumbel family, whose Pickands dependence function is given by
    $$
    A_{G_\theta}(t) = (t^\theta + (1-t)^\theta)^\frac{1}{\theta}, \quad \theta \in [1,\infty)
    $$
    for every $t \in \mathbb{I}$. Using Lemma \ref{lem:kendalls_tau_trans} with $A =1=A_\Pi$, we obtain that $\tau_{A_{G_\theta}} = 1 - \frac{1}{\theta}$. 
    According to Example \ref{ex2} it follows immediately that 
    $$
    (A_{G_\theta})_{1-\tau_{A_\theta}}=((A_\Pi)_\theta)_{1-\tau_{A_\theta}} = A_\Pi,
    $$
    i.e., we have $(A_{G_\theta})_{1-\tau_{A_\theta}}(t) =1$ for every $t \in \mathbb{I}$.
\end{example}
\begin{remark}
    The boundary case $\alpha=1-\tau_A$ mentioned in Remark \ref{rem:illustration.A.alpha}
    is of particular interest and underlines the fact that the Gumbel family has a very special role in $\mathcal{C}_{am}$. In fact, if for 
    $A \in \mathcal{A}\setminus\{A_M\}$ we have $(A)_{1-\tau_A} \in \mathcal{A}$, then 
    Lemma \ref{lem:kendalls_tau_trans} yields $\tau_{(A)_{1-\tau_A}}=0$, implying 
    $(A)_{1-\tau_A}=A_\Pi$. Having this and using Example \ref{ex2} as well 
    as Lemma \ref{lem:properties_trans_pickands} directly yields that 
    $A$ has to be the Pickands dependence function of an element in the Gumbel family.
    In other words, for any Pickands dependence function outside the Gumbel family, 
    the function $(A)_{1-\tau_A}$ is not a Pickands dependence function.
\end{remark}
\begin{example}[Galambos]\label{ex:galambos_pickands_vs_transformed}
We consider the Pickands dependence function $A_\theta$ of a Galambos copula, given by 
$$
A_\theta(t) = 1 - [t^{-\theta} + (1-t)^{-\theta}]^{-1/\theta}, \quad 
\theta \in (0,\infty).
$$
for every $t \in \mathbb{I}$. Calculating Kendall's $\tau$ of $C_{A_\theta} \in \mathcal{C}_{ev}$ and using the fact that
$$
A_\theta''(t) = (1 + \theta)(t^{-\theta} + (1-t)^{-\theta})^{-\frac{1}{\theta}-2}(1-t)^{-\theta-2}t^{-\theta-2}
$$ for every $t \in \mathbb{I}$, we obtain that
\begin{align}
\nonumber \tau_{A_\theta} &= \int_{\mathbb{I}}\frac{t(1-t)}{A_\theta(t)} \mathrm{d}D^+A_\theta(t) = \int_\mathbb{I} \frac{t(1-t)}{A_\theta(t)} A_\theta''(t) \mathrm{d}\lambda(t) \\&=
(1+\theta)\int_\mathbb{I} \frac{t^{-\theta - 1}(1-t)^{-\theta - 1}(t^{-\theta} + (1-t)^{-\theta})^{-\frac{1}{\theta}-2}}{1-(t^{-\theta} + (1-t)^{-\theta})^{-\frac{1}{\theta}}} \mathrm{d}\lambda(t).
\end{align}
The functions $(A_\theta)_{1-\tau_{A_\theta}}$ for the parameters $\theta \in \{\frac{1}{5}, \frac{1}{2}, 1, 3, 5, 10\}$ are depicted in Figure \ref{fig:galambos_pick_vs_trans}. Obviously, none of them 
is a Pickands dependence function.
\begin{figure}[!ht]
	\centering
	\includegraphics[width=1\textwidth]{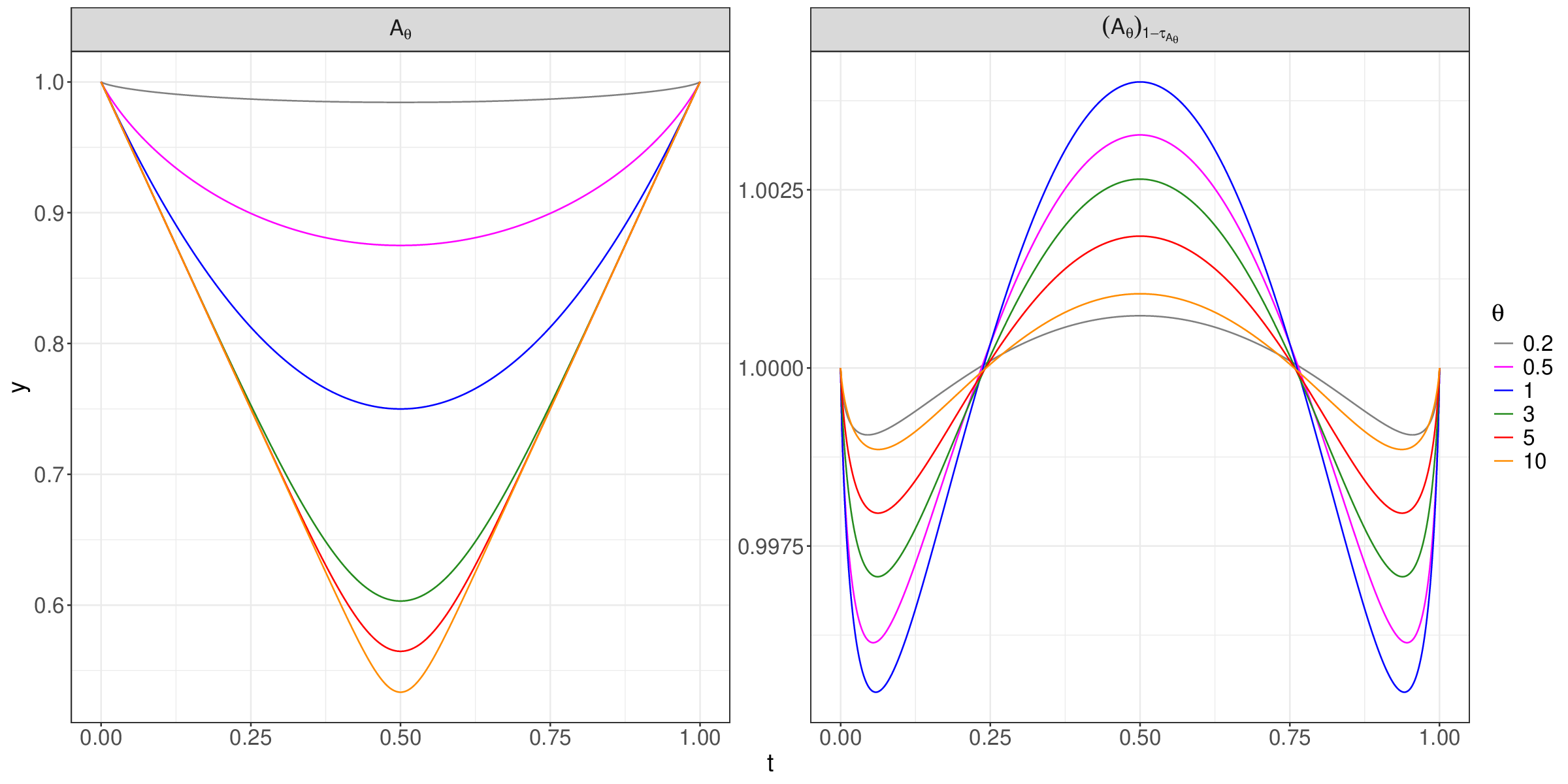}
	\caption{Plots of the Pickands dependence functions $A_\theta$ and the corresponding transformed functions $(A_\theta)_{1- \tau_{A_\theta}}$ for parameters $\theta \in \{\frac{1}{5}, \frac{1}{2}, 1, 3, 5, 10\}$ as defined in Example \ref{ex:galambos_pickands_vs_transformed}.}
	\label{fig:galambos_pick_vs_trans}
\end{figure}
\end{example}
\begin{example}[Marshall-Olkin] \label{ex:marshall_olkin}
For arbitrary parameters $\alpha, \beta \in (0,1]$ the Pickands dependence function $A_{\alpha,\beta} \in \mathcal{A}$ of a Marshall-Olkin copula $C_{A_{\alpha,\beta}} \in \mathcal{C}_{ev}$ is given by
$$
A_{\alpha,\beta}(t) = \begin{cases}
    1 - \beta t,& \text{ if } t \in [0,\frac{\alpha}{\alpha + \beta})\\
    1 - \alpha (1-t),& \text{ if } t \in [\frac{\alpha}{\alpha + \beta},1]
\end{cases}
$$
for every $t \in \mathbb{I}$. According to \citep{ghoudi1998}, Kendall's $\tau$ of $C_{A_{\alpha,\beta}}$ is given by
$$
\tau_{A_{\alpha,\beta}} = \frac{\alpha \beta}{\alpha + \beta - \alpha \beta}
$$
and therefore, simplifying, we have that
\begin{align*}
(A_{\alpha,\beta})_{1-\tau_{A_{\alpha,\beta}}}(t) &= (t^{1-\tau_{A_{\alpha,\beta}}} + (1-t)^{1-\tau_{A_{\alpha,\beta}}})^\frac{1}{1-\tau_{A_{\alpha,\beta}}}A_{\alpha,\beta}^\frac{1}{1-\tau_{A_{\alpha,\beta}}}\left(\frac{t^{1-\tau_{A_{\alpha,\beta}}}}{t^{1-\tau_{A_{\alpha,\beta}}} + (1-t)^{1-\tau_{A_{\alpha,\beta}}}}\right) \\&=
\begin{cases}
    \left((1-\beta)t^\frac{\alpha + \beta -2\alpha \beta}{\alpha + \beta -\alpha\beta} + (1-t)^\frac{\alpha + \beta -2\alpha \beta}{\alpha + \beta -\alpha\beta}\right)^\frac{\alpha + \beta -\alpha \beta}{\alpha + \beta -2\alpha \beta}, &\text{ if } t \in \left[0,\left(\frac{\alpha}{\alpha + \beta}\right)^\frac{\alpha + \beta -\alpha \beta}{\alpha + \beta -2\alpha \beta}\right)\\
       \left((1-\alpha)(1-t)^\frac{\alpha + \beta -2\alpha \beta}{\alpha + \beta -\alpha\beta} + t^\frac{\alpha + \beta -2\alpha \beta}{\alpha + \beta -\alpha\beta}\right)^\frac{\alpha + \beta -\alpha \beta}{\alpha + \beta -2\alpha \beta},&\text{ if } t \in \left[\left(\frac{\alpha}{\alpha + \beta}\right)^\frac{\alpha + \beta -\alpha \beta}{\alpha + \beta -2\alpha \beta}, 1\right]
\end{cases}
\end{align*}
for every $t \in \mathbb{I}$.
The functions $(A_{\alpha,\beta})_{1-\tau_{A_{\alpha,\beta}}}$ for some parameters are depicted in Figure \ref{fig:marshall_olkin_pick_vs_trans}. 
As in the previous example, none of them 
is a Pickands dependence function.
\begin{figure}[!ht]
	\centering
	\includegraphics[width=1\textwidth]{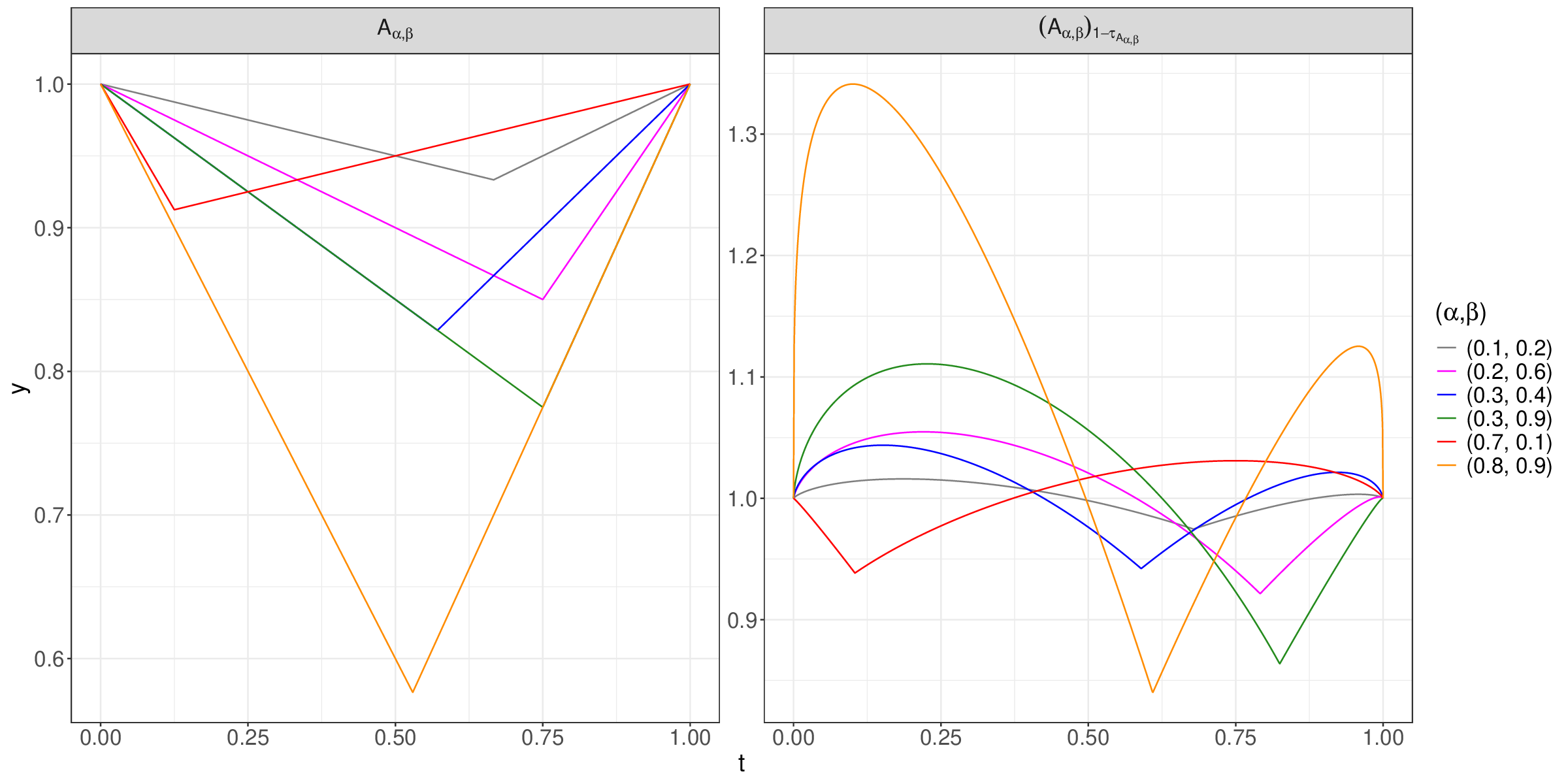}
	\caption{Plots of the Pickands dependence functions $A_{\alpha,\beta}$ and the corresponding transformed functions $(A_{\alpha,\beta})_{1- \tau_{A_{\alpha,\beta}}}$ for parameters $(\alpha,\beta) \in \{(0.1, 0.2), (0.2, 0.6), (0.3,0.4), (0.3,0.9), (0.7,0.1), (0.8,0.9)\}$ as considered 
    in Example \ref{ex:marshall_olkin}.}
	\label{fig:marshall_olkin_pick_vs_trans}
\end{figure}
\end{example}
\begin{example}[Piecewise linear Pickands dependence function] \label{ex:non_smooth_est}
    Defining the probability measure $\vartheta$ by 
    $$
    \vartheta := \tfrac{1}{5}(\delta_\frac{1}{10} + \delta_\frac{3}{10} + \delta_\frac{1}{2} + \delta_\frac{7}{10} + \delta_\frac{9}{10})
    $$
    we obviously have $\mathbb{E}[\vartheta] = \frac{1}{2}$. Calculating the associated Pickands dependence function yields that
    \begin{align*}
    A(t) = \begin{cases}
        1-t, &\text{ if } t \in [0,\frac{1}{10})\\
        \frac{24}{25} - \frac{3}{5}t, &\text{ if } t \in [\frac{1}{10},\frac{3}{10})\\
        \frac{21}{25} - \frac{1}{5}t, &\text{ if } t \in [\frac{3}{10},\frac{1}{2})\\
           \frac{16}{25} + \frac{1}{5}t, &\text{ if } t \in [\frac{1}{2},\frac{7}{10})\\
        \frac{9}{25} + \frac{3}{5}t, &\text{ if } t \in [\frac{7}{10},\frac{9}{10})\\
        t, &\text{ if } t \in [\frac{9}{10},1]\\
    \end{cases}
    \end{align*}
    for every $t \in \mathbb{I}$. Therefore, Kendall's $\tau$ is given by
    \begin{align*}
    \tau_A &= \int_\mathbb{I} \tfrac{t(1-t)}{A(t)} \mathrm{d}D^+A(t) = 2\int_\mathbb{I} \tfrac{t(1-t)}{A(t)} \mathrm{d}\vartheta(t) \\& =
    \frac{2}{5}\left[\frac{\frac{1}{10}(1-\frac{1}{10})}{A(\frac{1}{10})} + \frac{\frac{3}{10}(1-\frac{3}{10})}{A(\frac{3}{10})} + \frac{\frac{1}{2}(1-\frac{1}{2})}{A(\frac{1}{2})} + \frac{\frac{7}{10}(1-\frac{7}{10})}{A(\frac{7}{10})} + \frac{\frac{9}{10}(1-\frac{9}{10})}{A(\frac{9}{10})}\right]
    =
    \frac{5177}{12025}.
    \end{align*}
    Considering $1- \tau_A = \frac{6848}{12025}$ hence yields
    $$
    (A)_{1-\tau_A}(t) = \left(t^\frac{6848}{1205} + (1-t)^\frac{6848}{1205}\right)^\frac{1205}{6848}A^\frac{1205}{6848}\left(\frac{t^\frac{6848}{1205}}{t^\frac{6848}{1205} + (1-t)^\frac{6848}{1205}}\right).
    $$
\end{example}
\noindent Theorem \ref{thm:identifiability} has direct implications to the convergence of Archimax copulas. The next example shows that, 
even in very simple cases, convergence of $((A_n)_{1-\tau_{A_n}})_{n \in \mathbb{N}}$ to $(A)_{1-\tau_A}$ does not generally imply convergence of $(A_n)_{n \in \mathbb{N}}$ to $A$.
\begin{example}
For $A := A_{G_2}$ we have $\tau_A = \frac{1}{2}$. 
Setting $A_n(t): = A_{G_3}(t)=(t^{3} + (1-t)^{3})^{\frac{1}{3}}$ for 
every $t \in \mathbb{I}$ and every $n \in \mathbb{N}$, the sequence
$(A_n)_{n \in \mathbb{N}}$ obviously does not converge to $A$. 
Considering, however, $\tau_{A_n} = \frac{2}{3}$, and using Example \ref{ex2} 
as well as Theorem \ref{lem:properties_trans_pickands} directly yields 
$$
(A_n)_{1-\tau_{A_n}}(t) = (G_3)_{\frac{1}{3}}(t) =1 = A_\Pi(t) = (A)_{1-\tau_{A}}(t)
$$
for every $t \in \mathbb{I}$.
\end{example}
\section{Convergence of bivariate Archimax copulas}\label{sec:conv_archimax}
\noindent The main goal of this section is to prove the following theorem, which extends the corresponding results for 
$\mathcal{C}_{ar}$ and $\mathcal{C}_{ev}$ as established in \cite{bernoulli} to 
the larger class $\mathcal{C}_{am}$ and which is key for the nonparametric 
estimators developed in Section \ref{sec:estimator}.
Since the proof is rather technical, we split it into several lemmas.
\begin{theorem}\label{thrm:main_result_convergence}
    Consider $\psi,\psi_1,\psi_2,... \in \Psi$ as well as $A,A_1,A_2,... \in \mathcal{A}\setminus \{A_M\}$ and let $C_{\psi,A}, C_{\psi_1,A_1}, C_{\psi_2,A_2}, \ldots \mathcal{C}_{am}$ denote the corresponding
    Archimax copulas. Furthermore, let $\tau_{A},\tau_{A_1}, \tau_{A_2}, \ldots$ be the Kendall's $\tau$s of $C_A, C_{A_1},C_{A_2}\ldots \in \mathcal{C}_{ev}$. Then the following assertions are equivalent:
    \begin{enumerate}[label=(\roman*)]
        \item $(\psi_n)_{1-\tau_{A_n}} \overset{n \rightarrow \infty}{\longrightarrow}(\psi)_{1-\tau_{A}}$ uniformly on $[0,\infty)$ and $(A_n)_{1-\tau_{A_n}} \overset{n \rightarrow \infty}{\longrightarrow}(A)_{1-\tau_{A}}$ uniformly on $\mathbb{I}$;
          \item $(\varphi_n)_{1-\tau_{A_n}} \overset{n \rightarrow \infty}{\longrightarrow}(\varphi)_{1-\tau_{A}}$ pointwise on $(0,1]$ and $(A_n)_{1-\tau_{A_n}} \overset{n \rightarrow \infty}{\longrightarrow}(A)_{1-\tau_{A}}$ uniformly on $\mathbb{I}$;
        \item $d_\infty(C_{\psi_n,A_n}, C_{\psi,A}) \overset{n \rightarrow \infty}{\longrightarrow} 0$;
        \item $D_p(C_{\psi_n,A_n}, C_{\psi,A}) \overset{n \rightarrow \infty}{\longrightarrow} 0$, for every $p \in [1,\infty]$;
        \item $C_{\psi_n,A_n} \overset{n\rightarrow \infty}{\longrightarrow}C_{\psi,A}$ weakly conditional.
    \end{enumerate}
\end{theorem}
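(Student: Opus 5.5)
The plan is to prove the cycle (i)$\Leftrightarrow$(ii)$\Rightarrow$(v)$\Rightarrow$(iv)$\Rightarrow$(iii)$\Rightarrow$(ii). The implications (v)$\Rightarrow$(iv)$\Rightarrow$(iii) are already known from \cite{tru06}: weak conditional convergence implies $D_p$-convergence, and $D_p$-convergence implies $d_\infty$-convergence.

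For (i)$\Leftrightarrow$(ii) we use that the transformed generators $(\psi_n)_{1-\tau_{A_n}}$ lie in $\Psi$ by Lemma \ref{lem:trans_gen}, since $1-\tau_{A_n}\in(0,1]$. For convex normalized generators, uniform convergence of generators is equivalent to pointwise convergence of the pseudo-inverses on $(0,1]$. This follows from monotonicity and continuity, as in \cite{bernoulli}: pointwise convergence of monotone continuous functions to a continuous limit with fixed limits at the endpoints is uniform, by a Dini/Pólya-type argument.

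Before the remaining steps, we rewrite every copula via Theorem \ref{thm:identifiability} in reparametrized form. With $\tilde\psi=(\psi)_{1-\tau_A}$, $\tilde\varphi$ its pseudo-inverse and $\tilde A=(A)_{1-\tau_A}$, a direct substitution gives
\[
C_{\psi,A}(x,y)=\tilde\psi\Bigl(\bigl(\tilde\varphi(x)+\tilde\varphi(y)\bigr)\,\tilde A\Bigl(\tfrac{\tilde\varphi(x)}{\tilde\varphi(x)+\tilde\varphi(y)}\Bigr)\Bigr),
\]
since $(\varphi)_\alpha=\varphi^{1/\alpha}$ and $\kappa_\alpha$ undoes the powers. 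Analogous expressions hold for the Markov kernel of Corollary \ref{cor:markov_kernel}, in terms of $D^-\tilde\psi$ and the tangent-type function $G$ of $\tilde A$. The latter is defined through $D^+\tilde A$, which exists by the lemma on $D^+(A)_\alpha$ even though $\tilde A$ need not be convex.

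\textbf{(iii)$\Rightarrow$(ii).} By the result quoted in Section \ref{section:notation}, uniform convergence of copulas gives weak convergence of Kendall distribution functions $F^K_{\psi_n,A_n}\to F^K_{\psi,A}$. By Lemma \ref{lem:kendall_is_archimedean} these are Kendall functions of the Archimedean copulas $C_{\tilde\psi_n}$ and $C_{\tilde\psi}$. Since the Kendall function characterizes the normalized generator continuously \cite{bernoulli}, we get $C_{\tilde\psi_n}\to C_{\tilde\psi}$ and hence $\tilde\varphi_n\to\tilde\varphi$ pointwise. Continuity of $F^K$ on $(0,1]$ lets weak convergence be read pointwise, and the relation $\beta=\varphi/D^+\varphi$ with $\varphi(1/2)=1$ allows integration of the ODE.

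For the Pickands part, fix $t\in(0,1)$ and choose $x\in(0,1)$, then $y$ with $\tilde\varphi(x)/(\tilde\varphi(x)+\tilde\varphi(y))=t$. Inverting $\tilde\psi_n$ in the identity above, using $\tilde\varphi_n\to\tilde\varphi$ and locally uniform convergence of $\tilde\varphi_n$ (convex monotone functions), yields $\tilde A_n(t_n)\to\tilde A(t)$ with $t_n\to t$. Equicontinuity of $\{\tilde A_n\}$ then upgrades this to uniform convergence. Equicontinuity follows because $\tau_{A_n}$ stays bounded away from $1$ (as $\tau_{A_n}\to\tau_A<1$ via $\tau=3-4\int F^K$), so the exponents are controlled, and Lemma \ref{lem:properties_trans_pickands}(viii),(ix) apply.

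\textbf{(ii)$\Rightarrow$(v).} This is the main obstacle. We first pass to subsequences: by compactness of $\mathcal{A}$ and $\tau_{A_n}$ bounded away from $1$, any subsequence has a further subsequence with $A_n\to A^*$ and $\tau_{A_n}\to\tau^*$. Uniform convergence of $(A_n)_{1-\tau_{A_n}}$ together with Lemma \ref{lem:properties_trans_pickands}(ix) then gives $(A^*)_{1-\tau^*}=\tilde A$. It is cleaner, however, to work directly with the reparametrized kernel
\[
K_n(x,[0,y])=\frac{D^-\tilde\psi_n(s_n)}{D^-\tilde\psi_n(\tilde\varphi_n(x))}\,\tilde G_n(t_n),
\]
where $s_n=(\tilde\varphi_n(x)+\tilde\varphi_n(y))\tilde A_n(t_n)$ and $t_n=\tilde\varphi_n(x)/(\tilde\varphi_n(x)+\tilde\varphi_n(y))$.

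Convergence of the derivative ratio at continuity points of $D^-\tilde\psi$ is the standard convex-function fact used in \cite{bernoulli}. The set of $x$ where $\tilde\varphi(x)$ is a discontinuity point is countable, hence $\lambda$-null. For the factor $\tilde G_n$ we need $D^+\tilde A_n(t_n)\to D^+\tilde A(t)$ at continuity points. We obtain this from the explicit formula \eqref{eq:right-hand_der} combined with $D^+A_n(\kappa_{\alpha_n}(t_n))\to D^+A^*(\kappa_{\alpha}(t))$, which holds along subsequences at continuity points by the convexity fact recalled in Section \ref{subsec:intro_evc}. Lemma \ref{lem:properties_trans_pickands}(ii) guarantees that the exceptional set of $t$ is countable.

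Fixing $x$ outside a null set, convergence then holds for all $y$ outside a countable set, which is enough for weak convergence of $K_n(x,\cdot)$. Because the limit is independent of the chosen subsequence, the full sequence converges weakly conditionally. The delicate points are the boundary regions $y<f^0(x)$ for non-strict generators, where both kernels vanish and can be handled as in the proof of Corollary \ref{cor:markov_kernel}, and the $t$ near $L,R$ where $G$ jumps.
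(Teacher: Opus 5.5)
Your overall route is the paper's. (v)$\Rightarrow$(iv)$\Rightarrow$(iii) comes from \cite{tru06}. In (iii)$\Rightarrow$(ii) you use the Kendall function, Lemma \ref{lem:kendall_is_archimedean} and \cite[Theorem 4.1]{bernoulli} for the generator, and an inversion of $(\psi_n)_{1-\tau_{A_n}}$ for the Pickands part. The paper inverts along $t\mapsto C_{\psi,A}\bigl((\psi)_{1-\tau_A}(t),(\psi)_{1-\tau_A}(1-t)\bigr)=(\psi)_{1-\tau_A}\bigl((A)_{1-\tau_A}(t)\bigr)$, which gives uniformity at once; you invert pointwise and add equicontinuity. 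In (ii)$\Rightarrow$(v) you use the factorized kernel \eqref{eq:kernel_prd} with the compactness argument of Lemma \ref{lem:convergence_derivatives}.

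One step, however, is false: $\tau_{A_n}\to\tau_A$ does not follow from $\tau=3-4\int F^K$. Because $F^K_{\psi_n,A_n}=F^K_{(\psi_n)_{1-\tau_{A_n}}}$, that integral only yields $\tau(C_{\psi_n,A_n})=\tau_{(\psi_n)_{1-\tau_{A_n}}}=1-(1-\tau_{A_n})(1-\tau_{\psi_n})$ by Lemma \ref{lem:kendalls_tau_trans}, i.e.\ it controls only a product. By Theorem \ref{thm:identifiability}, $\tau_A$ is not a functional of the copula at all. For $\psi_0\in\Psi$ one has $C_{\psi_0,A_{G_3}}=C_{(\psi_0)_{2/3},A_{G_2}}$, so this constant sequence satisfies (iii) with $\tau_{A_n}=\tfrac23\neq\tfrac12=\tau_A$.

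What you actually need is only $\sup_n\tau_{A_n}<1$, and the product formula gives it. Since $1-\tau_{\psi_n}\le 2$, you get $1-\tau_{A_n}\ge\tfrac12\bigl(1-\tau(C_{\psi_n,A_n})\bigr)$, and $\tau(C_{\psi_n,A_n})\to\tau(C_{\psi,A})<1$. (The paper instead notes that $\tau_{A_{n_j}}\to 1$ would force $F^K_{\psi,A}(t)=t$, which is impossible for $A\neq A_M$.) Equicontinuity of $\{(A_n)_{1-\tau_{A_n}}\}$ must then be obtained by compactness, not from convergence of the exponents. Each subsequence has a further one with $A_n\to A^*\neq A_M$, so Lemma \ref{lem:properties_trans_pickands}(ix) applies.

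In (ii)$\Rightarrow$(v) you use the same bound without proof, and there (iii) is not yet available. It follows from (ii) alone. Uniform convergence $(\psi_n)_{1-\tau_{A_n}}\to(\psi)_{1-\tau_A}$ gives $\tau_{(\psi_n)_{1-\tau_{A_n}}}\to\tau_{(\psi)_{1-\tau_A}}<1$, and the identity $1-\tau_{(\psi_n)_{1-\tau_{A_n}}}=(1-\tau_{A_n})(1-\tau_{\psi_n})$ does the rest.

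This bound is genuinely needed. Convergence of $(A_n)_{1-\tau_{A_n}}$ alone does not exclude $A^*=A_M$: for $A_n=A_{G_{\theta_n}}$ with $\theta_n\to\infty$ one has $(A_n)_{1-\tau_{A_n}}\equiv A_\Pi$. In that case $(A^*)_{1-\tau^*}$ is undefined. The paper's proof of Lemma \ref{lem:convergence_derivatives} passes over this point too.

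Finally, Archimedean Kendall functions need not be continuous on $(0,1]$; piecewise linear generators give step functions such as $K_n$. So for the generator part, just cite \cite[Theorem 4.1]{bernoulli} rather than your pointwise/ODE sketch.
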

\begin{proof}
 Immediate consequence of Lemmas \ref{lem:convergence} and \ref{lem:convergence_wcc}.
\end{proof}
\begin{lemma}\label{lem:convergence}
    Let $\psi,\psi_1,\psi_2, ... \in \Psi$, $A,A_1,A_2,... \in \mathcal{A}\setminus\{A_M\}$ and let $\tau_A, \tau_{A_1}, \tau_{A_2}, \ldots$ be the Kendall's $\tau$s of $C_A, C_{A_1},C_{A_2}\ldots \in \mathcal{C}_{ev}$. Furthermore, let $C_{\psi,A}, C_{\psi_1,A_1}, C_{\psi_2,A_2}, ... \in \mathcal{C}_{am}$ be the corresponding Archimax copulas and let $(\psi)_{1-\tau_{A}}, (\psi_1)_{1-\tau_{A_1}}, (\psi_2)_{1-\tau_{A_2}}, \ldots$ and $(A)_{1-\tau_{A}}, (A_1)_{1-\tau_{A_1}}, (A_2)_{1-\tau_{A_2}}, \ldots$ be defined as in eqs. \eqref{eq:trans_arch} and \eqref{eq:trans_pickands}, respectively.
    Then the first of the following two assertions implies the second one:
    \begin{itemize}
        \item[(i)] $C_{\psi_n,A_n} \overset{n \rightarrow \infty}{\longrightarrow} C_{\psi,A}$ uniformly on $\mathbb{I}^2$.
        \item[(ii)] $(\psi_n)_{1-\tau_{A_n}} \overset{n \rightarrow \infty}{\longrightarrow} (\psi)_{1-\tau_{A}}$ uniformly on $[0,\infty)$ and 
        $
     (A_n)_{1-\tau_{A_n}}\overset{n \rightarrow \infty}{\longrightarrow} 
     (A)_{1-\tau_{A}}
        $ uniformly on $\mathbb{I}$.
    \end{itemize}
\end{lemma}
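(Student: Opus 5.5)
Uniform convergence of the copulas transfers to the Kendall distribution functions, which already encode the transformed generators; the transformed Pickands functions will then be recovered from the copula values themselves.

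\emph{Step 1: generators.} Since $d_\infty(C_{\psi_n,A_n},C_{\psi,A})\to 0$, the Kendall distribution functions $F^K_{\psi_n,A_n}$ converge weakly to $F^K_{\psi,A}$, as noted in Section \ref{section:notation}. By Lemma \ref{lem:kendall_is_archimedean}, each $F^K_{\psi_n,A_n}$ is the Kendall distribution function of the Archimedean copula $C_{(\psi_n)_{1-\tau_{A_n}}}$, and likewise for the limit. The transformed generators are convex and normalized (for $\alpha=1-\tau\in(0,1]$ Lemma \ref{lem:trans_gen} applies). By \eqref{eq:kendall.arch}, weak convergence of Kendall distribution functions is the same as weak convergence of the associated Williamson measures $\gamma_n\to\gamma$ on $[0,\infty]$. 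The limit has no mass at $\infty$ because it is itself a Williamson measure, so tightness is automatic. Using the representation \eqref{eq:rel_psi_gamma}, or directly the equivalence results of \cite{bernoulli} for $\mathcal{C}_{ar}$, we get $(\psi_n)_{1-\tau_{A_n}}\to(\psi)_{1-\tau_A}$ pointwise. Uniform convergence on $[0,\infty)$ then follows by a Dini/Pólya-type argument: the functions are monotone, the limit is continuous, and all tend to $0$ at $\infty$. Continuity of $(\psi)_{1-\tau_A}$ also gives pointwise convergence of the pseudo-inverses $(\varphi_n)_{1-\tau_{A_n}}\to(\varphi)_{1-\tau_A}$ on $(0,1]$.

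\emph{Step 2: Pickands part.} Write $\tilde\psi_n=(\psi_n)_{1-\tau_{A_n}}$, $\tilde\varphi_n$ for its pseudo-inverse, and $B_n=(A_n)_{1-\tau_{A_n}}$; write $\tilde\psi,\tilde\varphi,B$ for the limits. A direct computation from \eqref{eq:trans_arch} and \eqref{eq:trans_pickands} shows that the Archimax formula is invariant under the reparametrization:
\begin{equation*}
C_{\psi_n,A_n}(x,y)=\tilde\psi_n\bigl((\tilde\varphi_n(x)+\tilde\varphi_n(y))\,B_n(\tfrac{\tilde\varphi_n(x)}{\tilde\varphi_n(x)+\tilde\varphi_n(y)})\bigr).
\end{equation*}
This is the same computation as in the proof of Theorem \ref{thm:identifiability}. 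For fixed $t\in(0,1)$, pick $x,y$ with $\tilde\varphi(x)=st$ and $\tilde\varphi(y)=s(1-t)$, where $s>0$ is small enough that $sB(t)<\tilde\varphi(0)$. In the strict case any $s$ works; in the non-strict case $B$ is bounded (by part (v) of Lemma \ref{lem:properties_trans_pickands}), so a small $s$ suffices.

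Now choose $x_n,y_n$ with $\tilde\varphi_n(x_n)=st$ and $\tilde\varphi_n(y_n)=s(1-t)$, i.e.\ $x_n=\tilde\psi_n(st)\to x$. Then
\begin{equation*}
\tilde\psi_n\bigl(sB_n(t)\bigr)=C_{\psi_n,A_n}(x_n,y_n)\to C_{\psi,A}(x,y)=\tilde\psi\bigl(sB(t)\bigr),
\end{equation*}
using uniform convergence of the copulas together with their equicontinuity. Since $\tilde\psi_n\to\tilde\psi$ uniformly and $\tilde\psi$ is strictly decreasing near $sB(t)$, we obtain $B_n(t)\to B(t)$ by a standard inverse argument.

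\emph{Step 3: from pointwise to uniform.} This is the main obstacle. The $B_n$ need not be convex (see Remark \ref{rem:illustration.A.alpha}), so Arzelà--Ascoli via convexity is unavailable. I would first show $\tau_{A_n}\to\tau_A$, by equicontinuity of Pickands functions and compactness of $\mathcal{A}$. Take any subsequence with $A_{n_k}\to A^*$ and $\tau_{A_{n_k}}\to\tau^*$. Part (ix) of Lemma \ref{lem:properties_trans_pickands} gives $B_{n_k}\to(A^*)_{1-\tau^*}$ uniformly, provided $\tau^*<1$.

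To rule out $\tau^*=1$: if $\tau_{A_{n_k}}\to1$, the Kendall distributions would converge to that of $M$ by \eqref{eq:kendall_archimax}, contradicting $A\ne A_M$. Indeed $\beta_{\psi_n}$ are bounded, so $F^K_{\psi_n,A_n}(t)\to t$, whereas the limit Kendall distribution differs from the identity because $\tau_A<1$.

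Step 2 then forces $(A^*)_{1-\tau^*}=B$ pointwise. Hence every subsequence has a further subsequence along which $B_n\to B$ uniformly, which gives uniform convergence of the whole sequence.
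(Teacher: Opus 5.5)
Your proof is correct. Step 1 and the exclusion of $\tau^*=1$ match the paper's argument: weak convergence of the Kendall distributions, Lemma \ref{lem:kendall_is_archimedean}, \cite[Theorem 4.1]{bernoulli}, and the contradiction $F^K_{\psi_{n},A_{n}}\to\mathrm{id}$.

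You differ from the paper in the Pickands part.
\begin{itemize}
\item \textbf{The paper.} It evaluates the copulas at $(\tilde\psi_n(t),\tilde\psi_n(1-t))$, which is your construction with $s=1$. The Lipschitz property then gives $\tilde\psi_n\circ B_n\to\tilde\psi\circ B$ \emph{uniformly} in $t$ immediately. It next shows $\sup_n\tau_{A_n}<1$ and uses part (v) of Lemma \ref{lem:properties_trans_pickands} to bound $B_n$ uniformly. This yields a uniform lower bound $\tilde\psi_n(B_n(t))\geq a_0>0$. Finally it inverts, using $\tilde\varphi_n\to\tilde\varphi$ uniformly on $[a_0,1]$ (pointwise convergence plus convexity). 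Thus $B_n=\tilde\varphi_n\circ\tilde\psi_n\circ B_n\to B$ uniformly, with no subsequence extraction.
\item \textbf{Your route.} You invert only pointwise. You then upgrade to uniform convergence via compactness of $\mathcal{A}$, the joint continuity in part (ix) of Lemma \ref{lem:properties_trans_pickands}, and the subsequence principle. This is the same pattern the paper uses in Lemma \ref{lem:convergence_derivatives}.
\end{itemize}
Your route avoids the bookkeeping around the explicit constant $a_0$, including the separate treatment of the finitely many initial indices. The price is the compactness argument and part (ix). The paper's route shows that the ``main obstacle'' you identify in Step 3 is not really there: once $B_n$ is uniformly bounded, the inversion is already uniform.

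One sentence in Step 3 should be removed. The announced intermediate goal ``first show $\tau_{A_n}\to\tau_A$'' is false in general. Take $\chi\in\Psi$, $\psi=(\chi)_{1/2}$, $A=A_\Pi$, $\psi_n=\chi$ and $A_n=A_{G_2}$. Then $C_{\psi_n,A_n}=C_{\psi,A}$ for every $n$, yet $\tau_{A_n}=\tfrac12\neq 0=\tau_A$. Your argument never actually uses this claim. It only needs every subsequential limit $\tau^*$ to satisfy $\tau^*<1$, and Step 2 identifies $(A^*)_{1-\tau^*}=B$ whether or not $\tau^*=\tau_A$.
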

\begin{proof}
    Assume that $C_{\psi_n,A_n} \overset{n \rightarrow \infty}{\longrightarrow} C_{\psi,A}$ uniformly on $\mathbb{I}^2$. Then 
    $F^K_{\psi_n,A_n} \overset{n\rightarrow \infty}{\longrightarrow} F^K_{\psi,A}$ weakly on $\mathbb{I}$.
    Applying Lemma \ref{lem:kendall_is_archimedean}, each Kendall distribution function $F^K_{\psi,A}, F^K_{\psi_1,A_1}, F^K_{\psi_2,A_2}, \ldots$ induces a (unique, normalized) Archimedean generator $(\psi)_{1-\tau_A}, (\psi_1)_{1-\tau_{A_1}}, (\psi_2)_{1-\tau_{A_2}}, \ldots$, and the corresponding Archimedean copulas $C_{(\psi)_{1-\tau_A}}, C_{(\psi_1)_{1-\tau_{A_1}}}, C_{(\psi_2)_{1-\tau_{A_2}}}, \ldots$ have Kendall distribution functions $F^K_{\psi,A}, F^K_{\psi_1,A_1}, F^K_{\psi_2,A_2}, \ldots$.
    Applying \cite[Theorem 4.1]{bernoulli} then yields that the sequence $\{(\psi_n)_{1-\tau_{A_n}}\}_{n \in \mathbb{N}}$ converges uniformly to $(\psi)_{1-\tau_A}$. \\
    Using the Lipschitz property of copulas (see \citep{dur_princ}), the fact that $\{C_{\psi_n,A_n}\}_{n \in \mathbb{N}}$ converges to $C_{\psi,A}$ uniformly, and the triangle inequality, we obtain that
    \begin{align*}
    |(\psi_n)_{1-\tau_{A_n}}((A_n)_{1-\tau_{A_n}}(t)) - (\psi)_{1-\tau_{A}}((A)_{1-\tau_{A}}(t))| &= |C_{\psi_n,A_n}((\psi_n)_{1-\tau_{A_n}}(t),(\psi_n)_{1-\tau_{A_n}}(1-t)) - C_{\psi,A}((\psi)_{1-\tau_{A}}(t),(\psi)_{1-\tau_{A}}(1-t))| \\&\leq
    |C_{\psi_n,A_n}((\psi_n)_{1-\tau_{A_n}}(t),(\psi_n)_{1-\tau_{A_n}}(1-t)) - C_{\psi_n,A_n}((\psi)_{1-\tau_{A}}(t),(\psi)_{1-\tau_{A}}(1-t))| \\&\quad + |C_{\psi_n,A_n}((\psi)_{1-\tau_{A}}(t),(\psi)_{1-\tau_{A}}(1-t)) - C_{\psi,A}((\psi)_{1-\tau_{A}}(t),(\psi)_{1-\tau_{A}}(1-t))| \\&\leq
    |(\psi_n)_{1-\tau_{A_n}}(t) - (\psi)_{1-\tau_{A}}(t)| + |(\psi_n)_{1-\tau_{A_n}}(1-t) - (\psi)_{1-\tau_{A}}(1-t)|\\&\quad + |C_{\psi_n,A_n}((\psi)_{1-\tau_{A}}(t),(\psi)_{1-\tau_{A}}(1-t)) - C_{\psi,A}((\psi)_{1-\tau_{A}}(t),(\psi)_{1-\tau_{A}}(1-t))| \\& \overset{n\rightarrow \infty}{\longrightarrow} 0
    \end{align*}
uniformly in $t \in \mathbb{I}$.\\
Next, we prove that $\{(A_n)_{1-\tau_{A_n}}\}_{n \in \mathbb{N}}$ converges uniformly to $(A)_{1-\tau_{A}}$ on $\mathbb{I}$ and start with showing $\sup_{n \in \mathbb{N}} \tau_{A_n} <1$. Assume, on the contrary, that  
$\sup_{n \in \mathbb{N}} \tau_{A_n}=1$ holds. Then there exists some subsequence 
$(\tau_{A_{n_j}})_{j \in \mathbb{N}}$ with 
$\tau_{A_{n_j}} \overset{j \rightarrow \infty}{\longrightarrow} 1$, implying 
$$
t=\lim_{j \rightarrow \infty} \tau_{A_{n_j}} t + (1-\tau_{A_{n_j}})F_{\psi_{n_j}}^K(t) =
\lim_{j \rightarrow \infty} F_{\psi_{n_j},A_{n_j}}^K(t) = F^K_{\psi,A}(t)
$$
for every point of continuity $t$ of $F^K_{\psi,A}$. Since the points of continuity of $F^K_{\psi,A}$ are dense in $\mathbb{I}$, it follows that $C_{\psi,A}=M$, a contradiction. This shows that $\sup_{n \in \mathbb{N}} \tau_{A_n} <1$, and hence there exists some $\varepsilon_0 \in (0,1)$ such that
$$
\tau_{A_n} < 1- \varepsilon_0
$$
for every $n \in \mathbb{N}$. Using Lemma \ref{lem:properties_trans_pickands}, we obtain that
\begin{align}\label{eq: est_trans_A}
    (A_n)_{1-\tau_{A_n}}(t) \leq 
    2^\frac{\tau_{A_n}}{1-\tau_{A_n}} \text{ and }   (A)_{1-\tau_{A}}(t) \leq 
    2^\frac{\tau_{A}}{1-\tau_{A}}
\end{align}
for every $t \in \mathbb{I}$.
Using eq. \eqref{eq: est_trans_A} and the fact that the sequence $\{(\psi_n)_{1-\tau_{A_n}} \circ (A_n)_{1-\tau_{A_n}}\}_{n\in\mathbb{N}}$ converges uniformly to $(\psi)_{1-\tau_{A}} \circ (A)_{1-\tau_{A}}$ on $\mathbb{I}$, we obtain the existence of an $N \in \mathbb{N}$ such that for every $n \geq N$ and every $t \in \mathbb{I}$ we have that
$$
(\psi_n)_{1-\tau_{A_n}}( (A_n)_{1-\tau_{A_n}}(t)) > (\psi)_{1-\tau_{A}}( (A)_{1-\tau_{A}}(t)) - \frac{\psi(2^{\tau_A})}{2} \geq \frac{\psi(2^{\tau_A})}{2}.
$$
Furthermore, applying eq. \eqref{eq: est_trans_A}, we have that
$$
(\psi_n)_{1-\tau_{A_n}}( (A_n)_{1-\tau_{A_n}}(t)) \geq \psi_n(2^{\tau_{A_n}}) > \psi_n(2^{1-\varepsilon_0})
$$
for every $t \in \mathbb{I}$ and every $n \in \{1,\ldots, N-1\}$.
Therefore, it follows for every $n \in \mathbb{N}$ and every $t \in \mathbb{I}$ that
$$
(\psi_n)_{1-\tau_{A_n}}((A_n)_{1-\tau_{A_n}}(t)) \geq \min\left\{\psi_1(2^{1-\varepsilon_0}), \psi_2(2^{1-\varepsilon_0}), \ldots,\psi_{N-1}(2^{1-\varepsilon_0}),\tfrac{\psi(2^{\tau_A})}{2} \right\} =: a_0.
$$
Using Lemma \ref{lem:positivity_generator}, considering $1- \varepsilon_0 < 1$ and $\tau_A < 1$, it immediately follows that $a_0 > 0$.\\
Using convexity of $\varphi^\frac{1}{1-\tau_A}, \varphi_1^\frac{1}{1-\tau_{A_1}}, \varphi_2^\frac{1}{1-\tau_{A_2}}, \ldots$ and the fact that, according to \cite{bernoulli}, the sequence $\{\varphi_n^\frac{1}{1-\tau_{A_n}}\}_{n \in \mathbb{N}}$ converges pointwise to $\varphi^\frac{1}{1-\tau_A}$ on $[a_0,1]$, we obtain uniform convergence of 
$\{\varphi_n^\frac{1}{1-\tau_{A_n}}\}_{n \in \mathbb{N}}$ to $\varphi^\frac{1}{1-\tau_A}$ on $[a_0,1]$.\\
Furthermore, for every $n \in \mathbb{N}$ and $t \in \mathbb{I}$, we have $(A_n)_{1-\tau_{A_n}}(t)^{1-\tau_{A_n}} \leq 2^{\tau_{A_n}} < 2$ as well as  $(A)_{1-\tau_{A}}(t)^{1-\tau_{A}} \leq 2^{\tau_{A}}<2$. Applying Lemma \ref{lem:positivity_generator} therefore yields 
$$
\varphi_n(\psi_n((A_n)_{1-\tau_{A_n}}(t)^{1-\tau_{A_n}}))^\frac{1}{1-\tau_{A_n}} = (A_n)_{1-\tau_{A_n}}(t)
$$
as well as
$$
\varphi(\psi((A)_{1-\tau_{A}}(t)^{1-\tau_{A}}))^\frac{1}{1-\tau_{A}} = (A)_{1-\tau_{A}}(t).
$$
Using all of the above together with the fact that the sequence $\{(\psi_n)_{1-\tau_{A_n}} \circ (A_n)_{1-\tau_{A_n}}\}_{n\in\mathbb{N}}$ converges uniformly to $(\psi)_{1-\tau_{A}} \circ (A)_{1-\tau_{A}}$ on $\mathbb{I}$ and that $\varphi^\frac{1}{1-\tau_A}$ is continuous on $[a_0,1]$, again using continuous convergence, we finally obtain that 
\begin{align*}
\lim_{n \rightarrow \infty}(A_n)_{1-\tau_{A_n}}(t) &= \lim_{n \rightarrow \infty}\varphi_n(\psi_n((A_n)_{1-\tau_{A_n}}(t)^{1-\tau_{A_n}}))^\frac{1}{1-\tau_{A_n}} \\&= \lim_{n \rightarrow \infty}\varphi_n((\psi_n)_{1-\tau_{A_n}}((A_n)_{1-\tau_{A_n}}(t)))^\frac{1}{1-\tau_{A_n}} \\&=
\varphi((\psi)_{1-\tau_{A}}((A)_{1-\tau_{A}}(t)))^\frac{1}{1-\tau_{A}} \\&=
\varphi(\psi((A)_{1-\tau_{A}}(t)^{1-\tau_{A}}))^\frac{1}{1-\tau_{A}} \\& =
(A)_{1-\tau_{A}}(t)
\end{align*}
uniformly in $t \in \mathbb{I}$. 
\end{proof}
The next example shows that Lemma \ref{lem:convergence} does not extend to the case of convergence to the minimum copula $M$. 
\begin{example}
 Take any sequence $(A_n)_{n \in \mathbb{N}}$ in $\mathcal{A}\setminus\{A_M\}$ converging to $A_M$, choose an arbitrary $\psi \in \Psi$ and set $\psi_n(z) := \psi(z)$ for every $n\in \mathbb{N}$ and $z \in [0,\infty)$. 
 Then obviously
 $$
 \lim_{n \rightarrow \infty}C_{\psi_n,A_n}(x,y) = M(x,y)
 $$
 for all $x,y \in \mathbb{I}$. Considering $\tau_{A_n} \overset{n\rightarrow \infty}{\longrightarrow} 1$, we have that
 $$
 \lim_{n\rightarrow \infty} (\psi_n)_{1-\tau_{A_n}}(z) = \lim_{n \rightarrow \infty}\psi_n(z^{1-\tau_{A_n}}) = \lim_{n \rightarrow \infty}\psi(z^{1-\tau_{A_n}}) = \frac{1}{2}
 $$
 for every $z \in (0,\infty)$. Since obviously $\lim_{n\rightarrow \infty} (\psi_n)_{1-\tau_{A_n}}(0) = 1$, the limit of the sequence $((\psi_n)_{1-\tau_{A_n}})_{n \in \mathbb{N}}$ is not even a generator, implying that Lemma \ref{lem:convergence} can not be extended.
\end{example}

\noindent The following lemma shows that if the sequence $((A_n)_{1-\tau_{A_n}})_{n \in \mathbb{N}}$ converges pointwise to $(A)_{1-\tau_A}$, then the right-hand derivatives $D^+(A_n)_{1-\tau_{A_n}}$ converge to $D^+(A)_{1-\tau_A}$ at every point of continuity of $D^+(A)_{1-\tau_A}$.
\begin{lemma}\label{lem:convergence_derivatives}
    Consider $A,A_1,A_2, \ldots \in \mathcal{A}\setminus\{A_M\}$, let $\tau_A, \tau_{A_1},\tau_{A_2},\ldots$ denote the Kendall's $\tau$s of $C_A, C_{A_1}, C_{A_2}, \ldots \in \mathcal{C}_{ev}$. Define $(A)_{1-\tau_A},(A_1)_{1-\tau_{A_1}},(A_2)_{1-\tau_{A_2}}, \ldots$ according to eq. \eqref{eq:trans_pickands} and assume 
    that the sequences $\{(A_n)_{1-\tau_{A_n}}\}_{n \in \mathbb{N}}$ converges uniformly to $(A)_{1-\tau_A}$ on $\mathbb{I}$. Then for every sequence $(t_n)_{n \in \mathbb{N}}$  converging in $(0,1)$ to some $t_0 \in \mathrm{Cont}(D^+(A)_{1-\tau_A}) \cap (0,1)$, we have that
    $$
    a_n := D^+(A_n)_{1-\tau_{A_n}}(t_n) \overset{n\rightarrow\infty}{\longrightarrow}D^+(A)_{1-\tau_{A}}(t_0).
    $$
\end{lemma}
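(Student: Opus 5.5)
The idea is to reduce the claim to the convex setting through the representation in eq.~\eqref{eq:right-hand_der}, using a subsequence argument. Write $\alpha_n:=1-\tau_{A_n}\in(0,1]$, $\alpha:=1-\tau_A$, $B_n:=(A_n)_{\alpha_n}$ and $B:=(A)_\alpha$. The function $B$ and the point $t_0$ do not depend on how $B$ is written as a transformed Pickands function. It therefore suffices to show that every subsequence of $(a_n)_{n\in\mathbb{N}}$ has a further subsequence converging to $D^+B(t_0)$. The sequence $(a_n)$ is bounded, because eq.~\eqref{eq:right-hand_der} expresses it through $B_n\le 2^{(1-\alpha_n)/\alpha_n}$ and $|D^+A_n|\le 1$. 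Given a subsequence, I pass to a further one (not relabelled) with $\alpha_n\to\alpha^*\in[0,1]$.

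\emph{Case $\alpha^*>0$.} I invert the transformation. By Lemma~\ref{lem:properties_trans_pickands}(i) we have $A_n=(B_n)_{1/\alpha_n}$. By the argument of Lemma~\ref{lem:properties_trans_pickands}(ix), which only uses uniform convergence, continuity, $\kappa_{\alpha_n}\to\kappa_{\alpha^*}$ uniformly and $B_n\ge A_M\ge\frac12$, the sequence $A_n$ converges uniformly to $A^*:=(B)_{1/\alpha^*}$. Since $\mathcal{A}$ is compact, $A^*\in\mathcal{A}$, and $B=(A^*)_{\alpha^*}$. I then apply eq.~\eqref{eq:right-hand_der} to $(A_n,\alpha_n)$ at $t_n$:
$$
a_n=B_n(t_n)\Bigl[\tfrac{t_n^{\alpha_n-1}-(1-t_n)^{\alpha_n-1}}{t_n^{\alpha_n}+(1-t_n)^{\alpha_n}}+\tfrac{D^+A_n(\kappa_{\alpha_n}(t_n))\,t_n^{\alpha_n-1}(1-t_n)^{\alpha_n-1}}{A_n(\kappa_{\alpha_n}(t_n))\,(t_n^{\alpha_n}+(1-t_n)^{\alpha_n})^2}\Bigr].
$$
All smooth factors converge because $t_n\to t_0\in(0,1)$ and $\alpha_n\to\alpha^*>0$. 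Moreover $B_n(t_n)\to B(t_0)$ by uniform convergence plus continuity, and $A_n(\kappa_{\alpha_n}(t_n))\to A^*(\kappa_{\alpha^*}(t_0))\ge\frac12$. The remaining factor is $D^+A_n(s_n)$ with $s_n:=\kappa_{\alpha_n}(t_n)\to s_0:=\kappa_{\alpha^*}(t_0)\in(0,1)$. Since $t_0\in\mathrm{Cont}(D^+B)$ and $B=(A^*)_{\alpha^*}$, Lemma~\ref{lem:properties_trans_pickands}(ii) gives $s_0\in\mathrm{Cont}(D^+A^*)$. The convergence property for derivatives of convex Pickands functions stated in Subsection~\ref{subsec:intro_evc} then yields $D^+A_n(s_n)\to D^+A^*(s_0)$. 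Reading eq.~\eqref{eq:right-hand_der} backwards for $(A^*,\alpha^*)$ shows that $a_n\to D^+(A^*)_{\alpha^*}(t_0)=D^+B(t_0)$, as required.

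\emph{Case $\alpha^*=0$.} This is the step I expect to be the main obstacle, since here the inversion $A_n=(B_n)_{1/\alpha_n}$ degenerates. First I would try to exclude it. Uniform convergence of $B_n$ to the continuous function $B\ge A_M$ gives $B_n(t)^{\alpha_n}\to1$ for every $t$. By eq.~\eqref{eq:A.alpha.via.measure} this forces $2A_n(\kappa_{\alpha_n}(t))\to 1$, hence $A_n\to A_M$. I would then try to contradict the fact that $B$ arises as $(A)_{1-\tau_A}$ with $\tau_A<1$, for instance by expanding $B_n^{\alpha_n}$ to first order in $\alpha_n$ via eq.~\eqref{eq:A.alpha.via.measure}: $B_n=\exp\bigl(\alpha_n^{-1}\log B_n^{\alpha_n}\bigr)$. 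This expresses $\log B$ as a limit of $\alpha_n^{-1}\log\bigl(2\int\max\{t^{\alpha_n}(1-s),(1-t)^{\alpha_n}s\}\,\mathrm d\vartheta_n(s)\bigr)$, which has an explicit structure from which $D^+B$ can be read off.

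If the case cannot be excluded, my fallback is to work with this logarithmic representation directly. Set $g_n(t):=\alpha_n^{-1}\log B_n(t)^{\alpha_n}$, which equals $\log B_n(t)$. Each $g_n$ is, up to the smooth factor $\log(t^{\alpha_n}+(1-t)^{\alpha_n})$, an integral of functions of $t$ whose one-sided derivatives are controlled by $\vartheta_n$. The aim would be to show that $D^+g_n(t_n)$ converges at continuity points of $D^+\log B$, by a monotonicity argument on the integrand similar to the convex case. Combined with $B_n(t_n)\to B(t_0)>0$, this gives $a_n=B_n(t_n)\,D^+g_n(t_n)\to D^+B(t_0)$.
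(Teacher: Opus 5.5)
Your case $\alpha^*>0$ is correct and is in substance the paper's argument. The paper extracts $A_{n_{j_k}}\to A_0$ by compactness of $\mathcal{A}$, uses $\tau_{A_{n_{j_k}}}\to\tau_{A_0}$ and Lemma~\ref{lem:properties_trans_pickands}(ix) to identify $(A)_{1-\tau_A}=(A_0)_{1-\tau_{A_0}}$, and then passes to the limit in eq.~\eqref{eq:right-hand_der}; your inversion $A_n=(B_n)_{1/\alpha_n}$ reaches the same identification.

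The gap is the case $\alpha^*=0$, and your plan to exclude it cannot work because the case genuinely occurs. Take Gumbel functions $A_n:=A_{G_{\theta_n}}$ with $\theta_n\to\infty$. Example~\ref{ex:gumbel_pickands_vs_transformed} gives $(A_n)_{1-\tau_{A_n}}=A_\Pi=(A_\Pi)_{1-\tau_{A_\Pi}}$ for every $n$. So the hypothesis holds with $A=A_\Pi$ and $\tau_A=0$, while $\alpha_n=1/\theta_n\to0$ and $A_n\to A_M$, and no contradiction is available. Your fallback does not say which object is monotone, so this case is unproved as written. The paper's proof has the same blind spot: if the compactness limit is $A_0=A_M$, then $(A_0)_{1-\tau_{A_0}}$ is undefined and Lemma~\ref{lem:properties_trans_pickands}(ix) does not apply, and the paper does not address this.

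The missing idea is a monotonicity hidden in eq.~\eqref{eq:right-hand_der}. It handles all cases at once, with no subsequences and no inversion. A direct computation gives, for every $A\in\mathcal{A}$, $\alpha>0$ and $t\in(0,1)$,
\[
q_{A,\alpha}(t):=t+\frac{t(1-t)\,D^+(A)_\alpha(t)}{(A)_\alpha(t)}=\frac{\kappa_\alpha(t)\,G_A(\kappa_\alpha(t))}{A(\kappa_\alpha(t))}=\frac{G_A(\kappa_\alpha(t))}{h_A(\kappa_\alpha(t))}.
\]
By Lemma~\ref{lem:regularity_fcts}, $G_A\ge 0$ is nondecreasing and $h_A\ge 1$ is nonincreasing, so $q_{A,\alpha}$ is nondecreasing in $t$. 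Equivalently, with $\sigma(r):=e^r/(1+e^r)$, the function $r\mapsto\log (A)_\alpha(\sigma(r))-\log(1-\sigma(r))$ is convex on $\mathbb{R}$ with right derivative $q_{A,\alpha}(\sigma(r))$.

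Apply this to $B_n=(A_n)_{\alpha_n}$ and $B=(A)_\alpha$. Since $B_n\to B$ uniformly and $B_n,B\ge A_M\ge\frac12$, the associated convex functions converge pointwise on $\mathbb{R}$. The classical result for convex functions then gives $q_{A_n,\alpha_n}(t_n)\to q_{A,\alpha}(t_0)$ whenever $q_{A,\alpha}$ is continuous at $t_0$. Because $B$ is continuous and positive, that condition is exactly $t_0\in\mathrm{Cont}(D^+B)$. Hence
\[
a_n=B_n(t_n)\,\frac{q_{A_n,\alpha_n}(t_n)-t_n}{t_n(1-t_n)}\longrightarrow B(t_0)\,\frac{q_{A,\alpha}(t_0)-t_0}{t_0(1-t_0)}=D^+B(t_0),
\]
regardless of how $\alpha_n$ behaves. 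This is the precise form of the monotonicity argument your fallback was reaching for.
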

\begin{proof}
    We will prove that every subsequence $(a_{n_j})_{j \in \mathbb{N}}$ of $(a_n)_{n \in \mathbb{N}}$ has another subsequence that converges to  $D^+(A)_{1-\tau_{A}}(t_0)$. Letting $(a_{n_j})_{j \in \mathbb{N}}$ denote an arbitrary subsequence of $(a_n)_{n \in \mathbb{N}}$, using compactness of $\mathcal{A}$, there exists some subsequence $(A_{n_{j_k}})_{k \in \mathbb{N}}$ of $(A_{n_j})_{j \in \mathbb{N}}$ and some $A_0 \in \mathcal{A}$ such that $A_{n_{j_k}} \overset{k \rightarrow \infty}{\longrightarrow}A_0$ uniformly on $\mathbb{I}$. 
    This directly yields that $C_{A_{n_{j_k}}} \overset{k \rightarrow \infty}{\longrightarrow} C_{A_0}$ uniformly on $\mathbb{I}^2$ and that  $\tau_{A_{n_{j_k}}} \overset{k \rightarrow \infty}{\longrightarrow} \tau_{A_0}$. Using the aforementioned facts and uniform convergence of $(A_{n_{j_k}})_{k \in \mathbb{N}}$ to $A_0$, applying (the last assertion in) Lemma \ref{lem:properties_trans_pickands}, we obtain that $(A_{n_{j_k}})_{1-\tau_{A_{n_{j_k}}}} \overset{k \rightarrow \infty}{\longrightarrow} (A_0)_{1-\tau_{A_0}}$ uniformly on $\mathbb{I}$. Furthermore, we have by assumption that $(A_{n_{j_k}})_{1-\tau_{A_{n_{j_k}}}} \overset{k \rightarrow \infty}{\longrightarrow} (A)_{1-\tau_{A}}$ uniformly on $\mathbb{I}$. 
    Since the limit is unique, this yields that $(A)_{1-\tau_{A}}(t) = (A_0)_{1-\tau_{A_0}}(t)$ for every $t \in \mathbb{I}$ as well as $D^+(A)_{1-\tau_{A}}(t) = D^+(A_0)_{1-\tau_{A_0}}(t)$ for every $t \in (0,1)$.\\
    Considering $t_0\in \mathrm{Cont}(D^+(A)_{1-\tau_A})$, we have $t_0 \in \mathrm{Cont}(D^+(A_0)_{1-\tau_{A_0}})$, implying 
    $\kappa_{1-\tau_{A_0}}(t_0) \in \mathrm{Cont}(D^+A_0)$. Applying convexity of $A_{n_{j_k}}$ and the fact that $\kappa_{1-\tau_{A_{n_{j_k}}}} \overset{k \rightarrow \infty} {\longrightarrow} \kappa_{1-\tau_{A_{0}}}$ uniformly 
    on $\mathbb{I}$, 
    we obtain that
    $$
    \lim_{k \rightarrow \infty}D^+A_{n_{j_k}}\left(\kappa_{1-\tau_{A_{n_{j_k}}}}(t_{n_{j_k}})\right) = D^+A_0\left( \kappa_{1-\tau_{A_{0}}}(t_0)\right).
    $$
    Moreover, using uniform convergence of $(A_{n_{j_k}})_{k \in \mathbb{N}}$ 
    to $A_0$ altogether yields
    \begin{align*}
    \lim_{k\rightarrow \infty}a_{n_{j_k}} &= \lim_{k\rightarrow \infty}D^+(A_{n_{j_k}})_{1-\tau_{A_{n_{j_k}}}}(t_{n_{j_k}}) \\&= \lim_{k\rightarrow \infty}   (A_{n_{j_k}})_{1-\tau_{A_{n_{j_k}}}}(t_{n_{j_k}})\left[\frac{t_{n_{j_k}}^{-\tau_{A_{n_{j_k}}}} - (1-t_{n_{j_k}})^{-\tau_{A_{n_{j_k}}}}}{t_{n_{j_k}}^{1-\tau_{A_{n_{j_k}}}} + (1-t_{n_{j_k}})^{1-\tau_{A_{n_{j_k}}}}} + \frac{D^+A_{n_{j_k}}\left(\kappa_{1-\tau_{A_{n_{j_k}}}}(t_{n_{j_k}})\right)
    t_{n_{j_k}}^{-\tau_{A_{n_{j_k}}}}(1-t_{n_{j_k}})^{-\tau_{A{n_{j_k}}}}}{A_{n_{j_k}}\left(\kappa_{1-\tau_{A_{n_{j_k}}}}(t_{n_{j_k}})\right)\left(t_{n_{j_k}}^{1-\tau_{A_{n_{j_k}}}} + (1-t_{n_{j_k}})^{1-\tau_{A_{n_{j_k}}}}\right)^2}\right] \\&= D^+(A_0)_{1-\tau_{A_0}}(t_0) =
    D^+(A)_{1-\tau_{A}}(t_0).
    \end{align*}
    Since the subsequence $(a_{n_j})_{j \in \mathbb{N}}$ of $(a_n)_{n\in \mathbb{N}}$, was arbitrary, it already follows that $a_n \overset{n\rightarrow\infty}{\longrightarrow} D^+(A)_{1-\tau_{A}}(t_0)$.
\end{proof}
\noindent We now turn our attention to weak conditional convergence. Accordingly, for arbitrary $\psi \in \Psi$ and $A \in \mathcal{A} \setminus {A_M}$, define the functions
\begin{equation}\label{eq:fct_H}
H_{\psi,A}(x,y) := \frac{D^{-}(\psi)_{1-\tau_A}\left(((\varphi)_{1-\tau_A}(x) + (\varphi)_{1-\tau_A}(y)) \cdot (A)_{1-\tau_A}\left(\frac{(\varphi)_{1-\tau_A}(x)}{(\varphi)_{1-\tau_A}(x) + (\varphi)_{1-\tau_A}(y)}\right)\right)}{D^-(\psi)_{1-\tau_A}((\varphi)_{1-\tau_A}(x))}
\end{equation}
for every $(x,y) \in (0,1) \times \mathbb{I}$ and
\begin{equation}\label{fct_Q}
Q_{\psi,A}(x,y) := G_{(A)_{1-\tau_A}}\left(\frac{(\varphi)_{1-\tau_A}(x)}{(\varphi)_{1-\tau_A}(x) + (\varphi)_{1-\tau_A}(y)}\right)
\end{equation}
for every $(x,y) \in (0,1) \times \mathbb{I}$, where $G_{(A)_{1-\tau_A}}$ is defined as in eq.~\eqref{eq:funct_G_A}, with $(A)_{1-\tau_A}$ substituted for $A$.
Letting $C_{\psi,A} \in \mathcal{C}_{am}$ denote the corresponding Archimax copula and let $K_{\psi,A}$ be a version of its Markov kernel according to 
eq. \eqref{eq:markov-kernel}, tedious (but straightforward) calculations show that 
for every $(x,y) \in \mathbb{I}^2$ the following identity holds:
\begin{equation}\label{eq:kernel_prd}
    K_{\psi,A}(x,[0,y]) = H_{\psi,A}(x,y) \cdot Q_{\psi,A}(x,y).
\end{equation}
In the following two lemmas, we first consider convergence towards $H_{\psi,A}$ and subsequently convergence towards $Q_{\psi,A}$.
\begin{lemma}\label{lem:conv_H}
Let $\psi,\psi_1,\psi_2, ... \in \Psi$, $A,A_1,A_2,... \in \mathcal{A}\setminus\{A_M\}$ and let $\tau_A, \tau_{A_1}, \tau_{A_2}, \ldots$ be the Kendall's $\tau$s of $C_A, C_{A_1}, C_{A_2}, \ldots \in \mathcal{C}_{ev}$. Furthermore, let $(\psi)_{1-\tau_{A}}, (\psi_1)_{1-\tau_{A_1}}, (\psi_2)_{1-\tau_{A_2}}, \ldots$ and $(A)_{1-\tau_{A}}, (A_1)_{1-\tau_{A_1}}, (A_2)_{1-\tau_{A_2}}, \ldots$ be defined according to 
eqs. \eqref{eq:trans_arch} and \eqref{eq:trans_pickands}, respectively, and assume 
that the following two assertions hold:
\begin{enumerate}[label=(\roman*)]
    \item $(\psi_n)_{1-\tau_{A_n}} \overset{n \rightarrow \infty}{\longrightarrow} (\psi)_{1-\tau_{A}}$ uniformly on $[0,\infty)$ and 
    \item $(A_n)_{1-\tau_{A_n}}\overset{n \rightarrow \infty}{\longrightarrow}(A)_{1-\tau_{A}}$ uniformly on $\mathbb{I}$.
\end{enumerate}
Then there exists a set $\Lambda \in \mathcal{B}(\mathbb{I})$ with $\lambda(\Lambda) = 1$ such that the following property holds: for every $x \in \Lambda$ there exists some set $\mathcal{D}_x \in \mathcal{B}(\mathbb{I})$ which is dense in $\mathbb{I}$, such that  
$$
H_{\psi_n,A_n}(x,y) \overset{n \rightarrow \infty}{\longrightarrow} H_{\psi,A}(x,y)
$$
for every $y \in \mathcal{D}_x$. 
\end{lemma}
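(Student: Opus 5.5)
Write $\tilde\psi_n := (\psi_n)_{1-\tau_{A_n}}$, $\tilde\varphi_n := (\varphi_n)_{1-\tau_{A_n}}$, $\tilde A_n := (A_n)_{1-\tau_{A_n}}$, and analogously $\tilde\psi,\tilde\varphi,\tilde A$ without index. By Lemma \ref{lem:trans_gen}, the $\tilde\psi_n$ and $\tilde\psi$ are convex normalized generators (they are the generators induced by the Kendall distribution functions, Lemma \ref{lem:kendall_is_archimedean}). Then
$$H_{\psi_n,A_n}(x,y)=\frac{D^-\tilde\psi_n(s_n(x,y))}{D^-\tilde\psi_n(\tilde\varphi_n(x))},\qquad s_n(x,y):=(\tilde\varphi_n(x)+\tilde\varphi_n(y))\,\tilde A_n\Big(\tfrac{\tilde\varphi_n(x)}{\tilde\varphi_n(x)+\tilde\varphi_n(y)}\Big).$$
The plan is to prove separately that the arguments converge, that the left derivatives of the generators converge at continuity points, and then to choose $\Lambda$ and $\mathcal{D}_x$ so that all relevant points are continuity points of $D^-\tilde\psi$ and the denominator stays positive.

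\textbf{Step 1: the arguments converge.} Uniform convergence $\tilde\psi_n\to\tilde\psi$ of convex generators gives pointwise convergence $\tilde\varphi_n\to\tilde\varphi$ on $(0,1]$, as in \cite{bernoulli} and as already used in the proof of Lemma \ref{lem:convergence}. Fix $x\in(0,1)$ and $y\in(0,1]$ with $\tilde\varphi(x)+\tilde\varphi(y)\in(0,\infty)$. Then the ratio $\tilde\varphi_n(x)/(\tilde\varphi_n(x)+\tilde\varphi_n(y))$ converges. Since $\tilde A_n\to\tilde A$ uniformly and $\tilde A$ is continuous, $s_n(x,y)\to s(x,y)$ follows from continuous convergence. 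If $\tilde\varphi(0)$ is involved (non-strict case, $y$ small), I will simply exclude such $y$ from $\mathcal{D}_x$; more precisely, I will only consider $y\in(0,1)$.

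\textbf{Step 2: derivatives converge.} For convex functions, uniform convergence $\tilde\psi_n\to\tilde\psi$ implies $D^-\tilde\psi_n(z_n)\to D^-\tilde\psi(z_0)$ whenever $z_n\to z_0\in(0,\infty)$ and $z_0$ is a continuity point of $D^-\tilde\psi$. This is the standard sandwich argument via difference quotients, using $D^+\tilde\psi_n(z_n-h)$ and $D^-\tilde\psi_n(z_n+h)$ together with monotonicity of the derivatives, analogous to the statement given for $\mathcal{A}$ in Subsection \ref{subsec:intro_evc}.

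\textbf{Step 3: choice of sets (main obstacle).} Let $N\subset(0,\infty)$ be the countable set of discontinuities of $D^-\tilde\psi$. Set $\Lambda:=\{x\in(0,1):\tilde\varphi(x)\notin N,\ D^-\tilde\psi(\tilde\varphi(x))<0\}$. The first condition removes only countably many $x$, because $\tilde\varphi$ is injective on $(0,1]$. The second condition holds for all $x>0$, since $D^-\tilde\psi<0$ on $(0,\tilde\varphi(0)]$ by strict decrease and convexity. Hence $\lambda(\Lambda)=1$, and by Step 2 the denominators converge to a nonzero limit.

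For fixed $x\in\Lambda$, consider the map $y\mapsto s(x,y)=\tilde\varphi(x)\,h_{\tilde A}\big(\tfrac{\tilde\varphi(x)}{\tilde\varphi(x)+\tilde\varphi(y)}\big)$. Here lies the difficulty: $\tilde A$ need not be convex, so Lemma \ref{lem:regularity_fcts} does not apply directly. What is needed instead is that the preimage of the countable set $N$ under this map is not "fat." I will argue via the underlying $A$, using eq. \eqref{eq:alter_trans_pick}: $\tilde A(t)=t\,h_A^{1/\alpha}(\kappa_\alpha(t))$ with $\alpha=1-\tau_A$. Since $h_A$ is strictly decreasing on $(0,R)$ and $\kappa_\alpha$ and $y\mapsto\tilde\varphi(x)/(\tilde\varphi(x)+\tilde\varphi(y))$ are strictly monotone, the map $y\mapsto s(x,y)$ is strictly monotone on the region where $h_A>1$ and constant elsewhere. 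On the strictly monotone part, each point of $N$ has at most one preimage, so removing countably many $y$ leaves a dense set. On the constant part (where $s(x,y)=\tilde\varphi(x)\cdot 1$, up to the factor from the transformation, namely $\tilde\varphi(x)$ itself), $s(x,y)=\tilde\varphi(x)\notin N$ by the choice of $\Lambda$.

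Finally, points with $s(x,y)>\tilde\varphi(0)$ cause no trouble: there $D^-\tilde\psi=0$ on an open set, which is a continuity point, and the argument of Step 2 still applies. Therefore $\mathcal{D}_x:=\{y\in(0,1): s(x,y)\notin N\}$ is co-countable, hence dense and Borel, and on it $H_{\psi_n,A_n}(x,y)\to H_{\psi,A}(x,y)$.
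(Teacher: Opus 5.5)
Your proposal is correct and is essentially the paper's proof: the same set $\Lambda$ (points $x$ with $(\varphi)_{1-\tau_A}(x)$ a continuity point of $D^-(\psi)_{1-\tau_A}$), the same set $\mathcal{D}_x$, and the same density argument. Like the paper, you rewrite the numerator's argument as $(\varphi)_{1-\tau_A}(x)\,h_A^{1/(1-\tau_A)}\bigl(\varphi(x)/(\varphi(x)+\varphi(y))\bigr)$, which is strictly monotone in $y$ where $h_A>1$ and equals the continuity point $(\varphi)_{1-\tau_A}(x)$ elsewhere; you also make explicit two things the paper leaves implicit, namely the convergence of the arguments $s_n(x,y)$ and the nonvanishing of the limiting denominator (your claim $D^-\tilde\psi<0$ can fail at the endpoint $\tilde\varphi(0)$ itself, but you only need it on $(0,\tilde\varphi(0))$).
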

\begin{proof}
Define the set $\Lambda \in \mathcal{B}(\mathbb{I})$ as
$$
\Lambda := \{s \in (0,1) \colon (\varphi)_{1-\tau_A}(s) \in \mathrm{Cont}(D^-(\psi)_{1-\tau_A})\}.
$$
As $(\psi)_{1-\tau_A}$ is a convex function, $D^-(\psi)_{1-\tau_A}$ has at most countably many points of discontinuity. Hence, using the fact that $(\varphi)_{1-\tau_A}$ is  strictly decreasing on $[0,1]$, we obtain that the complement $\Lambda^c$ is at most countably infinite, so $\lambda(\Lambda) = 1$ holds.\\
Let $x \in \Lambda$ be arbitrary but fixed. 
Since the sequence $\{(\psi_n)_{1-\tau_{A_n}}\}_{n\in \mathbb{N}}$ converges uniformly to $(\psi)_{1-\tau_{A}}$, it follows (see \cite{bernoulli, mult_arch}) that $(\varphi_n)_{1-\tau_{A_n}}$ converges pointwise to $(\varphi)_{1-\tau_{A}}$ on $(0,1]$. 
Using convexity of all involved functions, continuous 
convergence holds in every point of continuity, i.e., for every 
$0<z \in \mathrm{Cont}(D^-(\psi)_{1-\tau_A})$ and every sequence 
$(z_n)_{n \in \mathbb{N}}$ converging in $(0,\infty)$ to $z$ we have that 
\begin{equation}\label{eq:cont.conv.interchange}
   \lim_{n\rightarrow \infty}D^-(\psi_n)_{1-\tau_{A_n}}(z_n) = D^-(\psi)_{1-\tau_{A}}(z). 
\end{equation}
In particular, eq. \eqref{eq:cont.conv.interchange} holds for 
$z=(\varphi)_{1-\tau_{A}}(x)$ by construction. 
Defining the set $\mathcal{D}_x$ by 
$$
\mathcal{D}_x := \left\{v \in \mathbb{I} \colon ((\varphi)_{1-\tau_A}(x) + (\varphi)_{1-\tau_A}(v)) \cdot (A)_{1-\tau_A}\left(\frac{(\varphi)_{1-\tau_A}(x)}{(\varphi)_{1-\tau_A}(x) + (\varphi)_{1-\tau_A}(v)}\right) \in \mathrm{Cont}(D^-(\psi)_{1-\tau_A})\right\},
$$
it suffices to show that $\mathcal{D}_x$ is dense in $\mathbb{I}$. 
Using the alternative expression for $(A)_{1-\tau_A}$ according to 
eq. \eqref{eq:alter_trans_pick}, the set $\mathcal{D}_x$ boils down to
$$
\mathcal{D}_x = \left\{v \in \mathbb{I} \colon (\varphi)_{1-\tau_A}(x)h_A^\frac{1}{1-\tau_A}\left(\frac{\varphi(x)}{\varphi(x) + \varphi(v)}\right) \in  \mathrm{Cont}(D^-(\psi)_{1-\tau_A})\right\}.
$$
Set $g^R(x) = \psi((\frac{1}{R}-1)\varphi(x))$, where $R \in (\frac{1}{2},1]$ is defined as in eq. \eqref{eq:definition_L_R}.
Then, according to Lemma \ref{lem:regularity_fcts}, the function $v \mapsto h_A\left(\frac{\varphi(x)}{\varphi(x) + \varphi(v)}\right)$ is strictly decreasing on $\left[0,g^R(x)\right]$ and the same holds for $v \mapsto\varphi(x)^\frac{1}{1-\tau_A}h_A^\frac{1}{1-\tau_A}\left(\frac{\varphi(x)}{\varphi(x) + \varphi(v)}\right)$. Considering that $D^-(\psi)_{1-\tau_A}$ has at most countably many discontinuities, the set of all $v \in \left[0,g^R(x)\right]$ fulfilling $\varphi(x)^\frac{1}{1-\tau_A}h_A^\frac{1}{1-\tau_A}\left(\frac{\varphi(x)}{\varphi(x) + \varphi(v)}\right) \in  \mathrm{Cont}(D^-(\psi)_{1-\tau_A})$ is dense in $\left[0,g^R(x)\right]$. 
Additionally, since for every 
$v \in (g^R(x),1]$ we have
$$
(\varphi)_{1-\tau_A}(x)h_A^\frac{1}{1-\tau_A}\left(\frac{\varphi(x)}{\varphi(x) + \varphi(v)}\right) = (\varphi)_{1-\tau_A}(x)
$$
and $(\varphi)_{1-\tau_A}(x)$ is a point of continuity of $D^-(\psi)_{1-\tau_A}$, 
it follows that $(g^R(x),1] \subseteq \mathcal{D}_x$. Altogether, $\mathcal{D}_x$ is dense in $\mathbb{I}$ and the lemma is proved.
\end{proof}
\begin{lemma}\label{lem:conv_Q}
Let $\psi,\psi_1,\psi_2, ... \in \Psi$, $A,A_1,A_2,... \in \mathcal{A}\setminus\{A_M\}$ and let $\tau_A, \tau_{A_1}, \tau_{A_2}, \ldots$ be the Kendall's $\tau$s of $C_A, C_{A_1}, C_{A_2}, \ldots \in \mathcal{C}_{ev}$. Furthermore, let $(\psi)_{1-\tau_{A}}, (\psi_1)_{1-\tau_{A_1}}, (\psi_2)_{1-\tau_{A_2}}, \ldots$ and $(A)_{1-\tau_{A}}, (A_1)_{1-\tau_{A_1}}, (A_2)_{1-\tau_{A_2}}, \ldots$ be defined according to eqs. \eqref{eq:trans_arch} and \eqref{eq:trans_pickands}, respectively, and assume that 
the following two assertions hold:
\begin{enumerate}[label=(\roman*)]
    \item $(\psi_n)_{1-\tau_{A_n}} \overset{n \rightarrow \infty}{\longrightarrow} (\psi)_{1-\tau_{A}}$ uniformly on $[0,\infty)$ and 
    \item $(A_n)_{1-\tau_{A_n}}\overset{n \rightarrow \infty}{\longrightarrow}(A)_{1-\tau_{A}}$ uniformly on $\mathbb{I}$.
\end{enumerate}
Then for every $x \in (0,1)$ there exists some 
set $\mathcal{D}_x \in \mathcal{B}(\mathbb{I})$ which is dense in $\mathbb{I}$, such that  
$$
Q_{\psi_n,A_n}(x,y) \overset{n \rightarrow \infty}{\longrightarrow} Q_{\psi,A}(x,y)
$$
for every $y \in \mathcal{D}_x$. 
\end{lemma}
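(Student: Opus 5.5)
Fix $x \in (0,1)$. Write $B:=(A)_{1-\tau_A}$, $B_n:=(A_n)_{1-\tau_{A_n}}$, $\phi:=(\varphi)_{1-\tau_A}$ and $\phi_n:=(\varphi_n)_{1-\tau_{A_n}}$. We also use the shorthand
$$
s_n(y):=\frac{\phi_n(x)}{\phi_n(x)+\phi_n(y)}, \qquad s(y):=\frac{\phi(x)}{\phi(x)+\phi(y)}.
$$
Then $Q_{\psi_n,A_n}(x,y)=G_{B_n}(s_n(y))$ and $Q_{\psi,A}(x,y)=G_{B}(s(y))$, where $G_{B}(s)=B(s)+D^+B(s)(1-s)$ for $s<1$ and $G_B(1)=1$.

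The plan has three steps: show that the inner arguments converge, show that the outer functions converge at continuity points, and then pick a dense set of $y$ on which both apply.

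\textbf{Convergence of the inner arguments.} By assumption (i), uniform convergence of the transformed generators gives pointwise convergence $\phi_n\to\phi$ on $(0,1]$, as in the proof of Lemma \ref{lem:conv_H} (via \cite{bernoulli,mult_arch}). At $y=0$ all values equal $\infty$ in the strict case. In the non-strict case, pointwise convergence at $0$ may fail, so I simply exclude $y=0$; this costs nothing for density.

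Hence $s_n(y)\to s(y)$ for every $y\in(0,1]$. Since $\phi(x)\in(0,\infty)$, we have $s(y)\in(0,1)$ for $y\in(0,1)$, and $s(1)=1$.

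\textbf{Convergence of the outer functions.} Suppose $s(y)\in(0,1)\cap \mathrm{Cont}(D^+B)$. Lemma \ref{lem:convergence_derivatives}, applied with $t_n=s_n(y)\to t_0=s(y)$, gives $D^+B_n(s_n(y))\to D^+B(s(y))$. Uniform convergence $B_n\to B$ (assumption (ii)) together with continuity of $B$ gives $B_n(s_n(y))\to B(s(y))$. Therefore $G_{B_n}(s_n(y))\to G_B(s(y))$.

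\textbf{Choice of the dense set.} Define
$$
\mathcal{D}_x:=\{y\in(0,1)\colon s(y)\in \mathrm{Cont}(D^+B)\}\cup\{1\}.
$$
At $y=1$ we have $Q=1$ for all $n$, since $\phi_n(1)=0$ and hence $s_n(1)=1$. It remains to show that $\mathcal{D}_x$ is dense.

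By Lemma \ref{lem:properties_trans_pickands}(ii), $D^+B$ is discontinuous at $t$ only if $D^+A$ is discontinuous at $\kappa_{1-\tau_A}(t)$. So $D^+B$ has at most countably many discontinuities; this is exactly the reason for going through $A$ rather than relying on convexity of $B$, which may fail.

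The map $y\mapsto s(y)$ is continuous and strictly increasing on $\{y\colon \phi(y)<\infty\}$, because $\phi$ is strictly decreasing there. Consequently the preimage of the countable exceptional set is countable on that region, and its complement there is dense.

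The remaining concern is the non-strict case, where $\phi(y)$ is constant for $y$ near $0$; more precisely $\phi(y)=\varphi(0)^{1/(1-\tau_A)}$ only at $y=0$, because $\varphi$ is strictly decreasing on $(0,1]$. So the monotonicity argument covers all of $(0,1)$. Measurability of $\mathcal{D}_x$ is clear, since it is a co-countable subset of $(0,1)$ together with the point $1$.

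\textbf{Main obstacle.} The delicate step is applying Lemma \ref{lem:convergence_derivatives}. It requires $t_0$ to lie in the interior $(0,1)$, and it needs the $t_n$ to converge within $(0,1)$. Both conditions hold for $y\in(0,1)$, since $s_n(y)\in(0,1)$ once $\phi_n(y)\in(0,\infty)$.

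If $\phi_n(y)$ could equal $\infty$ for a non-strict limit, then $s_n(y)=0$ and $G_{B_n}(0)$ would have to be handled separately. Pointwise convergence of $\phi_n(y)$ to the finite value $\phi(y)$ rules this out for large $n$, which resolves the issue.
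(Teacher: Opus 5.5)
Your proof is correct and follows the same route as the paper. Pointwise convergence of the transformed pseudo-inverses gives convergence of the inner arguments, Lemma~\ref{lem:convergence_derivatives} together with uniform convergence of $(A_n)_{1-\tau_{A_n}}$ gives convergence of $G$ at continuity points, and $\mathcal{D}_x$ is dense because it is the preimage of the co-countable set $\mathrm{Cont}(D^+(A)_{1-\tau_A})$ (Lemma~\ref{lem:properties_trans_pickands}(ii)) under a strictly increasing map.

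Your version is in fact slightly more careful than the paper's. You define $\mathcal{D}_x$ via $(\varphi)_{1-\tau_A}$, which is the argument at which continuity is actually needed; the paper writes $\varphi$ there. You also treat $y\in\{0,1\}$ separately, so that $t_0\in(0,1)$ as Lemma~\ref{lem:convergence_derivatives} requires. Your closing worry about $s_n(y)=0$ is moot: $\varphi_n(y)<\infty$ for every $y\in(0,1]$ and every $n$.
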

\begin{proof}
Let $x \in (0,1)$ be arbitrary but fixed and define $\mathcal{D}_x \in \mathcal{B}(\mathbb{I})$ by
$$
\mathcal{D}_x := \left\{v \in \mathbb{I} \colon \frac{\varphi(x)}{\varphi(x) + \varphi(v)} \in \mathrm{Cont}(D^+(A)_{1-\tau_A})\right\}.
$$
According to Lemma \ref{lem:properties_trans_pickands}, $D^+(A)_{1-\tau_A}$ has at most countably many points of discontinuity. Hence, using the fact that the function
$v \mapsto\frac{\varphi(x)}{\varphi(x) + \varphi(v)}$ is strictly increasing on $\mathbb{I}$, it is immediate that $\mathcal{D}_x$ is dense in $\mathbb{I}$. Since $(\psi_n)_{1-\tau_{A_n}} \overset{n \rightarrow \infty}{\longrightarrow} (\psi)_{1-\tau_{A}}$ uniformly on $[0,\infty)$, proceeding as in \cite{bernoulli,mult_arch} we obtain that
$$
\frac{(\varphi_n)_{1-\tau_{A_n}}(x)}{(\varphi_n)_{1-\tau_{A_n}}(x) + (\varphi_n)_{1-\tau_{A_n}}(y)} \overset{n \rightarrow \infty}{\longrightarrow}\frac{(\varphi)_{1-\tau_{A}}(x)}{(\varphi)_{1-\tau_{A}}(x) + (\varphi)_{1-\tau_{A}}(y)} 
$$
for every $x,y \in (0,1)^2$. 
Moreover, for arbitrary $y \in \mathcal{D}_x$, by construction we have 
that $\frac{\varphi(x)}{\varphi(x) + \varphi(y)}$ is a point of continuity of $D^+(A)_{1-\tau_A}$, so applying Lemma \ref{lem:convergence_derivatives} directly yields
$$
D^+(A_n)_{1-\tau_{A_n}}\left(\frac{(\varphi_n)_{1-\tau_{A_n}}(x)}{(\varphi_n)_{1-\tau_{A_n}}(x) + (\varphi_n)_{1-\tau_{A_n}}(y)}\right) \overset{n \rightarrow \infty}{\longrightarrow}D^+(A)_{1-\tau_{A}}\left(\frac{(\varphi)_{1-\tau_{A}}(x)}{(\varphi)_{1-\tau_{A}}(x) + (\varphi)_{1-\tau_{A}}(y)}\right).
$$
Finally, using uniform convergence of $((A_n)_{1-\tau_{A_n}})_{n \in \mathbb{N}}$ to 
$(A)_{1-\tau_{A}}$ on $\mathbb{I}$, we finally obtain that
$$
Q_{\psi_n,A_n}(x,y) \overset{n \rightarrow \infty}{\longrightarrow} Q_{\psi,A}(x,y).
$$
Since $y \in \mathcal{D}_x$ was arbitrary, the proof is complete.
\end{proof}
\noindent Finally considering that weak convergence of univariate distribution functions is equivalent to pointwise convergence on a dense set, proceeding 
analogously to \cite{bernoulli} yields weak conditional convergence - the following result holds:
\begin{lemma}\label{lem:convergence_wcc}
    Let $\psi,\psi_1,\psi_2,\ldots \in \Psi$, $A,A_1,A_2,\ldots \in \mathcal{A}\setminus \{A_M\}$ and let $\tau_A, \tau_{A_1}, \tau_{A_2}, \ldots$ be the Kendall's $\tau$s of $C_A, C_{A_1}, C_{A_2}, \ldots \in \mathcal{C}_{ev}$. Furthermore, let $(\psi)_{1-\tau_{A}}, (\psi_1)_{1-\tau_{A_1}}, (\psi_2)_{1-\tau_{A_2}}, \ldots$ and $(A)_{1-\tau_{A}}, (A_1)_{1-\tau_{A_1}}, (A_2)_{1-\tau_{A_2}}, \ldots$ be defined according to eqs. \eqref{eq:trans_arch} and \eqref{eq:trans_pickands}, respectively, and let $C_{\psi,A},C_{\psi_1,A_1},C_{\psi_2,A_2},\ldots \in \mathcal{C}_{am}$ be the corresponding Archimax copulas with Markov kernels $K_{\psi,A},K_{\psi_1,A_1},K_{\psi_2,A_2},\ldots$. Assume that the following two assertions hold:
    \begin{enumerate}[label=(\roman*)]
        \item $(\psi_n)_{1-\tau_{A_n}} \overset{n \rightarrow \infty}{\longrightarrow} (\psi)_{1-\tau_{A}}$ uniformly on $[0,\infty)$ and 
        \item $(A_n)_{1-\tau_{A_n}}\overset{n \rightarrow \infty}{\longrightarrow}(A)_{1-\tau_{A}}$ uniformly on $\mathbb{I}$.
    \end{enumerate}
    Then there exists a set $\Lambda \in \mathcal{B}(\mathbb{I})$ with $\lambda(\Lambda) = 1$ such that for every $x \in \Lambda$ the conditional distributions $K_{\psi_n,A_n}(x,\cdot)$ converge weakly to $K_{\psi,A}(x,\cdot)$, as $n \rightarrow \infty$. 
    In other words: The sequence 
    $(C_{\psi_n,A_n})_{n \in \mathbb{N}}$ converges weakly conditional to 
    $C_{\psi,A}$.
    \end{lemma}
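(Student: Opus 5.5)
We combine Lemma \ref{lem:conv_H}, Lemma \ref{lem:conv_Q} and the product representation \eqref{eq:kernel_prd}. Let $\Lambda_H$ be the set of full measure provided by Lemma \ref{lem:conv_H}, and set $\Lambda := \Lambda_H \cap (0,1)$, so that $\lambda(\Lambda)=1$.

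Fix $x \in \Lambda$. Lemma \ref{lem:conv_H} yields a dense set $\mathcal{D}^H_x$ on which $H_{\psi_n,A_n}(x,\cdot)\to H_{\psi,A}(x,\cdot)$. Lemma \ref{lem:conv_Q} yields a dense set $\mathcal{D}^Q_x$ on which $Q_{\psi_n,A_n}(x,\cdot)\to Q_{\psi,A}(x,\cdot)$. Both sets are obtained as complements of preimages of countable sets under strictly monotone maps.

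\textbf{The intersection is dense.} In the proof of Lemma \ref{lem:conv_Q}, the complement of $\mathcal{D}^Q_x$ is the preimage of a countable set under the strictly increasing map $v\mapsto \varphi(x)/(\varphi(x)+\varphi(v))$, so it is countable. In the proof of Lemma \ref{lem:conv_H}, the complement of $\mathcal{D}^H_x$ is contained in $[0,g^R(x)]$, where the relevant map is strictly monotone, so it is countable as well. Hence $\mathcal{D}_x := \mathcal{D}^H_x\cap \mathcal{D}^Q_x$ is co-countable and therefore dense in $\mathbb{I}$. If one prefers not to rely on the details of those proofs, it suffices to note that the exceptional sets constructed there are countable. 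For every $y\in\mathcal{D}_x$, the product representation \eqref{eq:kernel_prd} gives
$$K_{\psi_n,A_n}(x,[0,y]) = H_{\psi_n,A_n}(x,y)\,Q_{\psi_n,A_n}(x,y)\;\longrightarrow\; H_{\psi,A}(x,y)\,Q_{\psi,A}(x,y) = K_{\psi,A}(x,[0,y]).$$

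\textbf{Pointwise convergence on a dense set gives weak convergence.} The functions $y\mapsto K_{\psi_n,A_n}(x,[0,y])$ and $y\mapsto K_{\psi,A}(x,[0,y])$ are distribution functions on $\mathbb{I}$, extended by $0$ to the left and $1$ to the right. A sequence of distribution functions converging on a dense set converges at every continuity point of the limit, by monotonicity and a sandwich argument. Hence $K_{\psi_n,A_n}(x,\cdot)$ converges weakly to $K_{\psi,A}(x,\cdot)$ for every $x\in\Lambda$, which is weak conditional convergence.

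\textbf{Main obstacle.} The delicate point is that \eqref{eq:kernel_prd} must hold for the kernels of both the approximating copulas and the limit with the same version. This is the case because \eqref{eq:kernel_prd} is an identity valid for every $(x,y)$ and every pair $(\psi_n,A_n)$.
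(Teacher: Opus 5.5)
Your proposal is correct and follows essentially the same route as the paper, which obtains the lemma directly from Lemmas~\ref{lem:conv_H} and~\ref{lem:conv_Q}, the product representation \eqref{eq:kernel_prd}, and the fact that convergence of distribution functions on a dense set yields weak convergence. You also check that the exceptional sets constructed in the proofs of those two lemmas are countable, so $\mathcal{D}^H_x\cap\mathcal{D}^Q_x$ is still dense; this makes explicit a step the paper leaves implicit, since the lemma statements alone only guarantee two dense sets, and the intersection of two dense sets need not be dense.
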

\begin{proof}
Immediate consequence of Lemmas \ref{lem:conv_H}, \ref{lem:conv_Q} and eq. \eqref{eq:kernel_prd}.
\end{proof}
The following corollary will be useful in the sequel since 
it implies that our estimators developed in Section \ref{sec:estimator} can be directly applied to estimate the extent of (directed) dependence.
\begin{corollary}\label{cor:continuity_dependence_measures}
    Let $\eta$ be a dependence measure which is continuous with respect 
    to weak conditional convergence. Furthermore, consider $C_{\psi,A},C_{\psi_1,A_1},C_{\psi_2,A_2},... \in \mathcal{C}_{am}$ with $\psi,\psi_1,\psi_2,... \in \Psi$ and $A,A_1,A_2,... \in \mathcal{A}\setminus\{A_M\}$.
    Then the following two implications hold: 
    \begin{enumerate}[label=(\roman*)]
        \item If $(\psi_n)_{1-\tau_{A_n}} \overset{n\rightarrow\infty}{\longrightarrow} (\psi)_{1-\tau_A}$ and $(A_n)_{1-\tau_{A_n}} \overset{n\rightarrow\infty}{\longrightarrow} (A)_{1-\tau_{A}}$ pointwise, then $\eta(C_{\psi_n,A_n}) \overset{n\rightarrow\infty}{\longrightarrow}\eta(C_{\psi,A})$. 
        \item If $C_{\psi_n,A_n} \overset{n\rightarrow\infty}{\longrightarrow} 
        C_{\psi,A}$, then $\eta(C_{\psi_n,A_n}) \overset{n\rightarrow\infty}{\longrightarrow}\eta(C_{\psi,A})$.
    \end{enumerate}
\end{corollary}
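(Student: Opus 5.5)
The plan is to derive both implications from Theorem \ref{thrm:main_result_convergence}, which says that for elements of $\mathcal{C}_{am}$ built from Pickands dependence functions in $\mathcal{A}\setminus\{A_M\}$, uniform convergence of the copulas, convergence of the transformed generators and transformed Pickands dependence functions, and weak conditional convergence are all equivalent. Since $\eta$ is assumed continuous with respect to weak conditional convergence, it is enough to show in each case that $(C_{\psi_n,A_n})_{n \in \mathbb{N}}$ converges weakly conditional to $C_{\psi,A}$. Then $\eta(C_{\psi_n,A_n}) \to \eta(C_{\psi,A})$ follows at once.

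For (ii), the hypothesis $C_{\psi_n,A_n} \to C_{\psi,A}$ is read with respect to $d_\infty$. This is assertion (iii) of Theorem \ref{thrm:main_result_convergence}, and the theorem gives assertion (v), i.e.\ weak conditional convergence. Nothing more is needed for this part.

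For (i), the task is to upgrade the pointwise hypotheses to assertion (i) or (ii) of Theorem \ref{thrm:main_result_convergence}.

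1. Each $(\psi_n)_{1-\tau_{A_n}}$ and the limit $(\psi)_{1-\tau_A}$ is a non-increasing continuous function from $1$ at $0$ to $0$ at $\infty$. By Lemma \ref{lem:trans_gen}, since $1-\tau_{A_n}\in(0,1]$, these are generators. So $1-(\psi_n)_{1-\tau_{A_n}}$ can be viewed as a continuous distribution function on $[0,\infty)$.
2. Pointwise convergence to a continuous distribution function is automatically uniform (Pólya's theorem), the same argument used in the proof of assertion eight of Lemma \ref{lem:properties_trans_pickands}. This gives uniform convergence of the transformed generators on $[0,\infty)$.
3. Since $(\psi_n)_{1-\tau_{A_n}} \to (\psi)_{1-\tau_A}$ uniformly, the pseudo-inverses satisfy $(\varphi_n)_{1-\tau_{A_n}} \to (\varphi)_{1-\tau_A}$ pointwise on $(0,1]$, as in \cite{bernoulli}.
4. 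It remains to upgrade the pointwise convergence $(A_n)_{1-\tau_{A_n}} \to (A)_{1-\tau_A}$ to uniform convergence on $\mathbb{I}$.

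Step 4 is the main obstacle, because $(A)_{1-\tau_A}$ need not be convex, so the usual argument "pointwise plus convex gives uniform" is not available. I would instead use equicontinuity and Arzelà–Ascoli.

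1. By formula \eqref{eq:right-hand_der}, with $|D^+A_n|\le 1$ and $A_n\ge \tfrac12$, the right-hand derivatives are bounded uniformly in $n$ on every compact subinterval of $(0,1)$, provided $\tau_{A_n}$ stays bounded away from $1$.
2. Near the endpoints I would use the bound $A_M \le (A_n)_{1-\tau_{A_n}} \le 2^{\tau_{A_n}/(1-\tau_{A_n})}$ from Lemma \ref{lem:properties_trans_pickands}(v), together with the boundary values $(A)_\alpha(0)=(A)_\alpha(1)=1$.
3. Alternatively, for the endpoints one can use the representation \eqref{eq:A.alpha.via.measure} of $((A)_\alpha)^\alpha$ as an integral of $\max\{t^\alpha(1-s),(1-t)^\alpha s\}$, which is equicontinuous in $t$ uniformly in $\alpha$ on compact subsets of $(0,1]$.
4. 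To keep $\tau_{A_n}$ away from $1$, I would argue via $\tau_{(\psi_n)_{1-\tau_{A_n}}}=\tau_{A_n}+(1-\tau_{A_n})\tau_{\psi_n}$ (Lemma \ref{lem:kendalls_tau_trans}). Uniform convergence of these generators to a generator not giving $M$ then gives $\sup_n\tau_{A_n}<1$, mirroring the contradiction argument in the proof of Lemma \ref{lem:convergence}.
5. With equicontinuity established, pointwise convergence on the compact set $\mathbb{I}$ becomes uniform.

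At that point assertion (i) of Theorem \ref{thrm:main_result_convergence} holds, so (v) follows and the claim is proved.
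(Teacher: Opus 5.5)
Your proposal is correct. At the top level it is the paper's argument: both parts are reduced to weak conditional convergence via Theorem~\ref{thrm:main_result_convergence}, and part (ii) is handled exactly as the paper does.

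The difference is in part (i). The paper's proof (``immediate consequence of Theorem~\ref{thrm:main_result_convergence} and dominated convergence'') never addresses a mismatch: assertions (i) and (ii) of that theorem require \emph{uniform} convergence of $(A_n)_{1-\tau_{A_n}}$, while the corollary assumes only pointwise convergence. Your upgrade step supplies exactly what the paper leaves implicit. P\'olya's theorem handles the transformed generators. Equicontinuity handles the transformed Pickands functions. The bound $\sup_n\tau_{A_n}<1$ follows from $\tau_{(\psi_n)_{1-\tau_{A_n}}}=\tau_{A_n}+(1-\tau_{A_n})\tau_{\psi_n}\geq 2\tau_{A_n}-1$, together with continuity of Kendall's $\tau$ and the fact that the limit $C_{(\psi)_{1-\tau_A}}$ is Archimedean and so has $\tau<1$. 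This bound must come first, since every equicontinuity estimate depends on $\varepsilon_0$.

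One caveat inside the upgrade. Your first endpoint argument, via $A_M\le (A_n)_{1-\tau_{A_n}}\le 2^{\tau_{A_n}/(1-\tau_{A_n})}$, does not give equicontinuity at $0$ and $1$. The upper bound is a constant that may exceed $1$, so it does not force the functions close to their boundary value $1$.

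Your alternative via eq.~\eqref{eq:A.alpha.via.measure} is the one that works, and it works on all of $\mathbb{I}$:
\begin{itemize}
\item For $\alpha\in[\varepsilon_0,1]$, subadditivity gives $|t^\alpha-u^\alpha|\le|t-u|^{\alpha}\le|t-u|^{\varepsilon_0}$, hence $|((A)_\alpha)^\alpha(t)-((A)_\alpha)^\alpha(u)|\le 2|t-u|^{\varepsilon_0}$.
\item $((A)_\alpha)^\alpha$ takes values in $[\tfrac12,2]$ and $1/\alpha\le 1/\varepsilon_0$, so taking the $1/\alpha$-th power is uniformly Lipschitz.
\item Consequently the interior derivative bound from eq.~\eqref{eq:right-hand_der} is not needed at all.
\end{itemize}
If you want to keep the first endpoint argument instead, replace the constant bound by the sharper estimate $1-t\le (A)_\alpha(t)\le (t^\alpha+(1-t)^\alpha)^{1/\alpha}\le (1+t^{\varepsilon_0})^{1/\varepsilon_0}$ for $t\le\tfrac12$, and argue symmetrically near $1$.
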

\begin{proof}
    Immediate consequence of Theorem \ref{thrm:main_result_convergence} and dominated convergence. 
\end{proof}
We close this section with 
the following direct consequence of Theorem \ref{thrm:main_result_convergence}.
\begin{corollary}\label{cor:dense.subclass}
The family $\mathcal{C}_{am}^{pl}$ of all Archimax copulas $C$ for which there 
exists some piecewise linear $\psi \in \Psi$ and some 
piecewise linear $A \in \mathcal{A}$ with $C=C_{\psi,A}$ is dense in 
$(\mathcal{C}_{am},d_\infty)$ as well as in $(\mathcal{C}_{am},D_p)$, for every $p \in [1,\infty)$.
\end{corollary}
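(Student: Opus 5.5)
Given $C_{\psi,A}\in\mathcal{C}_{am}$, we aim to construct piecewise linear $\psi_n\in\Psi$ and $A_n\in\mathcal{A}$ with $C_{\psi_n,A_n}\to C_{\psi,A}$ uniformly. Weak conditional convergence, and hence $D_p$-convergence for $p\in[1,\infty)$, then follows from Theorem \ref{thrm:main_result_convergence} or from \cite{tru06}. Note that piecewise linearity refers to the original pair $(\psi,A)$, not to the transformed functions. This lets us bypass the transformed parametrization entirely and use the elementary implication stated after the definition of Archimax copulas: if $\psi_n\to\psi$ uniformly and $A_n\to A$ uniformly, then $C_{\psi_n,A_n}\to C_{\psi,A}$.

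The case $A=A_M$ is trivial, since $C_{\psi,A_M}=M$ and $A_M$ is itself piecewise linear; so we may take $\psi_n=\psi$ for any piecewise linear $\psi\in\Psi$, for instance the normalized generator of $W$. For general $A$, we let $A_n$ be the piecewise linear interpolation of $A$ at the nodes $k/n$. Convexity and the bounds $\max\{t,1-t\}\le A_n\le1$ are preserved, because the interpolant of a convex function is convex and lies above it, while the endpoints and the constraint are linear or convex. Hence $A_n\in\mathcal{A}$, and since $A$ is $1$-Lipschitz, $\|A_n-A\|_\infty\le 2/n$.

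For the generator, we would work through the Williamson measure $\gamma\in\mathcal{P}_\mathcal{W}$ of eq.~\eqref{eq:rel_psi_gamma}. Choosing $\gamma_n$ discrete with finitely many atoms makes $\psi_n(z)=\int(1-tz)_+\,\mathrm{d}\gamma_n(t)$ piecewise linear and convex. The task is then to approximate $\gamma$ weakly by such $\gamma_n$ while keeping $\gamma_n(\{0\})=0$ and the normalization $\int_\mathbb{I}(1-t)\,\mathrm{d}\gamma_n(t)=\tfrac12$.

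This normalization is the main obstacle, particularly when $\gamma$ is strict, i.e.\ has mass near $0$. We would first truncate and discretize: push the mass of $\gamma$ on $[0,1/n)$ to the point $1/n$ and quantize on a fine grid, which gives weak convergence. The normalization can then be repaired by a small rescaling $z\mapsto c_n z$ with $c_n\to1$; since $\psi_n(c_n\cdot)$ remains piecewise linear and convex, this works provided $\psi_n(1)\to\psi(1)=\tfrac12$ and $\psi_n$ is strictly decreasing near $1$. Weak convergence of the $\gamma_n$ yields pointwise convergence of $\psi_n$, because $t\mapsto(1-tz)_+$ is bounded and continuous. Since the $\psi_n$ are monotone with continuous limit $\psi$, which tends to $0$ at infinity, pointwise convergence upgrades to uniform convergence on $[0,\infty)$. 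Combining this with $A_n\to A$ and the implication quoted above finishes the proof.
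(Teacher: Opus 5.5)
Your proposal is correct and follows the paper's route. The paper's proof simply cites the fact that every $\psi\in\Psi$ and every $A\in\mathcal{A}$ are limits of piecewise linear elements of $\Psi$ and $\mathcal{A}$, and then applies Theorem~\ref{thrm:main_result_convergence}. You additionally prove that approximation fact (chordal interpolation of $A$, discretized and rescaled Williamson measures for $\psi$) and treat the case $A=A_M$ separately. Two small points should be stated explicitly: $A\neq A_M$ gives $A_n(\tfrac12)\ge A(\tfrac12)>\tfrac12$, so $A_n\neq A_M$ and Theorem~\ref{thrm:main_result_convergence} applies for the $D_p$ part; and $c_n\to1$ follows from the pointwise convergence $\psi_n\to\psi$ together with strict monotonicity of the limit $\psi$ near $1$, not merely of $\psi_n$.
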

\begin{proof}
    Direct consequence of Theorem \ref{thrm:main_result_convergence} and the 
    fact that every $\psi \in \Psi$ and every 
    $A \in \mathcal{A}$ are the limits of piecewise linear elements in 
    $\Psi$ and $\mathcal{A}$, respectively.
\end{proof}
\section{A novel nonparametric estimator for bivariate Archimax copulas}\label{sec:estimator}
\noindent Throughout this section we consider a stationary and ergodic sequence (see \citep[Section 6.1]{durrett2019} for formal definitions) $(X_1,Y_1), (X_2,Y_2), \ldots$ of random vectors with common bivariate, continuous distribution function $H$, and univariate marginal distribution functions $F$ and $G$; this stationarity and ergodicity assumption will be used frequently throughout the remainder of the paper. We further assume that the corresponding copula $C$ of $H$ is an Archimax copula $C_{\psi,A} \in \mathcal{C}_{am}$ with generator $\psi \in \Psi$ and Pickands dependence function $A \in \mathcal{A}\setminus\{A_M\}$.\\
Under stationarity and ergodicity, Birkhoff's Ergodic Theorem (see \citep{durrett2019}) implies that the empirical distribution function $H_n$ converges uniformly to the population distribution $H$ almost surely, i.e.,
\begin{equation*}
	\sup_{(x,y) \in \mathbb{R}^2} \left| H_n(x,y) - H(x,y) \right| \xrightarrow{\text{a.s.}} 0.
\end{equation*}
As a consequence, the empirical copula $\hat{C}_n$ converges uniformly to the population copula $C_{\psi,A}$ almost surely as well. This, in turn, implies that the copulas $C_n^B$ -- that is, the empirical copula with copula $B$ used as interpolation -- also converge uniformly to $C_{\psi,A}$ almost surely.\\
Building upon Theorem \ref{thrm:main_result_convergence}, 
the goal of this section is to construct a strongly consistent, fully 
 non-parametric estimator $C_{\psi_n,A_n}$ of the Archimax copula $C_{\psi,A}$ without leaving the Archimax class, i.e.,  
 each $C_{\psi_n,A_n}$ should be an Archimax copula, too.
 Considering Theorems \ref{thm:identifiability} and \ref{thrm:main_result_convergence}, the model $C_{\psi_n,A_n}$ is then identifiable in terms of 
 $(\psi_n)_{1-\tau_{A_n}}$ and $(A_n)_{1-\tau_{A_n}}$.
\subsection{Estimating the Kendall distribution function}\label{subs:estimator_kendall}
\noindent According to Lemma \ref{lem:kendall_is_archimedean}, 
the Kendall distribution function $F_{\psi,A}^K$ of the Archimax copula $C_{\psi,A}$
is the Kendall distribution function of the Archimedean copula 
$C_{(\psi)_{1-\tau_A}}$.
We therefore follow \cite{GNZ} and estimate $F^K_{\psi,A}=F^K_{(\psi)_{1-\tau_A}}$ 
by the function $K_n$ constructed as follows: 
Consider the pseudo sample $W_1, W_2, \ldots W_n$, given by
\begin{equation}\label{eq:pseudo_sample}
W_{i}= \frac{1}{n+1}\sum_{k = 1}^n \mathbf{1}_{(-\infty,X_{i}) \times (-\infty,Y_{i})}(X_{k},Y_k) 
\in \left\{0,\frac{1}{n+1},\frac{2}{n+1},\ldots,\frac{n-1}{n+1}\right\} 
\end{equation}
for every $i \in \{1,2,\ldots,n\}$ and define $K_n: \mathbb{I} \rightarrow \mathbb{I}$ as
\begin{equation}\label{eq:est_kend}
K_{n}(t) := \frac{1}{n} \sum_{k = 1}^n \mathbf{1}_{[0,t]}(W_{k})
\end{equation}
for every $t \in \mathbb{I}$. 
Obviously $K_n$ fulfills $K_n(0) \geq \frac{1}{n}$ 
as well as $K_n(t)=1$ for every $t \in [\frac{n-1}{n+1},1]$ and 
$K_n(t) \geq \frac{n+1}{n}t = t + \frac{t}{n}$ for every $t \in [0, \frac{n-1}{n+1}]$. 
Based on these properties, it is straightforward to 
verify that $K_n$ is bounded below by the distribution function 
$\underline{K}_n: \mathbb{I}
\rightarrow \mathbb{I}$ of the uniform distribution on the set $\left\{0,\frac{1}{n+1},\frac{2}{n+1},\ldots,\frac{n-1}{n+1}\right\} $, i.e., by the function
\begin{equation}\label{eq:lower.bound.K_n}
  \underline{K}_n(t)=\frac{1}{n} \sum_{i=1}^n \mathbf{1}_{[0,t]}(\tfrac{i-1}{n+1}), 
\quad t \in \mathbb{I}.
\end{equation}
According to \cite{GNZ}, the estimator $K_n$ (and also the function 
$\underline{K}_n$) is the Kendall distribution function of an Archimedean copula $C_{\psi_n}$. 
Notice that $K_n$ is not the Kendall distribution function 
of any empirical copula $C_n^B$, it is, however, close to $F^K_{C_n^W}$ and has the same limiting behavior. In fact, setting $\hat{W}_i:=\frac{n+1}{n}W_i \in \mathbb{I}$ and defining 
$\hat{K}_{n}(t) := \frac{1}{n} \sum_{k = 1}^n \mathbf{1}_{[0,t]}(\hat{W}_{k})$ it is 
straightforward to verify that $\hat{K}_n$ coincides with the Kendall distribution function 
of $C_n^W$ (i.e., of the empirical copula with the Fréchet--Hoeffding bound $W$ as 
`interpolation'). As mentioned in \cite{GNZ}, using the fact that convergence of 
copulas implies convergence of the corresponding Kendall distribution functions
together with continuous mapping theorem and $\lim_{n \rightarrow \infty} \frac{n}{n+1}=1$, 
it is therefore straightforward to prove that with probability $1$ we have that $(K_n)_{n \in \mathbb{N}}$ converges weakly to $F^K_{\psi,A}=F^K_{(\psi)_{1-\tau_A}}$.
\subsection{Unadjusted estimator for the transformed generator $(\psi)_{1-\tau_A}$}\label{subsec:6.2}
\noindent Since for bivariate Archimedean copulas, weak convergence 
of Kendall distribution functions is equivalent to uniform convergence of the 
(normalized) generators $\psi_n$ as well as to pointwise convergence of the 
pseudo-inverses $\varphi_n$ on $(0,1]$ (see \cite{bernoulli, mult_arch}) 
we already know that $(\psi_n)_{n \in \mathbb{N}}$ converges uniformly to 
$(\psi)_{1-\tau_A}$ on $[0,\infty)$ and $(\varphi_n)_{n \in \mathbb{N}}$ 
pointwise to $(\varphi)_{1-\tau_A}$ on $(0,1]$ with probability $1$.
Moreover, using eq. \eqref{eq:kendall.arch} the pseudo-inverse $\varphi_n$ 
of $\psi_n$ corresponding to $C_{\psi_n}$ can be constructed from $K_n$ 
in terms of (also see \cite{GR})
\begin{equation}\label{eq:phi.via.kendall}
\log{\varphi_n(t)} = \mathrm{sgn}(t-\tfrac{1}{2})\int_{\min\{t,\frac{1}{2}\}}^{\max\{t,\frac{1}{2}\}} \frac{1}{s-K_n(s)} \mathrm{d}\lambda(s).
\end{equation}
Notice that the resulting $\varphi_n$ is normalized and piecewise linear (with $W_1,\ldots,W_n$ being edges) and the same holds for the corresponding generator 
$\psi_n$. Moreover, again considering $K_n \geq \underline{K}_n $ we obviously have $K_n(s)-s \geq \underline{K}_n(s)-s>0$ for every $s \in [0,1)$, so for every $t \in [0,\frac{1}{2}]$
$$
\log{\varphi_n(t)} = \int_{[t,\frac{1}{2}]} \frac{1}{K_n(s)-s} \mathrm{d}\lambda(s)
\leq \int_{[t,\frac{1}{2}]} \frac{1}{\underline{K}_n(s)-s} \mathrm{d}\lambda(s)
$$
holds. Considering $t=0$, distinguishing the case of even and odd $n$, 
and calculating the latter integral yields 
\begin{align*}
   \log{\varphi_n(0)} &\leq  \int_{[0,\frac{1}{2}]} \frac{1}{\underline{K}_n(s)-s} \mathrm{d}\lambda(s) = 
   \begin{cases}
 \log\left(\binom{\frac{3n+1}{2}}{\frac{n+1}{2}}\right), & n \text{ odd}, \\[4mm]
\log\left(\binom{\frac{3n}{2}}{\frac{n}{2}}\right) + \,\log\left(\tfrac{3n+2}{2n+2}\right), & n \text{ even}.
\end{cases}
\end{align*}
so every generator $\psi_n$ is non-strict.
Summing up, we have that the sequence $(\psi_n)_{n \in \mathbb{N}}$ of 
normalized, non-strict, piecewise linear Archimedean generators 
converges uniformly to $(\psi)_{1-\tau_A}$ with probability $1$.  \\
In the sequel we will work with $\mathbb{E}[Z_n]$ for $Z_n$
having survival function $\psi_n$. Doing so, considering that $\varphi_n$ and 
$\psi_n$ are piecewise linear, it suffices to determine the values $\varphi_n(W_i)$
for every $i \in \{1,\ldots,n\}$, which can easily be done as follows: 
Letting $0=w_1,w_2,\ldots,w_m \leq \frac{n-1}{n+1}$ denote the unique values of 
$W_1, W_2, \ldots W_n$ in increasing order and setting 
$\ell:= \max\{i\colon: \, w_i < \frac{1}{2} \}$, using eq. \eqref{eq:phi.via.kendall}, as 
first step we obtain 
$$
\log{\varphi_n(w_{\ell+1})} = \log{\varphi_n(w_{\ell+1})} - \log{\varphi_n(\tfrac{1}{2})} =
\log{\frac{K_n(\tfrac{1}{2})-w_{\ell+1}}{K_n(\tfrac{1}{2})-\tfrac{1}{2}}},
$$
which is equivalent to 
$$
\frac{\varphi_n(w_{\ell+1})}{\varphi_n(\tfrac{1}{2})} = 
\frac{K_n(\tfrac{1}{2})-w_{\ell+1}}{K_n(\tfrac{1}{2})-\tfrac{1}{2}}.
$$
Proceeding in the same manner yields  
$$
\frac{\varphi_n(w_{\ell+2})}{\varphi_n(w_{\ell+1})} = 
\frac{K_n(w_{\ell+1})-w_{\ell+2}}{K_n(w_{\ell+1})-w_{\ell+1}}, \quad 
\frac{\varphi_n(w_{\ell+3})}{\varphi_n(w_{\ell+2})} = 
\frac{K_n(w_{\ell+2})-w_{\ell+3}}{K_n(w_{\ell+2})-w_{\ell+2}},\ldots,
$$
i.e., all values $\varphi_n(w_i)$ with $i > \ell$ can easily be determined recursively. 
For $i< \ell$ obviously an analogous recursion holds and we obtain
$$
\frac{\varphi_n(w_{\ell})}{\varphi_n(\tfrac{1}{2})} = 
\frac{K_n(w_\ell)-w_{\ell}}{K_n(w_\ell)-\tfrac{1}{2}}, \quad 
\frac{\varphi_n(w_{\ell-1})}{\varphi_n(w_{\ell})} = 
\frac{K_n(w_{\ell-1})-w_{\ell-1}}{K_n(w_{\ell-1})-w_{\ell}}, \quad 
\frac{\varphi_n(w_{\ell-2})}{\varphi_n(w_{\ell-1})} = 
\frac{K_n(w_{\ell-2})-w_{\ell-2}}{K_n(w_{\ell-2})-w_{\ell-1}}, \ldots
$$
\subsection{Unadjusted estimator for the transformed Pickands dependence function $(A)_{1-\tau_A}$}
\noindent Now we estimate the function $(A)_{1-\tau_A}$ using both -  
a version of the Pickands and the CFG type estimator as proposed in 
\citep{caperaa1997, est-archimax, genest2009, pickands1981}, and start by motivating
the estimators theoretically. Using the assumptions for the distribution 
function $H$, the underlying copula $C$ and the Pickands function $A \neq A_M$ 
as stated at the beginning of this section.
Defining
\begin{equation}\label{eq:population_xi}
\xi(t) := \min\left\{\frac{(\varphi)_{1-\tau_A}(F(X))}{t},\frac{(\varphi)_{1-\tau_A}(G(Y))}{1-t}\right\}
\end{equation}
for every $t \in (0,1)$ and setting $\xi(0) = (\varphi)_{1-\tau_A}(G(Y))$ as well as $\xi(1) = (\varphi)_{1-\tau_A}(F(X))$, it is straightforward to verify that
$$
\mathbb{P}[\xi(t) > x] = (\psi)_{1-\tau_A}(x\,(A)_{1-\tau_A}(t))
$$
holds for every $x>0$.\\
Suppose now that $Z$ is a random variable with survival function $(\psi)_{1-\tau_A}$, i.e.,
$$
\mathbb{P}[Z>z] = (\psi)_{1-\tau_A}(z)
$$
for every $z \in [0,\infty)$. Then obviously for every $t \in \mathbb{I}$ 
the random variable $\frac{Z}{(A)_{1-\tau_A}(t)}$ has the same distribution as $\xi(t)$. Building on this fact, we obtain, for every $t \in \mathbb{I}$, the following representation of $(A)_{1-\tau_A}(t)$ in terms of the expected values of $Z$ and $\xi(t)$:
\begin{equation}\label{eq:theoretical_pickands}
(A)_{1-\tau_A}(t)= \frac{\mathbb{E}[Z]}{\mathbb{E}[\xi(t)]}.
\end{equation}
Moreover, if $\mathbb{E}[\log Z]$ exists, we 
similarly obtain that
\begin{equation}\label{eq:theoretical_cfg}
(A)_{1-\tau_A}(t) = \exp\left(\mathbb{E}[\log Z] - \mathbb{E}[\log \xi(t)]\right)
\end{equation}
for every $t \in \mathbb{I}$.\\
As discussed in \ref{subsec:strong_consistency_pick}, the expected values $\mathbb{E}[Z]$ and $\mathbb{E}[\log Z]$ admit the representations
$$
\mathbb{E}[Z] = \int_\mathbb{I}(\varphi)_{1-\tau_A}(s) \, \mathrm{d}\lambda(s)
\qquad \text{and} \qquad
\mathbb{E}[\log Z] = \int_\mathbb{I}\log(\varphi)_{1-\tau_A}(s) \, \mathrm{d}\lambda(s).
$$
This representation enables direct estimation of $\mathbb{E}[Z]$ and $\mathbb{E}[\log Z]$ via $\varphi_n$, as defined in \eqref{eq:phi.via.kendall}.
\begin{remark}
    The fact that $\frac{Z}{(A)_{1-\tau_A}(t)}$ has the same distribution as $\xi(t)$ could alternatively be used to determine 
    $(A)_{1-\tau_A}(t)$ in the sense of eq. \eqref{eq:theoretical_pickands} in terms 
    of other `aggregations' of $Z$ and $\xi(t)$.
    Avoiding the potential problem of non-existence of $\mathbb{E}[Z]$ one could, 
    e.g., work with quantiles of $Z$ and $\xi(t)$. In analogy with \cite{est-archimax},
    however, we here stick to the approach via expectation. 
    Figure~\ref{fig:quantile} depicts an example of the estimator $B_{n,c}^\mathbf{P}$ (proposed in eq.~\eqref{eq:cfg_type_estimator_c}) with the expected values replaced by $\alpha$-quantiles. 
    Whether such a quantile-based estimator is strongly consistent in general, however, remains an open question which we plan to tackle in the near future.
\end{remark}
\begin{figure}[!ht]
	\centering
	\includegraphics[width=1\textwidth]{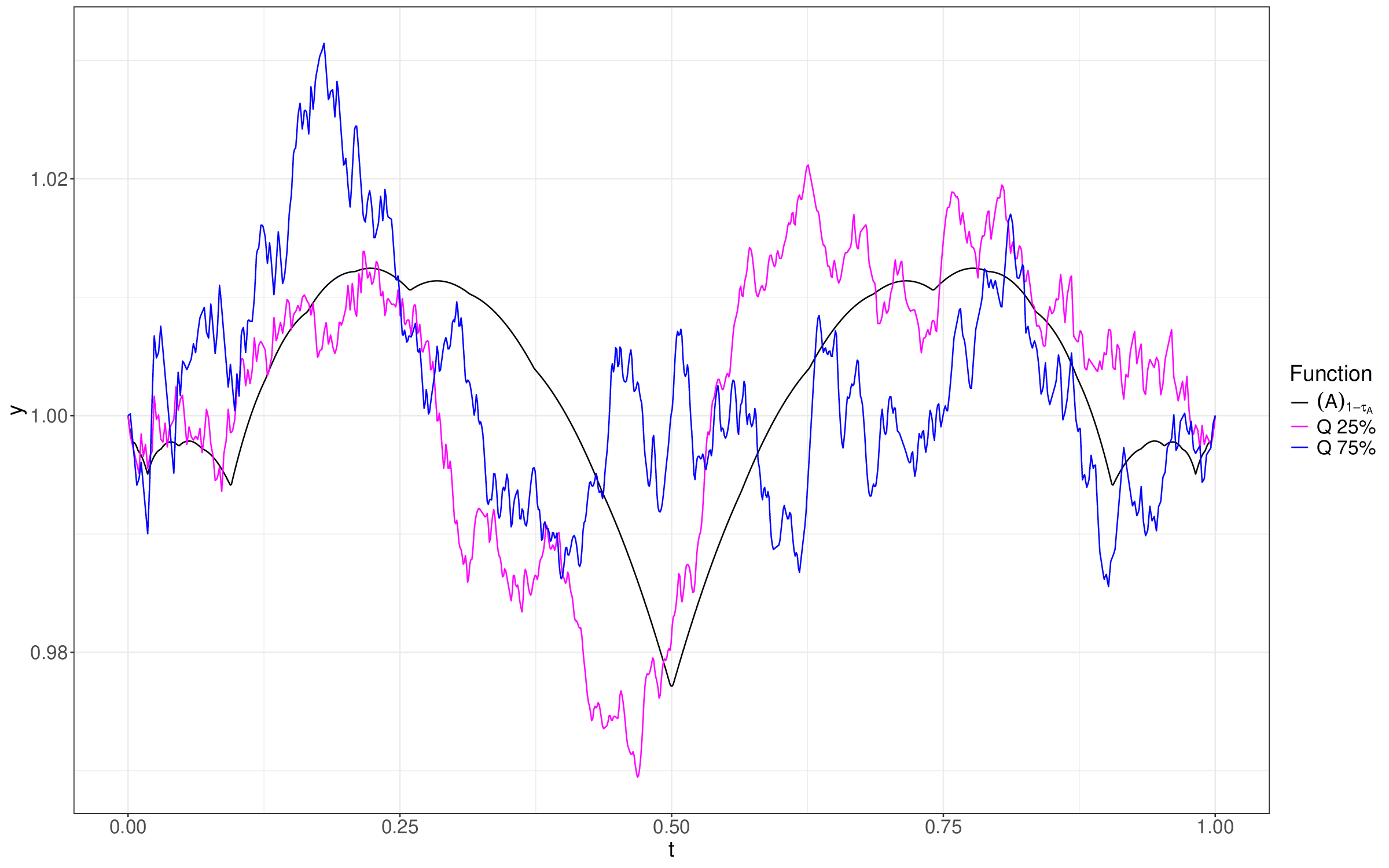}
	\caption{Plots of the estimator $B_{n,c}^{\mathbf{P}}$ proposed in eq. \eqref{eq:cfg_type_estimator_c} based on $\alpha$-quantiles rather than on expected values, with $B_{n,c}^{\mathbf{P}}$ shown in magenta for $\alpha = 0.25$ and in blue for $\alpha = 0.75$. The black line depicts the function $(A)_{1-\tau_A}$.}
	\label{fig:quantile}
\end{figure}
Considering the aforementioned properties, it seems natural 
to plug-in the empirical versions and proceed as follows: 
Letting $F_n$ and $G_n$ denote the empirical (marginal) distribution functions of the first
$n$ elements of the sample, we set
$$
\hat{U}_i := \frac{n}{n+1}F_n(X_i) \,\,\, \text{ and } \,\,\, \hat{V}_i := \frac{n}{n+1}G_n(Y_i)
$$
and define
\begin{equation}\label{eq:xis}
\xi_{n,i}(t) := \min\left\{\frac{\varphi_n(\hat{U}_i)}{t},\frac{\varphi_n(\hat{V}_i)}{1-t}\right\}
\end{equation}
for every $i \in \{1,...,n\}$ and $t \in (0,1)$; for $t \in \{0,1\}$ as before we set
$\xi_{n,i}(0) := \varphi_n(\hat{V}_i)$ as well as $\xi_{n,i}(1) := \varphi_n(\hat{U}_i)$.\\
Using eq. \eqref{eq:theoretical_pickands}, our uncorrected Pickands type 
estimator is then defined as \\
\begin{equation*}
B_{n,u}^\mathbf{P}(t)  := \frac{\frac{1}{n}\sum_{i = 1}^n \varphi_n\left(\frac{i}{n+1}\right)}{\frac{1}{n}\sum_{i=1}^n \xi_{n,i}(t)}.
\end{equation*}
Moreover, considering eq. \eqref{eq:theoretical_cfg} our uncorrected CFG type estimator $B_{n,u}^\mathbf{CFG}$ is given by
\begin{equation*}
B_{n,u}^\mathbf{CFG}(t)  := \exp\left(\frac{1}{n}\sum_{i = 1}^n \log\left(\varphi_n\left(\frac{i}{n+1}\right)\right) - \frac{1}{n}\sum_{i=1}^n\log \xi_{n,i}(t)\right) = \left[\frac{\prod_{i=1}^n\varphi_n(\frac{i}{n+1})}{\prod_{i=1}^n\xi_{n,i}(t)}\right]^\frac{1}{n}
\end{equation*}
for every $t \in \mathbb{I}$. 
For $\Xi \in \{\mathbf{P},\mathbf{CFG}\}$ we obviously have that $B_{n,u}^\Xi(0) = 1 = B_{n,u}^\Xi(1)$. Assuring that the estimator is strictly greater than $\frac{1}{2}$
at $t=\frac{1}{2}$, we correct the estimators $B_{n,u}^\Xi$ and define the corrected estimators $B_{n,c}^\Xi$ as
\begin{equation}\label{eq:cfg_type_estimator_c}
B_{n,c}^\Xi(t) := B_{n,u}^\Xi(t) + \varepsilon_n4t(1-t)
\end{equation}
for every $t \in \mathbb{I}$, whereby $(\varepsilon_n)_{n \in\mathbb{N}}$ can be any sequence fulfilling $\varepsilon_n > 0$ and $\varepsilon_n \overset{n \rightarrow \infty}{\longrightarrow} 0$. Since $\varepsilon_n$ serves solely as a correction term 
ensuring $B_{n,c}^\Xi(1/2) > 1/2$, we choose $\varepsilon_n = \exp(-n)$ throughout this paper for simplicity, which ensures that $(\varepsilon_n)_{n \in \mathbb{N}}$ decays exponentially fast. One might ask why the quantities $\mathbb{E}[Z]$ and $\mathbb{E}[\log Z]$ are estimated via a Riemann type sum of $\varphi_n$ rather than by a direct approach. We also considered computing the integrals $\int_{\mathbb{I}}\varphi_n(t)\,\mathrm{d}\lambda(t)$ and $\int_{\mathbb{I}}\log\varphi_n(t)\,\mathrm{d}\lambda(t)$ directly, but found that the Riemann sum approach is easy to handle and provided superior results. 
\subsection{Adjusted estimators for $(\psi)_{1-\tau_A}$, $(A)_{1-\tau_A}$, and the copula estimator}\label{subsec:adjusted_est}
\noindent 
Throughout this section, we always consider $\Xi \in \{\mathbf{P}, \mathbf{CFG}\}$. 
As mentioned before, our objective is to construct an estimator 
for the copula $C_{\psi,A}$, which is itself of Archimax type. 
One natural approach is to define such an estimator as the composition of the estimators $\psi_n$, $\varphi_n$, and $B_{n,c}^\Xi$ established before. 
We therefore propose the estimator 
$D_n^\Xi \colon \mathbb{I}^2 \to \mathbb{I}$, given by
\begin{equation}\label{eq:D_n}
D_n^\Xi(x,y) := \psi_n\!\left((\varphi_n(x) + \varphi_n(y)) \, B_{n,c}^\Xi\left(\frac{\varphi_n(x)}{\varphi_n(x) + \varphi_n(y)}\right)\right),
\end{equation}
for every $(x,y) \in \mathbb{I}^2 \setminus \{(1,1)\}$, and set $D_n^\Xi(1,1) = 1$. Lemma \ref{lem:copula_est_part1} shows that $D_n^\Xi$ converges uniformly to $C_{\psi,A}$ with probability $1$. However, $D_n^\Xi$ is not necessarily a copula. In fact, it need not even be a distribution function, since it does not satisfy the $2$-increasingness property (see \citep{dur_princ, klenke}) in general. An example illustrating this fact is presented in Subsection \ref{subsec:discussion_2_increasing}.\\
\noindent Since we want to stay within the Archimax class, we modify $D_n^\Xi$ accordingly and proceed as follows. 
For the remainder of this section we fix $\alpha \in [0, 1-\frac{1}{n}]$. Projecting the estimators $B_{n,c}^\mathbf{P}$ and $B_{n,c}^\mathbf{CFG}$ into the space of Pickands dependence functions $\mathcal{A}$, we define the function
\begin{equation*}
T_{\alpha,n}^\Xi(t) := \left[\left(t^\frac{1}{1-\alpha} + (1-t)^\frac{1}{1-\alpha}\right)\,B_{n,c}^\Xi\left(\kappa_{\frac{1}{1-\alpha}}(t) \right)
\right]^{1-\alpha} =(B_{n,c}^\Xi)_{\frac{1}{1-\alpha}}(t)
\end{equation*}
for every $t \in \mathbb{I}$ and set
\begin{equation}\label{eq:A_alpha_n}
A_{\alpha,n}^\Xi := \mathrm{gcm}\min\left\{T_{\alpha,n}^\Xi,1\right\}.
\end{equation}
Lemma \ref{lem:pick_is_pick} states that $A_{\alpha,n}^\Xi$ is indeed a Pickands dependence function.
We calculate Kendall's $\tau$ of the corresponding EVC $C_{A_{\alpha,n}^\Xi} \in \mathcal{C}_{ev}$ according to eq. \eqref{eq:kendallstau_ex} and denote it by $\tau_{A_{\alpha,n}^\Xi}$. Figure~\ref{fig:explanation_regularization} depicts the functions $T_{\alpha,n}^\mathbf{CFG}$, $A_{\alpha,n}^\mathbf{CFG}$, and their transformed versions $(A_{\alpha,n}^\mathbf{CFG})_{1-\tau_{A_{\alpha,n}^\mathbf{CFG}}}$ for different values of $\alpha \in [0,1-\frac{1}{n}]$.
Moreover, defining the function $\varphi_{\alpha,n}^\Xi$ as
\begin{equation}\label{eq:est_phi_alpha}
\varphi_{\alpha,n}^\Xi(t) := \frac{\mathrm{gcm}(\varphi_n^{1-\tau_{A_{\alpha,n}^\Xi}})(t)}{\mathrm{gcm}(\varphi_n^{1-\tau_{A_{\alpha,n}^\Xi}})(\frac{1}{2})}
\end{equation}
for every $t \in \mathbb{I}$, Lemma \ref{lem:gen_is_gen} states that $\varphi_{\alpha,n}^\Xi$ is indeed the pseudo-inverse of a normalized generator 
$\psi_{\alpha,n}^\Xi$ of an Archimedean copula $C_{\psi_{\alpha,n}^\Xi}$. \\
Given $A_{\alpha,n}^\Xi \in \mathcal{A}$ and $\varphi_{\alpha,n}^\Xi \in \Psi$, the corresponding Archimax copula $C_{\alpha,n}^\Xi$ is defined as
\begin{equation}\label{eq:C_alpha_n}
C_{\alpha,n}^\Xi(x,y) :=\psi_{\alpha,n}^\Xi\left((\varphi_{\alpha,n}^\Xi(x) + \varphi_{\alpha,n}^\Xi(y))A_{\alpha,n}^\Xi\left(\frac{\varphi_{\alpha,n}^\Xi(x)}{\varphi_{\alpha,n}^\Xi(x) + \varphi_{\alpha,n}^\Xi(y)}\right)\right),
\end{equation}
for every $(x,y) \in \mathbb{I}^2\setminus \{(1,1)\}$ and 
by $C_{\alpha,n}(1,1) = 1$.\\
As discussed above, the estimator $D_n^\Xi$ is strongly consistent and therefore converges uniformly to the true copula $C_{\psi,A}$ with probability $1$. 
It therefore seems natural to choose $\alpha \in [0,1-\frac{1}{n}]$ in such a way that $C_{\alpha,n}^\Xi$ is as close as possible to $D_n^\Xi$. Defining
\begin{equation}\label{eq:alphas}
\alpha_n := \min\mathrm{argmin}_{\alpha \in [0,1-\frac{1}{n}]} \Vert C_{\alpha,n}^\Xi -D_n^\Xi\Vert_\infty,
\end{equation}
according to Lemma \ref{lem:existence_uniqueness_alpha} $\alpha_n$ exists and is unique.
We optimize over the interval $[0,1-\frac{1}{n}]$ instead of the whole interval $\mathbb{I}$ to prevent the term $\frac{1}{1-\alpha}$ form becoming unbounded.
Building upon $\alpha_n$, our proposed estimator for 
$(A)_{1-\tau_A}$ takes the form
\begin{equation}\label{eq:final_estimator}
(A_{\alpha_n,n}^\Xi)_{1-\tau_{A_{\alpha_n,n}^\Xi}}(t) = \left[(t^{1-\tau_{A_{\alpha_n,n}^\Xi}} + (1-t)^{1-\tau_{A_{\alpha_n,n}^\Xi}})A_{\alpha_n,n}^\Xi\left(\kappa_{1-\tau_{A_{\alpha_n,n}^\Xi}}(t)\right)
\right]^\frac{1}{1-\tau_{A_{\alpha_n,n}^\Xi}}
\end{equation}
and the estimator for $(\varphi)_{1-\tau_A}$ is given by
\begin{equation}\label{eq:est_trans_phi}
(\varphi_{\alpha_n,n}^\Xi)_{1-\tau_{A_{\alpha_n,n}^\Xi}}(t) = \left[\varphi_{\alpha_n,n}^\Xi(t)\right]^\frac{1}{1-\tau_{A_{\alpha_n,n}^\Xi}}
\end{equation}
for every $t \in \mathbb{I}$. Accordingly, our estimator for $(\psi)_{1-\tau_A}$ 
is given by  
\begin{equation}\label{eq:est_trans_psi}
(\psi_{\alpha_n,n}^\Xi)_{1-\tau_{A_{\alpha_n,n}^\Xi}}(z) = \psi_{\alpha_n,n}^\Xi\left(z^{1-\tau_{A_{\alpha_n,n}^\Xi}}\right)
\end{equation}
for every $z \in [0,\infty)$.
Notice that, since both the estimator for $(A)_{1-\tau_A}$ and the estimator for $(\varphi)_{1-\tau_A}$ rely on the greatest convex minorant, both are slightly biased.\\
Altogether, our proposed estimator of the Archimax copula $C_{\psi,A}$ is given by
\begin{equation}\label{eq:copula_estimator}
C_{\alpha_n,n}^\Xi(x,y) := C_{\psi_{\alpha_n,n}^\Xi, A_{\alpha_n,n}^\Xi}(x,y) =  C_{(\psi_{\alpha_n,n}^\Xi)_{1-\tau_{A_{\alpha_n,n}^\Xi}}, (A_{\alpha_n,n}^\Xi)_{1-\tau_{A_{\alpha_n,n}^\Xi}}}(x,y)
\end{equation}
for every $x,y \in \mathbb{I}$. \\
Since directly verifying consistency of the estimators $(\psi_{\alpha_n,n}^\Xi)_{1-\tau_{A_{\alpha_n,n}^\Xi}}$ and $(\varphi_{\alpha_n,n}^\Xi)_{1-\tau_{A_{\alpha_n,n}^\Xi}}$ in a simulation study is difficult (see Section \ref{subsec:sim_study_pick_cfg_beta} for further explanations), prior to stating and proving consistency, we introduce an auxiliary function that will serve as 
reference function. For every $t \in (0,1)$, we define $(\beta)_{1-\tau_A}$ in accordance with eq. \eqref{eq:beta} by
\begin{align*}
(\beta)_{1-\tau_A}(t) &:= D^-(\psi)_{1-\tau_A}((\varphi)_{1-\tau_A}(t))(\varphi)_{1-\tau_A}(t) = (1-\tau_A)\beta(t)
\end{align*}
and set $(\beta)_{1-\tau_A}(0) = 0 = (\beta)_{1-\tau_A}(1)$.\\
Correspondingly, using eq. \eqref{eq:phi.via.kendall} we define the empirical counterpart as
\begin{equation}\label{eq:beta_n}
\beta_n(t) := D^-\psi_n(\varphi_n(t))\varphi_n(t)
\end{equation}
together with its transformed version, obtained via eq. \eqref{eq:est_trans_phi},
\begin{align}\label{eq:beta_n_trans}
(\beta_{\alpha_n,n}^\Xi)_{1-\tau_{A_{\alpha_n,n}^\Xi}}(t) &:= D^-(\psi_{\alpha_n,n}^\Xi)_{1-\tau_{A_{\alpha_n,n}^\Xi}}((\varphi_{\alpha_n,n}^\Xi)_{1-\tau_{A_{\alpha_n,n}^\Xi}}(t))(\varphi_{\alpha_n,n}^\Xi)_{1-\tau_{A_{\alpha_n,n}^\Xi}}(t) \nonumber\\
&= (1-\tau_{A_{\alpha_n,n}^\Xi})D^-\psi_{\alpha_n,n}^\Xi(\varphi_{\alpha_n,n}^\Xi(t))\varphi_{\alpha_n,n}^\Xi(t)
\end{align}
for every $t \in \mathbb{I}$.
\begin{figure}[!ht]
	\centering
	\includegraphics[width=1\textwidth]{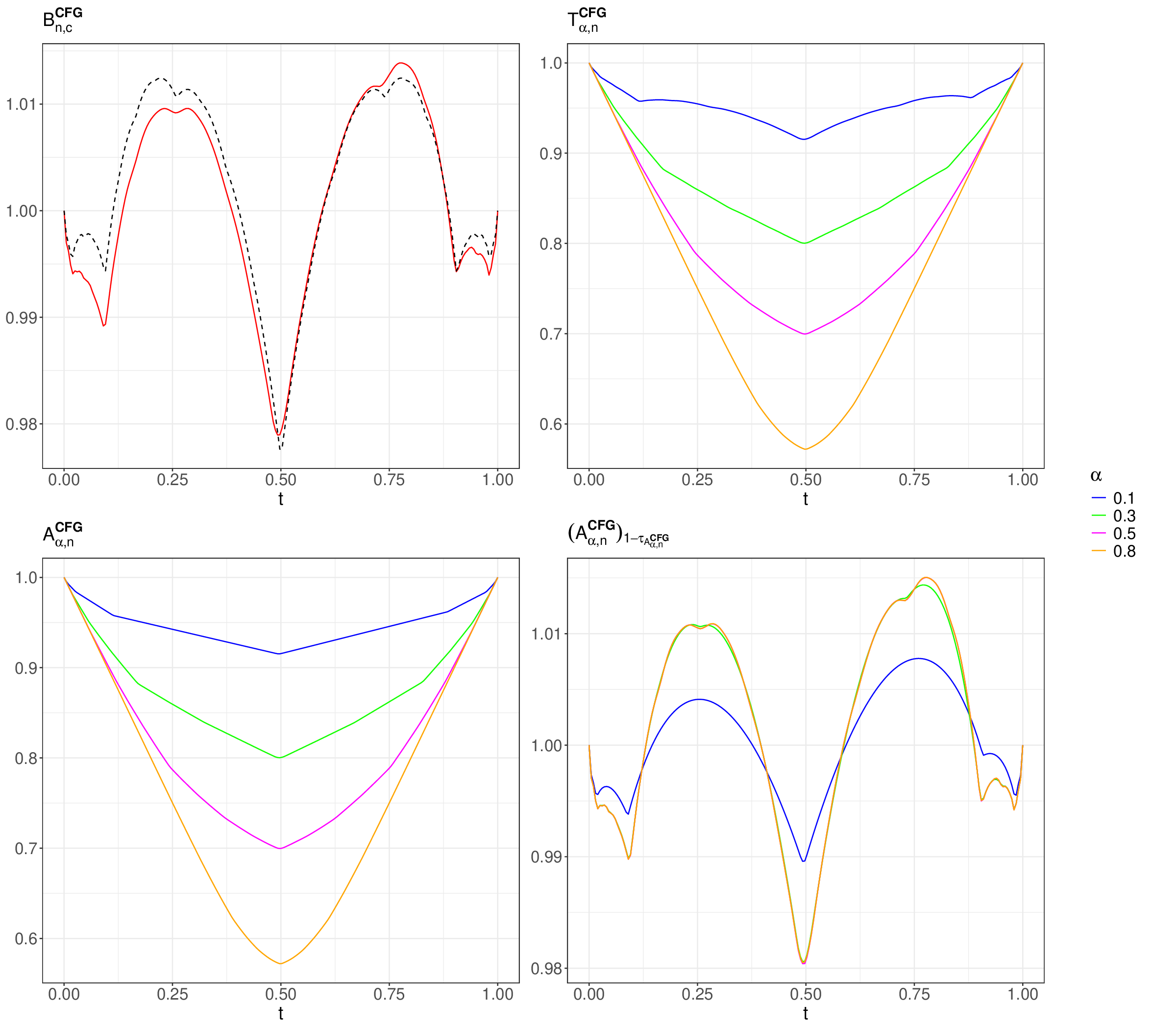}
	\caption{Plots of the estimator $B_{n,c}^{\mathbf{CFG}}$ (upper left panel, red line) and the function $(A)_{1-\tau_A}$ (upper left panel, dashed black line), the functions $T_{\alpha,n}^{\mathbf{CFG}}$ (upper right panel), the Pickands dependence functions $A_{\alpha,n}^{\mathbf{CFG}}$ (lower left panel), and their transformed versions $(A_{\alpha,n}^{\mathbf{CFG}})_{1-\tau_{A_{\alpha,n}^{\mathbf{CFG}}}}$ (lower right panel), for $\alpha \in \{0.1, 0.3, 0.5, 0.8\}$.}
	\label{fig:explanation_regularization}
\end{figure}
\subsection{Consistency of the Archimax estimator}
Building upon the previous observations, we can now state and prove 
strong consistency of our Archimax estimator. Fixing $\psi \in \Psi$ and $A \in \mathcal{A}$, denoting Kendall's $\tau$ of $C_A \in \mathcal{C}_{ev}$ 
by $\tau_A$, and letting $Z$ be a random variable with survival function 
$(\psi)_{1-\tau_A}$, throughout this section we assume that $\mathbb{E}[Z] < \infty$ if $\Xi = \mathbf{P}$, and $\mathbb{E}[|\log Z|] < \infty$ if $\Xi = \mathbf{CFG}$.
\begin{theorem}\label{thrm:strong_consistency_copula}
Let $(X_1,Y_1), (X_2,Y_2), \ldots$ be a stationary and ergodic sequence of random vectors with common bivariate, continuous distribution function $H$, univariate marginal distribution functions $F$ and $G$, and underlying Archimax copula $C_{\psi,A} \in \mathcal{C}_{am}$ with generator $\psi \in \Psi$ and Pickands dependence function $A \in \mathcal{A}\setminus\{A_M\}$. Furthermore, let $\Xi \in \{\mathbf{P},\mathbf{CFG}\}$ and let $C_{\psi_{\alpha_n,n}^\Xi, A_{\alpha_n,n}^\Xi}$ denote the estimator proposed in eq.~\eqref{eq:copula_estimator}. If $\Xi = \mathbf{P}$, we assume that Assumption~\ref{assumption1_pick} is satisfied; if $\Xi = \mathbf{CFG}$, we assume that Assumptions~\ref{assumption1_cfg} and~\ref{assumption2_cfg} are satisfied. Then, in either case,
\begin{equation}
	d_\infty \left(C_{\psi_{\alpha_n,n}^\Xi, A_{\alpha_n,n}^\Xi}, C_{\psi,A}\right) \xrightarrow{n\to\infty} 0
\end{equation}
with probability $1$.
\end{theorem}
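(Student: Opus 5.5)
The plan is to reduce everything to Theorem~\ref{thrm:main_result_convergence}, via the unadjusted estimator $D_n^\Xi$. By Lemma~\ref{lem:convergence}, uniform convergence of $C_{\alpha_n,n}^\Xi$ is equivalent to uniform convergence of the transformed generators and transformed Pickands dependence functions. So it suffices to prove that, with probability $1$,
\[
d_\infty\big(C_{\alpha_n,n}^\Xi,C_{\psi,A}\big)\le \Vert C_{\alpha_n,n}^\Xi-D_n^\Xi\Vert_\infty+\Vert D_n^\Xi-C_{\psi,A}\Vert_\infty\longrightarrow 0 .
\]
The second summand tends to $0$ by Lemma~\ref{lem:copula_est_part1}. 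That lemma rests on the almost sure uniform convergence $\psi_n\to(\psi)_{1-\tau_A}$ from Subsection~\ref{subsec:6.2}, on $B_{n,c}^\Xi\to(A)_{1-\tau_A}$ uniformly (Assumptions~\ref{assumption1_pick}, respectively \ref{assumption1_cfg} and \ref{assumption2_cfg}), and on the representation $C_{\psi,A}=C_{(\psi)_{1-\tau_A},(A)_{1-\tau_A}}$. For the first summand, the minimality property of $\alpha_n$ (Lemma~\ref{lem:existence_uniqueness_alpha}) gives
\[
\Vert C_{\alpha_n,n}^\Xi-D_n^\Xi\Vert_\infty\le \Vert C_{\alpha^\ast_n,n}^\Xi-D_n^\Xi\Vert_\infty
\]
for any admissible choice $\alpha_n^\ast\in[0,1-\frac1n]$. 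It is therefore enough to exhibit one sequence $\alpha_n^\ast$ with $C_{\alpha^\ast_n,n}^\Xi\to C_{\psi,A}$ uniformly almost surely.

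The natural candidate is $\alpha_n^\ast:=\min\{\tau_A,1-\frac1n\}$, which equals $\tau_A$ for large $n$ because $\tau_A<1$. For this choice,
\[
T^\Xi_{\alpha_n^\ast,n}=(B_{n,c}^\Xi)_{\frac{1}{1-\tau_A}}\longrightarrow ((A)_{1-\tau_A})_{\frac1{1-\tau_A}}=A
\]
uniformly, by Lemma~\ref{lem:properties_trans_pickands}(i) together with the argument in its last assertion, which works for general positive functions bounded away from $0$. Since $A\le1$ is convex, and both $f\mapsto\min\{f,1\}$ and $f\mapsto\mathrm{gcm}(f)$ are $1$-Lipschitz in the uniform norm and fix convex functions bounded by $1$, we get $A_{\alpha_n^\ast,n}^\Xi\to A$ uniformly. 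Continuity of $A\mapsto\tau_A$ on $\mathcal{A}$ (uniform convergence of EVCs implies convergence of Kendall's $\tau$) then gives $\tau_{A^\Xi_{\alpha_n^\ast,n}}\to\tau_A$. Next, $\varphi_n\to(\varphi)_{1-\tau_A}$ pointwise on $(0,1]$, so $\varphi_n^{1-\tau_{A^\Xi_{\alpha^\ast_n,n}}}\to\varphi$ pointwise; this limit is convex and decreasing. The greatest convex minorant is stable under such convergence: on compacts $[\delta,1]$ the convergence is uniform, since the functions are monotone and the limit is continuous, and gcm is $1$-Lipschitz there. After renormalising at $\frac12$, this yields $\varphi^\Xi_{\alpha_n^\ast,n}\to\varphi$ pointwise on $(0,1]$, hence $\psi^\Xi_{\alpha_n^\ast,n}\to\psi$ uniformly by \cite{bernoulli}. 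The implication noted after eq.~\eqref{eq:def_archimax} then gives $C_{\alpha_n^\ast,n}^\Xi\to C_{\psi,A}$ uniformly. Combining this with $D_n^\Xi\to C_{\psi,A}$ makes the first summand vanish.

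The main obstacle is the gcm step near $t=0$. Pointwise convergence on $(0,1]$ does not control the minorant globally, because mass can escape at $0$: $\varphi_n(0)$ is finite, whereas $\varphi(0)$ may be infinite. I would first try to show that $\mathrm{gcm}(f_n)(t)$ depends only on values of $f_n$ near $[t/2,1]$ up to an error that vanishes as $n\to\infty$, using monotonicity of $f_n$ together with convexity of the limit. If that localisation fails, I would instead go through Kendall distribution functions, comparing $t-\varphi/D^+\varphi$ before and after taking the minorant. A secondary issue is the convention for $\alpha$: the estimator is written with $\frac1{1-\alpha}$, so I must check that $\alpha=\tau_A$ really makes $(B)_{1/(1-\alpha)}$ undo the $(\cdot)_{1-\tau_A}$ transform. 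If the indexing turns out to be different, I would adjust $\alpha_n^\ast$ accordingly.
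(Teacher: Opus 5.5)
Your proposal is essentially the paper's proof: the triangle inequality through $D_n^\Xi$ (Lemma~\ref{lem:copula_est_part1}), minimality of $\alpha_n$ to compare with the admissible choice $\alpha=\tau_A$, and then the same chain as in Lemma~\ref{lem:copula_est_part2}, namely $T^\Xi_{\tau_A,n}\to A$, $A^\Xi_{\tau_A,n}\to A$, $\tau_{A^\Xi_{\tau_A,n}}\to\tau_A$, and gcm-stability for the generator; your indexing worry is unfounded, since $((A)_{1-\tau_A})_{\frac{1}{1-\tau_A}}=A$. The gcm step near $0$ that you flag, which the paper covers only by citing Lemma~\ref{lem:properties_gcm} on $[\delta,1]$, goes through by your localisation idea: $f_n=\varphi_n^{1-\tau_{A^\Xi_{\tau_A,n}}}$ is decreasing, so take a supporting line $\ell$ of $\varphi$ at $t\in(0,1]$ with slope $m$ and choose $\delta$ with $|m|\delta<\varepsilon/2$; then for large $n$ one has $f_n\ge f_n(\delta)>\varphi(\delta)-\varepsilon/2\ge\ell-\varepsilon$ on $[0,\delta]$ and $f_n>\varphi-\varepsilon\ge\ell-\varepsilon$ on $[\delta,1]$, hence $\varphi(t)-\varepsilon\le\mathrm{gcm}(f_n)(t)\le f_n(t)\to\varphi(t)$.
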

\begin{proof}
See \ref{proof_copula_estimator}.
\end{proof}
\noindent Applying Theorem \ref{thrm:main_result_convergence} 
yields strong consistency of the estimators $(A_{\alpha_n,n}^\Xi)_{1-\tau_{A_{\alpha_n,n}^\Xi}}$, $(\psi_{\alpha_n,n}^\Xi)_{1-\tau_{A_{\alpha_n,n}^\Xi}}$, $(\varphi_{\alpha_n,n}^\Xi)_{1-\tau_{A_{\alpha_n,n}^\Xi}}$ and $(\beta_{\alpha_n,n}^\Xi)_{1-\tau_{A_{\alpha_n,n}^\Xi}}$. The following theorem holds:
\begin{theorem}\label{thrm:strong_consistency_fcts} 
Let $(X_1,Y_1), (X_2,Y_2), \ldots$ be a stationary and ergodic sequence of random vectors with common bivariate, continuous distribution function $H$, univariate marginal distribution functions $F$ and $G$, and underlying Archimax copula $C_{\psi,A} \in \mathcal{C}_{am}$ with generator $\psi \in \Psi$ and Pickands dependence function $A \in \mathcal{A}\setminus\{A_M\}$. Furthermore, let $\Xi \in \{\mathbf{P},\mathbf{CFG}\}$ and $C_{\psi_{\alpha_n,n}^\Xi, A_{\alpha_n,n}^\Xi}$ be the estimator as proposed in eq. \eqref{eq:copula_estimator}. 
    If $\Xi = \mathbf{P}$, we assume that Assumption \ref{assumption1_pick} is 
    satisfied, if $\,\Xi = \mathbf{CFG}$, we assume that Assumptions  \ref{assumption1_cfg} and \ref{assumption2_cfg} are satisfied. Then the estimators in eqs. \eqref{eq:final_estimator}, \eqref{eq:est_trans_phi}, \eqref{eq:est_trans_psi} and \eqref{eq:beta_n_trans} are strongly consistent, i.e., with probability $1$ the following four assertions hold:
\begin{enumerate}[label=(\roman*)]
    \item 
$
(A_{\alpha_n,n}^\Xi)_{1-\tau_{A_{\alpha_n,n}^\Xi}} \overset{n \rightarrow \infty}{\longrightarrow} (A)_{1-\tau_{A}}
$
uniformly on $\mathbb{I}$,
\item
$
(\psi_{\alpha_n,n}^\Xi)_{1-\tau_{A_{\alpha_n,n}^\Xi}} \overset{n\rightarrow\infty}{\longrightarrow} (\psi)_{1-\tau_A}
$
uniformly on $[0,\infty)$,
\item
$
(\varphi_{\alpha_n,n}^\Xi)_{1-\tau_{A_{\alpha_n,n}^\Xi}} \overset{n\rightarrow\infty}{\longrightarrow} (\varphi)_{1-\tau_A}
$
pointwise on $(0,1]$, and
\item
$
(\beta_{\alpha_n,n}^\Xi)_{1-\tau_{A_{\alpha_n,n}^\Xi}} (t)\overset{n\rightarrow\infty}{\longrightarrow} (\beta)_{1-\tau_A}(t)
$
in every point of continuity $t \in \mathbb{I}$ of $(\beta)_{1-\tau_A}$.
\end{enumerate}
\end{theorem}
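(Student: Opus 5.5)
The plan is to reduce everything to Theorem \ref{thrm:strong_consistency_copula} combined with the equivalences in Theorem \ref{thrm:main_result_convergence}. Fix an $\omega$ in the probability-one event on which Theorem \ref{thrm:strong_consistency_copula} holds. On this event, $C_n := C_{\psi_{\alpha_n,n}^\Xi, A_{\alpha_n,n}^\Xi} \to C_{\psi,A}$ uniformly. To apply Theorem \ref{thrm:main_result_convergence} we need $C_n$ written as $C_{\psi_n',A_n'}$ with $\psi_n'\in\Psi$ and $A_n'\in\mathcal{A}\setminus\{A_M\}$. By Lemma \ref{lem:gen_is_gen} and Lemma \ref{lem:pick_is_pick}, $\psi_{\alpha_n,n}^\Xi\in\Psi$ and $A_{\alpha_n,n}^\Xi\in\mathcal{A}$.

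The first step is to rule out $A_{\alpha_n,n}^\Xi = A_M$. I expect this to hold because $\alpha\le 1-\frac1n$ and $B_{n,c}^\Xi(\frac12)>\frac12$, and the greatest convex minorant of $\min\{T^\Xi_{\alpha,n},1\}$ stays above $A_M$ at $\frac12$. Even if this fails for finitely many $n$, it is harmless: $C_{\psi,A}\neq M$ and $C_n\to C_{\psi,A}$ imply $C_n\neq M$ for all large $n$, so $A_{\alpha_n,n}^\Xi\neq A_M$ eventually. Dropping finitely many indices does not affect limits.

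Theorem \ref{thrm:main_result_convergence}, implication (iii)$\Rightarrow$(i),(ii), then gives assertions (i), (ii) and (iii) directly. Here the transformed objects $(\psi_{\alpha_n,n}^\Xi)_{1-\tau_{A^\Xi_{\alpha_n,n}}}$, $(\varphi_{\alpha_n,n}^\Xi)_{1-\tau_{A^\Xi_{\alpha_n,n}}}$ and $(A_{\alpha_n,n}^\Xi)_{1-\tau_{A^\Xi_{\alpha_n,n}}}$ are precisely those in eqs. \eqref{eq:final_estimator}, \eqref{eq:est_trans_phi} and \eqref{eq:est_trans_psi}. Concretely, Lemma \ref{lem:convergence} yields uniform convergence of the transformed generators and transformed Pickands functions. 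Pointwise convergence of the pseudo-inverses on $(0,1]$ then follows from \cite{bernoulli,mult_arch}, as already used in the proofs of Lemmas \ref{lem:conv_H} and \ref{lem:conv_Q}.

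For (iv), I would use the Kendall distribution functions. By Lemma \ref{lem:kendall_is_archimedean} and eq. \eqref{eq:kendall.arch},
$$F^K_{C_n}(t)=t-(\beta^\Xi_{\alpha_n,n})_{1-\tau_{A^\Xi_{\alpha_n,n}}}(t), \qquad F^K_{\psi,A}(t)=t-(\beta)_{1-\tau_A}(t),$$
where $(\beta)_{1-\tau_A}=(1-\tau_A)\beta$ is the $\beta$-function of the generator $(\psi)_{1-\tau_A}$, and similarly for the estimator. Uniform convergence $C_n\to C_{\psi,A}$ implies weak convergence of the Kendall distribution functions (\cite{GNZ}), i.e.\ convergence at every continuity point of $F^K_{\psi,A}$. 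These continuity points coincide with the continuity points of $(\beta)_{1-\tau_A}$ in $(0,1)$, since the two functions differ by the continuous function $t$. At $t\in\{0,1\}$ both $\beta$-functions vanish by definition, so convergence there is trivial.

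The main obstacle I anticipate is only bookkeeping. One must check that $(\beta^\Xi_{\alpha_n,n})_{1-\tau_{A^\Xi_{\alpha_n,n}}}$ as defined in eq. \eqref{eq:beta_n_trans} really equals $t-F^K_{C_n}(t)$. This follows from the chain rule $D^-(\psi)_\gamma(z)=\gamma z^{\gamma-1}D^-\psi(z^\gamma)$ evaluated at $z=\varphi^{1/\gamma}(t)$, together with eq. \eqref{eq:kendall_archimax}. One must also check that the choice of the almost-sure event is independent of $t$, which holds because the event comes from Theorem \ref{thrm:strong_consistency_copula} alone.
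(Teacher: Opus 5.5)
Your proof is correct and follows essentially the paper's argument: (i)--(iii) come from Theorem~\ref{thrm:strong_consistency_copula} together with the direction (iii)$\Rightarrow$(i),(ii) of Theorem~\ref{thrm:main_result_convergence} (Lemma~\ref{lem:bounds_tau} already gives $A^\Xi_{\alpha_n,n}\neq A_M$ for every $n$, so no indices need to be dropped), and your route to (iv), via weak convergence of $F^K_{C_n}=t-(\beta^\Xi_{\alpha_n,n})_{1-\tau_{A^\Xi_{\alpha_n,n}}}$, is the Kendall-function form of the Archimedean equivalences from \cite{bernoulli,mult_arch} that the paper cites. The one imprecision is at $t=0$: read literally, eq.~\eqref{eq:beta_n_trans} gives $(\beta^\Xi_{\alpha_n,n})_{1-\tau_{A^\Xi_{\alpha_n,n}}}(0)=-F^K_{C_n}(0)$, which need not vanish since $\psi^\Xi_{\alpha_n,n}$ is non-strict, so this endpoint is not trivial; however, $0$ is a continuity point of $(\beta)_{1-\tau_A}$ only if $F^K_{\psi,A}(0)=0$, and in that case your weak-convergence argument still gives $F^K_{C_n}(0)\to 0$.
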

\begin{proof}
The first three assertions are immediate consequences of Theorems \ref{thrm:main_result_convergence} and \ref{thrm:strong_consistency_copula}. 
The remaining fourth assertion follows from the first three assertions and the equivalences in \cite{mult_arch,bernoulli}.
\end{proof}
Since according to Theorem \ref{thrm:main_result_convergence}, in the Archimax family $\mathcal{C}_{am}$ uniform convergence and weak conditional convergence are equivalent, under the assumptions of Theorem \ref{thrm:strong_consistency_copula}
our estimator $C_{\psi_{\alpha_n,n}^\Xi, A_{\alpha_n,n}^\Xi}$
converges weakly conditional to $C_{\psi,A}$. The following result 
concerning the estimation of dependence measures is therefore immediate:
\begin{theorem}\label{thrm:strong_consistency_dep_meas}
Let $(X_1,Y_1), (X_2,Y_2), \ldots$ be a stationary and ergodic sequence of random vectors with common bivariate, continuous distribution function $H$, univariate marginal distribution functions $F$ and $G$, and underlying Archimax copula $C_{\psi,A} \in \mathcal{C}_{am}$ with generator $\psi \in \Psi$ and Pickands dependence function $A \in \mathcal{A}\setminus\{A_M\}$. Furthermore, let $\Xi \in \{\mathbf{P},\mathbf{CFG}\}$ and $C_{\psi_{\alpha_n,n}^\Xi, A_{\alpha_n,n}^\Xi}$ be the estimator as proposed in eq. \eqref{eq:copula_estimator}. 
    If $\Xi = \mathbf{P}$, we assume that Assumption \ref{assumption1_pick} is satisfied, if $\Xi = \mathbf{CFG}$, we assume that Assumptions \ref{assumption1_cfg} and \ref{assumption2_cfg} are satisfied.
    Then for every dependence measure $\eta: \mathcal{C} \rightarrow \mathbb{I}$ 
    which is continuous with respect to weak conditional convergence, the following 
    identity holds with probability $1$:
    \begin{equation}\label{eq:consistency_dependence_measure}
    \eta(C_{\psi_{\alpha_n,n}^\Xi, A_{\alpha_n,n}^\Xi}) \overset{n \rightarrow \infty }{\longrightarrow} \eta(C_{\psi,A}).
    \end{equation}
\end{theorem}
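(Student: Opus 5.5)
The plan is to combine Theorem \ref{thrm:strong_consistency_copula} with the equivalences of Theorem \ref{thrm:main_result_convergence} pathwise, on a single event of probability one. Let $\Omega_0$ be the event on which $d_\infty(C_{\psi_{\alpha_n,n}^\Xi, A_{\alpha_n,n}^\Xi}, C_{\psi,A}) \to 0$. Theorem \ref{thrm:strong_consistency_copula} gives $\mathbb{P}(\Omega_0)=1$ under the stated assumptions (Assumption \ref{assumption1_pick} for $\Xi=\mathbf{P}$, Assumptions \ref{assumption1_cfg} and \ref{assumption2_cfg} for $\Xi=\mathbf{CFG}$). Everything after this is deterministic, for a fixed $\omega\in\Omega_0$.

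Fix $\omega \in \Omega_0$. To apply Theorem \ref{thrm:main_result_convergence}, each estimator must be an Archimax copula with generator in $\Psi$ and Pickands dependence function in $\mathcal{A}\setminus\{A_M\}$.
\begin{itemize}
\item Lemmas \ref{lem:pick_is_pick} and \ref{lem:gen_is_gen} give $A_{\alpha_n,n}^\Xi \in \mathcal{A}$ and $\psi_{\alpha_n,n}^\Xi\in\Psi$.
\item We still need $A_{\alpha_n,n}^\Xi\neq A_M$. This should follow from the correction term: $B_{n,c}^\Xi(\tfrac12)>\tfrac12$ propagates through the transformation $T_{\alpha,n}^\Xi$. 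Here $\kappa$ fixes $\tfrac12$ and $\alpha\le 1-\tfrac1n$, so the exponent stays finite. The step $\min\{\cdot,1\}$ preserves the bound as well.
\item The greatest convex minorant step needs more care, since a convex minorant could in principle touch $A_M$ at $\tfrac12$. I would argue that the minorant of a function lying strictly above $A_M$ on a neighbourhood of $\tfrac12$ cannot equal $A_M$. If this is delicate, it suffices that $\tau_{A_{\alpha_n,n}^\Xi}<1$ for all large $n$. That holds because the limit $C_{\psi,A}\neq M$: otherwise the Kendall distribution functions along a subsequence would tend to the identity, as in the proof of Lemma \ref{lem:convergence}.
\item Finitely many exceptional indices do not affect convergence.
\end{itemize}

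With this in place, Theorem \ref{thrm:main_result_convergence} (iii)$\Rightarrow$(v) yields weak conditional convergence of $C_{\psi_{\alpha_n,n}^\Xi, A_{\alpha_n,n}^\Xi}(\omega)$ to $C_{\psi,A}$. Since $\eta$ is continuous with respect to weak conditional convergence, $\eta(C_{\psi_{\alpha_n,n}^\Xi, A_{\alpha_n,n}^\Xi})\to\eta(C_{\psi,A})$ for every $\omega\in\Omega_0$, which is eq. \eqref{eq:consistency_dependence_measure}. Alternatively, one can apply Corollary \ref{cor:continuity_dependence_measures}(ii) directly.

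Measurability is not an issue: the conclusion is an almost sure statement implied by an event of full probability. For $\xi$ and $\zeta_1$, continuity under weak conditional convergence follows from dominated convergence, because the kernels are bounded by $1$ and $D_p$-convergence holds.

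The main obstacle is confirming that the estimated Pickands functions avoid $A_M$ (equivalently, that $\tau_{A_{\alpha_n,n}^\Xi}<1$) so that Theorem \ref{thrm:main_result_convergence} is applicable. I would handle it first through the limiting argument via $C_{\psi,A}\neq M$, which requires only eventual validity.
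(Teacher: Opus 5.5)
Your proposal is correct and follows the paper's route. It obtains almost sure uniform convergence from Theorem \ref{thrm:strong_consistency_copula}, upgrades it pathwise to weak conditional convergence via Theorem \ref{thrm:main_result_convergence} (the paper packages this step as Corollary \ref{cor:continuity_dependence_measures}(ii)), and then uses continuity of $\eta$. The step you flag as the main obstacle is already settled for every $n$ by Lemma \ref{lem:bounds_tau}: using a convex lower bound of the kind you sketch for the gcm, it gives $A_{\alpha,n}^\Xi(\frac12)>\frac12$, so $A_{\alpha_n,n}^\Xi\neq A_M$ by Lemma \ref{lem:pickands_1/2}, and your eventual-validity fallback, while valid, is not needed.
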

\begin{proof}
Immediate consequence of Theorem \ref{thrm:strong_consistency_copula} and Corollary \ref{cor:continuity_dependence_measures}.
\end{proof}

\section{Simulation study}\label{sec:sim_study}
In this section, we study the finite-sample behavior of the Pickands and CFG type estimators $(A_{\alpha_n,n}^\mathbf{P})_{1-\tau_{A_{\alpha_n,n}^\mathbf{P}}}$ and $(A_{\alpha_n,n}^\mathbf{CFG})_{1-\tau_{A_{\alpha_n,n}^\mathbf{CFG}}}$, $(\beta_{\alpha_n,n}^\mathbf{P})_{1-\tau_{A_{\alpha_n,n}^\mathbf{P}}}$ and $(\beta_{\alpha_n,n}^\mathbf{CFG})_{1-\tau_{A_{\alpha_n,n}^\mathbf{CFG}}}$, together with the copula estimators $C_{\alpha_n,n}^\mathbf{P}$ and $C_{\alpha_n,n}^\mathbf{CFG}$ introduced in Section \ref{sec:estimator}. In view of Theorem \ref{thrm:strong_consistency_dep_meas}, we also briefly examine how these estimators perform when used to estimate dependence measures such as Chatterjee's $\xi$ (see \citep{chatterjee2021}). Samples from parametric families of Archimax copulas were generated using the \textsf{R} package \textsf{acopula} (see \citep{bacigal2023}). Samples for Archimax copulas whose Pickands dependence function is
not implemented in the package \textsf{acopula} were generated via conditional inverse sampling. \\
\begin{table}[h]
\centering
\small
\begin{tabular}{|l|c|c|c|}
\hline
\textbf{Family} & \textbf{Generator } $\psi(z)$ & \textbf{Pseudo-inverse } $\varphi(t)$ & \textbf{Parameter space} \\
\hline
Clayton & $\left(1+(0.5^{-\theta}-1)z\right)^{-1/\theta}$ & $\dfrac{t^{-\theta}-1}{0.5^{-\theta}-1}$ & $\theta \in (0,\infty)$ \\[10pt]
Frank & $-\dfrac{1}{\theta}\log\!\left(1+\left(1+\exp(-\theta/2)\right)^{-z}(\exp(-\theta)-1)\right)$ & $-\dfrac{1}{\log\!\left(1+\exp(-\theta/2)\right)}\log\!\left(\dfrac{\exp(-\theta t)-1}{\exp(-\theta)-1}\right)$ & $\theta \in \mathbb{R}\setminus\{0\}$ \\[12pt]
Gumbel & $\exp\!\left(-(\log 2)\,z^{1/\theta}\right)$ & $\left(\dfrac{-\log t}{\log 2}\right)^{\theta}$ & $\theta \in [1,\infty)$ \\[10pt]
Joe & $1-\left(1-(1-2^{-\theta})^{z}\right)^{1/\theta}$ & $\dfrac{\log\!\left(1-(1-t)^{\theta}\right)}{\log\!\left(1-2^{-\theta}\right)}$ & $\theta \in [1,\infty)$ \\[10pt]
\hline
\end{tabular}
\caption{Normalized generators, pseudo-inverses and parameter ranges of 
the Archimedean copula families considered in Section \ref{sec:sim_study}; 
each generator satisfies $\psi(1)=0.5$ and $\varphi(0.5)=1$.}
\label{tab:archimedean_generators_normalized}
\end{table}
\begin{table}[h]
\centering
\small
\begin{tabular}{|l|c|c|}
\hline
\textbf{Family} & \textbf{Pickands dependence function } $A(t)$ & \textbf{Parameter space} \\
\hline
Gumbel & $\left(t^{1/\theta} + (1-t)^{1/\theta}\right)^{\theta}$ & $\theta \in (0,1]$ \\[10pt]
Galambos & $1 - \left(t^{-\theta} + (1-t)^{-\theta}\right)^{-1/\theta}$ & $\theta \in (0,\infty)$ \\[10pt]
Marshall--Olkin & $1 - \min\!\left(\alpha t,\, \beta(1-t)\right)$ & $\alpha,\beta \in [0,1]$ \\[6pt]
\hline
\end{tabular}
\caption{Pickands dependence functions and parameter ranges of 
the EVC families considered in Section \ref{sec:sim_study}.}
\label{tab:pickands_functions}
\end{table}
The setup of our simulation study is as follows: We 
consider the sample sizes $n \in \{100,500,1000\}$ and work with the generators of the Clayton, Frank, Gumbel and Joe copulas, along with the Pickands dependence functions of the Gumbel, Galambos and Marshall--Olkin family; the corresponding definitions are gathered in Table \ref{tab:archimedean_generators_normalized} and Table \ref{tab:pickands_functions}. Generator parameters were selected to span a range of association strengths, specifically $\tau_\psi \in \{\frac{1}{5},\frac{2}{5},\frac{3}{5},\frac{4}{5}\}$ and $\tau_A \in \{\frac{1}{5},\frac{2}{5},\frac{3}{5},\frac{4}{5}\}$ across the different generators and Pickands dependence functions. Some parameter choices deliberately push against the condition $\mathbb{E}[Z] < \infty$ required in Theorems \ref{thrm:strong_consistency_copula} and \ref{thrm:strong_consistency_fcts}, respectively, allowing us to 
observe the estimators' behavior near these boundaries. 
Below we summarize the key findings and highlight a selection of representative results; the full set of results is deferred to \ref{sec:sim_study_app}. Following the approach of \citep{est-archimax}, estimator accuracy for $(A_{\alpha_n,n}^\Xi)_{1-\tau_{A_{\alpha_n,n}^\Xi}}$ and $(\beta_{\alpha_n,n}^\Xi)_{1-\tau_{A_{\alpha_n,n}^\Xi}}$, $\Xi \in \{\mathbf{P}, \mathbf{CFG}\}$, is assessed via the Integrated Squared Error (ISE) and the Integrated Relative Absolute error (IRAE). For continuous, bounded functions $f,g \colon \mathbb{I}\rightarrow \mathbb{R}$, these are defined as
\begin{equation*}
\mathrm{ISE}(f,g) := \int_\mathbb{I} |f(t) - g(t)|^2 \mathrm{d}\lambda(t)
\end{equation*}
and, provided $f,g \geq 0$,
\begin{equation*}
\mathrm{IARE}(f,g) := \int_\mathbb{I} \frac{|f(t) -g(t)|}{g(t)} \mathrm{d}\lambda(t),
\end{equation*}
assuming the integral is well-defined. Both quantities are approximated via Monte Carlo integration, drawing 10,000 uniform points on $\mathbb{I}$ per replicate, with 1000 Monte Carlo replicates used for each simulation setting. The performance of the copula estimators $C_{\alpha_n,n}^\mathbf{P}$ and $C_{\alpha_n,n}^\mathbf{CFG}$ is assessed via the uniform distance to the true copula, the latter distance is approximated by evaluating the estimated and true copula on the grid 
$\{0,\frac{1}{49},\frac{2}{49},\dots,\frac{48}{49},1\}^2 \subset \mathbb{I}^2$.
As before, this procedure is repeated over 1000 Monte Carlo replicates.
\subsection{Comparison of the adjusted Pickands and CFG type estimators and the estimators for the generator $(\psi)_{1-\tau_A}$}\label{subsec:sim_study_pick_cfg_beta}
\noindent We begin by comparing the Pickands and CFG type estimators, as defined in eq.\ \eqref{eq:final_estimator}, across a range of scenarios; the corresponding results are reported in Tables \ref{fig:cfg_vs_pick_gumbel_0.4}--\ref{fig:cfg_vs_pick_mar_ol_all_vs_all} in \ref{sec:sim_study_app}. Figure \ref{fig:simulation_trans_pick} is representative of the pattern observed throughout: the CFG type estimator outperforms the Pickands type estimator on average, both in terms of ISE and IRAE. This is consistent with the results reported for the CFG estimator in \citep{genest2009} and for the CFG type estimator in \citep{est-archimax}. 
Letting $Z$ denote the random variable with survival function $(\psi)_{1-\tau_A}$, note that the regularity condition $\mathbb{E}[Z] < \infty$ is more restrictive than $\mathbb{E}[|\log Z|] < \infty$. As shown in Lemma \ref{lem:regularity_fcts}, $\mathbb{E}[\log Z]$ exists for all families considered in this simulation study, whereas $\mathbb{E}[Z]$ fails to exist for most parameter choices in the Clayton case.\\
Tables \ref{fig:beta_cfg_vs_pick_gumbel_0.4}--\ref{fig:beta_cfg_vs_pick_mar_ol_0.4} compare the two estimators for various scenarios, with the parameter of the Pickands dependence function fixed so that Kendall's $\tau$ of the corresponding EVC equals $\tau_A = \frac{2}{5}$. The CFG type estimator consistently outperforms the Pickands type estimator. Both estimators perform better when the Pickands dependence function is of Gumbel or of 
Galambos type than when the Marshall-Olkin family is considered, likely because the Pickands dependence function of the latter one fails to be differentiable at certain points. The Pickands type estimator performs particularly poorly when the generator of the Archimax copula is from the Clayton family, 
which is consistent with the fact that $\mathbb{E}[Z]$ need not exist for all of the chosen parameter values in this case.\\
Since the CFG type estimator generally outperforms the Pickands type estimator across the values of $\tau_\psi$ and $\tau_A$ considered, we now focus exclusively on the CFG type estimator and compute the IRAE and ISE for each generator and Pickands dependence function over the full grid $(\tau_\psi,\tau_A) \in \{\frac{1}{5},\frac{2}{5},\frac{3}{5},\frac{4}{5}\}^2$. In the Galambos and Marshall-Olkin cases (Tables \ref{fig:cfg_vs_pick_galambos_all_vs_all} and \ref{fig:cfg_vs_pick_mar_ol_all_vs_all}) the performance of the CFG type estimator deteriorates as $\tau_\psi$ and $\tau_A$ increase.\\
For the parameter values underlying Figure \ref{fig:simulation_trans_pick}, both population quantities $\mathbb{E}[Z]$ and $\mathbb{E}[\log Z]$ exist. Nonetheless, the Pickands type estimator continues to exhibit a substantial number of outliers, some of which are sufficiently large that the $y$-axis is displayed on a square-root scale to preserve visibility. The comparative absence of outliers in the CFG type estimator is most plausibly attributable to the logarithmic transformation, which maps large values of $\varphi_n$ (in particular, values near 0) onto a smaller scale; this results in the estimator of $\mathbb{E}[\log Z]$ substantially outperforming the estimator of $\mathbb{E}[Z]$.\\
Comparing the performance of the Pickands type estimator $(\varphi_{\alpha_n,n}^\mathbf{P})_{1-\tau_{A_{\alpha_n,n}^\mathbf{P}}}$ and the CFG type estimator $(\varphi_{\alpha_n,n}^\mathbf{CFG})_{1-\tau_{A_{\alpha_n,n}^\mathbf{CFG}}}$, we proceed as follows: 
Since Archimedean generators $\psi$ are defined on $[0,\infty)$ and their pseudo-inverses $\varphi$ may be unbounded at $0$, a direct comparison of estimators of these functions is not straightforward. We therefore follow the approach adopted in \citep{GNZ,GR} and instead compare the functions $(\beta_{\alpha_n,n}^\mathbf{P})_{1-\tau_{A_{\alpha_n,n}^\mathbf{P}}}$ and $(\beta_{\alpha_n,n}^\mathbf{CFG})_{1-\tau_{A_{\alpha_n,n}^\mathbf{CFG}}}$, which depend on the corresponding Kendall's $\tau$, the generator, and its left-hand derivative. As these functions are defined on the compact interval $\mathbb{I}$ 
and only attain finite values, they constitute a more suitable basis for assessing estimator performance. The results in Figure \ref{fig:simulation_trans_beta} and Tables \ref{fig:beta_cfg_vs_pick_gumbel_0.4}--\ref{fig:beta_cfg_vs_pick_mar_ol_all_vs_all} indicate that the CFG-based estimator continues to outperform its Pickands-based counterpart, although the discrepancy between the two is significantly smaller than that observed for the estimators $(A_{\alpha_n,n}^\mathbf{P})_{1-\tau_{A_{\alpha_n,n}^\mathbf{P}}}$ and $(A_{\alpha_n,n}^\mathbf{CFG})_{1-\tau_{A_{\alpha_n,n}^\mathbf{CFG}}}$, respectively. This is most likely attributable to the fact that, with the exception of Kendall's $\tau$ of the extreme-value component, the estimators $(\beta_{\alpha_n,n}^\mathbf{P})_{1-\tau_{A_{\alpha_n,n}^\mathbf{P}}}$ and $(\beta_{\alpha_n,n}^\mathbf{CFG})_{1-\tau_{A_{\alpha_n,n}^\mathbf{CFG}}}$ depend exclusively on the estimator of the generator itself. Moreover, both estimators tend to perform more favorably when the underlying Pickands dependence function is smooth. To assess the estimators under less favorable conditions, we therefore additionally analyze a setting in which the true Pickands dependence function $A$ is highly irregular and consider $A$ as depicted in Figure \ref{fig:discrete.approx}.\\
Taking into account the simulations in Tables~\ref{fig:beta_cfg_vs_pick_gumbel_0.4}--\ref{fig:beta_cfg_vs_pick_mar_ol_all_vs_all}, the performance of the aforementioned estimators gets worse for larger values of $\tau_\psi$ and $\tau_A$. This is in contrast to the performance of $(A_{\alpha_n,n}^\mathbf{P})_{1-\tau_{A_{\alpha_n,n}}^\mathbf{P}}$ and $(A_{\alpha_n,n}^\mathbf{CFG})_{1-\tau_{A_{\alpha_n,n}}^\mathbf{CFG}}$, where performance decreases for smaller values of $\tau_\psi$ and $\tau_A$. We conjecture that this behavior is most likely due to the fact that the population version $(\beta)_{1-\tau_A}$ tends to exhibit a steeper slope for smaller values of $\tau_\psi$ and $\tau_A$, leading to a larger ISE, even though the estimators might seemingly deviate only slightly from the population version $(\beta)_{1-\tau_A}$.
\begin{figure}[!ht]
	\centering
	\includegraphics[width=1\textwidth]{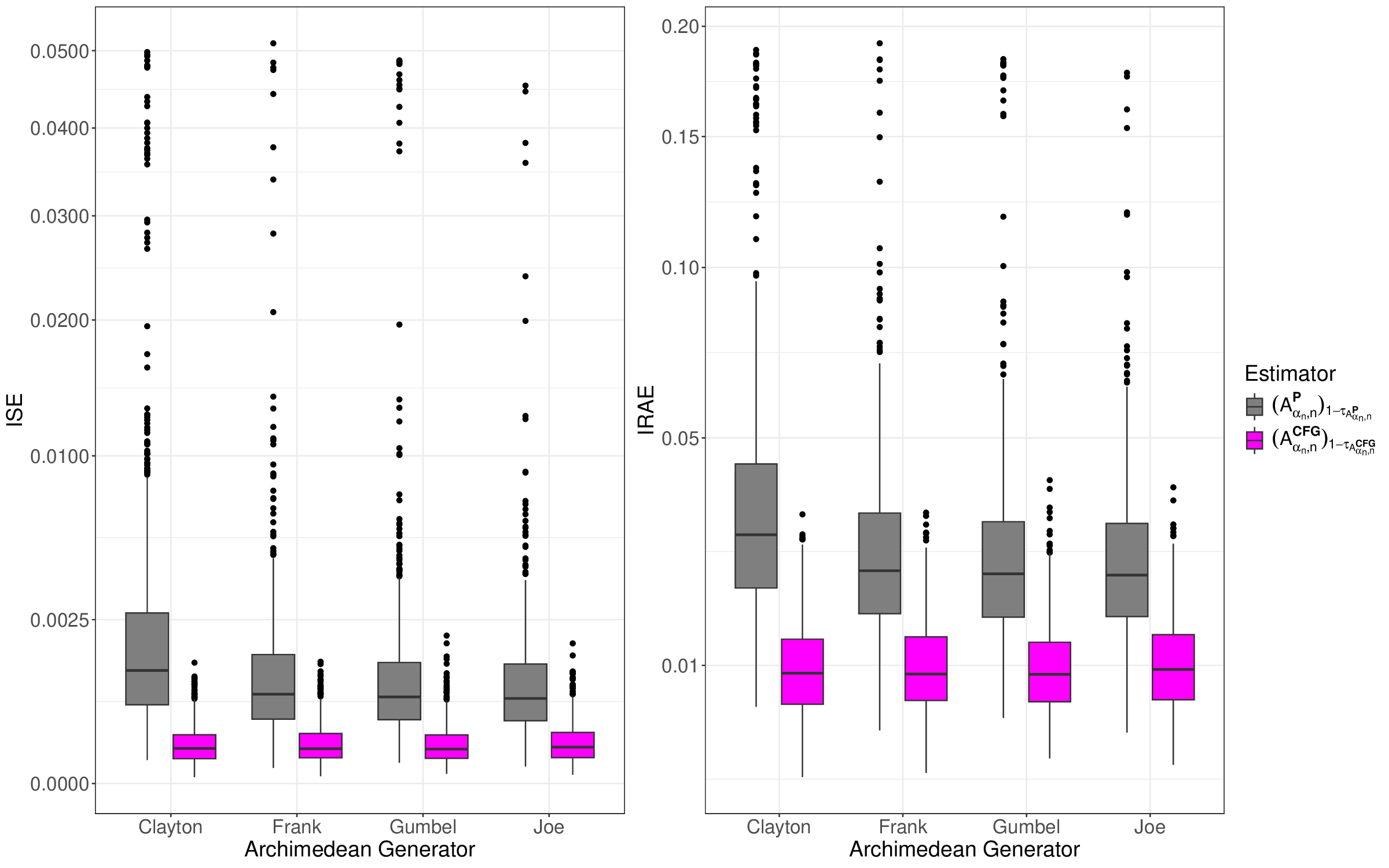}
	\caption{Boxplots of the ISE (left) and IRAE (right) for the Pickands type estimator $(A_{\alpha_n,n}^\mathbf{P})_{1-\tau_{A_{\alpha_n,n}^\mathbf{P}}}$ (gray) and the CFG type estimator $(A_{\alpha_n,n}^\mathbf{CFG})_{1-\tau_{A_{\alpha_n,n}^\mathbf{CFG}}}$ (magenta), based on a sample size of $n = 200$ and $1000$ Monte Carlo replications. The true Pickands dependence function is that of a Galambos copula with $\tau_A = \frac{1}{2}$. The true generators $\psi$ are those of a Clayton, Frank, Gumbel, and Joe copula with $\tau_\psi = \frac{1}{10}$, respectively. The $y$-axis is displayed on a square root scale.}
	\label{fig:simulation_trans_pick}
\end{figure}
\begin{figure}[!ht]
	\centering
	\includegraphics[width=1\textwidth]{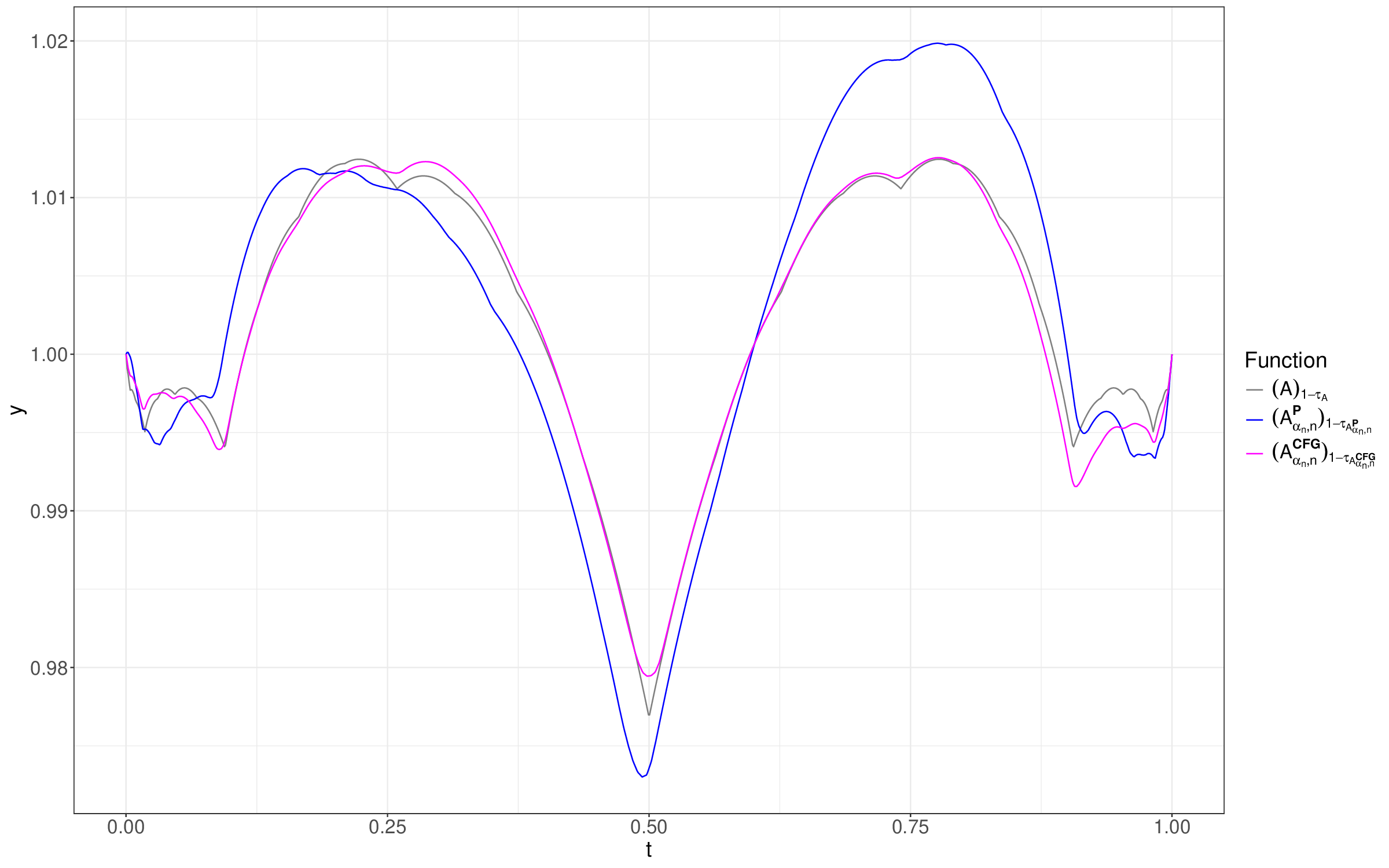}
	\caption{Plots of transformed Pickands dependence function $(A)_{1-\tau_A}$ (gray) as considered in Example \ref{ex:non_smooth_est} and the corresponding estimators $(A_{\alpha_n,n}^\mathbf{P})_{1-\tau_{A_{\alpha_n,n}^\mathbf{P}}}$ (blue) and $(A_{\alpha_n,n}^\mathbf{CFG})_{1-\tau_{A_{\alpha_n,n}^\mathbf{CFG}}}$ (magenta), respectively.}
	\label{fig:est_path_pick}
\end{figure}
\begin{figure}[!ht]
	\centering
	\includegraphics[width=1\textwidth]{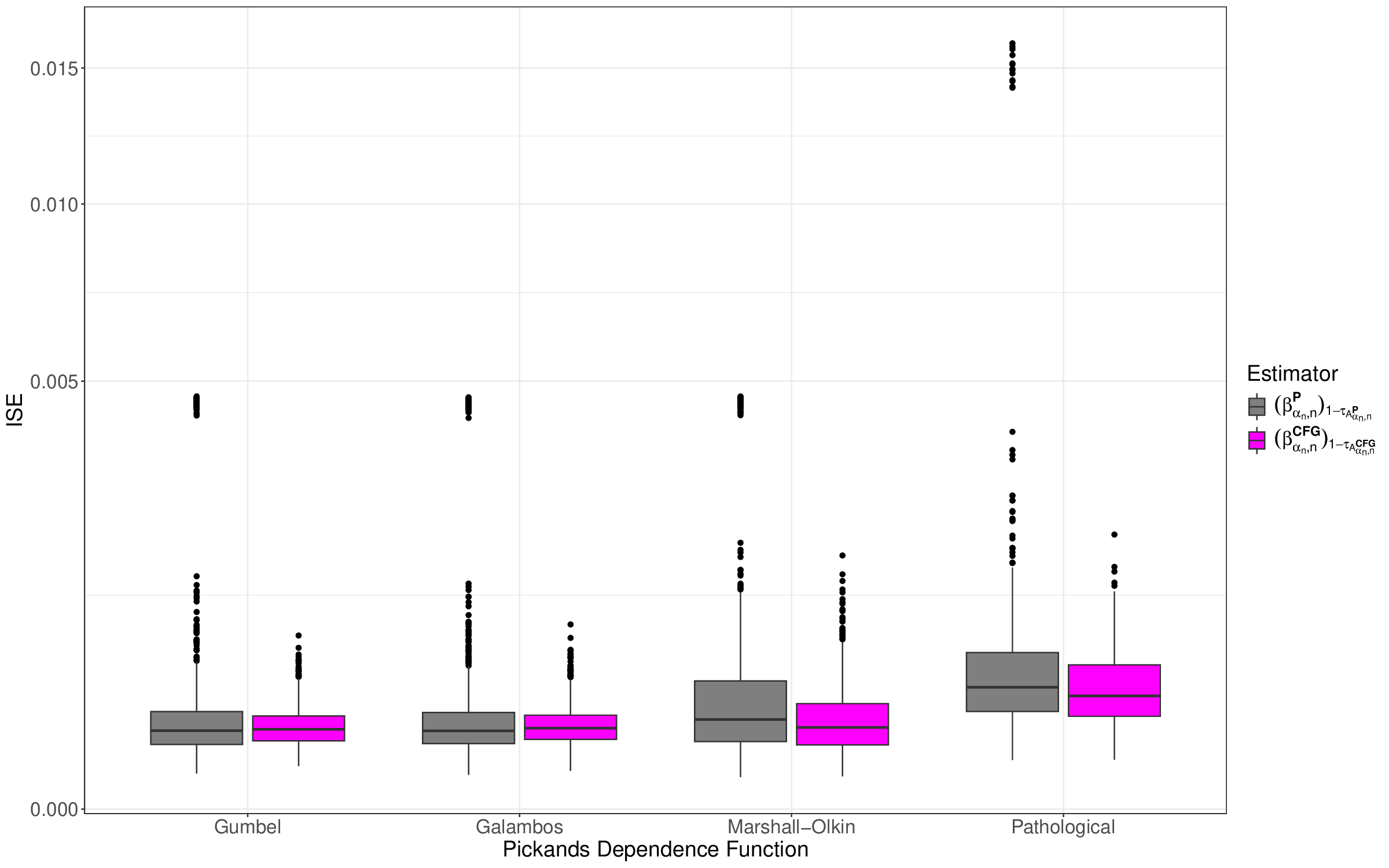}
	\caption{Boxplots of the ISE for the Pickands type estimator $(\beta_{\alpha_n,n}^\mathbf{P})_{1-\tau_{A_{\alpha_n,n}^\mathbf{P}}}$ (gray) and the CFG type estimator $(\beta_{\alpha_n,n}^\mathbf{CFG})_{1-\tau_{A_{\alpha_n,n}^\mathbf{CFG}}}$ (magenta), based on a sample size of $n = 200$ and $1000$ Monte Carlo replications. The true generator is that of a Frank copula with $\tau_\psi = \frac{3}{20}$. The true Pickands dependence functions $A$ (from left to right) are those of a Gumbel, Galambos, and Marshall--Olkin copula with $\tau_A = \frac{7}{10}$, as well as the pathological Pickands dependence function given in Figure~\ref{fig:discrete.approx}. The $y$-axis is displayed on a square root scale.}
	\label{fig:simulation_trans_beta}
\end{figure}
\subsection{Comparison of the copula estimators and their 
plug-ins for estimating directed dependence}
\noindent As one of the goals of this paper is to provide a fully non-parametric estimator for Archimax copulas that is itself an Archimax copula, it is natural to also compare the estimators $C_{\alpha_n,n}^\mathbf{P}$ and $C_{\alpha_n,n}^\mathbf{CFG}$ with the real Archimax copula $C_{\psi,A}$
for different sample sizes. Furthermore, in order to assess, how these estimators perform relative to the empirical copula estimator 
(as described in Section \ref{subsection:general_notation}) -- that is, the standard estimator ignoring the Archimax information -- we compared both $C_{\alpha_n,n}^\mathbf{P}$ and $C_{\alpha_n,n}^\mathbf{CFG}$ to the  empirical copula estimator $\hat{C}_n$. For the simulations summarized in Tables \ref{fig:cop_cfg_vs_pick_gumbel_0.4}--\ref{fig:cop_cfg_vs_pick_mar_ol_all_vs_all}, we again considered the sample sizes $n \in \{100,500,1000\}$. 
To gain additional insight into the small-sample behavior, we 
furthermore included Figure \ref{fig:simulation_trans_cop}, comparing the three estimators for sample sizes $n \in \{25,50,100,250,500,1000\}$. The results show that, even for small sample sizes, the Pickands-based estimator outperforms the empirical copula estimator--except in the Clayton case, and occasionally in the Gumbel regime--while the CFG-based estimator in turn outperforms the Pickands-based estimator. Analogous to Tables \ref{fig:cop_cfg_vs_pick_gumbel_0.4}--\ref{fig:cop_cfg_vs_pick_mar_ol_all_vs_all}, the performance of the estimators $C_{\alpha_n,n}^\mathbf{P}$ and $C_{\alpha_n,n}^\mathbf{CFG}$ again improves for larger values of $\tau_\psi$ and $\tau_A$.\\
Turning to the (natural plug-in) estimation of dependence measures, in light of Theorems \ref{thrm:main_result_convergence} and \ref{thrm:strong_consistency_dep_meas}, we compare the performance of the
estimators $\xi(C_{\alpha_n,n}^\mathbf{P})$ and $\xi(C_{\alpha_n,n}^\mathbf{CFG})$ for Chatterjee's $\xi$ against the originally proposed nearest neighbor based estimator $\hat{\xi}_n$. Figure \ref{fig:simulation_dep_meas} summarizes the 
obtained results for sample sizes $n \in \{25, 50, 100, 250, 500, 1000\}$. Unsurprisingly, the CFG-based estimator outperforms the other two estimators, particularly for small sample sizes. Notably, the Pickands-based estimator appears to exhibit a bias toward complete dependence.
\begin{figure}[!ht]
	\centering
	\includegraphics[width=1\textwidth]{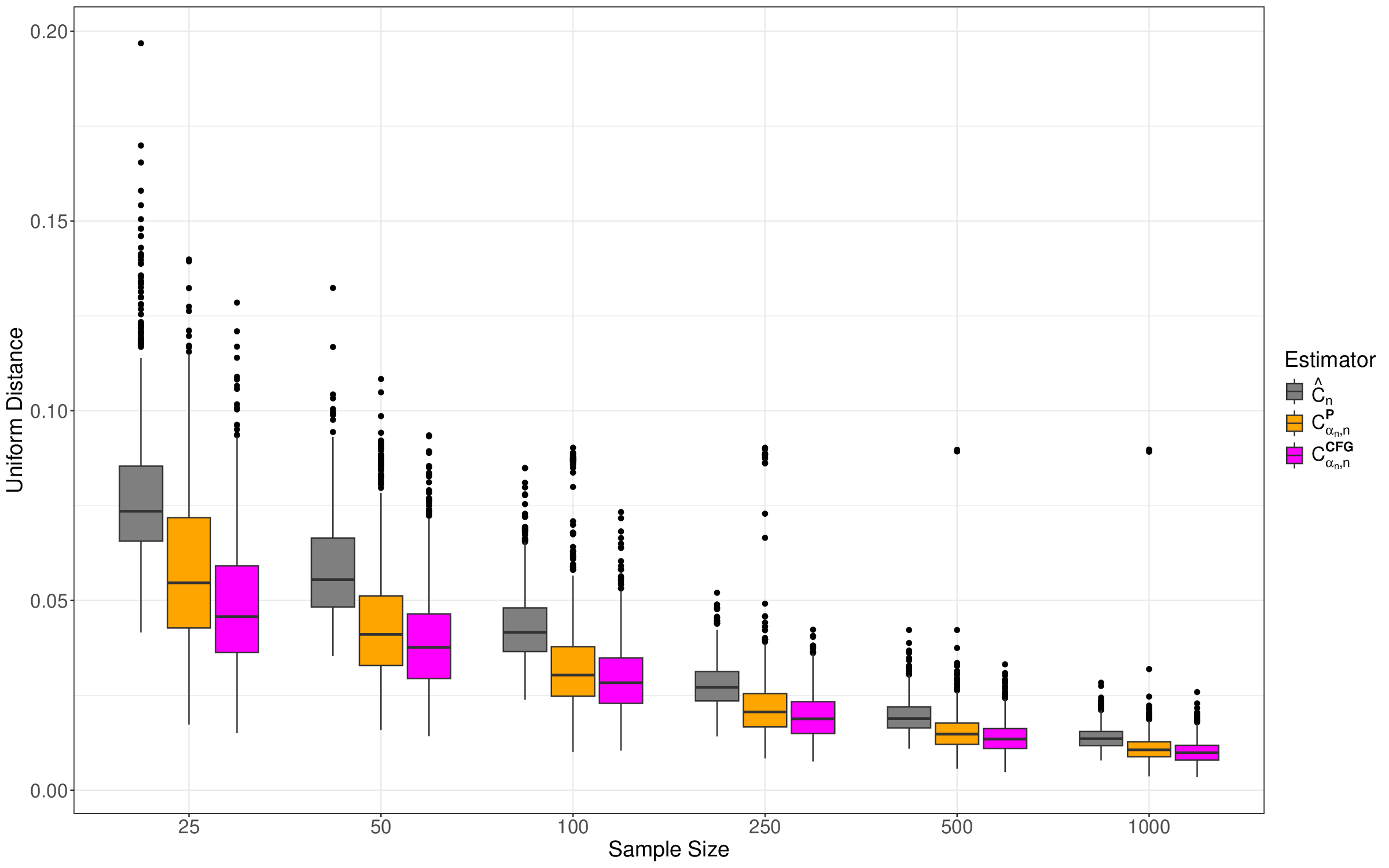}
	\caption{Boxplots of the uniform distance for the (bilinear extension of the) empirical copula estimator $\hat{C}_n$ (gray), the Pickands type estimator $C_{\alpha_n,n}^\mathbf{P}$ (orange) and the CFG type estimator $C_{\alpha_n,n}^\mathbf{CFG}$ (magenta), for sample sizes $n \in \{25, 50, 100, 250, 500, 1000\}$ and $1000$ Monte Carlo replications. The true copula is an Archimax copula $C_{\psi,A}$ with generator $\psi$ of a Joe copula with $\tau_\psi = \frac{1}{4}$. The true Pickands dependence function $A$ is the pathological Pickands dependence function given in Figure~\ref{fig:discrete.approx}.}
	\label{fig:simulation_trans_cop}
\end{figure}
\begin{figure}[!ht]
	\centering
	\includegraphics[width=1\textwidth]{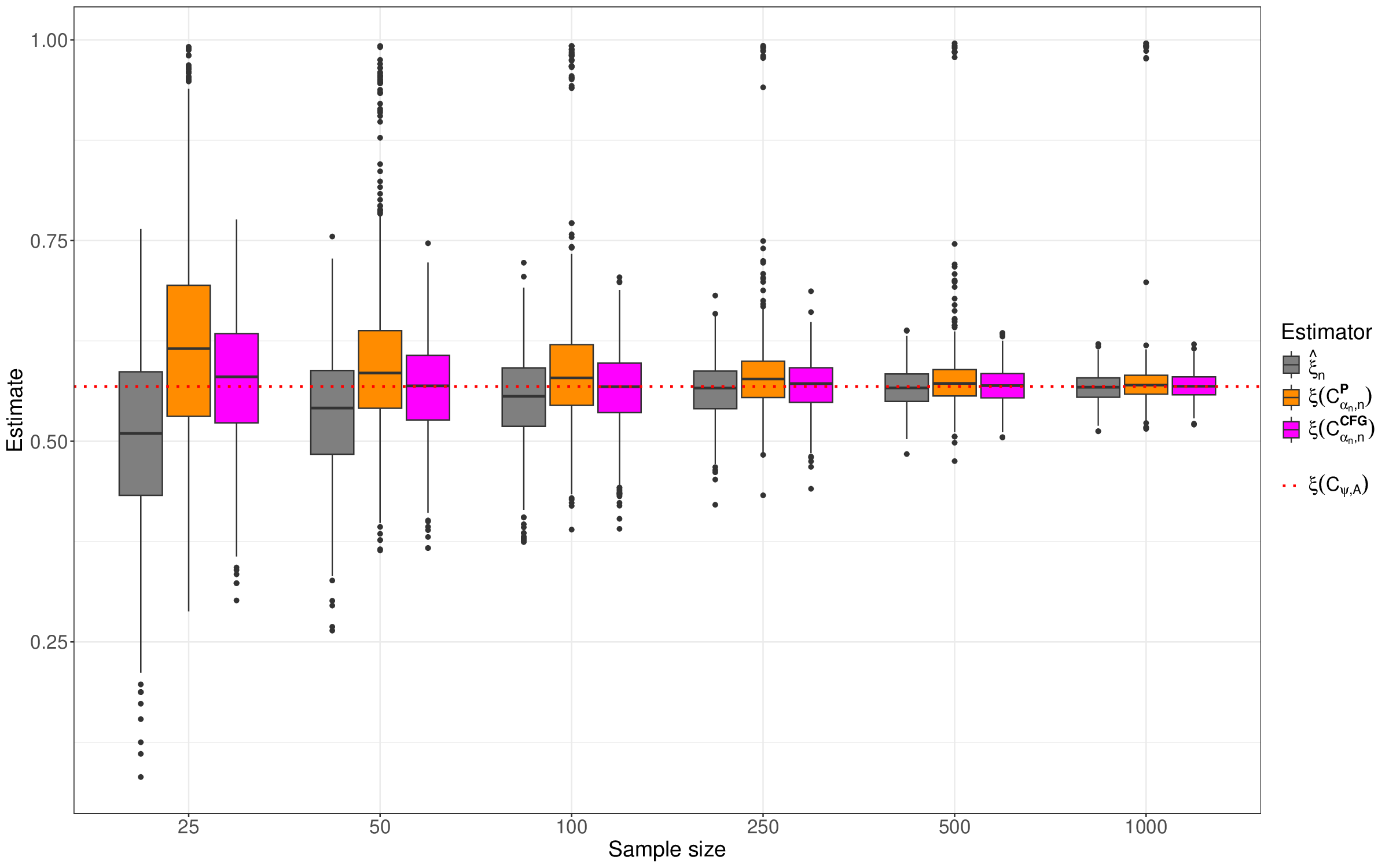}
	\caption{Boxplots of the Chatterjee estimator $\hat{\xi}_n$ (gray) and the plug-in estimators $\xi(C_{\alpha_n,n}^{\mathbf{P}})$ (orange) and $\xi(C_{\alpha_n,n}^{\mathbf{CFG}})$ (magenta), based on 1000 Monte Carlo replicates for each sample size $n \in \{25, 50, 100, 250, 500, 1000\}$. The dotted red line indicates the true value $\xi(C_{\psi,A}) \approx 0.5683$.}
	\label{fig:simulation_dep_meas}
\end{figure}
\section{Real data example}\label{sec:real_data}
In this section we illustrate the usefulness of our estimator in practice in terms of a real data example. We consider precipitation data from two locations in Austria, namely Bregenz and Dornbirn in Vorarlberg. 
The data, consisting of a bivariate sample of precipitation measurements in mm, recorded at 10-minute intervals between June 1, 1994, and May 31, 2026, is publicly available at Geosphere data hub \url{https://data.hub.geosphere.at/en/}.
Preparing the data for our purposes, we retain only the first 28 days per month and remove NA values. The data also contains a very low number of  (unrealistic/impossible) values 
of more than 100 mm of precipitation in a 10-minute interval, which we 
removed. Moreover, avoiding seasonal effects and snowfall we 
only consider the months March through August. 
Reducing short-term variability and reducing ties, we then calculated the 
average precipitation per day and finally the maximum of these 28 
daily averages per month. The resulting data contains 5 ties for the Bregenz series and 1 tie for the Dornbirn series, which were removed. 
Altogether we ended up with $181$ pairs of observations. All theoretical results established in the previous sections rely on the assumption that the data are generated by a stationary and ergodic sequence of random vectors. As ergodicity is not directly verifiable in practice, we instead assess the stronger, more tractable condition of stationarity and strong ($\alpha$-)mixing (see \citep[Definition 4.1]{est-archimax}), which is sufficient (though not necessary) for ergodicity.\\
As depicted in Figure~\ref{real_data_rolling mean}, the data exhibits no signs of non-stationarity, the rolling mean does not deviate substantially from the global mean. We also examined the behavior of the lagged variance and the autocorrelations and found no evidence against stationarity. Moreover, the Augmented Dickey--Fuller test (see \citep{fuller1996}) provides no evidence of a unit root. Applying both, the Ljung--Box and Box--Pierce tests, 
we found that both do not reject the null hypothesis of temporal independence at lags $\geq 5$ (see \citep{box1970, harvey1993, ljung1978}).

Figure~\ref{real_data_plots} 
depicts the data and the normalized ranks.
Considering that our data come from maxima, it seems natural to 
conjecture that the underlying dependence structure is of EVC type.
Assessing this assumption, we tested the null hypothesis that the data arise from an EVC. Applying the test proposed in \citep{genest2009, kojadinovic2010} (with parameters `$N = 1000$' and `derivatives $= C_n$') yields 
a p-value of approximately $p \approx 0.0099$. 
Hence, at conventional significance levels, we reject the EVC hypothesis.
Similarly to \citep{est-archimax}, a possible explanation is the pronounced clustering of observations in the lower-left corner of the scatterplot. Such a pattern suggests the presence of lower-tail dependence, a feature generally incompatible with EVCs.

Since Archimax copulas -- under suitable regularity conditions on the generator -- lie in the domain of attraction of an EVC and encompass both Archimedean copulas 
and EVCs as special cases, it seems reasonable to expect that the estimator proposed in eq.~\eqref{eq:copula_estimator} performs well for these data and is capable of capturing the lower-tail dependence seemingly present in the sample. In the following subsection, we therefore fit our nonparametric Archimax model to the data and compare its performance to the nonparametric CFG estimator for EVCs proposed in \citep{genest2009}.
\begin{figure}[!ht]
	\centering
	\includegraphics[width=1\textwidth]{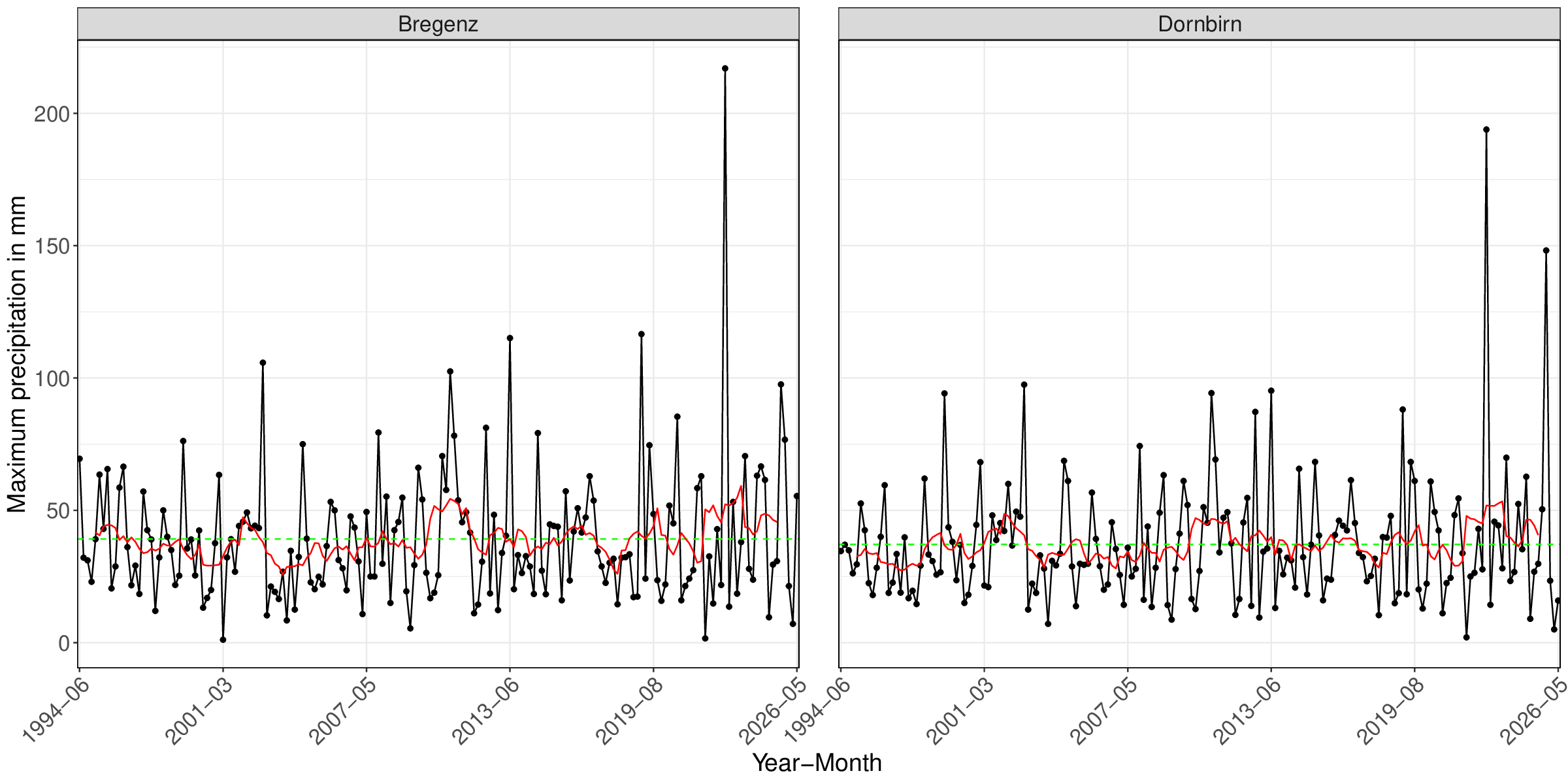}
	\caption{Plots of the monthly maximal (average daily)
    precipitation from 1994-06-01 to 2026-05-31 of Bregenz (left panel) and Dornbirn (right panel), Austria. 
   The red line denotes the rolling mean with lag $k = 10$,  the dashed green one the global mean.}
	\label{real_data_rolling mean}
\end{figure}
\begin{figure}[!ht]
	\centering
	\includegraphics[width=1\textwidth]{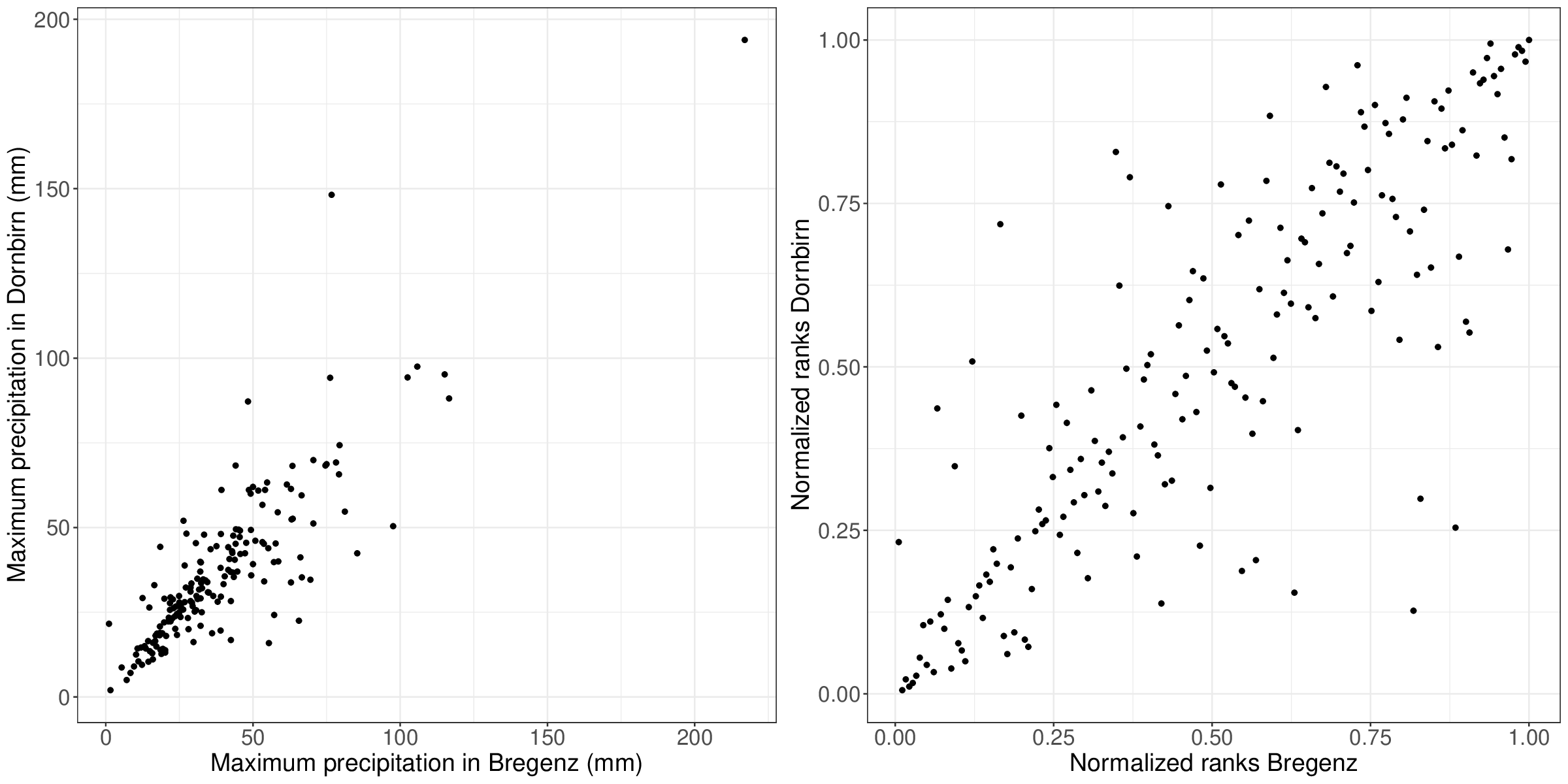}
	\caption{Plots of the monthly maximal precipitation from 1994-06-01 to 2026-05-31 of Bregenz and Dornbirn (left panel) and of the associated normalized ranks (right panel).}
	\label{real_data_plots}
\end{figure}
\subsection{Fitting the non-parametric Archimax model}
\noindent  We calculate the estimator $C_{\alpha_n,n}^{\mathbf{CFG}}$ 
defined according to eq.~\eqref{eq:C_alpha_n} for our rainfall data and compare it to the CFG-estimator (proposed in \citep{genest2009}) and to the empirical copula $C_n=C_n^\Pi$.
Since the estimator in eq.~\eqref{eq:C_alpha_n} relies only on Assumptions~\ref{assumption1_cfg}, \ref{assumption2_cfg}, and is 
fully nonparametric, no additional assumptions need to be verified; 
we can therefore expect it to yield reliable estimates for the 
underlying dependence structure. As demonstrated in the simulation study of Section~\ref{sec:sim_study}, the CFG type estimator $(A_{\alpha_n,n}^{\mathbf{CFG}})_{1-\tau_{A_{\alpha_n,n}^{\mathbf{CFG}}}}$ proposed in eq.~\eqref{eq:final_estimator} generally outperforms its Pickands type counterpart $(A_{\alpha_n,n}^{\mathbf{P}})_{1-\tau_{A_{\alpha_n,n}^{\mathbf{P}}}}$. We therefore focus on $(A_{\alpha_n,n}^{\mathbf{CFG}})_{1-\tau_{A_{\alpha_n,n}^{\mathbf{CFG}}}}$ as the estimator for the transformed Pickands dependence function $(A)_{1-\tau_A}$, and on $(\varphi_{\alpha_n,n}^{\mathbf{CFG}})_{1-\tau_{A_{\alpha_n,n}^{\mathbf{CFG}}}}$ as the estimator for the transformed pseudo-inverse $(\varphi)_{1-\tau_A}$.

Following the estimation procedure studied in Section~\ref{sec:estimator}, we begin by estimating $(\varphi)_{1-\tau_A}$ via the estimator $\varphi_n$ defined in eq.~\eqref{eq:est_trans_phi}, and use it to construct $B_{n,c}^{\mathbf{CFG}}$. Proceeding as described in Section~\ref{subsec:adjusted_est}, we obtain the parameter estimate $\alpha_n \approx 0.4368388$ (see eq.~\eqref{eq:alphas}), which yields the estimator $A_{\alpha_n,n}^{\mathbf{CFG}}$ given in eq.~\eqref{eq:A_alpha_n}. The corresponding Kendall's $\tau$ is
$$
\tau_{A_{\alpha_n,n}^{\mathbf{CFG}}} \approx 0.4755104.
$$
Having this yields the estimators $(A_{\alpha_n,n}^{\mathbf{CFG}})_{1-\tau_{A_{\alpha_n,n}^{\mathbf{CFG}}}}$ and $(\varphi_{\alpha_n,n}^{\mathbf{CFG}})_{1-\tau_{A_{\alpha_n,n}^{\mathbf{CFG}}}}$ as defined in eqs.~\eqref{eq:final_estimator} and~\eqref{eq:est_trans_phi}, respectively, which are displayed in Figures~\ref{real_data_plots_parameters} and~\ref{real_data_plots_parameters2}. Combining these two estimators finally 
yields our Archimax copula estimator $C_{\alpha_n,n}^{\mathbf{CFG}}$ for the underlying hypothesized true copula $C_{\psi,A}$ (see eq.~\eqref{eq:C_alpha_n}).

To benchmark our estimator against the one obtained under the assumption that the true underlying copula is an EVC, we proceed as follows. We estimate the Pickands dependence function using the CFG estimator of \citep{genest2009} and, since this estimator is not guaranteed to be convex, replace it by its greatest convex minorant. The resulting estimated EVC and its associated Pickands dependence function are denoted by $C_{EV,n}^{\mathbf{CFG}}$ and $A_{EV,n}^{\mathbf{CFG}}$, respectively.

To assess the relative goodness of fit, we compare the pointwise differences 
$C_n - C_{EV,n}^{\mathbf{CFG}}$ and $C_n - C_{\alpha_n,n}^{\mathbf{CFG}}$, where $C_n$ is again the (bilinear extension of the) empirical copula. 
Both differences are evaluated on a fine grid over $\mathbb{I}^2$ and visualized by means of heatmaps and boxplots in Figure~\ref{real_data_empirical_vs_cfg}, 
which shows that $C_{\alpha_n,n}^{\mathbf{CFG}}$ is uniformly closer to $C_n$ 
than $C_{EV,n}^{\mathbf{CFG}}$, suggesting that relaxing the EVC 
assumption leads to a more accurate representation of the dependence structure present in the data.
\begin{figure}[!ht]
	\centering
	\includegraphics[width=1\textwidth]{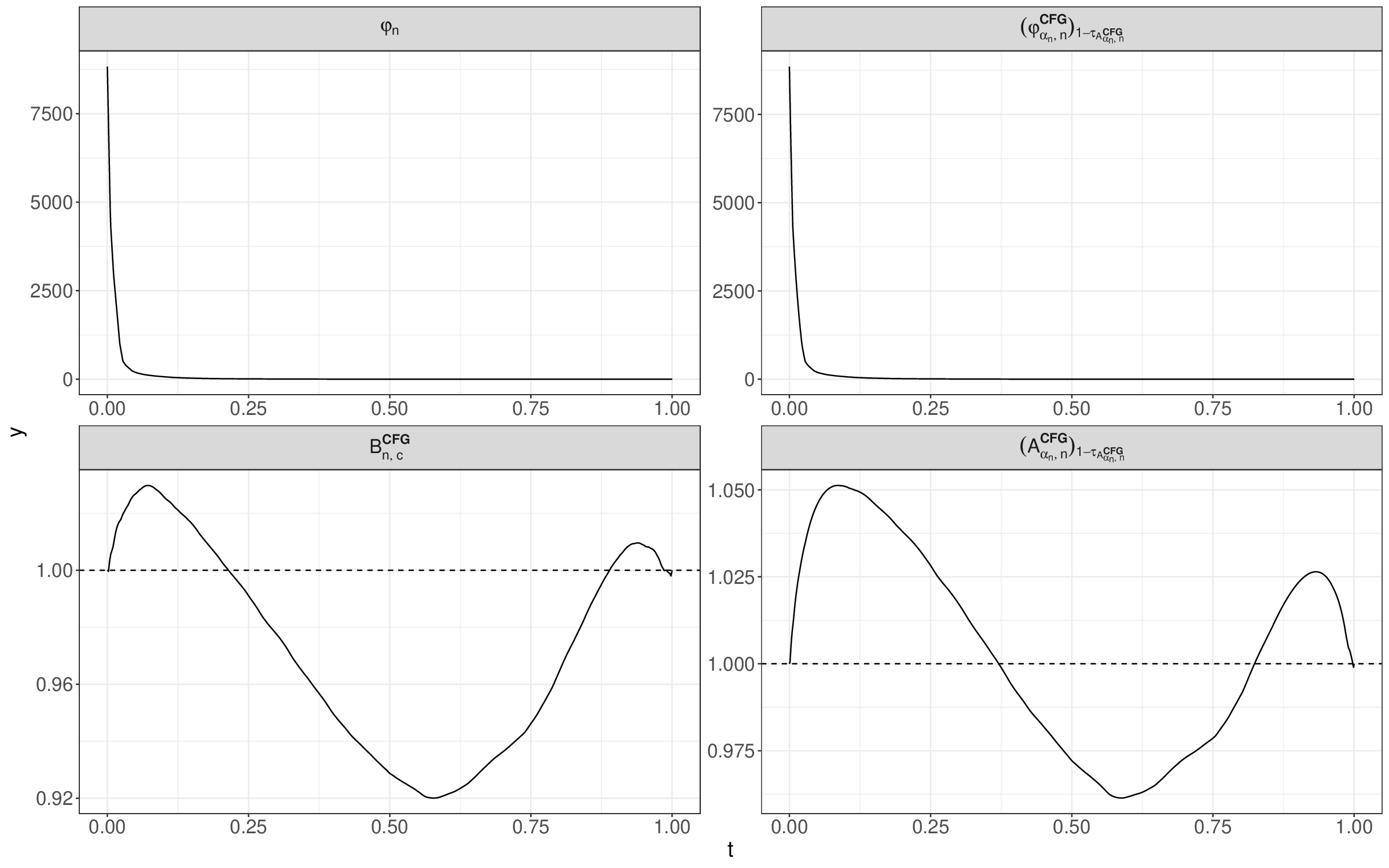}
	\caption{Plots of the estimators $\varphi_n$ (upper left panel), $(\varphi_{\alpha_n,n}^\mathbf{CFG})_{1-\tau_{A_{\alpha_n,n}^\mathbf{CFG}}}$ (upper right panel), $B_{n,c}^\mathbf{CFG}$ (lower left panel) and $(A_{\alpha_n,n}^\mathbf{CFG})_{1-\tau_{A_{\alpha_n,n}^\mathbf{CFG}}}$ (lower right panel).}
	\label{real_data_plots_parameters}
\end{figure}
\begin{figure}[!ht]
	\centering
	\includegraphics[width=1\textwidth]{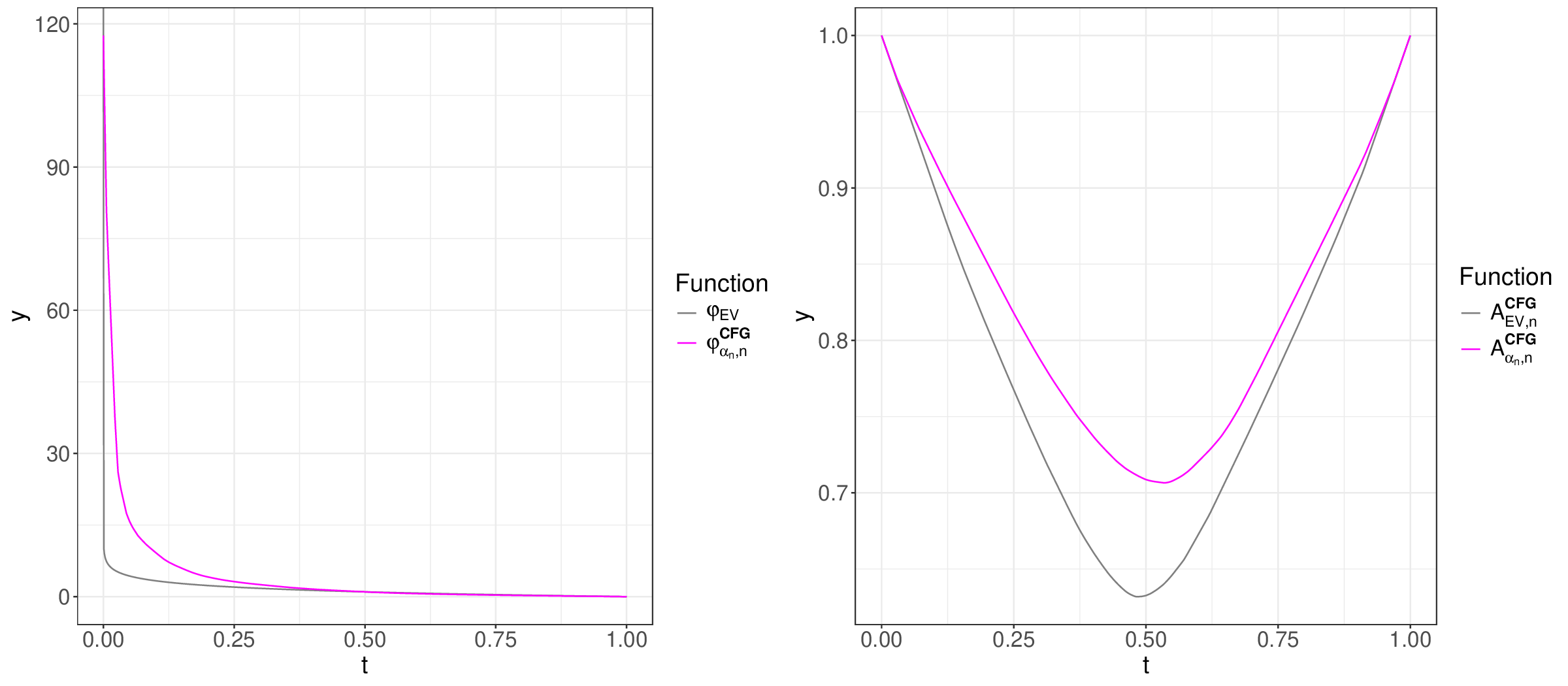}
	\caption{Plots of the estimator $\varphi_{\alpha_n,n}$ (magenta), the Archimedean generator $\varphi_{EV}(t) = \frac{\log(t)}{\log(\frac{1}{2})}$ (gray) in the left panel, and the estimators $A_{\alpha_n,n}^\mathbf{CFG}$ (magenta), $A_{EV,n}^\mathbf{CFG}$ (gray) in the right panel.}
	\label{real_data_plots_parameters2}
\end{figure}
\begin{figure}[!ht]
	\centering
	\includegraphics[width=1\textwidth]{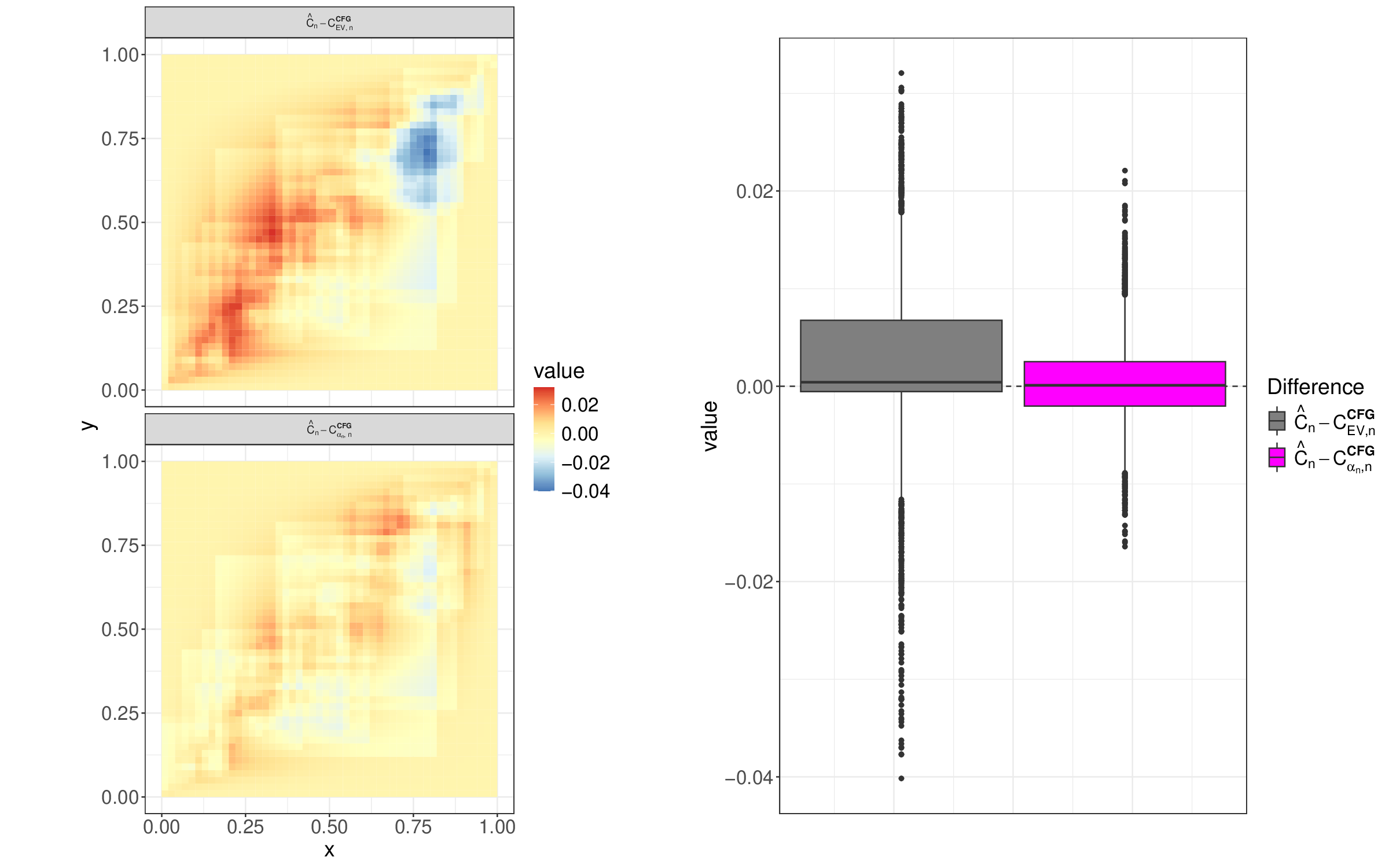}
	\caption{Heatmaps (left panel) and boxplots (right panel) of the differences $\hat{C}_n - C_{EV,n}^\mathbf{CFG}$ and $\hat{C}_n - C_{\alpha_n,n}^\mathbf{CFG}$, respectively, where $\hat{C}_n$ denotes the bilinear extension of the empirical copula estimator.}
	\label{real_data_empirical_vs_cfg}
\end{figure}
\subsection{Conditional distributions and estimating directed dependence}
In view of Corollary \ref{cor:continuity_dependence_measures} and Theorem~\ref{thrm:strong_consistency_dep_meas}, it may also be of interest to examine how the estimator $C_{\alpha_n,n}^{\mathbf{CFG}}$ performs as a plug-in estimator for dependence measures, in our case 
Chatterjee's $\xi$ and Trutschnig's $\zeta_1$ (as mentioned in Section 
\ref{subsection:general_notation}). Since Markov kernels play a central role in the definition of these measures, we first study the plug-in estimators of the kernels $K_{C_{\alpha_n,n}^{\mathbf{CFG}}}$, $K_{C_{EV,n}^{\mathbf{CFG}}}$ and $K_{\mathcal{CB}_N(E_n)}$, where $K_{C_{EV,n}^{\mathbf{CFG}}}$ denotes the Markov kernel associated with the copula $C_{EV,n}^{\mathbf{CFG}}$ and $K_{\mathcal{CB}_N(E_n)}$ denotes the Markov kernel corresponding to the empirical checkerboard copula $\mathcal{CB}_N(E_n)$ with resolution $N = \lfloor \sqrt{n} \rfloor$. We refer to \citep{dur_princ, JGT} for further details on checkerboard copulas and empirical checkerboard copulas. Figure~\ref{real_data_kernels_plots} compares the probability measures $K_{C_{\alpha_n,n}^{\mathbf{CFG}}}(x,\cdot)$, $K_{C_{EV,n}^{\mathbf{CFG}}}(x,\cdot)$, and $K_{\mathcal{CB}_N(E_n)}(x,\cdot)$ for fixed $x \in \{0.3, 0.5, 0.9\}$. It can be seen that $K_{C_{\alpha_n,n}^{\mathbf{CFG}}}(x,\cdot)$ is closer to the empirical checkerboard estimator $K_{\mathcal{CB}_N(E_n)}(x,\cdot)$ for $x = 0.9$, suggesting that $C_{\alpha_n,n}^{\mathbf{CFG}}$ may provide a better fit to the data at hand. Considering the estimators for the dependence measures $\zeta_1$ and $\xi$, we obtain the values $\zeta_1(C_{EV,n}^\mathbf{CFG}) \approx 0.676$, $\zeta_1(C_{\alpha_n,n}^\mathbf{CFG})\approx 0.694$ and $\zeta_1(\mathcal{CB}_N(E_n)) \approx 0.698$ as well as $\xi(C_{EV,n}^\mathbf{CFG}) \approx 0.485$, $\xi(C_{\alpha_n,n}^\mathbf{CFG}) \approx 0.504$ and $\hat{\xi}_n \approx 0.531$. Not surprisingly, the values of $\zeta_1(C_{\alpha_n,n}^\mathbf{CFG})$ and $\xi(C_{\alpha_n,n}^\mathbf{CFG})$ are closer to $\zeta_1(\mathcal{CB}_N(E_n))$ and $\hat{\xi}_n$, respectively. 
\begin{figure}[!ht]
	\centering
	\includegraphics[width=1\textwidth]{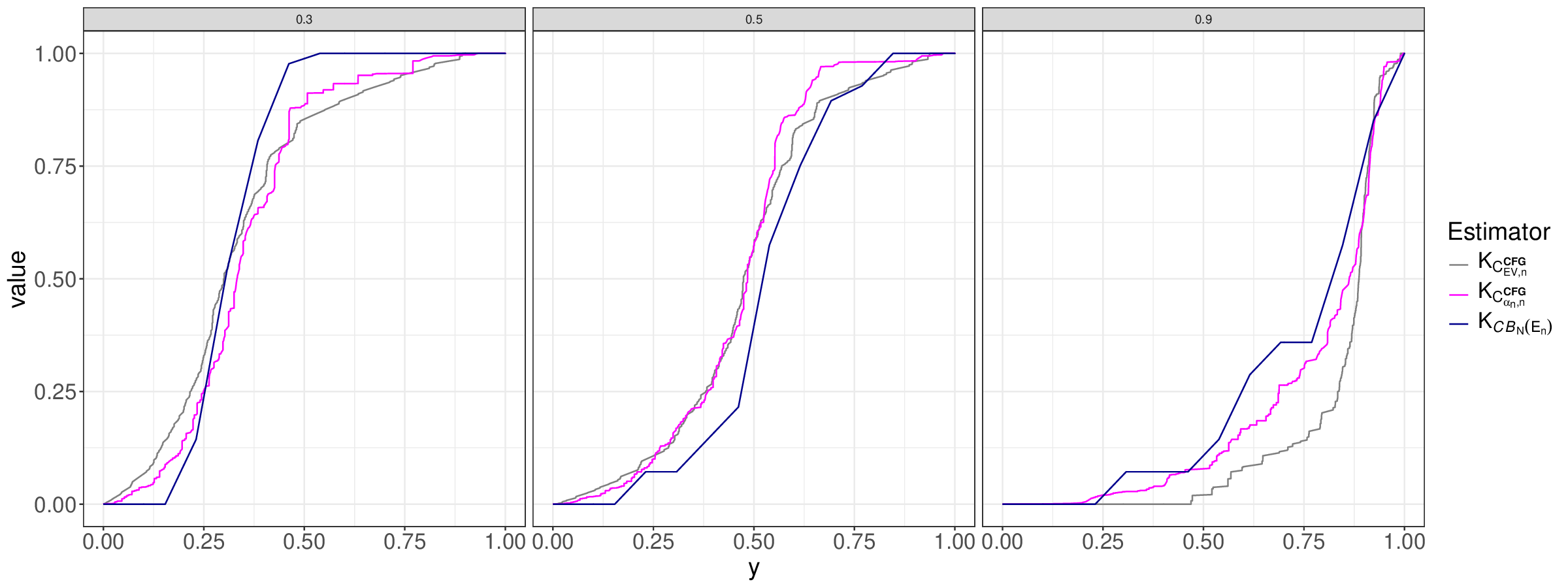}
	\caption{Plots of the distribution functions $y \mapsto  K_{C_{EV,n}^\mathbf{CFG}}(x, [0,y])$ (gray), $y \mapsto K_{C_{\alpha_n,n}^\mathbf{CFG}}(x, [0,y])$ (magenta) and $y \mapsto K_{\mathcal{CB}_N(E_n)}(x,[0,y])$ (darkblue) for $x = 0.3$ (left panel), $x = 0.5$ (middle panel) and $x = 0.9$ (right panel).}
	\label{real_data_kernels_plots}
\end{figure}
\vspace{1cm} \\
\textbf{Acknowledgement:}
Both authors gratefully acknowledge the support of the WISS 2025 project
‘IDA-lab Salzburg’ (20204-WISS/225/197-2019 and 20102-F1901166-KZP).

\clearpage
  \appendix
  \section{Auxiliary lemmas and proofs}\label{sec:auxil_lemmas}
In this short section, we present two auxiliary lemmas on properties of the generator $\psi \in \Psi$ and the Pickands dependence function $A \in \mathcal{A}$, which we use in Section~\ref{sec:conv_archimax}.
  \begin{lemma}\label{lem:positivity_generator}
      Every normalized, convex Archimedean generator $\psi \in \Psi$ fulfills $\psi(z) > 0$ for every $z \in [0,2)$. Moreover, $\psi$ is strictly decreasing on $[0,2)$ and the identity  $\varphi(\psi(z)) = z$ holds for every $z \in [0,2)$.
  \end{lemma}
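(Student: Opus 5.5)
The plan is to get positivity from convexity together with the normalization $\psi(0)=1$, $\psi(1)=\tfrac12$. Strict monotonicity and the identity $\varphi\circ\psi=\mathrm{id}$ on $[0,2)$ then follow from the defining properties of generators and their pseudo-inverses.

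\emph{Positivity.} Fix $z\in[1,2)$ and write $1$ as a convex combination of $0$ and $z$, namely $1=(1-\tfrac1z)\cdot 0+\tfrac1z\cdot z$. Convexity of $\psi$ gives
$$
\tfrac12=\psi(1)\le \left(1-\tfrac1z\right)\psi(0)+\tfrac1z\,\psi(z)=1-\tfrac1z+\tfrac{\psi(z)}{z}.
$$
Rearranging yields $\psi(z)\ge z\left(\tfrac1z-\tfrac12\right)=1-\tfrac z2$, which is strictly positive for $z<2$. For $z\in[0,1)$ we use that $\psi$ is non-increasing, so $\psi(z)\ge\psi(1)=\tfrac12>0$. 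Hence $\psi>0$ on $[0,2)$.

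\emph{Strict monotonicity.} By the definition of a generator, $\psi$ is strictly decreasing on $[0,z_0)$ with $z_0:=\inf\{x\colon\psi(x)=0\}$. Positivity on $[0,2)$ and continuity of $\psi$ give $z_0\ge2$, so $[0,2)\subseteq[0,z_0)$ and $\psi$ is strictly decreasing there.

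\emph{Inverse identity.} Take $z\in[0,2)$ and set $y=\psi(z)$. By definition, $\varphi(y)=\inf\{w\in[0,\infty]\colon\psi(w)=y\}$, and $z$ lies in this set, so $\varphi(y)\le z$. Suppose some $w<z$ also satisfied $\psi(w)=y$. Then $w,z\in[0,2)$, and strict monotonicity would force $\psi(w)>\psi(z)$, a contradiction. Hence the infimum equals $z$, i.e.\ $\varphi(\psi(z))=z$.

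The only non-routine step is the convexity estimate, and the choice of the points $0$, $1$, $z$ makes it immediate. It also explains why $2$ is the natural cutoff: the bound $\psi(z)\ge 1-\tfrac z2$ is attained by the linear non-strict generator $\psi(z)=(1-\tfrac z2)_+$, which vanishes at $z=2$.
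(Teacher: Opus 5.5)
Your proof is correct and follows essentially the same route as the paper: the convexity estimate $\psi(z)\ge 1-\tfrac{z}{2}$ on $[1,2)$ using the points $0,1,z$, followed by strict monotonicity below the first zero of $\psi$ and the resulting inverse identity. Your version is more explicit in two places the paper leaves implicit: the case $z\in[0,1)$ via monotonicity, and the infimum argument showing $\varphi(\psi(z))=z$.
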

  \begin{proof}
  For $\psi \in \Psi$ and $z \in [1,2)$, convexity yields 
  $$
  \frac{\psi(1) - \psi(0)}{1-0} \leq \frac{\psi(z) - \psi(0)}{z-0}.
  $$
  Hence, using $\psi(1) = \frac{1}{2}$ yields
  $$
  0<1- \frac{z}{2} \leq \psi(z)
  $$
  for every $z \in [1,2)$. Since $\psi$ is strictly decreasing on the interval $[0,\inf\{z \in [0,\infty) \colon \psi(z) = 0\})$, we obviously have that $\psi$ is strictly decreasing and hence invertible on $[0,2)$, with inverse $\varphi$. 
  This immediately implies that $\varphi(\psi(z)) = z$ for every $z \in [0,2)$.
  \end{proof}
As the following lemma is often used implicitly throughout this paper, we provide a formal proof for the sake of completeness.
  \begin{lemma}\label{lem:pickands_1/2}
      Let $A \in \mathcal{A}$. Then $A=A_M$ on $\mathbb{I}$, if and only if $A(\frac{1}{2}) = \frac{1}{2}$.
  \end{lemma}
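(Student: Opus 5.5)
The claim is that $A=A_M$ on $\mathbb{I}$ if and only if $A(\tfrac12)=\tfrac12$. The plan is to use only convexity of $A$ and the sandwich $\max\{1-t,t\}\le A(t)\le 1$, with no need for the measure representation.

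The direction ``$\Rightarrow$'' is immediate, since $A_M(\tfrac12)=\max\{\tfrac12,\tfrac12\}=\tfrac12$.

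For ``$\Leftarrow$'', assume $A(\tfrac12)=\tfrac12$ and first take $t\in[0,\tfrac12]$. I would write $t$ as a convex combination of the endpoints $0$ and $\tfrac12$, namely $t=(1-2t)\cdot 0+2t\cdot\tfrac12$. Convexity of $A$ together with $A(0)=1$ then gives
\[
A(t)\le(1-2t)A(0)+2tA(\tfrac12)=(1-2t)+t=1-t .
\]
The value $A(0)=1$ is forced by $\max\{1,0\}\le A(0)\le 1$. On this interval the lower bound reads $A(t)\ge\max\{1-t,t\}=1-t$, so $A(t)=1-t=A_M(t)$.

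The case $t\in[\tfrac12,1]$ is symmetric. Here I use the endpoints $\tfrac12$ and $1$ together with $A(1)=1$, which gives $A(t)\le t$. Combined with the lower bound $A(t)\ge t$, this yields $A(t)=t=A_M(t)$.

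Putting the two cases together, $A=A_M$ on all of $\mathbb{I}$. I do not expect any real obstacle; the only point needing care is justifying the endpoint values $A(0)=A(1)=1$, and these follow directly from the defining bounds of $\mathcal{A}$.
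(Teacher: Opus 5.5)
Your proof is correct and follows the same route as the paper: convexity between the endpoints and $\tfrac12$ gives $A(t)\le 1-t$ on $[0,\tfrac12]$ and $A(t)\le t$ on $[\tfrac12,1]$, and combining this with $A\ge A_M$ forces $A=A_M$. Your write-up is also slightly cleaner than the paper's, which prints the second inequality as $A(t)\geq t$ (a typo for $\le$) and does not justify the endpoint values $A(0)=A(1)=1$.
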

  \begin{proof}
      $A=A_M$ obviously implies $A(\frac{1}{2}) = \frac{1}{2}$. 
      Suppose now that $A(\frac{1}{2}) = \frac{1}{2}$ holds. Then using 
      convexity of $A \in \mathcal{A}$ directly yields $A(t) \leq 1-t$ for every 
      $t \in [0,\frac{1}{2}]$ as well as $A(t) \geq t$ for every 
      $t \in [\frac{1}{2},1]$. Considering $A(t) \geq \max\{t,1-t\}$ directly yields
      $A=A_M$.
  \end{proof}
  Next, a proof for Lemma \ref{lem:kendalls_tau_trans} is provided.
  \begin{proof}[Proof of Lemma \ref{lem:kendalls_tau_trans}] \label{proof:lem_trans_kendall}
According to \cite[Proof of Theorem 5.1]{dietrich2026} we have that:
\begin{align*}
\tau_B &= 1-\int_\mathbb{I}\frac{B(s) -s D^+B(s)}{B(s)^2}G_B(s) \mathrm{d}\lambda(s) \\&=
-\int_\mathbb{I}\left[\frac{D^+B(s)(1-2s)}{B(s)} + \frac{s(s-1)D^+B(s)^2}{B(s)^2}\right]\mathrm{d}\lambda(s)
\end{align*}
for every $B \in \mathcal{A}$, where $G_B$ is defined as in eq. \eqref{eq:funct_G_A}. Plugging in $(A)_\alpha$ for $B$ and using the transformation $\kappa_\alpha$ as defined in eq. \eqref{eq:trans_kappa}, we have that
\begin{align*}
    \frac{(1-2s)D^+(A)_\alpha(s)}{(A)_\alpha(s)} = \underbrace{\frac{(1-2s)(s^{\alpha-1} - (1-s)^{\alpha-1})}{s^\alpha + (1-s)^\alpha}}_{=:I_\alpha(s)} + \underbrace{\frac{D^+A(\kappa_\alpha(s))s^{\alpha-1}(1-s)^{\alpha-1}(1-2s)}{A(\kappa_\alpha(s))(s^\alpha + (1-s)^\alpha)^2}}_{=:II_\alpha(s)}
\end{align*}
and
\begin{align*}
    \frac{s(s-1)D^+(A)_\alpha(s)^2}{(A)_\alpha(s)^2} &= \underbrace{\frac{(s^{\alpha-1} - (1-s)^{\alpha-1})^2s(s-1)}{(s^\alpha + (1-s)^\alpha)^2}}_{=:III_\alpha(s)} + \underbrace{2 \frac{D^+A(\kappa_\alpha(s))s^{\alpha-1}(1-s)^{\alpha-1}}{A(\kappa_\alpha(s))(s^\alpha + (1-s)^\alpha)^2}\cdot\frac{(s^{\alpha-1} - (1-s)^{\alpha-1})s(s-1)}{s^\alpha + (1-s)^\alpha}}_{=:IV_\alpha(s)}\\&\quad +\underbrace{\frac{D^+A\left(\kappa_\alpha(s)\right)^2s^{2\alpha-2}(1-s)^{2\alpha-2}s(s-1)}{A^2(\kappa_\alpha(s))(s^\alpha + (1-s)^\alpha)^4}}_{=:V_\alpha(s)}.
\end{align*}
We consider the transformation $\kappa_\frac{1}{\alpha}(t) = \frac{t^\frac{1}{\alpha}}{t^\frac{1}{\alpha} + (1-t)^\frac{1}{\alpha}}$ with derivative $\kappa_\frac{1}{\alpha}'(t) = \frac{t^{\frac{1}{\alpha}-1}(1-t)^{\frac{1}{\alpha}-1}}{\alpha((1-t)^\frac{1}{\alpha} + t^\frac{1}{\alpha})^2}$ and obtain that
\begin{align*}
    \int_\mathbb{I} V_\alpha(s) \mathrm{d}\lambda(s) = \int_\mathbb{I}  \frac{D^+A\left(\kappa_\alpha(s)\right)^2s^{2\alpha-2}(1-s)^{2\alpha-2}s(s-1)}{A^2(\kappa_\alpha(s))(s^\alpha + (1-s)^\alpha)^4} \mathrm{d}\lambda(s) &= \int_\mathbb{I}-\frac{D^+A(t)^2t^\frac{2\alpha-1}{\alpha}(1-t)^\frac{2\alpha-1}{\alpha}}{A^2(t)((1-t)^\frac{1}{\alpha} + t^\frac{1}{\alpha})^{-2}}\kappa_\frac{1}{\alpha}'(t) \mathrm{d}\lambda(t)\\&=
    \frac{1}{\alpha}\int_\mathbb{I}\frac{D^+A(t)^2t(t-1)}{A^2(t)} \mathrm{d}\lambda(t).
\end{align*}
Now we consider the term
\begin{align*}
II_\alpha(s) + IV_\alpha(s) = \frac{D^+A(\kappa_\alpha(s))}{A(\kappa_\alpha(s))}\left[\frac{s^{\alpha-1}(1-s)^{\alpha-1}(1-2s)}{(s^\alpha + (1-s)^\alpha)^2} + 2 \frac{s^{\alpha-1}(1-s)^{\alpha-1}}{(s^\alpha + (1-s)^\alpha)^2}\cdot\frac{(s^{\alpha-1} - (1-s)^{\alpha-1})s(s-1)}{s^\alpha + (1-s)^\alpha}\right].
\end{align*}
Then using similar arguments as before yields
\begin{align*}
    \int_\mathbb{I}[II_\alpha(s) + IV_\alpha(s)]  \mathrm{d}\lambda(t) &= 
    \int_\mathbb{I}\frac{D^+A(\kappa_\alpha(s))}{A(\kappa_\alpha(s))}\frac{s^{\alpha-1}(1-s)^{\alpha-1}[(1-s)^\alpha-s^\alpha]}{(s^\alpha + (1-s)^\alpha)^3}\mathrm{d}\lambda(s) \\&=
    \int_\mathbb{I}\left[\frac{D^+A(t)}{A(t)}(1-2t)t^{1-\frac{1}{\alpha}}(1-t)^{1-\frac{1}{\alpha}}(t^\frac{1}{\alpha} + (1-t)^\frac{1}{\alpha})^2\kappa_\frac{1}{\alpha}'(t)\right]\mathrm{d}\lambda(t) \\&=
    \frac{1}{\alpha}\int_\mathbb{I}\frac{D^+A(t)(1-2t)}{A(t)}\mathrm{d}\lambda(t)
\end{align*}
Considering the remaining part we have that
$$
I_\alpha(s) + III_\alpha(s) = \frac{(1-2s)(s^{\alpha-1} - (1-s)^{\alpha-1})}{s^\alpha + (1-s)^\alpha} + \frac{(s^{\alpha-1} - (1-s)^{\alpha-1})^2s(s-1)}{(s^\alpha + (1-s)^\alpha)^2}
$$
Substituting $s = \frac{t^\frac{1}{\alpha}}{t^\frac{1}{\alpha} + (1-t)^\frac{1}{\alpha}} = \kappa_{\frac{1}{\alpha}}(t)$ again we obtain that
$$
I_\alpha(s) = ((1-t)^\frac{1}{\alpha} -t^\frac{1}{\alpha})(t^{1-\frac{1}{\alpha}} - (1-t)^{1-\frac{1}{\alpha}})
$$
and
$$
III_\alpha(s) = -(t^{1-\frac{1}{\alpha}} - (1-t)^{1-\frac{1}{\alpha}})^2t^\frac{1}{\alpha}(1-t)^\frac{1}{\alpha}.
$$
Simplifying, the following identity holds:
$$
I_\alpha(s) + III_\alpha(s) = (t^{1-\frac{1}{\alpha}} - (1-t)^{1-\frac{1}{\alpha}})((1-t)^{1+\frac{1}{\alpha}}-t^{1+\frac{1}{\alpha}}).
$$
Applying change of coordinates and integration by parts we have
\begin{align*}
    \alpha\int_\mathbb{I}[I_\alpha(s) + III_\alpha(s)] \mathrm{d}\lambda(s) &= \int_\mathbb{I} \frac{(t^{1-\frac{1}{\alpha}} - (1-t)^{1-\frac{1}{\alpha}})((1-t)^{1+\frac{1}{\alpha}}-t^{1+\frac{1}{\alpha}})t^{\frac{1}{\alpha}-1}(1-t)^{\frac{1}{\alpha}-1}}{((1-t)^\frac{1}{\alpha} + t^\frac{1}{\alpha})^2}\mathrm{d}\lambda(t) \\&= \int_\mathbb{I} \frac{((1-t)^{\frac{1}{\alpha}-1} - t^{\frac{1}{\alpha}-1})((1-t)^{1+\frac{1}{\alpha}}-t^{1+\frac{1}{\alpha}})}{((1-t)^\frac{1}{\alpha} + t^\frac{1}{\alpha})^2}\mathrm{d}\lambda(t) \\&= \frac{\alpha}{(1-t)^\frac{1}{\alpha} + t^\frac{1}{\alpha}}((1-t)^{\frac{1}{\alpha}+1} - t^{\frac{1}{\alpha}+1})\bigg|_{t = 0}^1 + (1+\alpha)\int_\mathbb{I}\frac{(1-t)^\frac{1}{\alpha} + t^\frac{1}{\alpha}}{(1-t)^\frac{1}{\alpha} + t^\frac{1}{\alpha}} \mathrm{d}\lambda(t)\\&=
    -2\alpha +1+\alpha = 1-\alpha.
\end{align*}
Putting all of the above together we finally obtain that
$$
    \tau_{(A)_\alpha} = 1-\frac{1-\tau_A}{\alpha}.
$$
The identity concerning $\tau_{(\psi)_\alpha}$ is straightforward to verify.
\end{proof}
  \comment{
  \begin{proof}[Proof of Lemma \ref{lem:identifiability_nes}]
      Considering $\psi_1,\psi_2 \in \Psi$ and $(1-\psi_1(\frac{1}{\cdot})) \in \mathcal{R}_{-\frac{1}{m_k}}$ with $m_k \geq 1$ for $k \in \{1,2\}$. Moreover, consider $A_1,A_2 \in \mathcal{A} \setminus \{A_M\}$ and assume that $C_{\psi_1,A_1}(x,y) = C_{\psi_2,A_2}(x,y)$ for every $x,y\in\mathbb{I}$. According to \textcolor{red}{[Caperaa]}, we obtain that 
      $$
      C_{(A_1)_{m_1}}(x,y) = \lim_{n \rightarrow \infty}C_{A_1,\psi_1}^n(x^\frac{1}{n},y^\frac{1}{n}) = \lim_{n \rightarrow \infty}C_{A_2,\psi_2}^n(x^\frac{1}{n},y^\frac{1}{n}) = C_{(A_2)_{m_2}}(x,y)
      $$
      for every $x,y \in \mathbb{I}$. Therefore, it obviously holds that $(A_1)_{m_1}(t) = (A_2)_{m_2}(t)$. We consider two cases. If $m_1 \leq m_2$, then $A_1(t) = (A_2)_\frac{m_2}{m_1}(t)$ for every $t \in \mathbb{I}$ and therefore $(\psi_2)_\frac{m_1}{m_2}(z) = \psi_1(z)$ for every $z \in [0,\infty)$. Considering $m_1 > m_2$, we obtain that $A_2(t) = (A_1)_{\frac{m_2}{m_1}}(t)$ and thus, $\psi_1(z) = (\psi_2)_{\frac{m_1}{m_2}}(z)$ for every $t \in \mathbb{I}$ and $z \in [0,\infty)$. This proves the result.
  \end{proof}
  }
\section{Strong consistency of the copula estimator}\label{sec:proof_consistency}
\noindent Throughout this section we assume that the properties stated at the beginning of Section~\ref{sec:estimator} hold. Let $F_n$ and $G_n$ denote the empirical marginal distribution functions based on the first $n$ elements of the sample $(X_1,Y_1),(X_2,Y_2),\ldots$ from $H$. Furthermore, we consider the (slightly modified) pseudo-observations $(\hat{U}_i,\hat{V}_i)$, given by
$$
\hat{U}_i = \frac{n}{n+1}F_n(X_i), \qquad \hat{V}_i = \frac{n}{n+1}G_n(Y_i),
$$
and define $\hat{C}_n: \mathbb{I}^2 \rightarrow [0,1]$ by
\begin{equation}\label{eq:empirical_copula_est}
	\hat{C}_n(x,y) = \frac{1}{n}\sum_{i=1}^n \mathbf{1}_{[0,x] \times [0,y]}(\hat{U}_i,\hat{V}_i)
\end{equation}
for every $x,y \in \mathbb{I}$. Notice that $\hat{C}_n$ is not a
copula, the function is not even continuous. It is straightforward to verify, 
however, that $\hat{C}_n$ exhibits the same limiting behaviour as the empirical copula $C_n$ considered in Section \ref{subsection:general_notation}, i.e., with probability $1$ we have that $(\hat{C}_n)_{n \in \mathbb{N}}$ converges uniformly to $C$.
The following elementary property of $\hat{C}_n$ will be useful in the sequel:
\begin{lemma}\label{lem:upper.lower.cn.hat}
    The function $\hat{C}_n$, defined according to eq. \eqref{eq:empirical_copula_est},
    has the following properties: 
    For all $x,y\geq \frac{n}{n+1}$ we have $\hat{C}_n(x,y)=1$. 
    Moreover, for all $x,y \in [0,1)$  
    \begin{align}\label{eq:estimates_copulas}
    W\left(\tfrac{\lfloor (n+1) x\rfloor}{n},\tfrac{\lfloor (n+1) y\rfloor}{n}\right) \leq \hat{C}_n(x,y) &\leq M\left(\tfrac{\lfloor (n+1) x\rfloor}{n},\tfrac{\lfloor (n+1) y\rfloor}{n}\right)
    \end{align}
    holds. 
\end{lemma}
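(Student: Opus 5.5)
The plan is to argue directly from the definition of $\hat{C}_n$, using that the pseudo-observations take specific grid values. Since $H$ is continuous, with probability one there are no ties. Then $\{F_n(X_1),\ldots,F_n(X_n)\}=\{\frac{1}{n},\frac{2}{n},\ldots,1\}$, and hence $\{\hat{U}_1,\ldots,\hat{U}_n\}=\{\frac{1}{n+1},\ldots,\frac{n}{n+1}\}$, each value attained exactly once; the same holds for the $\hat{V}_i$. (If ties had to be allowed, the inequalities below would still follow by the same counting, since $F_n(X_i)\le 1$ always.)

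\textbf{First assertion.} If $x,y\geq \frac{n}{n+1}$, then every $\hat{U}_i\le x$ and every $\hat{V}_i\le y$. Hence all indicators equal $1$ and $\hat{C}_n(x,y)=1$.

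\textbf{Counting step.} Fix $x\in[0,1)$. The number of indices $i$ with $\hat{U}_i\le x$ equals the number of $k\in\{1,\ldots,n\}$ with $\frac{k}{n+1}\le x$. This number is $\min\{\lfloor (n+1)x\rfloor,n\}=\lfloor (n+1)x\rfloor$, because $x<1$ gives $(n+1)x<n+1$. Write $a:=\lfloor (n+1)x\rfloor/n$ and $b:=\lfloor (n+1)y\rfloor/n$, both in $\mathbb{I}$. Then the sets
$$I_x:=\{i:\hat{U}_i\le x\},\qquad J_y:=\{i:\hat{V}_i\le y\}$$
have cardinalities $na$ and $nb$, and by definition $\hat{C}_n(x,y)=\frac{1}{n}\,|I_x\cap J_y|$.

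\textbf{Bounds.} The elementary set inequalities
$$\max\{|I_x|+|J_y|-n,0\}\le |I_x\cap J_y|\le \min\{|I_x|,|J_y|\}$$
give, after dividing by $n$,
$$\max\{a+b-1,0\}\le \hat{C}_n(x,y)\le \min\{a,b\}.$$
This is precisely $W(a,b)\le \hat{C}_n(x,y)\le M(a,b)$, i.e. eq. \eqref{eq:estimates_copulas}.

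The only delicate point is the counting step: one must verify that the $\frac{n}{n+1}$ rescaling converts the threshold $x$ into exactly $\lfloor (n+1)x\rfloor$ points. One must also check that the restriction $x<1$ keeps this count at most $n$, so that $a,b\in\mathbb{I}$ and $W,M$ are evaluated inside their domain. Everything else is routine.
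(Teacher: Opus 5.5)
Your argument is correct and is essentially the paper's proof. The paper views $\hat{C}_n$ as a bivariate distribution function and applies Sklar's theorem with the Fr\'echet--Hoeffding bounds, using marginals $\frac{1}{n}|I_x|$ and $\frac{1}{n}|J_y|$. It then uses the same count $|I_x|=\lfloor (n+1)x\rfloor$. Your inequalities $\max\{|I_x|+|J_y|-n,0\}\le |I_x\cap J_y|\le \min\{|I_x|,|J_y|\}$ are simply the elementary, self-contained form of those bounds.

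One correction: your parenthetical claim that the inequalities survive ties is false. With ties you only get $|I_x|\le \lfloor (n+1)x\rfloor$, which preserves the upper bound, but the lower bound can fail. For example, take $n=2$, $X_1=X_2$ and $Y_1<Y_2$. Then $\hat{U}_1=\hat{U}_2=\frac{2}{3}$, so $\hat{C}_n(\frac{1}{2},\frac{9}{10})=0<\frac{1}{2}=W(\frac{1}{2},1)$. Hence the absence of ties, which the paper also uses tacitly, is genuinely needed.

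Moreover, in the paper's stationary--ergodic setting, unlike the i.i.d.\ case, continuity of $H$ alone does not make ties a null event. The no-ties condition is therefore best stated as an explicit hypothesis rather than derived from continuity.
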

\begin{proof}
    Since the first assertion obviously holds, it suffices to prove the second one, 
    which can be done as follows: 
    Interpreting $\hat{C}_n$ as a bivariate distribution function, applying Sklar's 
    theorem, and using the fact that $M$ and $W$ is the upper and lower bound in $\mathcal{C}$, respectively, directly yields 
    \begin{align*}
    M \left(\frac{1}{n} \sum_{i=1}^n \mathbf{1}_{[0,x]}(\hat{U}_i), \frac{1}{n} \sum_{i=1}^n \mathbf{1}_{[0,x]}(\hat{V}_i)\right)
        \geq \hat{C}_n(x,y) \geq W \left(\frac{1}{n} \sum_{i=1}^n \mathbf{1}_{[0,x]}(\hat{U}_i), \frac{1}{n} \sum_{i=1}^n \mathbf{1}_{[0,x]}(\hat{V}_i)\right).
    \end{align*}
    Considering that for $x,y \in [0,1)$ we have 
    $$
    \lfloor (n+1) x\rfloor = \sum_{i=1}^n \mathbf{1}_{[0,x]}(\hat{U}_i), \quad 
    \lfloor (n+1) y\rfloor = \sum_{i=1}^n \mathbf{1}_{[0,y]}(\hat{V}_i),
    $$
    it therefore follows that 
    $$
    W\left(\tfrac{\lfloor (n+1) x\rfloor}{n},\tfrac{\lfloor (n+1) y\rfloor}{n}\right) \leq \hat{C}_n(x,y) \leq  M\left(\tfrac{\lfloor (n+1) x\rfloor}{n},\tfrac{\lfloor (n+1) y\rfloor}{n}\right),
    $$
    which completes the proof.
\end{proof}
\begin{remark}
Viewing copulas as distribution functions on $\mathbb{I}^2$ and 
extending them in the standard way to real distribution functions on $\mathbb{R}^2$, 
eq. \eqref{eq:estimates_copulas} holds for all $x,y \in \mathbb{I}$. 
In the sequel we will use this extension whenever the arguments are outside
$\mathbb{I}$.
\end{remark}
\subsection{Strong consistency of the estimators $\psi_n$ and $\varphi_n$}
We note that throughout this section, most of the statements in the theorems,  lemmas and their proofs, hold with probability $1$. For the sake of readability, 
we do not explicitly state this repetitively.
\noindent We start with the following theorem on strong consistency of our estimators:
\begin{theorem}\label{thm:consistency_psi_n}
 Let $\varphi_n$ be defined according to eq. \eqref{eq:phi.via.kendall}, let 
 $\psi_n \in \Psi$ denote the generator with pseudo-inverse $\varphi_n$ and let 
 $\beta_n$ be defined as in eq. \eqref{eq:beta_n}. 
 Then the following three assertions hold with probability $1$:
 \begin{enumerate}[label=(\roman*)]
          \item $\psi_n \overset{n\rightarrow \infty}{\longrightarrow} (\psi)_{1-\tau_A}$ uniformly on $[0,\infty)$.
          \item $\varphi_n(x) \overset{n\rightarrow \infty}{\longrightarrow} (\varphi)_{1-\tau_A}(x)$ pointwise for every $x \in (0,1]$.
        \item $\beta_n(t) \overset{n\rightarrow \infty}{\longrightarrow} (\beta)_{1-\tau_A}(t)=(1-\tau_A)\beta(t)$ for every $t \in \mathbb{I}$ at which $\beta$ is continuous. 
      \end{enumerate}
\end{theorem}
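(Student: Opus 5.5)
The plan is to reduce all three assertions to the known equivalences for Archimedean copulas in \cite{bernoulli,mult_arch}, with $K_n$ playing the role of the Kendall distribution function. First, by \cite{GNZ}, $K_n$ is itself the Kendall distribution function of an Archimedean copula $C_{\psi_n}$. Moreover, $\varphi_n$ defined via eq.~\eqref{eq:phi.via.kendall} is exactly the normalized pseudo-inverse associated with $K_n$: the identity $K_n(t)=t-\varphi_n(t)/D^+\varphi_n(t)$ of eq.~\eqref{eq:kendall.arch} integrates to eq.~\eqref{eq:phi.via.kendall} with the normalization $\varphi_n(\tfrac12)=1$. The earlier observation $K_n\ge \underline{K}_n$, hence $K_n(s)-s>0$ on $[0,1)$, guarantees that the integral is finite on $(0,1)$. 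It also guarantees that $\varphi_n$ is strictly decreasing, convex, and has $\varphi_n(1)=0$, so $\psi_n\in\Psi$ is well defined.

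Second, I establish that $K_n\to F^K_{\psi,A}$ weakly with probability $1$. Under stationarity and ergodicity, Birkhoff's theorem gives $\hat C_n\to C_{\psi,A}$ uniformly almost surely, and therefore also $C_n^W\to C_{\psi,A}$ uniformly. Uniform convergence of copulas implies weak convergence of their Kendall distribution functions (\cite[Proposition 5]{GNZ}), so $\hat K_n=F^K_{C_n^W}\to F^K_{\psi,A}$ weakly. Since $K_n(t)=\hat K_n(\tfrac{n+1}{n}t)$ and $\tfrac{n+1}{n}\to1$, the continuous mapping argument transfers this convergence to $K_n$ at every continuity point of $F^K_{\psi,A}$. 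By Lemma~\ref{lem:kendall_is_archimedean} (here $A\neq A_M$ is used), $F^K_{\psi,A}=F^K_{(\psi)_{1-\tau_A}}$, where $(\psi)_{1-\tau_A}\in\Psi$ by Lemma~\ref{lem:trans_gen}.

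Third, I invoke the Archimedean characterization \cite[Theorem 4.1]{bernoulli} (together with \cite{mult_arch}). For normalized generators, weak convergence of the Kendall distribution functions $F^K_{\psi_n}\to F^K_{(\psi)_{1-\tau_A}}$ is equivalent to uniform convergence $\psi_n\to(\psi)_{1-\tau_A}$ on $[0,\infty)$, which gives (i). It is likewise equivalent to pointwise convergence of the pseudo-inverses on $(0,1]$, which gives (ii). For (iii), I use eq.~\eqref{eq:kendall.arch} in the form $\beta_n(t)=t-K_n(t)$ and $(\beta)_{1-\tau_A}(t)=t-F^K_{(\psi)_{1-\tau_A}}(t)=(1-\tau_A)\beta(t)$, the latter by eq.~\eqref{eq:kendall_archimax}. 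Convergence of $\beta_n(t)$ then holds wherever $F^K_{\psi,A}$ is continuous. Since $F^K_{\psi,A}(t)=t+(\tau_A-1)\beta(t)$ with $1-\tau_A>0$, continuity of $F^K_{\psi,A}$ at $t$ is the same as continuity of $\beta$ at $t$.

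The main obstacle I expect is the second step, in particular verifying rigorously that $\beta_n$ computed from $\varphi_n$ via $D^-\psi_n$ coincides with $t-K_n(t)$ everywhere, including at the jump points $W_i$ where left and right derivatives differ. There I would check the one-sided conventions directly on the piecewise linear $\psi_n$; if they disagree at atoms, the claim only needs continuity points, which avoids the atoms of the limit, and the countably many atoms of $K_n$ do not matter in the limit. A secondary subtlety is making the almost-sure statements hold on a single probability-one event for all $t$ simultaneously. This follows because weak convergence of $(K_n)$ is a single event, and everything else is deterministic on that event.
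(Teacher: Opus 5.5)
Your proposal is correct and follows essentially the same route as the paper. You obtain weak convergence of $K_n$ to $F^K_{\psi,A}=F^K_{(\psi)_{1-\tau_A}}$ (Section~\ref{subs:estimator_kendall} and Lemma~\ref{lem:kendall_is_archimedean}), then apply the Archimedean equivalences of \cite[Theorem 4.1]{bernoulli} for (i) and (ii), and use $K_n(t)=t-\beta_n(t)$ together with $F^K_{\psi,A}(t)=t-(\beta)_{1-\tau_A}(t)$ for (iii). The obstacle you anticipate does not arise. The integrand in eq.~\eqref{eq:phi.via.kendall} is right-continuous, so taking the right derivative gives $\varphi_n(t)/D^+\varphi_n(t)=t-K_n(t)$ for every $t\in(0,1)$. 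Combined with $D^-\psi_n(\varphi_n(t))=1/D^+\varphi_n(t)$, this makes the identity $\beta_n(t)=t-K_n(t)$ exact everywhere, including at the jump points $W_i$.
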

\begin{proof}
The first assertion has already been shown in Section \ref{subsec:6.2}, the second one is a direct consequence of the equivalences in \cite[Theorem 4.1]{bernoulli}.
To prove the third statement recall that we have 
$K_n(t) = t - \beta_n(t)$ as well as 
$$
F^K_{\psi,A}(t)=F^K_{C_{(\psi)_{1-\tau_A}}}(t)= t-(\beta)_{1-\tau_A}(t).
$$ 
for every $t \in \mathbb{I}$. Having this, again applying \cite[Theorem 4.1]{bernoulli},
concludes the proof.
\end{proof}
\subsection{Strong consistency of the Pickands type and the CFG type estimators $B_{n,c}^\mathbf{P}$ and $B_{n,c}^\mathbf{CFG}$}\label{subsec:strong_consistency_pick}
 \noindent Let $Z$ be the random variable whose survival function is 
 $(\psi)_{1-\tau_A}$. Using convexity of $(\psi)_{1-\tau_A}$, we obviously have that 
 $Z$ is absolutely continuous and that 
 $-D^-(\psi)_{1-\tau_A}$ is (a version of) the density of $\mathbb{P}^Z$. 
 Similarly to \cite{GNZ}, throughout this subsection we assume that 
 $\mathbb{E}[Z] < \infty$ and $\mathbb{E}[\vert \log Z \vert] < \infty$.
The following identity expressing $\mathbb{E}[Z]$ and $\mathbb{E}[\log Z]$ 
in terms of $(\psi)_{1-\tau_A}$ and $(\varphi)_{1-\tau_A}$ will be useful in the sequel.
Using Fubini's theorem (Cavalieri's principle) directly yields
\begin{align}\label{eq:int.psi.phi.gleich}
 \mathbb{E}[Z] &= \int_{[0,\infty)} \mathbb{P}[Z>z] \, \mathrm{d}\lambda(z) =  
 \int_{[0,\infty)} (\psi)_{1-\tau_A}(z) \, \mathrm{d}\lambda(z)=
 \int_{\mathbb{I}} \lambda\left(\{t \in [0,\infty): (\psi)_{1-\tau_A}(t) \geq z \} \right) 
 \, \mathrm{d}\lambda(z) \nonumber \\
 &= \int_{\mathbb{I}} \lambda\left([0,(\varphi)_{1-\tau_A}(z)] \right) 
 \, \mathrm{d}\lambda(z) = \int_{\mathbb{I}}  (\varphi)_{1-\tau_A}(z) \, \mathrm{d}\lambda(z)
\end{align}
and
\begin{align}\label{eq:int.log.psi.log.phi.gleich}
 \mathbb{E}[\log Z] &= \int_{[0,\infty)} \mathbb{P}[\log Z>z] \, \mathrm{d}\lambda(z) - \int_{(-\infty,0]} \mathbb{P}[\log Z\leq z] \, \mathrm{d}\lambda(z) \nonumber \\&=  
 \int_{[0,\infty)} (\psi)_{1-\tau_A}(\exp(z)) \, \mathrm{d}\lambda(z) - \int_{(-\infty,0]} [1-(\psi)_{1-\tau_A}(\exp(z))] \, \mathrm{d}\lambda(z) \nonumber \\&=
 \int_{[0,\frac{1}{2}]} \lambda\left(\{t \in [0,\infty): (\psi)_{1-\tau_A}(\exp(t)) \geq z \} \right) 
 \, \mathrm{d}\lambda(z) \nonumber \\&\quad-  \int_{[\frac{1}{2},1]} \lambda\left(\{t \in (-\infty,0]: 1-(\psi)_{1-\tau_A}(\exp(t)) \geq z \} \right) 
 \, \mathrm{d}\lambda(z)
 \nonumber \\
 &= \int_{[0,\frac{1}{2}]} \lambda\left([0,\log((\varphi)_{1-\tau_A}(z))] \right) 
 \, \mathrm{d}\lambda(z) - \int_{[0,\frac{1}{2}]} \lambda\left([\log((\varphi)_{1-\tau_A}(1-z)),0] \right) 
 \, \mathrm{d}\lambda(z) \nonumber\\&= \int_{\mathbb{I}}  \log((\varphi)_{1-\tau_A}(z)) \, \mathrm{d}\lambda(z),
\end{align}
where we used change of coordinates in the last equality. 
Alternatively $\mathbb{E}[\log{Z}]$ allows the following presentation in terms of
$(\psi)_{1-\tau_A}$:
\begin{align*}
    \mathbb{E}[\log{Z}] &= \int_{(-\infty,0)} (1-(\psi)_{1-\tau_A}(\exp(x))) \mathrm{d}\lambda(x) - 
    \int_{(0,\infty)} (\psi)_{1-\tau_A}(\exp(x)) \mathrm{d}\lambda(x) \\
    &= \int_{\mathbb{I}} \frac{1-(\psi)_{1-\tau_A}(u)}{u} \mathrm{d}\lambda(u) - 
       \int_{[1,\infty)} \frac{(\psi)_{1-\tau_A}(u)}{u} \mathrm{d}\lambda(u).
\end{align*}
Furthermore, since every $\varphi_n$ is non-strict and piece-wise linear we obviously have that
\begin{align*}
\int_{\mathbb{I}}\varphi_n(t) \mathrm{d}\lambda(t)  < \infty \,\,\, \text{and} \,\,\, \int_{\mathbb{I}}|\log(\varphi_n(t))| \mathrm{d}\lambda(t) < \infty
\end{align*}
and therefore $\Vert \varphi_n \Vert_1 < \infty$ and $\Vert \log \circ \varphi_n \Vert_1 < \infty$ with $\Vert \cdot \Vert_1$ denoting the $L_1$-norm with respect to $\lambda$ on 
$\mathbb{I}$. Applying Assumption \ref{assumption1_pick}, the next lemma shows that $(\varphi)_{n\in \mathbb{N}}$ converges to $(\varphi)_{1-\tau_A}$ almost surely in the $L^1$-norm.
\noindent Assuming \ref{assumption1_pick}, \ref{assumption1_cfg} and \ref{assumption2_cfg}, the estimators for $\mathbb{E}[Z]$ and $\mathbb{E}[\log Z]$ are strongly consistent - the following lemma holds:
\begin{lemma}\label{lem:est_eval}
Let $Z$ be the random variable with survival function $(\psi)_{1-\tau_A}$, 
and consider $f \in \{\mathrm{id}, \log\}$. If 
$f = \mathrm{id}$, we assume that $\mathbb{E}[Z] < \infty$ and 
Assumption~\ref{assumption1_pick} holds; if $f = \log$, we assume 
that $\mathbb{E}[|\log Z|] < \infty$ and Assumptions~\ref{assumption1_cfg} and~\ref{assumption2_cfg} hold.
Furthermore, let $\varphi_n$ be defined as in eq. \eqref{eq:phi.via.kendall}. 
Then, in either case, the following convergence holds with probability $1$:
\begin{equation}
\label{eq:your_label}
    \frac{1}{n}\sum_{i=1}^n f\!\left(\varphi_n\!\left(\frac{i}{n+1}\right)\right) 
    \xrightarrow{\;n \to \infty\;} \mathbb{E}[f(Z)].
\end{equation}
\end{lemma}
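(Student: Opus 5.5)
We need to prove that the Riemann sums $\frac1n\sum_{i=1}^n f(\varphi_n(\frac{i}{n+1}))$ converge to $\int_{\mathbb{I}} f((\varphi)_{1-\tau_A}(z))\,\mathrm{d}\lambda(z)$. By eqs.~\eqref{eq:int.psi.phi.gleich} and \eqref{eq:int.log.psi.log.phi.gleich}, the latter integral equals $\mathbb{E}[f(Z)]$. Write $\phi:=(\varphi)_{1-\tau_A}$. The plan is to split the unit interval into a compact middle part $[\delta,1-\delta]$ and two boundary pieces $[0,\delta)$ and $(1-\delta,1]$. On the middle part the argument is deterministic, using convexity and pointwise convergence. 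The boundary pieces have to be controlled by the assumptions: Assumption~\ref{assumption1_pick} when $f=\mathrm{id}$, and Assumptions~\ref{assumption1_cfg}, \ref{assumption2_cfg} when $f=\log$. I expect these assumptions to give a uniform-in-$n$ tail smallness of the Riemann sums near $0$ (and, for $\log$, near $1$, where $\log\varphi_n\to-\infty$).

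\textbf{Middle part.} By Theorem~\ref{thm:consistency_psi_n}(ii), with probability $1$ we have $\varphi_n\to\phi$ pointwise on $(0,1]$. All $\varphi_n$ and $\phi$ are convex and decreasing, so pointwise convergence upgrades to uniform convergence on every compact $[\delta,1-\delta]\subset(0,1)$. This is the same convexity argument already used in the proof of Lemma~\ref{lem:convergence}. On this interval $\phi$ is bounded and bounded away from $0$, so $f\circ\varphi_n\to f\circ\phi$ uniformly there as well. Moreover, $f\circ\phi$ is monotone, hence Riemann integrable on $[\delta,1-\delta]$. Combining these two facts, the partial sum $\frac1n\sum_{i:\,i/(n+1)\in[\delta,1-\delta]} f(\varphi_n(\tfrac{i}{n+1}))$ converges to $\int_\delta^{1-\delta} f\circ\phi\,\mathrm{d}\lambda$.

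\textbf{Boundary parts.} Since $\mathbb{E}[|f(Z)|]<\infty$, the integrals $\int_{[0,\delta)}|f\circ\phi|$ and $\int_{(1-\delta,1]}|f\circ\phi|$ tend to $0$ as $\delta\downarrow0$. It therefore remains to show
$$\lim_{\delta\downarrow 0}\limsup_{n\to\infty}\frac1n\sum_{i:\,i/(n+1)<\delta}\bigl|f(\varphi_n(\tfrac{i}{n+1}))\bigr| = 0,$$
together with the analogous statement near $1$ in the $\log$ case. Monotonicity of $\varphi_n$ gives the comparison $\frac1n|f(\varphi_n(\tfrac{i}{n+1}))|\le \frac{n+1}{n}\int_{(i-1)/(n+1)}^{i/(n+1)}|f\circ\varphi_n|\,\mathrm{d}\lambda$ near $0$, with a mirrored comparison near $1$. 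Via this comparison the discrete tail is bounded by $\int_{[0,\delta)}|f\circ\varphi_n|$.

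\textbf{Main obstacle.} The crux is controlling these tail integrals uniformly in $n$. Pointwise convergence alone does not give it, because $\varphi_n(0)$ can grow roughly like a binomial coefficient, as computed in Section~\ref{subsec:6.2}. The plan is to invoke the assumption directly, presumably a domination or uniform-integrability condition on $\varphi_n$ (respectively $\log\varphi_n$) near the endpoints. If instead the assumption is phrased as $L^1$ convergence $\|\varphi_n-\phi\|_1\to0$, as the text before the lemma suggests, then the tail bound follows from $\int_{[0,\delta)}|f\circ\varphi_n|\le \int_{[0,\delta)}|f\circ\phi|+\|f\circ\varphi_n-f\circ\phi\|_1$. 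Finally, let $n\to\infty$ and then $\delta\downarrow0$ to conclude.
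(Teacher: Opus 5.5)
Your proposal is correct and follows the paper's argument: the paper splits the error into $I_n$ (replacing $\varphi_n$ by $(\varphi)_{1-\tau_A}$ under the uniform measure on $\{\frac{i}{n+1}\}$) and a Riemann-sum term $II_n$. It bounds $I_n$ exactly as you do, comparing the boundary sums monotonically with $\int_{[0,\delta]}|f\circ\varphi_n|\,\mathrm{d}\lambda$ (and with the integral over $[1-\delta,1]$ in the $\log$ case), and using uniform convergence on $[\delta,1-\delta]$ obtained from convexity. Your first guess about the hypotheses is the right one, so the $L^1$-convergence fallback is not needed: Assumptions~\ref{assumption1_pick}, \ref{assumption1_cfg} and \ref{assumption2_cfg} are precisely uniform-in-$n$ tail conditions such as $\inf_{\delta>0}\sup_{n}\int_{[0,\delta]}\varphi_n\,\mathrm{d}\lambda=0$.
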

\begin{proof}
Let $f \in \{\mathrm{id},\log\}$ and let $\theta_n$ denote the uniform distribution on the set $\{\frac{1}{n+1},\ldots,\frac{n}{n+1}\}$. Then obviously $(\theta_n)_{n \in \mathbb{N}}$ converges weakly to $\lambda$ on 
$\mathbb{I}$. Using the triangle inequality we have 
\begin{align*}
    \left\vert \int_\mathbb{I} f(\varphi_n(t)) \mathrm{d}\theta_n(t) - 
    \int_\mathbb{I} f((\varphi)_{1-\tau_A}(t)) \mathrm{d}\lambda(t)\right\vert 
    &\leq \underbrace{\left\vert \int_\mathbb{I} f(\varphi_n(t)) \mathrm{d}\theta_n(t) - 
    \int_\mathbb{I} f((\varphi)_{1-\tau_A}(t)) \mathrm{d}\theta_n(t) \right\vert}_{=I_n}  \\&\quad + 
    \underbrace{\left\vert \int_\mathbb{I} f((\varphi)_{1-\tau_A}(t)) \mathrm{d}\theta_n(t) - 
    \int_\mathbb{I} f((\varphi)_{1-\tau_A}(t)) \mathrm{d}\lambda(t) \right\vert}_{II_n}.
\end{align*}
Let $\varepsilon>0$ be arbitrary but fixed. 
We first focus on the first summand $I_n$ and distinguish the cases $f = \mathrm{id}$ and $f = \log$. By Assumption \ref{assumption1_pick} or Assumption \ref{assumption1_cfg} we can find some $\delta \in (0,\frac{1}{2})$ such that 
$$
\sup_{n \in \mathbb{N}}\int_{[0,\delta]}f(\varphi_n(t))\mathrm{d}\lambda(t)  + 
\int_{[0,\delta]}f((\varphi)_{1-\tau_A}(t))\mathrm{d}\lambda(t)< \frac{\varepsilon}{4}
$$
in the case that $f = \mathrm{id}$, and such that
\begin{align*}
&\sup_{n \in \mathbb{N}}\int_{[0,\delta]}f(\varphi_n(t))\mathrm{d}\lambda(t)  + 
\int_{[0,\delta]}f((\varphi)_{1-\tau_A}(t))\mathrm{d}\lambda(t) + \quad\sup_{n \in \mathbb{N}}\int_{[1-\delta,1]}|f(\varphi_n(t))|\mathrm{d}\lambda(t)  + 
\int_{[1-\delta,1]}|f((\varphi)_{1-\tau_A}(t))|\mathrm{d}\lambda(t)< \frac{\varepsilon}{4}
\end{align*}
for $f = \log$. Notice that for $\frac{i}{n+1} \leq \delta$ we have 
$\log(\varphi_n(t)) \geq \log(\varphi_n(\frac{i}{n+1}))\geq \log(\varphi_n(\delta)) > 0$ for every $t \in [\frac{i-1}{n+1},\frac{i}{n+1}]$; moreover, for 
$\frac{i}{n+1} \geq 1-\delta$ it follows that $\vert \log(\varphi_n(t)) \vert 
\geq \vert \log(\varphi_n(\frac{i-1}{n})) \vert $ holds.  
Hence, using monotonicity of pseudo-inverses and the logarithm, 
and applying the triangle inequality several times, for $f = \log$ the summand $I_n$ can be bounded from above by
\begin{align*}
    I_n &\leq \int_\mathbb{I} \vert f(\varphi_n(t)) - f((\varphi)_{1-\tau_A}(t))\vert \mathrm{d}\theta_n(t)
    = \frac{1}{n} \sum_{i=1}^n \left\vert f\left(\varphi_n\left(\frac{i}{n+1}\right)\right) 
    - f\left((\varphi)_{1-\tau_A}\left(\frac{i}{n+1}\right)\right)  \right\vert \\
    &= \frac{1}{n} \sum_{i: \frac{i}{n+1} \leq \delta} \left\vert f\left(\varphi_n\left(\frac{i}{n+1}\right)\right) - f\left((\varphi)_{1-\tau_A}\left(\frac{i}{n+1}\right)\right)  \right\vert + 
    \frac{1}{n} \sum_{i:\delta < \frac{i}{n+1} < 1-\delta} \left\vert f\left(\varphi_n\left(\frac{i}{n+1}\right)\right) 
    - f\left((\varphi)_{1-\tau_A}\left(\frac{i}{n+1}\right)\right)  \right\vert \\&\quad+
    \frac{1}{n} \sum_{i: \frac{i}{n+1} \geq 1-\delta}\left\vert f\left(\varphi_n\left(\frac{i}{n+1}\right)\right) 
    - f\left((\varphi)_{1-\tau_A}\left(\frac{i}{n+1}\right)\right)  \right\vert \\
    &\leq \frac{n+1}{n} \frac{1}{n+1}  \sum_{i: \frac{i}{n+1} \leq \delta}  f\left(\varphi_n\left(\frac{i}{n+1}\right)\right) + \frac{n+1}{n} \frac{1}{n+1} \sum_{i: \frac{i}{n+1} \leq \delta}  f\left((\varphi)_{1-\tau_A}\left(\frac{i}{n+1}\right)\right)\, \\& \quad+
    \frac{n+1}{n} \frac{1}{n+1}  \sum_{i: \frac{i}{n+1} \geq 1-\delta}  \left|f\left(\varphi_n\left(\frac{i}{n+1}\right)\right)\right| + \frac{n+1}{n} \frac{1}{n+1} \sum_{i: \frac{i}{n+1} \geq 1-\delta}  \left|f\left((\varphi)_{1-\tau_A}\left(\frac{i}{n+1}\right)\right)\right|\,\\&\quad
    + \, \sup_{t \in [\delta,1-\delta]}|f(\varphi_n(t)) - f((\varphi)_{1-\tau_A}(t))| \\
    & \leq \frac{n+1}{n} \frac{\varepsilon}{4} + \, \sup_{t \in [\delta,1-\delta]}|f(\varphi_n(t)) - f((\varphi)_{1-\tau_A}(t))|.
\end{align*}
Using similar arguments, $I_n$ can be bounded from above in the same way for the 
case $f = \mathrm{id}$.
Considering that $(\varphi_n)_{n \in \mathbb{N}}$ converges uniformly to 
$(\varphi)_{1-\tau_A}$ on every compact subinterval $[a,b]$ of $(0,1]$, this shows the existence of some index 
$n_1 \in \mathbb{N}$ such that 
$I_n\leq \frac{\varepsilon}{2}$ for all $n \geq n_1$.\\
\comment{
For the second summand notice that by convexity of $(\varphi)_{1-\tau_A}$ 
we have 
$$
\int_{[0,\delta]} (\varphi)_{1-\tau_A} \mathrm{d}\theta_n \leq \tfrac{n+1}{n} \int_{[0,\delta]} (\varphi)_{1-\tau_A} \mathrm{d}\theta_n 
\leq \int_{[0,\delta]} (\varphi)_{1-\tau_A} \mathrm{d}\lambda < \tfrac{\varepsilon}{4},
$$
}
Using a similar argument as for $I_n$ or weak convergence of 
$(\theta_n)_{n \in \mathbb{N}}$ to $\lambda$ we can find some index $n_2 \geq n_1$ 
with $II_n \leq \frac{\varepsilon}{2}$ for all $n \geq n_2$. 
Altogether this yields $I_n + II_n < \varepsilon$ for all $n \geq n_2$, which completes the 
proof since $\varepsilon>0$ was arbitrary.
\end{proof}
\noindent The proof of the subsequent lemma is analogous to \citep{est-archimax, genest2009}:
\begin{lemma}\label{lem:repres_empirical_copula} Let $\xi$ and $\xi_{i,n}$ be defined according to eqs. \eqref{eq:population_xi} and \eqref{eq:xis}, respectively, $\hat{C}_n$ according to eq. \eqref{eq:empirical_copula_est} and let $f \in \{\mathrm{id}, \log\}$.
Furthermore write $C := C_{\psi,A}$. 
Then the following identity holds for every $t \in \mathbb{I}$:
    \begin{equation*}
     \frac{1}{n}\sum_{i=1}^n(f \circ \xi_{i,n})(t) -\mathbb{E}[(f\circ\xi)(t)]= \int_{[0,\infty)}[\hat{C}_n(\psi_n(tx),\psi_n((1-t)x)) - C((\psi)_{1-\tau_A}(tx),(\psi)_{1-\tau_A}((1-t)x))]f'(x)\mathrm{d}\lambda(x).
    \end{equation*}
\end{lemma}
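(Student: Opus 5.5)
The plan is to prove the identity separately for the empirical and the population term, using the same layer-cake representation for both, and then subtract. The key observation is that the event that $\xi$ exceeds a level can be written in terms of the joint distribution of the pseudo-observations.

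\textbf{Population term.} Since $(\varphi)_{1-\tau_A}$ is strictly decreasing on $(0,1]$, for $x>0$ and $t\in(0,1)$ we have
\[
\{\xi(t)>x\}=\{(\varphi)_{1-\tau_A}(F(X))>tx,\ (\varphi)_{1-\tau_A}(G(Y))>(1-t)x\}.
\]
This event coincides, up to null sets, with $\{F(X)<(\psi)_{1-\tau_A}(tx),\ G(Y)<(\psi)_{1-\tau_A}((1-t)x)\}$. Continuity of $H$ then gives
\[
\mathbb{P}[\xi(t)>x]=C((\psi)_{1-\tau_A}(tx),(\psi)_{1-\tau_A}((1-t)x)).
\]

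\textbf{Empirical term.} Since $\varphi_n$ is strictly decreasing wherever it is positive and $\psi_n$ is its generator, $\varphi_n(\hat U_i)>tx$ holds exactly when $\hat U_i<\psi_n(tx)$. The analogous statement holds for $\hat V_i$. Hence
\[
\frac1n\#\{i:\xi_{n,i}(t)>x\}=\hat C_n(\psi_n(tx)-,\psi_n((1-t)x)-).
\]
The left limits may be replaced by $\hat C_n$ itself except at finitely many $x$, which is irrelevant under $\lambda$.

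\textbf{Case $f=\mathrm{id}$.} Apply Cavalieri's principle exactly as in eq.~\eqref{eq:int.psi.phi.gleich}. This gives $\mathbb{E}[\xi(t)]=\int_{[0,\infty)}\mathbb{P}[\xi(t)>x]\,\mathrm d\lambda(x)$ and the analogous formula for the empirical mean with respect to the empirical measure. Since $f'\equiv1$, subtracting the two representations yields the claim.

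\textbf{Case $f=\log$.} Split at $1$, as in eq.~\eqref{eq:int.log.psi.log.phi.gleich}, to obtain
\[
\mathbb{E}[\log\xi(t)]=\int_1^\infty \frac{\mathbb{P}[\xi(t)>x]}{x}\,\mathrm d\lambda(x)-\int_0^1\frac{1-\mathbb{P}[\xi(t)>x]}{x}\,\mathrm d\lambda(x),
\]
and the same formula for the empirical version. When the two expressions are subtracted, the constant terms $\int_0^1 x^{-1}\,\mathrm d\lambda(x)$ cancel formally. This leaves $\int_0^\infty[\hat C_n(\dots)-C(\dots)]\,x^{-1}\,\mathrm d\lambda(x)$ with $f'(x)=1/x$.

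\textbf{Main obstacle.} The delicate step is justifying this cancellation, since each term separately diverges at $0$. I would therefore subtract before integrating, working on $[\eta,\infty)$, and let $\eta\downarrow0$. On $[0,1]$ both $1-\hat C_n(\psi_n(tx),\psi_n((1-t)x))$ and $1-C(\cdots)$ are $O(x)$ by the Lipschitz property. The copula side uses $1-\psi(z)\le z/\text{const}$ by convexity with $\psi(1)=\tfrac12$. The empirical side uses Lemma~\ref{lem:upper.lower.cn.hat}, noting that $\hat C_n=1$ once both arguments are at least $n/(n+1)$, which holds for small $x$ because $\psi_n$ is continuous at $0$. Integrability at $\infty$ follows from $\mathbb{E}[|\log Z|]<\infty$ together with the equality in distribution of $\xi(t)$ and $Z/(A)_{1-\tau_A}(t)$. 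On the empirical side it is automatic, since $\psi_n$ is non-strict and therefore vanishes beyond $\varphi_n(0)$.

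\textbf{Boundary values.} The cases $t\in\{0,1\}$ follow in the same way using the conventions $\xi(0)=(\varphi)_{1-\tau_A}(G(Y))$, $\xi(1)=(\varphi)_{1-\tau_A}(F(X))$ and their empirical analogues. Here one of the two arguments of the copula equals $\psi(0)=1$.
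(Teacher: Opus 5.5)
Your route is the paper's. You use the identities $\mathbb{P}[\xi(t)>x]=C((\psi)_{1-\tau_A}(tx),(\psi)_{1-\tau_A}((1-t)x))$ and $\frac1n\#\{i:\xi_{n,i}(t)>x\}=\hat C_n(\psi_n(tx),\psi_n((1-t)x))$, the latter for all but finitely many $x$. For $f=\mathrm{id}$ you then apply Cavalieri. For $f=\log$ you use the representation $\log y=\int_{[0,\infty)}(\mathbf 1_{[0,y]}(x)-\mathbf 1_{\mathbb I}(x))x^{-1}\,\mathrm d\lambda(x)$ together with Fubini; your split at $1$ is the same formula. Your handling of left limits is more careful than the paper's.

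However, your justification of integrability at $0$ in the $\log$ case is false. Convexity with $\psi(0)=1$ and $\psi(1)=\tfrac12$ gives $\psi(z)\le 1-\tfrac z2$ on $[0,1]$. That is the \emph{lower} bound $1-\psi(z)\ge\tfrac z2$, not an upper bound, and $D^+\psi(0)$ may be $-\infty$.

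For the generator actually involved this always happens when $\tau_A>0$. For $z\in(0,1]$,
\[
\frac{1-(\psi)_{1-\tau_A}(z)}{z}=\frac{1-\psi(z^{1-\tau_A})}{z^{1-\tau_A}}\,z^{-\tau_A}\ge\tfrac12 z^{-\tau_A}\to\infty .
\]
Since $C\le M$, you also get $1-C((\psi)_{1-\tau_A}(tx),(\psi)_{1-\tau_A}((1-t)x))\ge 1-(\psi)_{1-\tau_A}(\max\{t,1-t\}x)$, so this quantity is not $O(x)$.

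Integrability of $x^{-1}(1-C(\cdots))$ near $0$ does not follow from convexity at all. By Tonelli, $\int_{[0,1]}x^{-1}\,\mathbb P[\xi(t)\le x]\,\mathrm d\lambda(x)=\mathbb E[\log^-\xi(t)]$ with $\log^-:=\max\{-\log,0\}$, and this can be infinite for suitable convex generators. You should therefore repair the step exactly as you handled $\infty$: use $\xi(t)\overset{d}{=}Z/(A)_{1-\tau_A}(t)$ and the standing assumption $\mathbb E[|\log Z|]<\infty$.

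Once you do this, the $\eta$-truncation and the "cancelling divergent constants" become unnecessary:
\begin{itemize}
\item Keep the nonnegative integrands $\mathbf 1\{y>x\}/x$ on $[1,\infty)$ and $\mathbf 1\{y\le x\}/x$ on $[0,1]$, and apply Tonelli to each.
\item All four resulting integrals are finite. The empirical ones are trivially finite, since every $\xi_{n,i}(t)\in(0,\infty)$ and $1-\hat C_n(\psi_n(tx),\psi_n((1-t)x))=0$ for small $x$.
\item Then subtract; there is no divergent constant to cancel.
\end{itemize}
In the same way, the subtraction for $f=\mathrm{id}$ needs $\mathbb E[\xi(t)]<\infty$, which you should state explicitly as a consequence of $\mathbb E[Z]<\infty$.
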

\begin{proof}
For fixed $t\in \mathbb{I}$ and every $x > 0$ we have
$$
\mathbb{P}[\xi(t) > x] = (\psi)_{1-\tau_A}\left(x\,(A)_{1-\tau_A}(t)\right) 
= C\left((\psi)_{1-\tau_A}(tx),(\psi)_{1-\tau_A}((1-t)x)\right)
$$
as well as
$$
\frac{1}{n}\sum_{i=1}^n\mathbf{1}_{[0,\xi_{i,n}(t)]}(x) = \frac{1}{n}\sum_{i=1}^n \mathbf{1}_{[0,\psi_n(tx)] \times [0,\psi_n((1-t)x)]}(\hat{U}_i,\hat{V}_i) = \hat{C}_n\left(\psi_n(tx),\psi_n((1-t)x)\right).
$$
Having this and considering the case $f = \mathrm{id}$, integration over $x$ directly yields
    \begin{align*}
           \frac{1}{n}\sum_{i=1}^n\xi_{i,n}(t) -\mathbb{E}[\xi(t)] &= \int_{[0,\infty)}\left[\frac{1}{n}\sum_{i=1}^n\mathbf{1}_{[0,\xi_{i,n}(t)]}(x) - \mathbb{P}[\xi(t) > x]\right]\mathrm{d}\lambda(x) \\& = 
            \int_{[0,\infty)}[\hat{C}_n(\psi_n(tx),\psi_n((1-t)x)) - C((\psi)_{1-\tau_A}(tx),(\psi)_{1-\tau_A}((1-t)x))]f'(x)\mathrm{d}\lambda(x).
    \end{align*}
    Considering the case $f = \log$ and proceeding as in \citep{est-archimax}, it is easy to see that $\log(t) = \int_{[0,\infty)}\frac{\mathbf{1}_{[0,t]}(x) - \mathbf{1}_{\mathbb{I}}(x)}{x}\mathrm{d}\lambda(x)$ for every $t > 0$.
    Proceeding as before and using Fubini's theorem, we obtain
\begin{align*}
        \frac{1}{n}\sum_{i=1}^n\log\xi_{i,n}(t) -\mathbb{E}[\log\xi(t)] &= \frac{1}{n}\sum_{i = 1}^n\int_{[0,\infty)} \frac{\mathbf{1}_{[0,\xi_{i,n}(t)]}(x) - \mathbf{1}_\mathbb{I}(x)}{x}\mathrm{d}\lambda(x) - \mathbb{E}\left[\int_{[0,\infty)}\frac{\mathbf{1}_{[0,\xi(t)]}(x) - \mathbf{1}_\mathbb{I}(x)}{x}\mathrm{d}\lambda(x)\right] \\&=
        \frac{1}{n}\sum_{i = 1}^n\int_{[0,\infty)} \frac{\mathbf{1}_{[0,\xi_{i,n}(t)]}(x) - \mathbf{1}_\mathbb{I}(x)}{x}\mathrm{d}\lambda(x) - \int_{[0,\infty)}\left[\int_\Omega\frac{\mathbf{1}_{[0,\xi(t)]}(x) - \mathbf{1}_\mathbb{I}(x)}{x}\mathrm{d}\mathbb{P}\right]\mathrm{d}\lambda(x) \\& =
          \frac{1}{n}\sum_{i = 1}^n\int_{[0,\infty)} \frac{\mathbf{1}_{[0,\xi_{i,n}(t)]}(x) - \mathbf{1}_\mathbb{I}(x)}{x}\mathrm{d}\lambda(x) - \int_{[0,\infty)}\frac{\mathbb{P}[x \leq \xi(t)] - \mathbf{1}_\mathbb{I}(x)}{x}\mathrm{d}\lambda(x) \\& =
        \int_{[0,\infty)} \frac{\frac{1}{n}\sum_{i = 1}^n\mathbf{1}_{[0,\xi_{i,n}(t)]}(x) - \mathbb{P}[x \leq \xi(t)]}{x}\mathrm{d}\lambda(x) \\& =
                \int_{[0,\infty)} [\hat{C}_n(\psi_n(tx),\psi_n((1-t)x)) -C(\psi_{1-\tau_A}(tx),\psi_{1-\tau_A}((1-t)x))] f'(x)\mathrm{d}\lambda(x).
\end{align*}
This proves the result.
\end{proof}
\noindent As next step, we use the expression in Lemma \ref{lem:repres_empirical_copula} to prove uniform convergence over $\mathbb{I}$.
\begin{lemma}\label{lem:convergence_xi}
Let $\xi$ and $\xi_{i,n}$ be defined according to eqs. \eqref{eq:population_xi} and \eqref{eq:xis}, respectively. Furthermore, consider $f \in \{\mathrm{id},\log\}$.  
If $f = \mathrm{id}$, we assume that $\mathbb{E}[Z] < \infty$ and that 
Assumption~\ref{assumption1_pick} holds; if $f = \log$, we assume 
that $\mathbb{E}[|\log Z|] < \infty$ and that Assumptions~\ref{assumption1_cfg} and~\ref{assumption2_cfg} hold.
Then in each of the two cases the following property holds with probability $1$:
$$
\sup_{t \in \mathbb{I}} \left\vert\frac{1}{n}\sum_{i=1}^n (f \circ \xi_{i,n})(t) - \mathbb{E}[(f\circ\xi)(t)]\right\vert \overset{n \rightarrow \infty}{\longrightarrow} 0.
$$
\end{lemma}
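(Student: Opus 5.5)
We start from the representation in Lemma \ref{lem:repres_empirical_copula}, which gives for every $t \in \mathbb{I}$
\begin{equation*}
\Delta_n(t):=\frac{1}{n}\sum_{i=1}^n(f \circ \xi_{i,n})(t) -\mathbb{E}[(f\circ\xi)(t)]= \int_{[0,\infty)}\big[\hat{C}_n(\psi_n(tx),\psi_n((1-t)x)) - C((\psi)_{1-\tau_A}(tx),(\psi)_{1-\tau_A}((1-t)x))\big]f'(x)\,\mathrm{d}\lambda(x).
\end{equation*}
We split the integrand by adding and subtracting $C(\psi_n(tx),\psi_n((1-t)x))$. The first difference is bounded by $d_\infty(\hat{C}_n,C)$, which tends to $0$ almost surely (ergodic Glivenko--Cantelli, Section \ref{sec:estimator}). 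The second difference is bounded, via the Lipschitz property of copulas, by $|\psi_n(tx)-(\psi)_{1-\tau_A}(tx)|+|\psi_n((1-t)x)-(\psi)_{1-\tau_A}((1-t)x)|\le 2\Vert\psi_n-(\psi)_{1-\tau_A}\Vert_\infty$, which tends to $0$ almost surely by Theorem \ref{thm:consistency_psi_n}(i). Both bounds are uniform in $t$ and $x$. However, $f'(x)=1$ or $1/x$ is not integrable on $[0,\infty)$ (nor near $0$ when $f=\log$), so the proof reduces to controlling the tails.

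We fix $\varepsilon>0$ and truncate the $x$-range to $[\delta, K]$, where $\delta>0$ is needed only if $f=\log$. On $[\delta,K]$ the previous uniform bound gives a contribution of at most $(2\Vert\psi_n-(\psi)_{1-\tau_A}\Vert_\infty+d_\infty(\hat C_n,C))\int_\delta^K f'\,\mathrm{d}\lambda\to0$, uniformly in $t$.

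For the upper tail $x>K$ we bound both terms separately. Since $\max\{t,1-t\}\ge\tfrac12$ and $C\le M$, the population term satisfies $C((\psi)_{1-\tau_A}(tx),(\psi)_{1-\tau_A}((1-t)x))\le(\psi)_{1-\tau_A}(x/2)$. Its integral against $f'$ over $(K,\infty)$ equals, by the same Cavalieri computation as in eqs. \eqref{eq:int.psi.phi.gleich}--\eqref{eq:int.log.psi.log.phi.gleich}, a tail of $\mathbb{E}[Z]$ resp. $\mathbb{E}[\log Z]$, and is therefore small for large $K$. For the empirical term we use Lemma \ref{lem:upper.lower.cn.hat}: $\hat C_n(\psi_n(tx),\psi_n((1-t)x))\le \lfloor (n+1)\psi_n(x/2)\rfloor/n$. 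Applying Cavalieri's principle to this step function turns the tail integral into a tail Riemann sum $\frac1n\sum_{i:\,\frac{i}{n+1}\le\psi_n(K/2)}f(\varphi_n(\tfrac{i}{n+1}))$-type expression, minus $f(K/2)$ times the count. Assumption \ref{assumption1_pick} (resp. \ref{assumption1_cfg}) provides exactly the uniform-in-$n$ smallness of $\int_{[0,\eta]}f(\varphi_n)$, as already exploited in the proof of Lemma \ref{lem:est_eval}.

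For $f=\log$ we also need the lower tail $x<\delta$. There we rewrite the integrand using $1-\hat C_n(u,v)\le (1-u)+(1-v)+O(1/n)$, again from Lemma \ref{lem:upper.lower.cn.hat}, and the analogous bound for $C$, with $u,v=\psi_n(\cdot)$ close to $1$. This reduces the contribution to integrals of $(1-\psi_n(x/?) )/x$ near $0$, that is, to $\int_{[1-\eta,1]}|\log\varphi_n|$, which Assumption \ref{assumption2_cfg} controls uniformly in $n$. Choosing first $K$ and $\delta$, and then $n$ large, yields $\sup_t|\Delta_n(t)|<\varepsilon$ almost surely.

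I expect the main obstacle to be the tail steps. The difficulty is translating the discrete bounds on $\hat C_n$ from Lemma \ref{lem:upper.lower.cn.hat} into the integrals of $f(\varphi_n)$ near $0$ and $1$ that the assumptions control, while keeping everything uniform in $t$ and correctly tracking the $1/n$ rounding errors. The pointwise argument on $[\delta,K]$ is routine by comparison.
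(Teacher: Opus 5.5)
Your architecture is the paper's: the representation of Lemma \ref{lem:repres_empirical_copula}, adding and subtracting $C(\psi_n(tx),\psi_n((1-t)x))$, a compact middle range controlled by $d_\infty(\hat C_n,C)$ and $2\Vert\psi_n-(\psi)_{1-\tau_A}\Vert_\infty$, and tails controlled through Lemma \ref{lem:upper.lower.cn.hat} and the Cavalieri arguments behind Lemma \ref{lem:implication_regularity}. Your upper-tail treatment is correct. The Riemann-sum detour is not needed, however. Since $\lfloor (n+1)u\rfloor/n\le\frac{n+1}{n}u$, you get $\hat C_n(\psi_n(tx),\psi_n((1-t)x))\le\frac{n+1}{n}\psi_n(x/2)$. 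This reduces the empirical tail directly to $\int_{[K/2,\infty)}\psi_n\,\mathrm{d}\lambda$ resp. $\int_{[K/2,\infty)}\frac{\psi_n(s)}{s}\,\mathrm{d}\lambda(s)$, which is exactly what Lemma \ref{lem:implication_regularity} bounds uniformly in $n$.

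The genuine gap is the lower tail for $f=\log$. A bound $1-\hat C_n(u,v)\le(1-u)+(1-v)+O(1/n)$ with an additive rounding term cannot work against the weight $1/x$. Its constant part contributes $\frac{c}{n}\int_{(0,\delta]}\frac{\mathrm{d}\lambda(x)}{x}=\infty$ for every $n$ and every $\delta>0$, so no order of choosing $\delta$ and $n$ rescues it.

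The missing observation is that the discretization error is multiplicative in $1-u$. This holds because the rescaled pseudo-observations satisfy $\hat U_i,\hat V_i\le\frac{n}{n+1}$, so $\hat C_n\equiv 1$ on $[\frac{n}{n+1},1]^2$. Concretely, the lower bound in Lemma \ref{lem:upper.lower.cn.hat} together with $\lfloor z\rfloor\ge z-1$ gives $1-\frac{\lfloor (n+1)u\rfloor}{n}\le\frac{n+1}{n}(1-u)$, and therefore
\[
1-\hat C_n(u,v)\le 1-W\Big(\tfrac{\lfloor (n+1)u\rfloor}{n},\tfrac{\lfloor (n+1)v\rfloor}{n}\Big)\le\tfrac{n+1}{n}\big[(1-u)+(1-v)\big],\qquad u,v\in\mathbb{I}.
\]
Take $u=\psi_n(tx)$ and $v=\psi_n((1-t)x)$, and use monotonicity, $1-\psi_n(tx)\le 1-\psi_n(x)$; no rescaling of the argument is needed. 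The empirical lower tail is then at most $\frac{2(n+1)}{n}\int_{(0,\delta]}\frac{1-\psi_n(x)}{x}\,\mathrm{d}\lambda(x)$, uniformly in $t$. This is precisely what Assumption \ref{assumption2_cfg} controls via the third part of Lemma \ref{lem:implication_regularity}.

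The population term is handled the same way, using $1-C(u',v')\le(1-u')+(1-v')$ and $\mathbb{E}[|\log Z|]<\infty$. This is exactly how the paper treats its term $c_{n,m}$. With this replacement your argument goes through.
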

\begin{proof}
    Applying Lemma \ref{lem:repres_empirical_copula} for $f \in \{\mathrm{id},\log\}$, 
    using the triangle inequality we have that
    \begin{align*}
    \sup_{t \in \mathbb{I}}\bigg|\frac{1}{n}\sum_{i=1}^n(f \circ \xi_{i,n})(t) &-\mathbb{E}[(f \circ\xi)(t)]\bigg|\\
    &= \sup_{t \in \mathbb{I}}\left|\int_{[0,\infty)}[\hat{C}_n(\psi_n(tx),\psi_n((1-t)x)) - C(\psi_{1-\tau_A}(tx),\psi_{1-\tau_A}((1-t)x))]f'(x)\mathrm{d}\lambda(x)\right| \\
    &\leq
    \underbrace{\sup_{t \in \mathbb{I}}\bigg|\int_{[0,\infty)}[\hat{C}_n(\psi_n(tx),\psi_n((1-t)x)) - C(\psi_n(tx),\psi_n((1-t)x))]f'(x)\mathrm{d}\lambda(x)\bigg|}_{=:I_n} \\
    &\quad +
    \underbrace{\sup_{t \in \mathbb{I}}\left|\int_{[0,\infty)}[C(\psi_n(tx),\psi_n((1-t)x)) - C(\psi_{1-\tau_A}(tx),\psi_{1-\tau_A}((1-t)x))]f'(x)\mathrm{d}\lambda(x)\right|}_{=:II_n}.
    \end{align*}
    \textbf{(i)} We first consider the case $f = \mathrm{id}$. Let $m > 0$ be arbitrary but fixed. Then for $I_n$ we obviously have  
    \begin{align*}
        I_n &\leq \sup_{t \in \mathbb{I}}\int_{[0,m]} |\hat{C}_n(\psi_n(tx),\psi_n((1-t)x)) - C(\psi_n(tx),\psi_n((1-t)x))| \mathrm{d}\lambda(x) \\&\quad+ \sup_{t \in \mathbb{I}}\int_{[m,\infty)} |\hat{C}_n(\psi_n(tx),\psi_n((1-t)x)) - C(\psi_n(tx),\psi_n((1-t)x))| \mathrm{d}\lambda(x).
    \end{align*}
    Thereby the first integral converges to $0$ since 
    \begin{align*}
    \sup_{t \in \mathbb{I}}\int_{[0,m]} |\hat{C}_n(\psi_n(tx),\psi_n((1-t)x)) - C(\psi_n(tx),\psi_n((1-t)x))| \mathrm{d}\lambda(x) \leq
    m \,\Vert\hat{C}_n - C\Vert_\infty \overset{n\rightarrow \infty}{\longrightarrow}0,
    \end{align*}
    and for the second summand we proceed as follows: Considering
    \begin{align}\label{eq:estimates_copulas2}
    \nonumber W\left(\tfrac{\lfloor (n+1) \psi_n(tx)\rfloor}{n},\tfrac{\lfloor (n+1) \psi_n((1-t)x)\rfloor}{n}\right) \leq \hat{C}_n(\psi_n(tx),\psi_n((1-t)x)) &\leq M\left(\tfrac{\lfloor (n+1) \psi_n(tx)\rfloor}{n},\tfrac{\lfloor (n+1) \psi_n((1-t)x)\rfloor}{n}\right)\\
    W(\psi_n(tx),\psi_n((1-t)x)) \leq C(\psi_n(tx),\psi_n((1-t)x)) &\leq M(\psi_n(tx),\psi_n((1-t)x))
    \end{align}
   and using Lemma \ref{lem:implication_regularity}, setting 
    $$
    a_{n,m}:=\sup_{t \in \mathbb{I}}\int_{[m,\infty)} \vert \hat{C}_n(\psi_n(tx),\psi_n((1-t)x)) - C(\psi_n(tx),\psi_n((1-t)x))\vert \mathrm{d}\lambda(x)
    $$
    yields
    \begin{align}\label{eq:estimate_xis}
    \nonumber \inf_{m > 0}\sup_{n \in \mathbb{N}}a_{n,m}&\leq  \inf_{m > 0}\sup_{n \in \mathbb{N}} \sup_{t \in \mathbb{I}}\bigg[\int_{[m,\infty)} \hat{C}_n(\psi_n(tx),\psi_n((1-t)x)) \mathrm{d}\lambda(x) + \int_{[m,\infty)} C(\psi_n(tx),\psi_n((1-t)x)) \mathrm{d}\lambda(x)\bigg] \\& \nonumber\leq
    \inf_{m > 0} \sup_{n\in \mathbb{N}}\left[\left(\tfrac{n+1}{n} +1\right)\sup_{t \in \mathbb{I}}\int_{[m,\infty)} M(\psi_n(tx),\psi_n((1-t)x)) \mathrm{d}\lambda(x)\right] \\& \nonumber
    \leq 3\inf_{m > 0}\sup_{n \in \mathbb{N}}\sup_{t \in \mathbb{I}}\left[\min\left\{\tfrac{1}{t},\tfrac{1}{1-t}\right\}\int_{[\max\{t,1-t\}m,\infty)} \psi_n(x) \mathrm{d}\lambda(x)\right] \\& \leq
    6\inf_{m > 0}\sup_{n \in \mathbb{N}}\left[\int_{[\frac{m}{2},\infty)} \psi_n(x) \mathrm{d}\lambda(x)\right] = 0.
    \end{align}
    Thereby the third inequality follows by considering the two cases 
    $t \geq 1-t$ and $t\leq 1-t$ separately and using change of coordinates both 
    times. Summing up, we have shown that $I_n \overset{n\rightarrow \infty}{\longrightarrow} 0$ almost surely.\\
    For $II_n$ we proceed similarly and first use 
    \begin{align*}
    II_n &\leq \sup_{t \in \mathbb{I}}\int_{[0,m]}|C(\psi_n(tx),\psi_n((1-t)x)) - C((\psi)_{1-\tau_A}(tx),(\psi)_{1-\tau_A}((1-t)x))|\mathrm{d}\lambda(x) \\& \quad +
    \sup_{t \in \mathbb{I}}\int_{[m,\infty)}|C(\psi_n(tx),\psi_n((1-t)x)) - C((\psi)_{1-\tau_A}(tx),(\psi)_{1-\tau_A}((1-t)x))|\mathrm{d}\lambda(x).
    \end{align*}
    Using Lipschitz continuity of copulas and applying 
    Theorem \ref{thm:consistency_psi_n} for the first summand we have that
    \begin{align*}
    \sup_{t \in \mathbb{I}}\int_{[0,m]}|C(\psi_n(tx),\psi_n((1-t)x)) - C((\psi)_{1-\tau_A}(tx),(\psi)_{1-\tau_A}((1-t)x))|\mathrm{d}\lambda(x) \leq
    2m\Vert \psi_n - (\psi)_{1-\tau_A}\Vert_\infty \overset{n\rightarrow \infty}{\longrightarrow}0.
    \end{align*}
    Furthermore, letting $b_{n,m}$ denote the second summand, proceeding as in in eq. \eqref{eq:estimate_xis} yields
    \begin{align*}
    \inf_{m > 0}\sup_{n \in \mathbb{N}}b_{n,m} &\leq
    2 \inf_{m > 0} \bigg[\sup_{n \in \mathbb{N}}\bigg[\int_{[\frac{m}{2},\infty)}\psi_n(x)\mathrm{d}\lambda(x)\bigg] +\int_{[\frac{m}{2},\infty)}(\psi)_{1-\tau_A}(x)\mathrm{d}\lambda(x)\bigg] = 0.
    \end{align*}
    This shows $II_n \overset{n\rightarrow \infty}{\longrightarrow} 0$ almost surely, so the proof is complete for the case $f = \mathrm{id}$. \\
    \textbf{(ii)} For $f = \log$ we obviously have 
      \begin{align*}
        I_n &\leq \sup_{t \in \mathbb{I}}\int_{[0,\frac{1}{m}]} \frac{|\hat{C}_n(\psi_n(tx),\psi_n((1-t)x)) - C(\psi_n(tx),\psi_n((1-t)x))|}{x} \mathrm{d}\lambda(x) \\&\quad +
        \sup_{t \in \mathbb{I}}\int_{[\frac{1}{m},m]} \frac{|\hat{C}_n(\psi_n(tx),\psi_n((1-t)x)) - C(\psi_n(tx),\psi_n((1-t)x))|}{x} \mathrm{d}\lambda(x) \\&\quad+ \sup_{t \in \mathbb{I}}\int_{[m,\infty)} \frac{|\hat{C}_n(\psi_n(tx),\psi_n((1-t)x)) - C(\psi_n(tx),\psi_n((1-t)x))|}{x} \mathrm{d}\lambda(x).
    \end{align*}
    Considering the first summand and setting 
    $$
    c_{n,m}:=\sup_{t \in \mathbb{I}}\int_{[0,\frac{1}{m}]} \frac{|\hat{C}_n(\psi_n(tx),\psi_n((1-t)x)) - C(\psi_n(tx),\psi_n((1-t)x))|}{x} \mathrm{d}\lambda(x),
    $$
    applying eq. \eqref{eq:estimates_copulas2}, the obvious fact that $\lfloor z \rfloor \geq z -1$ for every $z \in \mathbb{R}$ and Lemma \ref{lem:implication_regularity} yields
    \begin{align*}
    \nonumber \inf_{m > 0}\sup_{n \in \mathbb{N}}c_{n,m}&\leq \inf_{m > 0}\sup_{n \in \mathbb{N}} \sup_{t \in \mathbb{I}} \int_{[0,\frac{1}{m}]}\frac{1-W\left(\frac{(n+1)\psi_n(tx) - 1}{n},\frac{(n+1)\psi_n((1-t)x) - 1}{n}\right)}{x} \mathrm{d}\lambda(x) \\& \leq 
    4\inf_{m > 0}\sup_{n \in \mathbb{N}} \sup_{t \in \mathbb{I}} \min\{\tfrac{1}{t}, \tfrac{1}{1-t}\}\int_{[0,\max\{t,1-t\}\frac{1}{m}]}\frac{1-\psi_n(s)}{s} \mathrm{d}\lambda(x) \\& \leq
    8\inf_{m > 0}\sup_{n \in \mathbb{N}} \int_{[0,\frac{1}{m}]}\frac{1-\psi_n(s)}{s} \mathrm{d}\lambda(x) = 0.
    \end{align*}
    The second, the third term and $II_n$ all converge to zero, using similar arguments as in the case $f = \mathrm{id}$. This proves the result.
\end{proof}
\noindent Combining Lemma \ref{lem:est_eval} and Lemma \ref{lem:convergence_xi} directly 
yields the following main result for this subsection concerning 
strong consistency of $B_{n,c}^\Xi$.
\begin{theorem}\label{thm:pick_est_empirical}
Consider $\Xi \in \{\mathbf{P},\mathbf{CFG}\}$, and let $B_{n,c}^\Xi$ be defined 
according to eq. \eqref{eq:cfg_type_estimator_c}.
If $\Xi = \mathbf{P}$, assume that $\mathbb{E}[Z] < \infty$ and that Assumption \eqref{assumption1_pick} holds. If $\Xi = \mathbf{CFG}$, assume that $\mathbb{E}[|\log Z|] < \infty$ and that Assumptions \ref{assumption1_cfg} and \ref{assumption2_cfg} hold.
Then, in either case, the following identity holds with probability $1$:
$$
\|B_{n,c}^\Xi - (A)_{1-\tau_A}\|_\infty \overset{n\rightarrow \infty}{\longrightarrow} 0.
$$
\end{theorem}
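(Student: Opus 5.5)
The plan is to combine Lemma \ref{lem:est_eval} (consistency of the numerator) with Lemma \ref{lem:convergence_xi} (uniform consistency of the denominator) through a uniform ratio/quotient argument. The deterministic correction term contributes at most $\sup_t \varepsilon_n 4t(1-t)=\varepsilon_n\to 0$. Hence it suffices to show $\|B_{n,u}^\Xi-(A)_{1-\tau_A}\|_\infty\to0$ almost surely.

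We work on the probability-one event on which both lemmas hold, and write
\[
N_n:=\tfrac{1}{n}\sum_{i=1}^n f\bigl(\varphi_n(\tfrac{i}{n+1})\bigr), \qquad D_n(t):=\tfrac1n\sum_{i=1}^n (f\circ\xi_{n,i})(t),
\]
with $f=\mathrm{id}$ for $\Xi=\mathbf{P}$ and $f=\log$ for $\Xi=\mathbf{CFG}$. Lemma \ref{lem:est_eval} gives $N_n\to\mathbb{E}[f(Z)]$, and Lemma \ref{lem:convergence_xi} gives $\sup_t|D_n(t)-\mathbb{E}[f(\xi(t))]|\to0$. By eqs. \eqref{eq:theoretical_pickands} and \eqref{eq:theoretical_cfg}, $(A)_{1-\tau_A}(t)=\mathbb{E}[Z]/\mathbb{E}[\xi(t)]$, respectively $\exp(\mathbb{E}[\log Z]-\mathbb{E}[\log\xi(t)])$.

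\emph{CFG case.} Here $B_{n,u}^{\mathbf{CFG}}(t)=\exp(N_n-D_n(t))$. The exponent converges uniformly in $t$ to $\mathbb{E}[\log Z]-\mathbb{E}[\log\xi(t)]=\log (A)_{1-\tau_A}(t)$. This limit is uniformly bounded: $(A)_{1-\tau_A}$ is continuous and positive on $\mathbb{I}$, bounded above by $2^{\tau_A/(1-\tau_A)}$ (Lemma \ref{lem:properties_trans_pickands}), and bounded below by $A_M\ge\frac12$. Since $\exp$ is Lipschitz on bounded sets, uniform convergence of $B_{n,u}^{\mathbf{CFG}}$ follows.

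\emph{Pickands case.} Here $\mathbb{E}[\xi(t)]=\mathbb{E}[Z]/(A)_{1-\tau_A}(t)$, and this quantity lies between $\mathbb{E}[Z]\,2^{-\tau_A/(1-\tau_A)}>0$ and $2\mathbb{E}[Z]$. It is therefore bounded away from zero uniformly in $t$ (note $\mathbb{E}[Z]>0$). By uniform convergence, $D_n(t)\ge c>0$ for all $t$ and all large $n$. The estimate
\[
\Bigl|\tfrac{N_n}{D_n(t)}-\tfrac{\mathbb{E}Z}{\mathbb{E}\xi(t)}\Bigr|\le \tfrac{|N_n-\mathbb{E}Z|}{D_n(t)}+\mathbb{E}Z\,\tfrac{|D_n(t)-\mathbb{E}\xi(t)|}{D_n(t)\,\mathbb{E}\xi(t)}
\]
then yields uniform convergence.

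\emph{Endpoints.} The values $t\in\{0,1\}$ are harmless: both sides equal $1$ there, and they are covered by the same argument since Lemma \ref{lem:convergence_xi} is stated on all of $\mathbb{I}$.

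\emph{Main obstacle.} The substantive work is already contained in the two lemmas. The only delicate point is the uniform lower bound on $\mathbb{E}[\xi(t)]$, respectively the boundedness of $\log (A)_{1-\tau_A}$. This is exactly where the assumption $A\neq A_M$ enters, through $\tau_A<1$ and the finiteness of the bound $2^{\tau_A/(1-\tau_A)}$.
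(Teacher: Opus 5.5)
Your proposal is correct and follows the paper's route: the paper proves this theorem simply by combining Lemma \ref{lem:est_eval} with Lemma \ref{lem:convergence_xi}. Your quotient estimate in the Pickands case, the Lipschitz argument for $\exp$ on bounded sets in the CFG case, and the uniform bounds $A_M\le (A)_{1-\tau_A}\le 2^{\tau_A/(1-\tau_A)}$ from Lemma \ref{lem:properties_trans_pickands} spell out the details the paper leaves implicit.
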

\subsection{Proof of Theorem \ref{thrm:strong_consistency_copula}}
\noindent We start with the following lemma, which will be essential for proving the existence of the parameter $\alpha_n$ in eq. \eqref{eq:alphas}. 
\begin{lemma}\label{lem:estimate_B_n_c}
Consider $\Xi \in \{\mathbf{P},\mathbf{CFG}\}$ and let $B_{n,c}^\Xi$ be defined 
according to eq. \eqref{eq:cfg_type_estimator_c}. Then the following three assertions hold:
\begin{enumerate}[label=(\roman*)]
    \item There exists some constant $K_n \in (0,\infty)$ such that $A_M(t) \leq B_{n,c}^\Xi(t)\leq K_n$ holds for every $t \in \mathbb{I}$.
    \item  $B_{n,c}^\Xi$ is uniformly continuous on $\mathbb{I}$.
    \item $B_{n,c}^\Xi(t) > A_M(t)$ for every $t \in (0,1)$ and $B_{n,c}^\Xi(0) = B_{n,c}^\Xi(1) = 1$.
\end{enumerate}
\end{lemma}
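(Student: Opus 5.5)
The plan is to work directly from the definitions of $B_{n,u}^\Xi$ and $\xi_{n,i}$, treating $n$ as fixed and using that $\varphi_n$ is a normalized, non-strict, piecewise linear pseudo-inverse. In particular $0\le\varphi_n\le\varphi_n(0)<\infty$ on $\mathbb{I}$, and $\varphi_n>0$ on $[0,1)$. Since $\hat U_i,\hat V_i\le \frac{n}{n+1}<1$, all values $\varphi_n(\hat U_i),\varphi_n(\hat V_i)$ are strictly positive and finite. Likewise the numerators $\frac1n\sum_i\varphi_n(\frac{i}{n+1})$ and $\frac1n\sum_i\log\varphi_n(\frac{i}{n+1})$ are finite constants, and the first is positive.

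For (ii) and the endpoint claim of (iii), write $a_i:=\varphi_n(\hat U_i)$ and $b_i:=\varphi_n(\hat V_i)$, so that $\xi_{n,i}(t)=\min\{a_i/t,b_i/(1-t)\}$. This is continuous and positive on $(0,1)$, and it tends to $b_i$ as $t\to0$ and to $a_i$ as $t\to1$, matching the conventions $\xi_{n,i}(0)=b_i$, $\xi_{n,i}(1)=a_i$. Hence $t\mapsto\frac1n\sum_i\xi_{n,i}(t)$ and $t\mapsto\frac1n\sum_i\log\xi_{n,i}(t)$ are continuous on the compact interval $\mathbb{I}$, and the first is bounded below by a positive constant. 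So $B_{n,u}^\Xi$ and $B_{n,c}^\Xi$ are continuous, hence uniformly continuous, and bounded by some $K_n$. For the endpoint values, the ranks $\{n\hat V_i\cdot\frac{n+1}{n}\}=\{1,\dots,n\}$ give $\frac1n\sum_i\varphi_n(\hat V_i)=\frac1n\sum_i\varphi_n(\frac{i}{n+1})$ (and likewise with $\log$ and for $\hat U_i$). Therefore $B_{n,u}^\Xi(0)=B_{n,u}^\Xi(1)=1$, and since the correction term vanishes at $0$ and $1$, the same holds for $B_{n,c}^\Xi$.

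The main step is the lower bound $B_{n,u}^\Xi\ge A_M$, after which the strict inequality on $(0,1)$ in (iii) follows from adding $\varepsilon_n4t(1-t)>0$. Take $t\le\frac12$, so $A_M(t)=1-t$. Then $\xi_{n,i}(t)\le b_i/(1-t)$, and summing with the rank identity above gives
\[
\tfrac1n\textstyle\sum_i\xi_{n,i}(t)\le \tfrac{1}{1-t}\,\tfrac1n\sum_i\varphi_n(\tfrac{i}{n+1}),
\]
hence $B_{n,u}^{\mathbf P}(t)\ge 1-t$. In the CFG case, $\log\xi_{n,i}(t)\le\log b_i-\log(1-t)$, and averaging and exponentiating again gives $B_{n,u}^{\mathbf{CFG}}(t)\ge 1-t$. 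The case $t\ge\frac12$ is symmetric, using $\xi_{n,i}(t)\le a_i/t$ and the $\hat U_i$-ranks.

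I expect the only delicate point to be the rank identity. One must make sure there are no ties, which holds almost surely by continuity of $H$, and that $\hat U_i$ ranges exactly over $\{\frac{1}{n+1},\dots,\frac{n}{n+1}\}$. This identity is what makes the upper bounds on the denominators match the numerators exactly. Everything else is routine.
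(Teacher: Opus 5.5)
Your proposal is correct and follows essentially the paper's argument. The lower bound $B_{n,u}^\Xi\ge A_M$ comes from $\xi_{n,i}(t)\le\varphi_n(\hat V_i)/(1-t)$ (resp.\ $\le\varphi_n(\hat U_i)/t$) combined with the rank identity $\frac1n\sum_i\varphi_n(\hat U_i)=\frac1n\sum_i\varphi_n(\hat V_i)=\frac1n\sum_i\varphi_n(\frac{i}{n+1})$ (and its $\log$ version), and uniform continuity follows from continuity on the compact interval $\mathbb{I}$. The only difference is that the paper gives the explicit bound $K_n=\varphi_n(\frac{1}{n+1})/\varphi_n(\frac{n}{n+1})+\varepsilon_n$, via $\xi_{n,i}(t)\ge\varphi_n(\frac{n}{n+1})/A_M(t)$, whereas you obtain boundedness from compactness.

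One minor caveat: under the paper's stationary--ergodic (not necessarily i.i.d.) sampling scheme, continuity of $H$ alone does not rule out ties almost surely. The no-ties condition behind the rank identity is therefore an implicit assumption, shared by the paper, rather than a consequence of continuity.
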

\begin{proof}
We start with the first assertion. For every $i \in \{1,\ldots,n\}$ and $t \in \mathbb{I}$ we obviously have that 
$$
\varphi_n\left(\tfrac{1}{n+1}\right)\min\left\{\tfrac{1}{t},\tfrac{1}{1-t}\right\}\geq \hat{\xi}_{n,i}(t) \geq \varphi_n\left(\tfrac{n}{n+1}\right)\min\left\{\tfrac{1}{t},\tfrac{1}{1-t}\right\}
$$
with $\min\left\{\frac{1}{t},\frac{1}{1-t}\right\} = 1$ for $t = 0$ and $\min\left\{\frac{1}{t},\frac{1}{1-t}\right\} = 1$ for $t = 1$.
Both, for $B_{n,u}^\mathbf{P}$ and for $B_{n,u}^\mathbf{CFG}$ it therefore follows that
\begin{align*}
B_{n,u}^\Xi(t) &\leq
\frac{\varphi_n(\frac{1}{n+1})}{\varphi_n(\frac{n}{n+1})}A_M(t).
\end{align*}
and thus
$$
B_{n,c}^\Xi(t) \leq \frac{\varphi_n(\frac{1}{n+1})}{\varphi_n(\frac{n}{n+1})} + \varepsilon_n
$$
for every $t \in \mathbb{I}$.
Moreover, considering $\frac{1}{n}\sum_{i=1}^n\varphi_n(\frac{i}{n+1}) = \frac{1}{n}\sum_{i=1}^n\varphi_n(\hat{U}_i) = \frac{1}{n}\sum_{i=1}^n\varphi_n(\hat{V}_i)$, it 
is straightforward to verify that 
$$
B_{n,u}^\Xi(t) \geq A_M(t)
$$
for every $t \in \mathbb{I}$, which yields $B_{n,c}^\Xi(t) > A_M(t)$ for every $t \in (0,1)$. 
Since $B_{n,c}^\Xi(0) = B_{n,c}^\Xi(1) = 1$ is obvious, 
it remains to show the second assertion.
Considering that $\xi_{n,i}$ is continuous on $\mathbb{I}$, using 
compactness of $\mathbb{I}$, directly 
yields uniform continuity of $\xi_{n,i}$. Since finite sums, compositions and fractions (provided that the denominator is unequal to $0$) of uniformly continuous functions are uniformly continuous too, uniform continuity of $B_{n,c}^\Xi$ follows immediately.
\end{proof}
\noindent Our next step is to show that the function $A_{\alpha,n}^\Xi$ as proposed in eq. \eqref{eq:A_alpha_n} is a Pickands dependence function for every $\alpha \in [0,1-\frac{1}{n}]$.
\begin{lemma}\label{lem:pick_is_pick}
    Consider $\Xi \in \{\mathbf{P},\mathbf{CFG}\}$ and let $\alpha \in [0,1-\frac{1}{n}]$ be arbitrary. Then $A_{\alpha,n}^\Xi$ as defined in eq. \eqref{eq:A_alpha_n} has the following properties:
    \begin{enumerate}[label=(\roman*)]
        \item $A_M(t) \leq A_{\alpha,n}^\Xi(t) \leq 1$ holds for every $t \in \mathbb{I}$;
        \item $A_{\alpha,n}^\Xi$ is convex on $\mathbb{I}$.
    \end{enumerate}
    In other words, $A_{\alpha,n}^\Xi$ is a Pickands dependence function.
\end{lemma}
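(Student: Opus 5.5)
We need to show that $A_{\alpha,n}^\Xi=\mathrm{gcm}\min\{T_{\alpha,n}^\Xi,1\}$ satisfies $A_M\le A_{\alpha,n}^\Xi\le 1$ and is convex. The plan is to derive both bounds from Lemma \ref{lem:estimate_B_n_c}, together with two elementary facts about the greatest convex minorant. First, $\mathrm{gcm}(g)\le g$. Second, if $h$ is convex and $h\le g$, then $h\le \mathrm{gcm}(g)$. Convexity (assertion (ii)) is then immediate, since the greatest convex minorant of a function bounded below by a convex function is, by construction, a convex function on $\mathbb{I}$. Here we need $\min\{T_{\alpha,n}^\Xi,1\}$ to be bounded below by something finite, which the lower bound below provides. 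One could add that the gcm is real-valued and continuous on the open interval, and at the endpoints equals the value of the function there by the boundary values below.

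\textbf{Upper bound.} We use $A_{\alpha,n}^\Xi\le \min\{T_{\alpha,n}^\Xi,1\}\le 1$. In addition, at the endpoints we need $T_{\alpha,n}^\Xi(0)=T_{\alpha,n}^\Xi(1)=1$. This follows because $\kappa_{\frac{1}{1-\alpha}}(0)=0$ and $\kappa_{\frac{1}{1-\alpha}}(1)=1$, while $B_{n,c}^\Xi(0)=B_{n,c}^\Xi(1)=1$ by Lemma \ref{lem:estimate_B_n_c}(iii). Hence $\min\{T_{\alpha,n}^\Xi,1\}$ equals $1$ at both endpoints, as it must for a Pickands function. Note that $\alpha\le 1-\frac1n$ keeps $\frac{1}{1-\alpha}\in[1,n]$ finite, so $T_{\alpha,n}^\Xi$ is well defined and continuous by Lemma \ref{lem:estimate_B_n_c}(ii).

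\textbf{Lower bound (the main step).} Since $A_M$ is convex, it suffices to show $A_M\le \min\{T_{\alpha,n}^\Xi,1\}$, and since $A_M\le 1$, it suffices to show $A_M\le T_{\alpha,n}^\Xi$. Write $\gamma=\frac{1}{1-\alpha}\ge 1$, so that $T_{\alpha,n}^\Xi=(B_{n,c}^\Xi)_\gamma$. By Lemma \ref{lem:estimate_B_n_c}(i), $B_{n,c}^\Xi(s)\ge \max\{s,1-s\}$. Setting $s=\kappa_\gamma(t)$ gives
\[
\big(t^\gamma+(1-t)^\gamma\big)B_{n,c}^\Xi(\kappa_\gamma(t))\ge \max\{t^\gamma,(1-t)^\gamma\}=A_M(t)^\gamma.
\]
Taking the $\gamma$-th root yields $T_{\alpha,n}^\Xi(t)\ge A_M(t)$. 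This is the monotone analogue of the fact that $(A_M)_\gamma=A_M$, and it is the only real computation in the proof.

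\textbf{Remaining check.} Combining the two steps, the greatest convex minorant preserves $A_M\le A_{\alpha,n}^\Xi\le 1$ and yields convexity, so $A_{\alpha,n}^\Xi\in\mathcal{A}$. The delicate point is verifying that the gcm attains the boundary values $1$ at $0$ and $1$. This follows from the sandwich $A_M\le A_{\alpha,n}^\Xi\le 1$ together with $A_M(0)=A_M(1)=1$.
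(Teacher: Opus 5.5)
Your proof is correct and follows the paper's argument. Lemma~\ref{lem:estimate_B_n_c} gives $B_{n,c}^\Xi\ge A_M$, hence $T_{\alpha,n}^\Xi=(B_{n,c}^\Xi)_{\frac{1}{1-\alpha}}\ge A_M$ (you write out the computation the paper calls immediate); monotonicity of the gcm together with $\mathrm{gcm}(A_M)=A_M$ and $\mathrm{gcm}(f)\le f$ then yields $A_M\le A_{\alpha,n}^\Xi\le 1$, and convexity holds by construction. Your separate check that $T_{\alpha,n}^\Xi(0)=T_{\alpha,n}^\Xi(1)=1$ is harmless but unnecessary, since the sandwich already forces the boundary values.
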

\begin{proof}
According to Lemma \ref{lem:estimate_B_n_c} we have 
$B_{n,c}^\Xi(t) \geq A_M(t)$ for every $t \in \mathbb{I}$. It therefore 
follows immediately that
$$
T_{\alpha,n}^\Xi(t) = B_{n,c}^\Xi\left(\frac{t^\frac{1}{1-\alpha}}{t^\frac{1}{1-\alpha} + (1-t)^\frac{1}{1-\alpha}}\right)^{1-\alpha}\left(t^\frac{1}{1-\alpha} + (1-t)^\frac{1}{1-\alpha}\right)^{1-\alpha} = (B_{n,c}^\Xi)_{\frac{1}{1-\alpha}}(t) \geq A_M(t)
$$
holds for every $t \in \mathbb{I}$ and arbitrary $\alpha \in [0,1-\frac{1}{n}]$. 
Hence, applying Lemma \ref{lem:properties_gcm} directly yields
$$
A_M(t) \leq A_{\alpha,n}^\Xi(t) \leq 1
$$
for every $t \in \mathbb{I}$, which proves the second assertion and concludes the 
proof, since $A_{\alpha,n}^\Xi$ is convex by definition.
\end{proof}
\noindent The next technical lemma is key for proving Theorem \ref{lem:existence_uniqueness_alpha}.
\begin{lemma}\label{lem:help_lem}
Let $z \in (0,\infty)$, $c_1,c_2\in (0,\infty)$ with $c_1 < c_2 $ and $\beta_1,\beta_2 \in [c_1,c_2]$. Then
    \begin{equation}\label{eq:est_kappa_t}
    |z^{\beta_1} - z^{\beta_2}| \leq |\beta_1 - \beta_2| \,\,|\log(z)|z^r,
    \end{equation}
    where $ r:= \min\{\beta_1, \beta_2\}$, if $z \in (0,1]$ and $r := \max\{\beta_1, \beta_2\}$, if $z \in (1,\infty)$.\\
    Furthermore, if $z \in (0,1]$ we have that
    \begin{equation}\label{eq:est_0_1}
    |z^{\beta_1} - z^{\beta_2}| \leq \frac{|\beta_1 - \beta_2|}{\min\{\beta_1,\beta_2\}e} \leq \frac{|\beta_1 - \beta_2|}{c_1e}.
    \end{equation}
\end{lemma}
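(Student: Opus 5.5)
The natural route is the mean value theorem applied to $\beta \mapsto z^\beta = \exp(\beta \log z)$ on the segment between $\beta_1$ and $\beta_2$. Without loss of generality assume $\beta_1 \neq \beta_2$ (otherwise both sides vanish). Then there exists $\xi$ between $\beta_1$ and $\beta_2$ with
\begin{equation*}
z^{\beta_1} - z^{\beta_2} = (\beta_1 - \beta_2)\,\log(z)\, z^{\xi},
\end{equation*}
so $|z^{\beta_1} - z^{\beta_2}| = |\beta_1-\beta_2|\,|\log(z)|\, z^{\xi}$. It then remains to bound $z^{\xi}$ by $z^r$. For $z \in (0,1]$ the map $\beta \mapsto z^\beta$ is non-increasing, so $z^\xi \leq z^{\min\{\beta_1,\beta_2\}}$. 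For $z \in (1,\infty)$ it is increasing, so $z^\xi \leq z^{\max\{\beta_1,\beta_2\}}$. This yields eq. \eqref{eq:est_kappa_t} in both cases.

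For eq. \eqref{eq:est_0_1}, restrict to $z\in(0,1]$ and use the first part with $r=\min\{\beta_1,\beta_2\}=:m \geq c_1 > 0$. The task reduces to the elementary bound
\begin{equation*}
\sup_{z\in(0,1]} |\log z|\, z^{m} \leq \frac{1}{m e}.
\end{equation*}
To see this, substitute $u=-\log z \in [0,\infty)$, so the expression becomes $u e^{-m u}$. Its derivative $e^{-mu}(1-mu)$ vanishes at $u=1/m$, which gives the maximum value $\frac{1}{me}$. The last inequality $\frac{1}{me}\le\frac{1}{c_1 e}$ follows from $m\ge c_1$.

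There is no real obstacle. The only points needing care are the monotonicity direction of $\beta\mapsto z^\beta$ in the two regimes of $z$, and the trivial cases $z=1$ and $\beta_1=\beta_2$, where both sides are zero. Note that the hypothesis $\beta_i\le c_2$ is not needed here; only $c_1>0$ enters.
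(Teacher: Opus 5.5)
Your proof is correct and follows the paper's argument essentially step for step: the mean value theorem for $\beta\mapsto z^\beta$, monotonicity to bound $z^\xi$ by $z^r$, and maximization of $z^r|\log z|$ on $(0,1]$. The paper locates that maximum at $z=e^{-1/r}$, which is your $u=1/m$. Your observation that the upper bound $c_2$ plays no role is also accurate.
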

\begin{proof}
Fixing $z \in (0,\infty)$, $c \in (0,\infty)$ and $\beta_1,\beta_2 \in [0,c]$ with $\beta_1 \neq \beta_2$, using the mean value theorem there exists some $s \in (\beta_1,\beta_2)$ such that 
$$
z^s \log(z) = \frac{z^{\beta_1} - z^{\beta_2}}{\beta_1 - \beta_2}
$$
and therefore
\begin{align*}
|z^{\beta_1} - z^{\beta_2}| = |\beta_1 - \beta_2|\,z^s\,|\log(z)| \leq |\beta_1 - \beta_2\,|z^r\,|\log(z)| 
\end{align*}
where $ r:= \min\{\beta_1, \beta_2\}$, if $z \in (0,1]$ and $r := \max\{\beta_1, \beta_2\}$, if $z \in (1,\infty)$.\\
Having $z \in (0,1]$ it is easily seen that the function $z \mapsto z^r|\log z|$ attains its maximum at $z = \exp(-\frac{1}{r})$, which implies 
$$
    |z^{\beta_1} - z^{\beta_2}| \leq \frac{|\beta_1 - \beta_2|}{\min\{\beta_1,\beta_2\}e} \leq \frac{|\beta_1 - \beta_2|}{c_1e}.
$$
\end{proof}
\begin{lemma}\label{lem:cont_A_alpha}
For $\Xi \in \{\mathbf{P},\mathbf{CFG}\}$ let $A_{\alpha,n}^\Xi$ be defined 
according to eq. \eqref{eq:A_alpha_n} and let $\alpha_0,\alpha_1,\alpha_2, \ldots \in [0,1-\frac{1}{n}]$ be such that $\alpha_j \overset{j \rightarrow \infty}{\longrightarrow} \alpha_0$. Then the following convergence holds:
$$
\Vert A_{\alpha_j,n}^\Xi - A_{\alpha_0,n}^\Xi \Vert_\infty \overset{j \rightarrow \infty}{\longrightarrow} 0.
$$
\end{lemma}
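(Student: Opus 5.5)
The plan is to split the statement into two continuity steps. First, the map $\alpha \mapsto T_{\alpha,n}^\Xi$ is continuous from $[0,1-\frac{1}{n}]$ into $(C(\mathbb{I}),\Vert\cdot\Vert_\infty)$. Second, the operator $g \mapsto \mathrm{gcm}\min\{g,1\}$ is $1$-Lipschitz with respect to the uniform norm. Composing the two gives $\Vert A_{\alpha_j,n}^\Xi - A_{\alpha_0,n}^\Xi\Vert_\infty \leq \Vert T_{\alpha_j,n}^\Xi - T_{\alpha_0,n}^\Xi\Vert_\infty \to 0$.

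\textbf{Second step (Lipschitz property).} Truncation $g\mapsto \min\{g,1\}$ is clearly $1$-Lipschitz in the sup-norm. For the greatest convex minorant, suppose $\Vert g-h\Vert_\infty\le \delta$. Then $\mathrm{gcm}(h)-\delta$ is a convex minorant of $g$, so $\mathrm{gcm}(h)-\delta\le \mathrm{gcm}(g)$; by symmetry $|\mathrm{gcm}(g)-\mathrm{gcm}(h)|\le\delta$. This is standard, and I would invoke it alongside Lemma~\ref{lem:properties_gcm}.

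\textbf{First step (continuity in $\alpha$).} Write $\beta_j := \frac{1}{1-\alpha_j}$ and $\beta_0 := \frac{1}{1-\alpha_0}$. Since $\alpha_j\in[0,1-\frac1n]$, all exponents lie in $[1,n]$, and the outer exponents $1-\alpha_j$ lie in $[\frac1n,1]$. Thus $T_{\alpha_j,n}^\Xi(t)=\big[(t^{\beta_j}+(1-t)^{\beta_j})\,B_{n,c}^\Xi(\kappa_{\beta_j}(t))\big]^{1/\beta_j}$, and I would bound the difference from $T_{\alpha_0,n}^\Xi(t)$ via the triangle inequality in three pieces.
\begin{enumerate}[label=(\alph*)]
\item By eq.~\eqref{eq:est_0_1} of Lemma~\ref{lem:help_lem} with $c_1=1$, we have $|t^{\beta_j}-t^{\beta_0}|\le |\beta_j-\beta_0|/e$ uniformly in $t$. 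The same holds for $(1-t)$.
\item $\kappa_{\beta_j}\to\kappa_{\beta_0}$ uniformly by Lemma~\ref{lem:properties_trans_pickands}(viii). Combined with the uniform continuity of $B_{n,c}^\Xi$ from Lemma~\ref{lem:estimate_B_n_c}(ii), this gives $B_{n,c}^\Xi\circ\kappa_{\beta_j}\to B_{n,c}^\Xi\circ\kappa_{\beta_0}$ uniformly.
\item The base $s_j(t):=(t^{\beta_j}+(1-t)^{\beta_j})B_{n,c}^\Xi(\kappa_{\beta_j}(t))$ is uniformly bounded above by $K_n$, using Lemma~\ref{lem:estimate_B_n_c}(i) and $t^{\beta}+(1-t)^{\beta}\le1$. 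By (a) and (b), $s_j\to s_0$ uniformly.
\end{enumerate}
It then remains to handle the outer power $s\mapsto s^{1/\beta}$.

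\textbf{Main obstacle.} The delicate point is the outer power, because $s_j$ may approach $0$ and is not bounded away from zero. I would avoid needing a positive lower bound as follows. The maps $s\mapsto s^{1/\beta}$, for $\beta\in[1,n]$ and $s\in[0,K_n]$, are equicontinuous on the compact set $[0,K_n]$; indeed $|s^{1/\beta}-r^{1/\beta}|\le |s-r|^{1/\beta}\le \max\{|s-r|,|s-r|^{1/n}\}$ by subadditivity of concave powers. This controls $|s_j^{1/\beta_j}-s_0^{1/\beta_j}|$. For the remaining term $|s_0^{1/\beta_j}-s_0^{1/\beta_0}|$, I would apply Lemma~\ref{lem:help_lem}, i.e.\ eq.~\eqref{eq:est_kappa_t}, to $z=s_0(t)$ with exponents $1/\beta_j, 1/\beta_0\in[\frac1n,1]$. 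This yields the bound $|1/\beta_j-1/\beta_0|\sup_{z\in(0,K_n]} |\log z|\,z^{r}$. That supremum is finite because $r\ge\frac1n$, so $z^r|\log z|$ is bounded on $(0,K_n]$; at $z=0$ both powers vanish. Altogether $\Vert T_{\alpha_j,n}^\Xi-T_{\alpha_0,n}^\Xi\Vert_\infty\to0$, and the Lipschitz property of the second step concludes the proof.
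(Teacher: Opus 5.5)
Your proof is correct, but it handles the outer power differently from the paper.

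\textbf{The paper's route.} The paper factors
$$T_{\alpha,n}^\Xi(t)=\bigl(t^{\beta}+(1-t)^{\beta}\bigr)^{1/\beta}\,\bigl(B_{n,c}^\Xi\circ\kappa_\beta(t)\bigr)^{1/\beta},\qquad \beta=\tfrac{1}{1-\alpha}.$$
It obtains $B_{n,c}^\Xi\circ\kappa_{\beta_j}\to B_{n,c}^\Xi\circ\kappa_{\beta_0}$ uniformly exactly as in your step (b). It then uses $B_{n,c}^\Xi\in[\frac12,K_n]$, which follows from $B_{n,c}^\Xi\ge A_M$. On this set, bounded away from $0$, the maps $x\mapsto x^{1-\alpha_j}$ converge uniformly. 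The Gumbel-type factor $\bigl(t^{\beta_j}+(1-t)^{\beta_j}\bigr)^{1/\beta_j}$ is covered by ``the result now follows easily''. The truncation/gcm step is Lemma~\ref{lem:properties_gcm}, just as in your second step.

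\textbf{Your route.} You keep the base $s_j$ whole. You control the outer power with the H\"older estimate $|s^{p}-r^{p}|\le|s-r|^{p}$ for $p\in[\frac1n,1]$, and use eq.~\eqref{eq:est_kappa_t} for the change of exponent. This needs only $0<B_{n,c}^\Xi\le K_n$ and uniform continuity, not the lower bound $A_M\le B_{n,c}^\Xi$. It also treats the factor $t^{\beta}+(1-t)^{\beta}$ explicitly rather than leaving it implicit.

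\textbf{The obstacle is not real.} The ``main obstacle'' you identify does not actually arise. By convexity of $x\mapsto x^\beta$ you have $t^{\beta}+(1-t)^{\beta}\ge 2^{1-\beta}\ge 2^{1-n}$. By Lemma~\ref{lem:estimate_B_n_c}(i), $B_{n,c}^\Xi\ge A_M\ge\frac12$. Hence $s_j\ge 2^{-n}$ uniformly in $j$ and $t$. Uniform convergence of $s\mapsto s^{1/\beta_j}$ on $[2^{-n},K_n]$ then finishes the argument in one line, which is essentially the paper's route. Your workaround is valid, just more work than necessary under the paper's hypotheses.
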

\begin{proof}
    Using the fact that the function $x \mapsto \min\{x,1\}$ is $1$-Lipschitz continuous and Lemma \ref{lem:properties_gcm}, it suffices to 
    prove that 
     $$
    (B_{n,c}^\Xi)_{\frac{1}{1-\alpha_j}} = T_{\alpha_j,n}^\Xi \overset{j\rightarrow \infty}{\longrightarrow} T_{\alpha_0,n}^\Xi = 
    (B_{n,c}^\Xi)_{\frac{1}{1-\alpha_0}}
    $$
    uniformly on $\mathbb{I}$, which can be done as follows.
    Under the stated assumption we obviously have 
    $\lim_{j \rightarrow \infty} \frac{1}{1-\alpha_j}=\frac{1}{1-\alpha_0}$ as well as
    $\frac{1}{1-\alpha_j} \in [1,n]$ for every $j \in \mathbb{N}_0$.
    Applying assertion 8 from Lemma \ref{lem:properties_trans_pickands}
    yields uniform convergence of $(\kappa_{\frac{1}{1-\alpha_j}})_{j \in \mathbb{N}}$ to $\kappa_{\frac{1}{1-\alpha_0}}$ on $\mathbb{I}$. 
    Having this, using the second assertion of Lemma \ref{lem:estimate_B_n_c}, 
    uniform convergence of $B_{n,c}^\Xi \circ \kappa_{\frac{1}{1-\alpha_j}}$ 
    to $B_{n,c}^\Xi \circ \kappa_{\frac{1}{1-\alpha_0}}$ follows. 
    Moreover, from Lemma \ref{lem:estimate_B_n_c} we also know that 
    $B_{n,c}^\Xi(t) \in [\frac{1}{2},K_n]$ for every $t \in \mathbb{I}$, 
    hence, applying the fact that the functions 
    $x \mapsto x^{1-\alpha_j}$ converge uniformly to 
    the function  $x \mapsto x^{1-\alpha_0}$ on $[\frac{1}{2},K_n]$ 
    yields that 
    $$
    \left(B_{n,c}^\Xi \circ \kappa_{\frac{1}{1-\alpha_j}}\right)^{1-\alpha_j}
    \overset{j \rightarrow \infty}{\longrightarrow} 
     \left(B_{n,c}^\Xi \circ \kappa_{\frac{1}{1-\alpha_0}}\right)^{1-\alpha_0}
    $$
    uniformly on $\mathbb{I}$. 
    Having this, the result now follows easily.
\end{proof}
\noindent Assuming that $\alpha \in [\frac{1}{n},1-\frac{1}{n}]$, we always have $A_{\alpha,n}^\Xi(\frac{1}{2}) > \frac{1}{2}$ and therefore we obtain the following bound on Kendall's $\tau$ of $A_{\alpha,n}^\Xi$:
\begin{lemma}\label{lem:bounds_tau}
    Let $\Xi \in \{\mathbf{P},\mathbf{CFG}\}$. 
    Then for every $\alpha \in [0,1-\frac{1}{n}]$ we have  $A_{\alpha,n}^\Xi\left(\frac{1}{2}\right) > \frac{1}{2}$ as well as
    $0 \leq \tau_{A_{\alpha,n}^\Xi} < 1.$
\end{lemma}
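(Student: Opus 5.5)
The strategy is to show $A_{\alpha,n}^\Xi(\frac{1}{2}) > \frac{1}{2}$ first. Then I would combine Lemma \ref{lem:pickands_1/2} with the Kendall's $\tau$ formula in eq. \eqref{eq:kendallstau_ex} to get $0\le \tau<1$.

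\emph{Step 1 (lower bound of $T_{\alpha,n}^\Xi$ away from $A_M$ near $\frac12$).} Set $\gamma := \frac{1}{1-\alpha} \in [1,n]$. The map $\kappa_\gamma$ fixes $\frac12$, so
$$
T_{\alpha,n}^\Xi(\tfrac12) = 2^{\gamma(1-\alpha)\cdot\frac{-1}{\gamma}\cdot\gamma}\ldots = \tfrac{1}{2}\,\bigl(2\,B_{n,c}^\Xi(\tfrac12)\bigr)^{1-\alpha}.
$$
Directly, $(2^{1-\gamma}B)^{1/\gamma} = \frac12 (2B)^{1/\gamma}$. Lemma \ref{lem:estimate_B_n_c}(iii) gives $B_{n,c}^\Xi(\frac12) > \frac12$, hence $T_{\alpha,n}^\Xi(\frac12) > \frac12$. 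In fact, by Lemma \ref{lem:estimate_B_n_c}(i),(iii) together with continuity, $T_{\alpha,n}^\Xi(t) > A_M(t)$ for every $t\in(0,1)$. The reason is that $(f)_\gamma(t) > A_M(t)$ whenever $f(\kappa_\gamma(t)) > A_M(\kappa_\gamma(t))$. This follows from the computation in Lemma \ref{lem:properties_trans_pickands}(vii) run in reverse, since $(A_M)_\gamma = A_M$ and the map $f\mapsto (f)_\gamma$ is strictly monotone in $f$ at each point. The same holds for $\min\{T_{\alpha,n}^\Xi,1\}$, because $A_M<1$ on $(0,1)$.

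\emph{Step 2 (the greatest convex minorant stays above $A_M$ strictly at $\frac12$).} This is the main obstacle, since the gcm could a priori touch $A_M$ at $\frac12$. Let $g:=\min\{T_{\alpha,n}^\Xi,1\}$. Then $g$ is continuous on $\mathbb{I}$ (Lemma \ref{lem:estimate_B_n_c}(ii)), $g(0)=g(1)=1$, and $g>A_M$ on $(0,1)$. I would first try a compactness argument. Suppose $\mathrm{gcm}\,g(\frac12) = \frac12$. By the standard representation of the convex minorant at an interior point, there exist $s\le\frac12\le u$ and a weight $\lambda$ with $\frac12 = \lambda s + (1-\lambda)u$ and $\lambda g(s) + (1-\lambda) g(u) = \frac12$. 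If $s=u=\frac12$, this contradicts $g(\frac12) > \frac12$. Otherwise, since $g\ge A_M$ and $A_M$ is convex with $A_M(\frac12)=\frac12$, equality forces $g(s)=A_M(s)$ and $g(u)=A_M(u)$, with $A_M$ affine on $[s,u]$. Because $A_M$ has a kink at $\frac12$, being affine on $[s,u]\ni\frac12$ forces $s=\frac12$ or $u=\frac12$. Every option therefore needs a point $r\in\{s,u\}$ with $g(r)=A_M(r)$, and $r$ cannot be an endpoint $0$ or $1$ (there $g=1=A_M$, but then the segment would pass through $(\frac12,\frac12)$ only if the other end also lies on $A_M$, reducing to an interior point). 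This contradicts $g>A_M$ on $(0,1)$. The endpoint subcase has to be checked carefully: a chord from $(0,1)$ through $(\frac12,\frac12)$ must end at $(1,0)$, which is impossible because $g\ge \frac12$. Hence $A_{\alpha,n}^\Xi(\frac12)>\frac12$.

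\emph{Step 3 (the bounds on $\tau$).} By Lemma \ref{lem:pick_is_pick}, $A_{\alpha,n}^\Xi\in\mathcal{A}$. Lemma \ref{lem:pickands_1/2} then gives $A_{\alpha,n}^\Xi\neq A_M$, so $C_{A_{\alpha,n}^\Xi}\neq M$ and $\tau<1$. The bound $\tau\ge0$ is immediate from eq. \eqref{eq:kendallstau_ex}: the integrand $t(1-t)/A(t)$ is nonnegative, and $D^+A$ is nondecreasing, so it defines a nonnegative measure. To make $\tau<1$ explicit, one can also use the fact that $\tau(C)=1$ holds exactly for $C=M$.
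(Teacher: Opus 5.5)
Your proposal is correct. Steps 1 and 3 essentially match the paper; the difference is in Step 2.

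\textbf{How the paper does Step 2.} The paper does not argue by contradiction. It sets $c_\alpha := \min_{s\in\mathbb{I}} T_{\alpha,n}^\Xi(s)$, which exceeds $\frac12$ because $T_{\alpha,n}^\Xi\ge A_M$, $T_{\alpha,n}^\Xi(\frac12)>\frac12$, and $T_{\alpha,n}^\Xi$ is continuous on the compact interval $\mathbb{I}$. It then notes that the convex function $\max\{A_M,c_\alpha\}$ lies below $\min\{T_{\alpha,n}^\Xi,1\}$, using $c_\alpha\le T_{\alpha,n}^\Xi(0)=1$. Monotonicity of the gcm then gives $A_{\alpha,n}^\Xi(\frac12)\ge c_\alpha>\frac12$ directly.

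\textbf{What your route needs in addition.} You rely on the two-point (Carath\'eodory) representation of the greatest convex minorant as a minimum of $\lambda g(s)+(1-\lambda)g(u)$ over $s\le\frac12\le u$. This fact is true, but it is an extra ingredient relative to the paper's definition of $\mathrm{gcm}$ as a supremum of convex minorants. You should also state explicitly that the minimum is attained, because the feasible set of $(s,u,\lambda)$ is compact and $g$ is continuous. Without continuity the conclusion can fail: take $g=A_M$ except $g(\frac12)=1$.

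\textbf{Your case analysis can be shortened.} Once attainment is in place, only two cases remain.
\begin{itemize}
\item If $s<\frac12<u$, then $\lambda g(s)+(1-\lambda)g(u)\ge\lambda(1-s)+(1-\lambda)u>\frac12$.
\item Otherwise all the weight sits at $\frac12$, and $g(\frac12)>\frac12$.
\end{itemize}
So you need neither the strict inequality $g>A_M$ on $(0,1)$ from Step 1 nor the endpoint discussion. Also, the chord from $(0,1)$ does not ``end at $(1,0)$''; it ends at $(u,1-u)$. That point is still excluded, since $1-u<\frac12\le g(u)$.

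\textbf{Comparison.} The paper's argument is shorter and gives the quantitative bound $A_{\alpha,n}^\Xi(\frac12)\ge c_\alpha$. Yours isolates the mechanism, namely the kink of $A_M$ at $\frac12$, but needs the convex-envelope representation and its attainment as additional justified steps.
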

\begin{proof}
    According to Lemma \ref{lem:estimate_B_n_c} 
    $B_{n,c}^\Xi(\frac{1}{2}) > \frac{1}{2}$ holds, which implies
    $T_{\alpha,n}^\Xi(\frac{1}{2}) > \frac{1}{2}$. Using continuity of $T_{\alpha,n}^\Xi$, we obviously have that $c_\alpha := \min_{s \in \mathbb{I}}T_{\alpha,n}^\Xi(s) > \frac{1}{2}$. 
    Defining the function $g_\alpha: \mathbb{I} \rightarrow [c_\alpha,1]$ by
    $$
    g_{\alpha}(t) := \begin{cases}
        1-t, &\text{ if } t \in [0,1-c_\alpha],\\
        c_\alpha, &\text{ if } t \in (1-c_\alpha,c_\alpha),\\
        t, &\text{ if } t \in [c_\alpha,1],\\
    \end{cases}
    $$ 
    obviously $g_{\alpha}$ is convex, so we obtain that
    $$
    A_{\alpha,n}^\Xi\left(\tfrac{1}{2}\right) = \mathrm{gcm}(\min\{T_{\alpha,n}^\Xi,1\})\left(\tfrac{1}{2}\right) \geq  g_{\alpha}\left(\tfrac{1}{2}\right) > \tfrac{1}{2}.
    $$
    Considering $A_{\alpha,n}^\Xi \in \mathcal{A}$, both 
    $C_{A_{\alpha,n}^\Xi} \neq M$ and 
    $0 \leq \tau_{A_{\alpha,n}^\Xi} < 1$ follow.  
\end{proof}
\noindent Our next step is to show that $\varphi_{\alpha,n}^\Xi$ as defined in eq. \eqref{eq:est_phi_alpha} is indeed the pseudo-inverse of a normalized Archimedean 
generator $\psi_{\alpha,n}^\Xi$:
\begin{lemma}\label{lem:gen_is_gen}
    Let $\Xi \in \{\mathbf{P},\mathbf{CFG}\}$ and $\alpha \in [0,1-\frac{1}{n}]$ be arbitrary. Then $\varphi_{\alpha,n}^\Xi$, defined according to 
    eq. \eqref{eq:est_phi_alpha}, has the following properties: 
    \begin{enumerate}[label=(\roman*)]
        \item $\varphi_{\alpha,n}^\Xi(0)< \infty$,  $\varphi_{\alpha,n}^\Xi(\frac{1}{2}) = 1$, 
        $\varphi_{\alpha,n}^\Xi(1) = 0$ and $\varphi_{\alpha,n}^\Xi(t) > 0$ for every $t \in [0,1)$;
        \item $\varphi_{\alpha,n}^\Xi$ is strictly decreasing on $\mathbb{I}$;
        \item $\varphi_{\alpha,n}^\Xi$ is convex on $\mathbb{I}$.
    \end{enumerate}
    In other words: $\varphi_{\alpha,n}^\Xi$ is the
    pseudo-inverse of a non-strict Archimedean generator $\psi_{\alpha,n}^\Xi \in \Psi$.
    \end{lemma}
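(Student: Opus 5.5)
Write $\gamma := 1-\tau_{A_{\alpha,n}^\Xi}$ and $g := \mathrm{gcm}(\varphi_n^{\gamma})$, so that $\varphi_{\alpha,n}^\Xi = g/g(\frac12)$. By Lemma \ref{lem:bounds_tau}, $\gamma \in (0,1]$. The plan is to get every property of $\varphi_{\alpha,n}^\Xi$ from the corresponding property of $\varphi_n^\gamma$, and then to use the standard facts about greatest convex minorants (Lemma \ref{lem:properties_gcm}).

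\emph{Boundary values and positivity.} From Subsection \ref{subsec:6.2}, $\varphi_n$ is normalized, non-strict, piecewise linear and strictly decreasing, with $\varphi_n(1)=0$ and $\varphi_n(0)<\infty$. Hence $\varphi_n^\gamma$ is finite, continuous and strictly decreasing on $\mathbb{I}$, with $\varphi_n^\gamma(1)=0$, and it is strictly positive on $[0,1)$. Since $0$ is a convex minorant of $\varphi_n^\gamma$, we have $0 \le g \le \varphi_n^\gamma$. This gives $g(1)=0$ and $g(0)\le\varphi_n(0)^\gamma<\infty$.

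The main obstacle is strict positivity of $g$ on $[0,1)$, which is needed both to divide by $g(\frac12)$ and to get $\varphi_{\alpha,n}^\Xi(t)>0$. I would show it using the fact that $\varphi_n^\gamma$ is piecewise concave-or-linear near $1$. On the last linear segment $[w_m,1]$ of $\varphi_n$ we have $\varphi_n(t)=c(1-t)$ with $c>0$, so $\varphi_n^\gamma(t)=c^\gamma(1-t)^\gamma \ge c^\gamma(1-w_m)^{\gamma-1}(1-t)$ there. On $[0,w_m]$, $\varphi_n^\gamma$ is bounded below by the positive constant $\varphi_n(w_m)^\gamma$. Together these show that the linear function $t\mapsto \delta(1-t)$ is a convex minorant of $\varphi_n^\gamma$ for $\delta>0$ small enough. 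Therefore $g(t)\ge \delta(1-t)>0$ for $t\in[0,1)$. In particular $g(\frac12)>0$, and after normalization $\varphi_{\alpha,n}^\Xi(\frac12)=1$, $\varphi_{\alpha,n}^\Xi(1)=0$, $\varphi_{\alpha,n}^\Xi(0)<\infty$ and $\varphi_{\alpha,n}^\Xi>0$ on $[0,1)$.

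\emph{Convexity and strict monotonicity.} Convexity is immediate, since $g$ is a gcm and dividing by the positive constant $g(\frac12)$ preserves it. For strict decrease, take a convex $g$ with $g(1)=0$ and $g(t)>0$ for $t<1$. Suppose $g(s)\le g(t)$ for some $s<t<1$. Write $t$ as a convex combination of $s$ and $1$; convexity then gives $g(t)\le \lambda g(s) + (1-\lambda)\cdot 0 < g(s)$ whenever $g(s)>0$, which is a contradiction. So $g$ is strictly decreasing on $\mathbb{I}$.

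Finally, a convex, strictly decreasing, continuous function $\varphi$ on $\mathbb{I}$ with $\varphi(1)=0$, $\varphi(\frac12)=1$ and $\varphi(0)<\infty$ is the pseudo-inverse of the non-strict generator $\psi(z)=\varphi^{-1}(z)$ for $z\le\varphi(0)$ and $\psi(z)=0$ otherwise. This $\psi$ is convex, non-increasing and normalized, so $\psi\in\Psi$, which is exactly the claim $\psi_{\alpha,n}^\Xi\in\Psi$.
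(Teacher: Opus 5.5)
Your proof is correct and follows essentially the same route as the paper: bound $\mathrm{gcm}(\varphi_n^{\gamma})$ from below by a positive linear function vanishing at $1$, then deduce strict decrease from convexity together with $g(1)=0$ and $g>0$ on $[0,1)$. The differences are minor: the paper gets the linear bound from the tangent of the convex $\varphi_n$ at $1$ and $(1-t)^{\gamma}\ge 1-t$, whereas you use the explicit last linear piece of $\varphi_n$; your convex-combination argument for strict monotonicity is a cleaner version of the paper's one-sided-derivative argument, and like the paper you take continuity of the gcm at the endpoints for granted (it does hold because $\varphi_n^{\gamma}$ is continuous on $\mathbb{I}$).
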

\begin{proof}
    Let $\Xi \in \{\mathbf{P},\mathbf{CFG}\}$ and $\alpha \in [0,1-\frac{1}{n}]$ 
    be arbitrary but fixed and let $\varphi_n$ and $\tau_{A_{\alpha,n}^\Xi}$ be the estimators as defined in eqs. \eqref{eq:phi.via.kendall} and \eqref{eq:kendallstau_ex}, respectively. According to Lemma \ref{lem:bounds_tau} we have that $\tau_{A_{\alpha,n}^\Xi} \in [0,1)$. Since $\varphi_n$ is the pseudo-inverse of  an Archimedean generator, setting $f_n := \varphi_n^{1-\tau_{A_{\alpha,n}^\Xi}}$, we obtain that
    \begin{enumerate}
        \item $f_n > 0$ on $[0,1)$, $f_n(1) = 0$ and $f_n(0)< \infty$;
        \item $f_n$ is strictly decreasing on $\mathbb{I}$;
        \item $f_n$ is continuous on $\mathbb{I}$.
    \end{enumerate}
    Furthermore, convexity of $\varphi_n$ and the fact 
    that $\varphi_n$ is piecewise linear yields 
    $\varphi_n(t) \geq (t-1)D^-\varphi_n(1)=(1-t)(-D^-\varphi_n(1))$ for every $t \in \mathbb{I}$, 
    implying
    $$
    f_n(t) \geq (1-t)^{1-\tau_{A_{\alpha,n}^\Xi}}(-D^-\varphi_n(1))^{1-\tau_{A_{\alpha,n}^\Xi}} \geq (1-t)(-D^-\varphi_n(1))^{1-\tau_{A_{\alpha,n}^\Xi}}=: \iota_n(t)
    $$
    for every $t \in \mathbb{I}$. Since $\iota_n$ is convex and  $\iota_n(t) > 0$ for every $t \in [0,1)$, we obtain that
    \begin{equation}\label{eq:est_gcm}
    \mathrm{gcm}(f_n)(t) \geq \iota_n(t) > 0
    \end{equation}
    holds for every $t \in [0,1)$. Applying the fact that $f_n(1) = 0=\iota_n(1) $, it follows that 
    $$
    0 = f_n(1) \geq \mathrm{gcm}(f_n)(1) \geq 0
    $$
    and therefore $\mathrm{gcm}(f_n)(1) = 0$. Considering the left-hand derivative of $\mathrm{gcm}(f_n)$ in $1$, using eq. \eqref{eq:est_gcm} yields 
    $$
    D^-[\mathrm{gcm}(f_n)](1) = \lim_{h \uparrow 0}\frac{\mathrm{gcm}(f_n)(1+h) - \mathrm{gcm}(f_n)(1)}{h} = \lim_{h \uparrow 0}\frac{\mathrm{gcm}(f_n)(1+h)}{h} \leq 0.
    $$
    Since the left-hand derivative of a convex function is non-decreasing, this 
    shows 
    $$
    D^-[\mathrm{gcm}(f_n)](x) \leq D^-[\mathrm{gcm}(f_n)](1) \leq 0
    $$
    for every $x \in (0,1)$, so $\mathrm{gcm}(f_n)$ is non-increasing on $(0,1)$.\\
    Suppose that there exist $x_1,x_2 \in (0,1)$ with $x_1 < x_2$ and $\mathrm{gcm}(f_n)(x_1) = \mathrm{gcm}(f_n)(x_2)$. Then monotonicity yields that $\mathrm{gcm}(f_n)(t) = \mathrm{gcm}(f_n)(x_1)$ for every $t \in [x_1,x_2]$ and therefore $D^-\mathrm{gcm}(f_n)(t) = 0$ for every $t \in (x_1,x_2]$, implying that $D^-\mathrm{gcm}(f_n)(t) = 0$ for every $t \in (x_1,1]$. This shows 
    that $\mathrm{gcm}(f_n)(t) = 0$ for every $t \in (x_1,1]$, a contradiction to eq. \eqref{eq:est_gcm}. Therefore we have that $\mathrm{gcm}(f_n)$ is strictly decreasing on $\mathbb{I}$. Since obviously $\mathrm{gcm}(f_n)(\frac{1}{2}) > 0$ and $\mathrm{gcm}(f_n)$ is convex by definition, the desired properties follow immediately.
\end{proof}
\begin{lemma}\label{lem:cont_phi_alpha}
Let $\Xi \in \{\mathbf{P},\mathbf{CFG}\}$, $\varphi_{\alpha,n}^\Xi$ be defined as in eq. \eqref{eq:est_phi_alpha} and assume that $\alpha_0,\alpha_1,\alpha_2, \ldots \in [0,1-\frac{1}{n}]$ fulfills $\alpha_j \overset{j \rightarrow \infty}{\longrightarrow} \alpha_0$. Then it holds that
$$
\Vert\varphi_{\alpha_j,n}^\Xi -  \varphi_{\alpha_0,n}^\Xi\Vert_\infty \overset{j \rightarrow \infty}{\longrightarrow} 0.
$$
\end{lemma}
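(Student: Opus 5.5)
Since $n$ is fixed, the plan is to reduce everything to continuity in $\alpha$ of the scalar $\tau_{A_{\alpha,n}^\Xi}$, plus continuity of the exponent map $f \mapsto f^{\gamma}$ and of the gcm operation.

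\textbf{Step 1: continuity of $\tau_j := \tau_{A_{\alpha_j,n}^\Xi}$.} By Lemma \ref{lem:cont_A_alpha}, $A_{\alpha_j,n}^\Xi \to A_{\alpha_0,n}^\Xi$ uniformly. Uniform convergence of Pickands dependence functions implies uniform convergence of the corresponding EVCs $C_{A_{\alpha_j,n}^\Xi} \to C_{A_{\alpha_0,n}^\Xi}$. Kendall's $\tau$ is continuous with respect to uniform convergence of copulas: via eq. \eqref{eq:kendalls_tau_alter}, the Kendall distribution functions converge weakly, and dominated convergence applies on $\mathbb{I}$. Hence $\tau_j \to \tau_0$. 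By Lemma \ref{lem:bounds_tau} all $\tau_j$ lie in $[0,1)$, so the exponents $\gamma_j := 1-\tau_j$ lie in $(0,1]$ and converge to $\gamma_0 := 1-\tau_0 > 0$. Passing to the tail, we may assume $\gamma_j \in [c_1,1]$ for some $c_1 > 0$.

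\textbf{Step 2: uniform convergence of $f_j := \varphi_n^{\gamma_j}$ to $f_0 := \varphi_n^{\gamma_0}$.} Since $\varphi_n$ is non-strict, it is bounded on $\mathbb{I}$ by $\varphi_n(0) < \infty$. Apply Lemma \ref{lem:help_lem} pointwise with $z = \varphi_n(t) \in [0,\varphi_n(0)]$. For $z \le 1$, eq. \eqref{eq:est_0_1} gives the bound $|\gamma_j-\gamma_0|/(c_1 e)$, and $z = 0$ is trivial. For $z \in (1,\varphi_n(0)]$, eq. \eqref{eq:est_kappa_t} gives the bound $|\gamma_j-\gamma_0|\,\log(\varphi_n(0))\,\varphi_n(0)$. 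Thus $\|f_j - f_0\|_\infty \to 0$.

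\textbf{Step 3: pass through the gcm and the normalization.} The greatest convex minorant is $1$-Lipschitz in the sup norm (presumably the content of Lemma \ref{lem:properties_gcm}): if $f \le g + \varepsilon$, then $\mathrm{gcm}(f) \le \mathrm{gcm}(g)+\varepsilon$, and symmetrically. Hence $\mathrm{gcm}(f_j) \to \mathrm{gcm}(f_0)$ uniformly. In particular the denominators satisfy $\mathrm{gcm}(f_j)(\tfrac12) \to \mathrm{gcm}(f_0)(\tfrac12)$, and the limit is strictly positive by eq. \eqref{eq:est_gcm} from the proof of Lemma \ref{lem:gen_is_gen}, so the denominators are bounded away from $0$ for large $j$. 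The numerators are uniformly bounded by $\max\{1,\varphi_n(0)\}$. The quotient estimate
\[
\left|\frac{a_j}{b_j}-\frac{a_0}{b_0}\right| \le \frac{|a_j-a_0|}{b_j} + \frac{a_0\,|b_j-b_0|}{b_j b_0}
\]
then gives $\|\varphi_{\alpha_j,n}^\Xi - \varphi_{\alpha_0,n}^\Xi\|_\infty \to 0$.

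\textbf{Main obstacle.} I expect Step 1 to be the main obstacle: justifying rigorously that $\tau$ of an EVC depends continuously on its Pickands function. The formula eq. \eqref{eq:kendallstau_ex} involves $\mathrm{d}D^+A$, so arguing directly from it is awkward. I would therefore route the argument through uniform convergence of the copulas and eq. \eqref{eq:kendalls_tau_alter}, with a possible alternative via eq. \eqref{eq:kendalls_tau_general}. An alternative route is the integral formula in the proof of Lemma \ref{lem:kendalls_tau_trans}, combined with a.e. convergence of $D^+A_j$ from convexity and bounded convergence, since the integrand is bounded because $A \ge \tfrac12$ and $|D^+A| \le 1$.
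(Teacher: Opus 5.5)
Your proposal is correct and follows essentially the same route as the paper. Both arguments get continuity of $\tau_{A_{\alpha,n}^\Xi}$ from Lemma \ref{lem:cont_A_alpha} and uniform convergence of the associated EVCs, then uniform convergence of $\varphi_n^{1-\tau_{A_{\alpha_j,n}^\Xi}}$ from Lemma \ref{lem:help_lem} using $\varphi_n(0)<\infty$, and finally pass through Lemma \ref{lem:properties_gcm}. You also explicitly handle the normalization by $\mathrm{gcm}(\cdot)(\tfrac{1}{2})$, using the positivity in eq. \eqref{eq:est_gcm}, which fills in a step the paper leaves implicit.
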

\begin{proof}
    According to Lemma \ref{lem:cont_A_alpha} we have $A_{\alpha_j,n}^\Xi \overset{j \rightarrow \infty}{\longrightarrow} A_{\alpha_0,n}^\Xi$ uniformly on $\mathbb{I}$, so (see \cite{bernoulli}) the sequence $(C_{A_{\alpha_j,n}^\Xi})_{j \in \mathbb{N}}$ in $\mathcal{C}_{ev}$ converges uniformly to $C_{A_{\alpha_0,n}^\Xi} \in \mathcal{C}_{ev}$, which in turn yields $\tau_{A_{\alpha_j,n}^\Xi} \overset{j \rightarrow \infty}{\longrightarrow} \tau_{A_{\alpha_0,n}^\Xi}$. Applying Lemma \ref{lem:bounds_tau} yields that $0 \leq \tau_{A_{\alpha_0,n}^\Xi} < 1$, 
    so there exists some constant $c_0 > 0$ with $1- \tau_{A_{\alpha_0,n}^\Xi} = c_0$. Furthermore, there exists some $J_1 \in \mathbb{N}$ such that for every $j \geq J_1$ we obtain that
    $$
    |\tau_{A_{\alpha_j,n}^\Xi} - \tau_{A_{\alpha_0,n}^\Xi}| < \frac{\varepsilon}{\max\{\frac{2}{c_0e},\log(\varphi_n(0))\varphi_n(0)\}}.
    $$
    Moreover, there exists an index $J_2 \geq J_1 $ such that for every $j \geq J_2$ we have
    $$
    \tau_{A_{\alpha_j,n}^\Xi}  < \tau_{A_{\alpha_0,n}^\Xi} + \frac{c_0}{2}.
    $$
    Applying eqs. \eqref{eq:est_kappa_t} and \eqref{eq:est_0_1} in Lemma \ref{lem:help_lem} we therefore obtain that for every $j \geq J_2$ it holds that
    \begin{align*}
    |\varphi_n(t)^{1-\tau_{A_{\alpha_j,n}^\Xi}}-\varphi_n(t)^{1-\tau_{A_{\alpha_0,n}^\Xi}}| 
    &\leq |\tau_{A_{\alpha_j,n}^\Xi} - \tau_{A_{\alpha_0,n}^\Xi}| \, |\log(\varphi_n(t))|\,\varphi_n(t)^r\\&\leq
    |\tau_{A_{\alpha_j,n}^\Xi} - \tau_{A_{\alpha_0,n}^\Xi}|\max\left\{\tfrac{2}{c_0e},|\log(\varphi_n(0))|\varphi_n(0)\right\} < \varepsilon
    \end{align*}
    for every $t \in \mathbb{I}$, whereby $r = \max\{1-\tau_{A_{\alpha_j,n}^\Xi},1-\tau_{A_{\alpha_0,n}^\Xi}\}$ if
    $t \in [0,\frac{1}{2}]$ and
    $r = \min\{1-\tau_{A_{\alpha_j,n}^\Xi},1-\tau_{A_{\alpha_0,n}^\Xi}\}$ if $t \in (\frac{1}{2},1]$. 
    In other words, we have 
    $\varphi_n^{1-\tau_{A_{\alpha_j,n}^\Xi}}
    \overset{j \rightarrow \infty}{\longrightarrow}
    \varphi_n^{1-\tau_{A_{\alpha_0,n}^\Xi}}$
    uniformly on $\mathbb{I}$.
    Applying Lemma \ref{lem:properties_gcm} on properties of the gcm yields that  $\varphi_{\alpha_j,n}^\Xi \overset{j \rightarrow \infty}{\longrightarrow}\varphi_{\alpha_0,n}^\Xi$ uniformly on $\mathbb{I}$.
\end{proof}
\begin{theorem}\label{lem:existence_uniqueness_alpha}
Let $\Xi \in \{\mathbf{P},\mathbf{CFG}\}$ and $D_n^\Xi$ be defined according to 
eq. \eqref{eq:D_n} and $C_{\alpha,n}^\Xi$ according to 
eq. \eqref{eq:C_alpha_n}. Then
\begin{equation}\label{eq:def.alpha.n}
\alpha_n = \min\mathrm{argmin}_{\alpha \in [0,1-\frac{1}{n}]} \Vert C_{\alpha,n}^\Xi - D_{n}^\Xi \Vert_\infty
\end{equation}
exists and is unique.
\end{theorem}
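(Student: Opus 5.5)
The plan is to show that the map $g_n\colon [0,1-\frac{1}{n}] \rightarrow [0,\infty)$, $g_n(\alpha) := \Vert C_{\alpha,n}^\Xi - D_n^\Xi\Vert_\infty$, is (lower semi-)continuous. The set $\mathrm{argmin}$ of a continuous function on the compact interval $[0,1-\frac{1}{n}]$ is then nonempty and closed, hence compact. Consequently its minimum exists, and it is trivially unique as the smallest element of a compact subset of $\mathbb{R}$. The \emph{uniqueness} part is therefore immediate once existence of the argmin set as a nonempty closed set is established. All the work lies in the continuity of $\alpha \mapsto C_{\alpha,n}^\Xi$ with respect to $d_\infty$; first we note that $g_n$ is finite, since both $C_{\alpha,n}^\Xi$ and $D_n^\Xi$ take values in $\mathbb{I}$.

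\textbf{Continuity of $\alpha \mapsto C_{\alpha,n}^\Xi$.} Take $\alpha_j \to \alpha_0$ in $[0,1-\frac{1}{n}]$. By Lemma \ref{lem:cont_A_alpha}, $A_{\alpha_j,n}^\Xi \to A_{\alpha_0,n}^\Xi$ uniformly on $\mathbb{I}$, and all of these are Pickands dependence functions by Lemma \ref{lem:pick_is_pick}. By Lemma \ref{lem:cont_phi_alpha}, $\varphi_{\alpha_j,n}^\Xi \to \varphi_{\alpha_0,n}^\Xi$ uniformly on $\mathbb{I}$. By Lemma \ref{lem:gen_is_gen}, these are pseudo-inverses of normalized non-strict generators $\psi_{\alpha_j,n}^\Xi\in\Psi$. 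Uniform convergence of the pseudo-inverses on $(0,1]$ yields pointwise convergence there, which, by the equivalences in \cite{bernoulli} (Theorem 4.1 there, which is used already in Section \ref{sec:estimator}), gives uniform convergence $\psi_{\alpha_j,n}^\Xi \to \psi_{\alpha_0,n}^\Xi$ on $[0,\infty)$. Now we invoke the remark after the definition of Archimax copulas: if the generators converge uniformly and the Pickands functions converge uniformly, then $C_{\psi_j,A_j} \to C_{\psi,A}$, and pointwise convergence of copulas is uniform. Hence $d_\infty(C_{\alpha_j,n}^\Xi, C_{\alpha_0,n}^\Xi) \to 0$. The reverse triangle inequality then gives $|g_n(\alpha_j) - g_n(\alpha_0)| \le d_\infty(C_{\alpha_j,n}^\Xi,C_{\alpha_0,n}^\Xi) \to 0$. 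Here we only need $D_n^\Xi$ to be a bounded function, since it is fixed and does not depend on $\alpha$.

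\textbf{Conclusion.} By the Weierstrass theorem, $g_n$ attains its infimum on the compact interval, so $\mathrm{argmin}\, g_n = g_n^{-1}(\{\min g_n\})$ is nonempty and, as a preimage of a point under a continuous map, closed. It is therefore compact, its minimum exists, and $\alpha_n$ is uniquely defined.

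The step I expect to be the main obstacle is justifying the generator convergence, and in particular the remark's implication for $C_{\psi,A}$. Near the boundary, where $\varphi$ may be large, one must check that uniform convergence of the non-strict $\varphi_{\alpha_j,n}^\Xi$ really forces uniform convergence of $\psi_{\alpha_j,n}^\Xi$. I would handle this by combining monotonicity of the generators with the continuity of the limit $\psi_{\alpha_0,n}^\Xi$, using the standard Dini/Pólya type argument that pointwise convergence of monotone functions to a continuous limit is locally uniform. The tail can be controlled by the common bound $\varphi_{\alpha_j,n}^\Xi(0)\to\varphi_{\alpha_0,n}^\Xi(0)<\infty$.
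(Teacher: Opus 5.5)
Your proof is correct, and its skeleton is the paper's: continuity of $\alpha\mapsto\Vert C_{\alpha,n}^\Xi-D_n^\Xi\Vert_\infty$ on the compact interval $[0,1-\frac{1}{n}]$, hence a nonempty compact argmin whose minimum is $\alpha_n$. The difference is in how you get $d_\infty(C_{\alpha_j,n}^\Xi,C_{\alpha_0,n}^\Xi)\to 0$.

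After Lemmas~\ref{lem:cont_A_alpha} and~\ref{lem:cont_phi_alpha}, the paper takes three further steps:
it uses $\tau_{A_{\alpha_j,n}^\Xi}\to\tau_{A_{\alpha_0,n}^\Xi}$;
it applies the last assertion of Lemma~\ref{lem:properties_trans_pickands} to transfer convergence to the transformed functions $(A_{\alpha,n}^\Xi)_{1-\tau_{A_{\alpha,n}^\Xi}}$ and $(\varphi_{\alpha,n}^\Xi)_{1-\tau_{A_{\alpha,n}^\Xi}}$;
it then invokes Theorem~\ref{thrm:main_result_convergence}, which needs $A_{\alpha,n}^\Xi\neq A_M$ (supplied by Lemma~\ref{lem:bounds_tau}).

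You instead stay with the untransformed pair $(\psi_{\alpha,n}^\Xi,A_{\alpha,n}^\Xi)$ that literally defines $C_{\alpha,n}^\Xi$ in eq.~\eqref{eq:C_alpha_n}. You upgrade uniform convergence of the normalized pseudo-inverses to uniform convergence of the generators, either via \cite{bernoulli} or via your P\'olya-type argument. The latter works because $\varphi_{\alpha_j,n}^\Xi(0)\to\varphi_{\alpha_0,n}^\Xi(0)<\infty$ gives a common bound beyond which all generators vanish. You then apply the elementary remark in Section~\ref{section:notation} that convergence of generators and Pickands dependence functions implies convergence of the Archimax copulas.

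Only this easy direction is needed, so identifiability and the transformed parametrization play no role, and your final step is shorter and more elementary. The paper's detour fits its transformed-parameter framework but is not required for this result.

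Your route does not avoid the $\tau$-machinery entirely, though. Lemma~\ref{lem:cont_phi_alpha} itself relies on continuity of $\alpha\mapsto\tau_{A_{\alpha,n}^\Xi}$ and on Lemma~\ref{lem:bounds_tau}, so the savings are confined to the last step.
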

\begin{proof}
Fix $\Xi \in \{\mathbf{P},\mathbf{CFG}\}$ and consider an arbitrary sequence $\alpha_0,\alpha_1,\alpha_2,\ldots \in [0,1-\frac{1}{n}]$ with $\alpha_j \overset{j \rightarrow \infty}{\longrightarrow} \alpha_0$. Applying Lemma \ref{lem:cont_A_alpha} 
and Lemma \ref{lem:cont_phi_alpha} we have, firstly, that 
$A_{\alpha_j,n}^\Xi \overset{j \rightarrow \infty}{\longrightarrow}A_{\alpha_0,n}^\Xi$ and $\varphi_{\alpha_j,n}^\Xi \overset{j \rightarrow \infty}{\longrightarrow} \varphi_{\alpha_0,n}^\Xi$ uniformly on $\mathbb{I}$, and, secondly, $\tau_{A_{\alpha_j,n}^\Xi} \overset{j \rightarrow \infty}{\longrightarrow} \tau_{A_{\alpha_0,n}^\Xi}$. 
Having this, using the last assertion in Lemma \ref{lem:properties_trans_pickands} and applying similar arguments as in the proofs of Lemma \ref{lem:cont_A_alpha} and Lemma \ref{lem:cont_phi_alpha} we obtain that
$$
(A_{\alpha_j,n}^\Xi)_{1-\tau_{A_{\alpha_j,n}^\Xi}} \overset{j \rightarrow \infty}{\longrightarrow} (A_{\alpha_0,n}^\Xi)_{1-\tau_{A_{\alpha_0,n}^\Xi}},\quad 
(\varphi_{\alpha_j,n}^\Xi)_{1-\tau_{A_{\alpha_j,n}^\Xi}} \overset{j \rightarrow \infty}{\longrightarrow} (\varphi_{\alpha_0,n}^\Xi)_{1-\tau_{A_{\alpha_0,n}^\Xi}}
$$
uniformly on $\mathbb{I}$. Using Theorem \ref{thrm:main_result_convergence} it then immediately follows that $C_{\alpha_j,n}^\Xi \overset{j\rightarrow\infty}{\longrightarrow}C_{\alpha_0,n}^\Xi$ uniformly on $\mathbb{I}^2$.
This shows that the mapping 
$\alpha \mapsto \Vert C_{\alpha,n}^\Xi - D_{n}^\Xi \Vert_\infty$ is continuous on the compact interval $[0,1-\frac{1}{n}]$, implying that the set 
$$
\mathrm{argmin}_{\alpha \in [0,1-\frac{1}{n}]} \Vert C_{\alpha,n}^\Xi - D_{n}^\Xi \Vert_\infty
$$
is non-empty and compact. As a direct consequence $\alpha_n$, defined according 
to eq. \eqref{eq:def.alpha.n} exists and is unique.
\end{proof}
\noindent As next step we show that the estimators proposed in eq. \eqref{eq:D_n} converge uniformly on $\mathbb{I}^2$ to the copula $C_{\psi,A}$.
\begin{lemma}\label{lem:copula_est_part1}
Consider $\psi \in \Psi$, $A \in \mathcal{A}\setminus\{A_M\}$, $\Xi \in \{\mathbf{P},\mathbf{CFG}\}$ and let $D_n^\Xi$ be defined 
according to eq. \eqref{eq:D_n}. Then with probability $1$ we have
    $$
    \Vert D_n^\Xi - C_{\psi,A}\Vert_\infty \overset{n \rightarrow \infty}{\longrightarrow} 0.
    $$
\end{lemma}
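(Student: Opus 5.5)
Note first that $D_n^\Xi$ is nothing but the formula \eqref{eq:def_archimax} evaluated with the plug-ins $\psi_n,\varphi_n,B_{n,c}^\Xi$. By Theorem \ref{thm:identifiability} and the subsequent remark, the target can be written as
$$C_{\psi,A}(x,y)=(\psi)_{1-\tau_A}\Big(\big((\varphi)_{1-\tau_A}(x)+(\varphi)_{1-\tau_A}(y)\big)(A)_{1-\tau_A}\Big(\tfrac{(\varphi)_{1-\tau_A}(x)}{(\varphi)_{1-\tau_A}(x)+(\varphi)_{1-\tau_A}(y)}\Big)\Big).$$
This is a direct computation: $(\varphi)_{1-\tau_A}=\varphi^{1/(1-\tau_A)}$, and the transformed $A$ undoes the power via \eqref{eq:trans_pickands}. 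So the plan is a continuity argument for the map $(\psi,\varphi,B)\mapsto\psi((\varphi(x)+\varphi(y))B(\cdot))$, fed with the almost sure limits
\begin{itemize}
\item $\psi_n\to(\psi)_{1-\tau_A}$ uniformly, by Theorem \ref{thm:consistency_psi_n};
\item $\varphi_n\to(\varphi)_{1-\tau_A}$ pointwise on $(0,1]$, hence uniformly on compacts $[a,1]$ by convexity and monotonicity;
\item $B_{n,c}^\Xi\to(A)_{1-\tau_A}$ uniformly, by Theorem \ref{thm:pick_est_empirical}, using $\varepsilon_n\to0$.
\end{itemize}

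Write $s_n(x,y)=\varphi_n(x)+\varphi_n(y)$ and $w_n=\varphi_n(x)/s_n$, with $s,w$ the population analogues. I would split
$$|D_n^\Xi-C_{\psi,A}|\le|\psi_n(s_nB_{n,c}^\Xi(w_n))-(\psi)_{1-\tau_A}(s_nB_{n,c}^\Xi(w_n))|+|(\psi)_{1-\tau_A}(s_nB_{n,c}^\Xi(w_n))-(\psi)_{1-\tau_A}(s\,(A)_{1-\tau_A}(w))|.$$
The first term is bounded by $\|\psi_n-(\psi)_{1-\tau_A}\|_\infty\to0$ uniformly in $(x,y)$. For the second term I work on $[a,1]^2$ with $a>0$ small. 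There, uniform convergence of $\varphi_n$ gives $s_n\to s$ uniformly. Where $s$ is bounded away from $0$, the ratio $w_n\to w$ uniformly. Combined with uniform convergence of $B_{n,c}^\Xi$ and uniform continuity of $(A)_{1-\tau_A}$, this gives $s_nB_{n,c}^\Xi(w_n)\to s\,(A)_{1-\tau_A}(w)$ uniformly. Uniform continuity of $(\psi)_{1-\tau_A}$ on $[0,\infty)$ then finishes this region.

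Near $(1,1)$, where $s$ is small, I avoid the ratio altogether. Since $A_M\le B_{n,c}^\Xi\le K'$ for large $n$ (uniform convergence to the bounded function $(A)_{1-\tau_A}$, using Lemma \ref{lem:estimate_B_n_c}), both arguments are bounded above by $K's_n$ and $K's$ respectively. Both therefore tend to $0$ uniformly, and both copula values lie within a small distance of $(\psi)_{1-\tau_A}(0)=1$.

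The hardest part is the boundary strip where $x<a$ or $y<a$. Here $\varphi_n$ need not converge uniformly and $\varphi(0)$ may be infinite. I would control it with bounds that hold for all $n$ and are small in $a$, using $B_{n,c}^\Xi\ge A_M$, so that the argument of $\psi_n$ is at least $\max\{\varphi_n(x),\varphi_n(y)\}$, together with monotonicity of $\psi_n$:
$$0\le D_n^\Xi(x,y)\le\psi_n(\varphi_n(\min\{x,y\}))=\min\{x,y\}\le a,$$
and likewise $0\le C_{\psi,A}\le\min\{x,y\}\le a$. Their difference is therefore at most $a$. Letting $n\to\infty$ first and then $a\downarrow0$ gives the claim almost surely, since all the used limits hold on a common probability-one event.

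The step I expect to need the most care is checking that $\psi_n(\varphi_n(u))=u$ for the non-strict piecewise linear generators, including at the value $\varphi_n(0)$. The fallback bound $D_n^\Xi\le\psi_n(\varphi_n(\min\{x,y\}))$ makes the estimate robust even if this identity is delicate at the endpoint.
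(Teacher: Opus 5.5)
Your proposal is correct and follows essentially the same route as the paper. Both obtain uniform convergence on $[a,1]^2$ from the almost sure limits of $\psi_n$, $\varphi_n$ and $B_{n,c}^\Xi$, and both control the strips $x<a$ or $y<a$ through $D_n^\Xi\le M$ (from $B_{n,c}^\Xi\ge A_M$) and $C_{\psi,A}\le M$. You also supply details the paper only asserts, namely the split into $\psi_n$ versus $(\psi)_{1-\tau_A}$ and the separate treatment near $(1,1)$, where the ratio $w_n$ is ill-conditioned.
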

\begin{proof}
Let $\varepsilon > 0$ be arbitrary but fixed and consider some 
$\delta \in (0,\frac{\varepsilon}{5})$. We already know that the sequence 
$(\varphi_n)_{n\in\mathbb{N}}$ converges uniformly 
to $(\varphi)_{1-\tau_A}$ on every compact subinterval $[a,b]$ of $(0,1]$
and that (according to Theorems \ref{thm:consistency_psi_n} and  \ref{thm:pick_est_empirical}) $(B_{n,c}^\Xi)_{n \in \mathbb{N}}$ 
converges uniformly on $\mathbb{I}$ to $(A)_{1-\tau_A}$.
We therefore can find some $n_0 \in \mathbb{N}$ such that for every 
$n \geq n_0$
$$
\sup_{(x,y)\in[\delta,1]^2}|D_n^\Xi(x,y) - C_{\psi,A}(x,y)| < \frac{\varepsilon}{5}
$$
holds. According to Lemma \ref{lem:estimate_B_n_c} we have that $B_{n,c}^\Xi(t) \geq A_M(t)$ for every $t \in \mathbb{I}$, hence 
\begin{align*}
D_n^\Xi(x,y) \leq \psi_n\left((\varphi_n(x) + \varphi_n(y))\max\left\{\frac{\varphi_n(x)}{\varphi_n(x) + \varphi_n(y)},\frac{\varphi_n(y)}{\varphi_n(x) + \varphi_n(y)}\right\}\right) = M(x,y)
\end{align*}
for all $x,y \in \mathbb{I}$. 
For $n \geq n_0 $ this altogether yields
\begin{align*}
\Vert D_n^\Xi - C_{\psi,A}\Vert_\infty &\leq \sup_{(x,y)\in[\delta,1]^2}|D_n^\Xi(x,y) - C_{\psi,A}(x,y)| + \sup_{(x,y)\in[0,\delta] \times \mathbb{I}}|D_n^\Xi(x,y) - C_{\psi,A}(x,y)| + \sup_{(x,y)\in[\delta,1] \times [0,\delta]}|D_n^\Xi(x,y) - C_{\psi,A}(x,y)| \\&\leq
\frac{\varepsilon}{5} + \sup_{(x,y)\in[0,\delta] \times \mathbb{I}}D_n^\Xi(x,y) +C_{\psi,A}(\delta,1) +  \sup_{(x,y)\in[\delta,1] \times [0,\delta]}D_n^\Xi(x,y) + C_{\psi,A}(1,\delta) \\&\leq
\frac{\varepsilon}{5} + 4\delta < \varepsilon.
\end{align*}
This completes the proof since $\varepsilon>0$ was arbitrary.
\end{proof}
Before finally proving Theorem \ref{thrm:strong_consistency_copula}
we need the following last lemma:
\begin{lemma}\label{lem:copula_est_part2}
  Consider $\psi \in \Psi$, $A \in \mathcal{A}\setminus\{A_M\}$, $\Xi \in \{\mathbf{P},\mathbf{CFG}\}$, and let 
  $A_{\tau_A,n}^\Xi$ be defined according to eq. \eqref{eq:A_alpha_n} and $\psi_{\tau_A,n}^\Xi$ be the pseudo-inverse of $\varphi_{\tau_A,n}^\Xi$ 
  according to eq. \eqref{eq:est_phi_alpha}. Then we have that
    $$
    \Vert C_{\psi_{\tau_A,n}^\Xi,A_{\tau_A,n}^\Xi}-C_{\psi,A}\Vert_\infty \overset{n \rightarrow \infty}{\longrightarrow}0
    $$
with probability $1$.
\end{lemma}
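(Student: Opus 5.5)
The idea is to take $\alpha=\tau_A$ and check that the regularization steps become asymptotically harmless there. By Theorem \ref{thrm:main_result_convergence} (implication (ii)$\Rightarrow$(iii)), it suffices to prove two things with probability $1$. First, $(A_{\tau_A,n}^\Xi)_{1-\tau_{A_{\tau_A,n}^\Xi}}\to(A)_{1-\tau_A}$ uniformly on $\mathbb{I}$. Second, $(\varphi_{\tau_A,n}^\Xi)_{1-\tau_{A_{\tau_A,n}^\Xi}}\to(\varphi)_{1-\tau_A}$ pointwise on $(0,1]$. (For large $n$ we have $\tau_A\in[0,1-\frac1n]$, so $\alpha=\tau_A$ is admissible.)

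\textbf{Step 1: the Pickands part.} By Theorem \ref{thm:pick_est_empirical}, $B_{n,c}^\Xi\to(A)_{1-\tau_A}$ uniformly. The exponent $\frac{1}{1-\tau_A}$ is fixed, so the last assertion of Lemma \ref{lem:properties_trans_pickands} (with $\alpha_n\equiv\frac1{1-\tau_A}$, which is legitimate for positive continuous functions bounded below by $\tfrac12$) together with the group property (i) gives
\[
T_{\tau_A,n}^\Xi=(B_{n,c}^\Xi)_{\frac{1}{1-\tau_A}}\to((A)_{1-\tau_A})_{\frac{1}{1-\tau_A}}=A
\]
uniformly. Since $A\le 1$ and $A$ is convex, the maps $\min\{\cdot,1\}$ and $\mathrm{gcm}$ are $1$-Lipschitz in the sup-norm (Lemma \ref{lem:properties_gcm}) and leave $A$ fixed. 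Hence $A_{\tau_A,n}^\Xi\to A$ uniformly. Uniform convergence of Pickands functions gives uniform convergence of the EVCs, and therefore $\tau_{A_{\tau_A,n}^\Xi}\to\tau_A$ through the continuity of Kendall's $\tau$ under $d_\infty$. A second application of Lemma \ref{lem:properties_trans_pickands}(ix), now with exponents $1-\tau_{A_{\tau_A,n}^\Xi}\to1-\tau_A$, yields $(A_{\tau_A,n}^\Xi)_{1-\tau_{A_{\tau_A,n}^\Xi}}\to(A)_{1-\tau_A}$ uniformly.

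\textbf{Step 2: the generator part.} By Theorem \ref{thm:consistency_psi_n}, $\varphi_n\to(\varphi)_{1-\tau_A}=\varphi^{1/(1-\tau_A)}$ pointwise, and uniformly on compact subsets of $(0,1]$. Because $\tau_{A_{\tau_A,n}^\Xi}\to\tau_A$, Lemma \ref{lem:help_lem} gives locally uniformly on $(0,1]$
\[
f_n:=\varphi_n^{1-\tau_{A_{\tau_A,n}^\Xi}}\to\varphi,
\]
and the limit $\varphi$ is convex. The main obstacle is to show that $\mathrm{gcm}(f_n)\to\varphi$ pointwise on $(0,1]$. Near $0$ the functions $f_n$ may be unbounded in the limit, so we only have local convergence, and the gcm is a global operation.

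For this step I would first use $\mathrm{gcm}(f_n)\le f_n$, which gives $\limsup_n \mathrm{gcm}(f_n)(t)\le\varphi(t)$. For the lower bound, fix $t\in(0,1]$ and $\varepsilon>0$, and take a supporting line $\ell$ of the convex function $\varphi$ at $t$. On a compact interval $[\delta,1]$ with $\delta<t$ we have $f_n\ge\ell-\varepsilon$ for large $n$. On $[0,\delta]$ we need $f_n\ge\ell-\varepsilon$ as well. Since $f_n$ is nonincreasing, $f_n(s)\ge f_n(\delta)\to\varphi(\delta)$, so it suffices to choose $\delta$ such that $\varphi(\delta)>\sup_{[0,\delta]}\ell$. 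This is possible when $\ell$ has negative slope, because $\varphi(\delta)\ge\ell(\delta)\ge\ell(s)$ fails only by the slope difference. To get a strict margin, I would shift the supporting line by $\varepsilon$ and use monotonicity. This makes $\ell-\varepsilon$ an affine minorant of $f_n$, so $\mathrm{gcm}(f_n)(t)\ge\varphi(t)-\varepsilon$.

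Finally, $\mathrm{gcm}(f_n)(\tfrac12)\to\varphi(\tfrac12)=1$, so the normalization also converges, and $\varphi_{\tau_A,n}^\Xi\to\varphi$ pointwise. Raising to the power $1/(1-\tau_{A_{\tau_A,n}^\Xi})$ then gives condition (ii) of Theorem \ref{thrm:main_result_convergence}. With Step 1 this yields uniform convergence of $C_{\psi_{\tau_A,n}^\Xi,A_{\tau_A,n}^\Xi}$ to $C_{\psi,A}$.
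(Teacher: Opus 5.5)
Your proof is correct, and it departs from the paper's in two places.

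\textbf{Step 1.} This matches the paper: $B_{n,c}^\Xi\to(A)_{1-\tau_A}$, then $T_{\tau_A,n}^\Xi\to A$, then $A_{\tau_A,n}^\Xi\to A$ and $\tau_{A_{\tau_A,n}^\Xi}\to\tau_A$.

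\textbf{Step 2.} Here your argument is more careful than the paper's. The paper only shows $\varphi_n^{1-\tau_{A_{\tau_A,n}^\Xi}}\to\varphi$ uniformly on each $[\delta,1]$ and then cites Lemma \ref{lem:properties_gcm}. But assertion (v) of that lemma needs uniform convergence on all of $\mathbb{I}$. That fails near $0$ when $\psi$ is strict, and the gcm is a global operation. Your upper bound via $\mathrm{gcm}(f_n)\le f_n$ and your lower bound via a shifted supporting line supply exactly the missing justification.

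One sentence should be tightened. The requirement $\varphi(\delta)>\sup_{[0,\delta]}\ell$ need not be achievable, for instance if $\varphi$ is affine on $[0,t]$. What you actually use is the $\varepsilon$-shifted version. If $m$ is the slope of $\ell$, choose $\delta<t$ with $|m|\delta\le\varepsilon/2$. Then for large $n$ and every $s\in[0,\delta]$,
$$f_n(s)\ge f_n(\delta)\ge\varphi(\delta)-\tfrac{\varepsilon}{2}\ge\ell(\delta)-\tfrac{\varepsilon}{2}\ge\ell(s)-\varepsilon.$$

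\textbf{Conclusion.} You pass to the transformed functions and invoke Theorem \ref{thrm:main_result_convergence}, (ii)$\Rightarrow$(iii). That is legitimate: in the paper this implication runs through (i) and weak conditional convergence. The paper instead stays with the untransformed pair. Pointwise convergence $\varphi_{\tau_A,n}^\Xi\to\varphi$ gives $\psi_{\tau_A,n}^\Xi\to\psi$ uniformly. Together with $A_{\tau_A,n}^\Xi\to A$, the elementary continuity of $(\psi,A)\mapsto C_{\psi,A}$ from Section 2 and the Lipschitz property of copulas finish the proof. Since Steps 1 and 2 already give you $A_{\tau_A,n}^\Xi\to A$ and $\varphi_{\tau_A,n}^\Xi\to\varphi$, the paper's ending is shorter and avoids the heavier theorem. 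Your detour is correct but unnecessary.
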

\begin{proof}
    According to Theorem \ref{thm:pick_est_empirical} we have that 
    $B_{n,c}^\Xi \overset{n \rightarrow \infty} {\longrightarrow} (A)_{1-\tau_A}$
    uniformly on $\mathbb{I}$ with probability $1$. Using Lemma \ref{lem:properties_trans_pickands} therefore yields that 
    $$
    T_{\tau_A,n}^\Xi =(B_{n,c}^\Xi)_{\frac{1}{1-\tau_A}} 
    \overset{n \rightarrow \infty}{\longrightarrow} 
    ((A)_{1-\tau_A})_{\frac{1}{1-\tau_A}} = A
    $$
    uniformly on $\mathbb{I}$, so, applying Lemma \ref{lem:properties_gcm} we obtain that $A_{\tau_A,n}^\Xi \overset{n\rightarrow \infty}{\longrightarrow}A$ uniformly on $\mathbb{I}$ and therefore $\tau_{A_{\tau_A,n}^\Xi} \overset{n \rightarrow \infty}{\longrightarrow} \tau_A \in [0,1)$. Using Theorem \ref{thm:consistency_psi_n}, the fact that $\varphi, \varphi_1, \varphi_2, \ldots$ are convex functions, and applying arguments analogous to the proof of Lemma \ref{lem:cont_phi_alpha} we have that for every $\delta > 0$
    $$
    \sup_{t \in [\delta,1]}|\varphi_n^{1-\tau_{A_{\tau_A,n}^\Xi}}(t) - \varphi(t)| \overset{n\rightarrow \infty}{\longrightarrow} 0
    $$
    holds almost surely. 
    Again applying Lemma \ref{lem:properties_gcm} therefore yields that  $\varphi_{\tau_A,n}^\Xi \overset{n \rightarrow \infty}{\longrightarrow} \varphi$ uniformly on $[\delta,1]$ for every $\delta > 0$, implying that $\varphi_{\tau_A,n}^\Xi \overset{n \rightarrow \infty}{\longrightarrow} \varphi$ pointwise on $(0,1]$. Using Theorem \ref{thm:consistency_psi_n}, the fact that conjunctions of uniformly and pointwise convergent sequences of functions converge pointwise and Lipschitz continuity of copulas 
    finally yields $\Vert C_{\psi_{\tau_A,n}^\Xi,A_{\tau_A,n}^\Xi}-C_{\psi,A}\Vert_\infty \overset{n \rightarrow \infty}{\longrightarrow}0$.
\end{proof}
\noindent After all these preparations we can now 
prove strong consistency of the copula-estimator proposed in eq. \eqref{eq:copula_estimator}.
\begin{proof}[Proof of Theorem \ref{thrm:strong_consistency_copula}]\label{proof_copula_estimator}
    Choose $n \in \mathbb{N}$ sufficiently large so that $0 \leq \tau_A < 1-\frac{1}{n}$
    holds. Using the definition of $\alpha_n$ as given in eq. \eqref{eq:alphas} and applying Lemmas \ref{lem:copula_est_part1} and \ref{lem:copula_est_part2}, we obtain that
    \begin{align*}
    \Vert C_{\psi_{\alpha_n,n}^\Xi, A_{\alpha_n,n}^\Xi} - C_{\psi,A}\Vert_\infty \leq
    \Vert C_{\psi_{\alpha_n,n}^\Xi, A_{\alpha_n,n}^\Xi} - D_n^\Xi\Vert_\infty + \Vert D_n^\Xi - C_{\psi,A}\Vert_\infty \leq \Vert C_{\psi_{\tau_A,n}^\Xi, A_{\tau_A,n}^\Xi} - D_n^\Xi\Vert_\infty + \Vert D_n^\Xi - C_{\psi,A}\Vert_\infty \overset{n\rightarrow\infty}{\longrightarrow}0
    \end{align*}
    with probability $1$.
\end{proof}
\subsection{Auxiliary lemma}
Let $f \colon \mathbb{I} \rightarrow \mathbb{R}$ be a function, that is bounded from below. Then the greatest convex minorant $\mathrm{gcm}(f): \mathbb{I} \rightarrow \mathbb{R}$ 
of $f$ is the function defined as
$$
\mathrm{gcm}(f)(t) := \sup\{h(t)\mid h \colon \mathbb{I} \rightarrow \mathbb{R},\,\, h \text{ convex and } h(s)\leq f(s) \text{ for all } s \in \mathbb{I}\}.
$$
For the sake of completeness, we prove the following properties for the greatest convex minorant (we write $f_1 \leq f_2$ if $f_1(s) \leq f_2(s)$ for all $s \in \mathbb{I}$):
\begin{lemma}[Properties of the gcm]\label{lem:properties_gcm}
Let $f,f_1,f_2,\ldots \colon \mathbb{I} \rightarrow \mathbb{R}$ be functions, that  are bounded from below. Then the following properties hold:
\begin{enumerate}[label=(\roman*)]
    \item $\mathrm{gcm}(f)$ exists, is convex and unique.
    \item If $f$ is convex, then $\mathrm{gcm}(f)=f$.
    \item If $f_1 \leq f_2$, then $\mathrm{gcm}(f_1) \leq \mathrm{gcm}(f_2)$.
    \item $\mathrm{gcm}(f+c) = \mathrm{gcm}(f) + c$ for every constant $c \in \mathbb{R}$.
    \item If $f_n \overset{n\rightarrow\infty}{\longrightarrow} f$ uniformly on $\mathbb{I}$, then $\mathrm{gcm}(f_n) \overset{n\rightarrow\infty}{\longrightarrow}\mathrm{gcm}(f)$ uniformly on $\mathbb{I}$.
\end{enumerate}
\end{lemma}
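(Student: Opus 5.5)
We work throughout with the family $\mathcal{H}_f$ of all convex $h\colon\mathbb{I}\to\mathbb{R}$ with $h\le f$; everything will come from elementary properties of this family.

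\textbf{Assertion (i).} Since $f$ is bounded from below, say $f\ge m$, the constant function $m$ lies in $\mathcal{H}_f$. Hence $\mathcal{H}_f\neq\emptyset$, and the supremum defining $\mathrm{gcm}(f)$ is a real number with $m\le\mathrm{gcm}(f)\le f$. Convexity follows because a pointwise supremum of convex functions is convex: for $s,t\in\mathbb{I}$, $\lambda\in\mathbb{I}$ and $h\in\mathcal{H}_f$,
$$h(\lambda s+(1-\lambda)t)\le\lambda h(s)+(1-\lambda)h(t)\le\lambda\,\mathrm{gcm}(f)(s)+(1-\lambda)\,\mathrm{gcm}(f)(t),$$
and we then take the supremum over $h$. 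Uniqueness is automatic, since the function is defined explicitly. Together with $\mathrm{gcm}(f)\le f$, this also shows that $\mathrm{gcm}(f)$ is itself the largest element of $\mathcal{H}_f$.

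\textbf{Assertions (ii)–(iv).} These are immediate from that maximality.
\begin{itemize}
\item[(ii)] If $f$ is convex, then $f\in\mathcal{H}_f$, so $f\le\mathrm{gcm}(f)\le f$.
\item[(iii)] If $f_1\le f_2$, then $\mathcal{H}_{f_1}\subseteq\mathcal{H}_{f_2}$, and taking suprema gives $\mathrm{gcm}(f_1)\le\mathrm{gcm}(f_2)$.
\item[(iv)] The map $h\mapsto h+c$ is a bijection from $\mathcal{H}_f$ onto $\mathcal{H}_{f+c}$, because adding a constant preserves convexity. Hence the suprema differ exactly by $c$.
\end{itemize}

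\textbf{Assertion (v).} Here (iii) and (iv) are combined into a sup-norm Lipschitz bound. Set $\varepsilon_n:=\Vert f_n-f\Vert_\infty$, which is finite for large $n$ and tends to $0$. Then
$$f-\varepsilon_n\le f_n\le f+\varepsilon_n.$$
By (iii) and (iv),
$$\mathrm{gcm}(f)-\varepsilon_n\le\mathrm{gcm}(f_n)\le\mathrm{gcm}(f)+\varepsilon_n.$$
Therefore $\Vert\mathrm{gcm}(f_n)-\mathrm{gcm}(f)\Vert_\infty\le\Vert f_n-f\Vert_\infty\to0$. This is in fact the 1-Lipschitz property of $\mathrm{gcm}$ with respect to the uniform norm.

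None of the steps poses a real obstacle. The only point that needs care is the well-definedness in (i): the lower bound on $f$ guarantees that $\mathcal{H}_f$ is nonempty and that the supremum is finite. Note that continuity at the endpoints is not required.
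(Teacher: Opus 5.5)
Your proof is correct and follows essentially the same route as the paper: convexity of $\mathrm{gcm}(f)$ as a supremum of convex minorants, (ii)--(iv) from maximality and monotonicity, and (v) via the sandwich $\mathrm{gcm}(f)-\varepsilon\le\mathrm{gcm}(f_n)\le\mathrm{gcm}(f)+\varepsilon$ obtained from (iii) and (iv). You also fill in two details the paper leaves implicit: the constant lower bound makes the family of convex minorants nonempty, and assertion (ii) is argued explicitly.
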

\begin{proof}
As we always have $\mathrm{gcm}(f) \leq f$ and the supremum of families of 
convex functions is a convex function, $\mathrm{gcm}(f)$ is itself convex. 
Uniqueness is an immediate consequence of the maximality of $\mathrm{gcm}(f)$.\\
The third assertion is an immediate consequence of 
the fact that for $f_1 \leq f_2$ obviously $\mathrm{gcm}(f_1)$ is a convex minorant
of $f_2$. Since assertion number four is straightforward to verify, 
we focus on the last assertion: 
Using uniform convergence, for every $\varepsilon > 0$ there exists some 
$N_0 \in \mathbb{N}$ such that for every $n \geq N_0$ and every $t \in \mathbb{I}$ we obtain that
   $$
   f(t) - \varepsilon < f_n(t) < f(t) + \varepsilon.
   $$
Using the first four properties, we obtain that for every $n \geq N_0$ and 
every $t \in \mathbb{I}$
   $$
   \mathrm{gcm}(f)(t) - \varepsilon = \mathrm{gcm}(f-\varepsilon)(t) \leq \mathrm{gcm}(f_n)(t) \leq \mathrm{gcm}(f+ \varepsilon)(t) = \mathrm{gcm}(f)(t) + \varepsilon
   $$
   holds, which completes the proof.
\end{proof}
\comment{
\textcolor{red}{We establish bounds for the pseudo sample $r_1, \ldots, r_m$... dont need that anymore...
\begin{lemma}
    Let $(X_1,Y_1), (X_2,Y_2), \ldots, (X_n,Y_n)$ be a sample of a continuous distribution function $H$ whose corresponding copula is an Archimax copula $C_{\psi,A}$ with generator $\psi \in \Psi$ and Pickands dependence function $A \in \mathcal{A}$. Let $W_j$ be according to eq. \eqref{eq:pseudo_sample}, $w_k$ be the unique values of $W_j$ in increasing order and $p_j$, $s_j$ and $r_j$ as in algorithm... Then we obtain that
    $$
    \frac{1}{(n+1)^{m-j}}\leq r_{j} \leq 1
    $$  
for every $j \in \{1, \ldots,m\}$, where $m$ is the number of $w_k$.
\end{lemma}
\begin{proof}
    As $\frac{r_i}{r_{i+1}} = s_i$ for every $i \in \{2,\ldots,m\}$, we first establish an estimate for $s_i$. We obviously have that
    $$
    K_n(w_i-) = \sum_{k = m-i+2}^{m}p_k > w_i
    $$
    and furthermore we have that
    $$
    s_i = \frac{\sum_{j=i+1}^mp_j - w_{m-i+1}}{p_{i+1} + \sum_{k = 1}^{m-i-1}\prod_{j = i+1}^{m-k}s_jp_{m-k+1}}
    $$
    for every $i \in \{1,\ldots,m-1\}$, where the empty sum is interpreted as $0$. As, according to \textcolor{red}{[Neslehova]}, $s_j \in (0,1)$ we have that
    $$
    s_i \geq 1- \frac{w_{m-i+1}}{\sum_{j=i+1}^mp_j}.
    $$
    We first consider the case $i = m-1$ and obtain that
    $$
    s_{m-1} = 1 - \frac{w_2}{p_m}.
    $$
    There exists $k^* \in \{1,\ldots,n-1\}$ such that $w_2 = \frac{k^+}{n+1}$ and we set $N_0 := \{j \in \{1, \ldots, n\} \colon W_j = 0\}$ and obviously obtain that $p_m = \frac{N_0}{n}$. We prove that $k^* \leq N_0$. We say that $W_{i_1}, W_{i_2}, \ldots W_{i_{N_0}}$ are the pseudo-random variables fulfilling $W_{i_j} = 0$ for every $j \in \{1,\ldots,N_0\}$. Then $w_2$ is associated with at least one $W_\ell = \frac{k^*}{n+1}$ with $\ell \not \in \{i_1,\ldots,i_{N_0}\}$. Suppose that $k^* > N_0$. Then the sample point $(X_\ell,Y_\ell)$ which corresponds to $W_\ell$ dominates at least one $(X_{j_0}, X_{j_0})$ for $j_0 \not \in \{i_1,\ldots,i_{N_0}\}$. Since $W_{j_0} > 0$, $(X_{j_0}, X_{j_0})$ must itself dominate a point in the sample. This is a contradiction to the fact that $w_2$ is the smallest element in $w_2,\ldots,w_m$ and thus $k^* \leq N_0$ holds. This yields
    $$
    s_{m-1} = 1- \frac{\frac{k^*}{n+1}}{\frac{N_0}{n}} = 1- \frac{nk^*}{N_0(n+1)}\geq \frac{1}{n+1}.
    $$
    We follow the same idea for arbitrary $i \in \{1, \ldots, m-2\}$ and fix $k_{m-i+1} \in \{1,\ldots,n-1\}$ such that $w_{m-i+1} = \frac{k_{m-i+1}}{n+1}$ and $N_{-}:= \{j \in \{1, \ldots, n\} \colon W_j < w_{m-i+1}\}$. Then with the same idea as before we obtain that $k_{m-i+1} \leq N_{-}$ and therefore it follows that
    $$
    s_i \geq  1- \frac{nk_{m-i+1}}{(n+1)N_{-}} \geq \frac{1}{n+1}.
    $$
    Using the recursive formula in eq. \eqref{abc} we obtain the bound
    $$
    r_j \geq \frac{1}{(n+1)^{m-j}}
    $$
    for every $j \in \{1, \ldots, m\}$.
\end{proof}
}
}
\subsection{Regularity assumptions for the Pickands and CFG type estimator}\label{subsec:assumptions_pickands_cfg}
In this subsection we discuss the regularity assumptions stated in  Theorems~\ref{thrm:strong_consistency_copula} and~\ref{thrm:strong_consistency_fcts}. Both theorems require $\mathbb{E}[Z] < \infty$ for the Pickands type estimator
and $\mathbb{E}[|\log Z|] < \infty$ for the CFG type estimator. 
Motivated by the simulation study in Section~\ref{sec:sim_study}, we check these conditions for each of the generators $\psi$ listed in Table~\ref{tab:archimedean_generators_normalized}.
\begin{lemma}
    Let $\psi_\theta \in \Psi$ be one of the generators listed 
    in Table \ref{tab:archimedean_generators_normalized} with admissible parameter $\theta$  and $A \in \mathcal{A} \setminus\{A_M\}$ with corresponding $\tau_A \in [0,1)$. Furthermore let $Z_\theta$ be the random variable with survival function $(\psi)_{1-\tau_A}$. If $\psi_\theta$ is the generator of a Frank, Gumbel or Joe copula, then
    \begin{equation} \label{eq:param.ev.pick}
    \mathbb{E}[Z_\theta] =  \int_\mathbb{I} (\varphi_\theta)_{1-\tau_A}(t) \mathrm{d}\lambda(t) = \int_\mathbb{I} \varphi_\theta^\frac{1}{1-\tau_A}(t) \mathrm{d}\lambda(t) < \infty.
    \end{equation}
    In the case that $\psi_\theta$ is the generator of a Clayton copula, eq. \eqref{eq:param.ev.pick} holds if and only if $\theta \in [0,1-\tau_A)$.\\
    Furthermore, in each of the four cases it holds that
     $$
     \mathbb{E}[|\log(Z_\theta)|] = \int_\mathbb{I} \left|\log\left(\varphi_\theta^\frac{1}{1-\tau_A}(t)\right)\right| \mathrm{d}\lambda(t) = \frac{1}{1-\tau_A}\int_\mathbb{I} |\log(\varphi_\theta(t))|\mathrm{d}\lambda(t) < \infty.
    $$ 
\end{lemma}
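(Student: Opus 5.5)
The plan is to reduce both claims to elementary integrability statements about the explicit pseudo-inverses $\varphi_\theta$ in Table~\ref{tab:archimedean_generators_normalized}. The expectation identities are already available. Applying eq.~\eqref{eq:int.psi.phi.gleich} and eq.~\eqref{eq:int.log.psi.log.phi.gleich} to the generator $(\psi_\theta)_{1-\tau_A}$, whose pseudo-inverse is $\varphi_\theta^{1/(1-\tau_A)}$, gives $\mathbb{E}[Z_\theta]=\int_\mathbb{I}\varphi_\theta^{1/(1-\tau_A)}\,\mathrm{d}\lambda$. The same argument, with absolute values inside, gives $\mathbb{E}[|\log Z_\theta|]=\int_\mathbb{I}|\log \varphi_\theta^{1/(1-\tau_A)}|\,\mathrm{d}\lambda=\frac{1}{1-\tau_A}\int_\mathbb{I}|\log\varphi_\theta|\,\mathrm{d}\lambda$. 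So only the finiteness of these integrals remains. Since $\varphi_\theta$ is continuous and strictly positive on $(0,1)$, each integrand is bounded on every compact $[\delta,1-\delta]$. Hence it suffices to examine the two endpoints, comparing the integrand with a reference function there and using the limit comparison test.

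For $\mathbb{E}[Z_\theta]$ only the endpoint $t\to0$ matters, because $\varphi_\theta(t)\to0$ as $t\to1$. I would compute the asymptotics of each family at $0$.
\begin{itemize}
\item Gumbel: $\varphi_\theta(t)=(-\log t/\log 2)^\theta$ exactly.
\item Frank: $(e^{-\theta t}-1)/(e^{-\theta}-1)\sim \theta t/(1-e^{-\theta})$, so $\varphi_\theta(t)\sim c_\theta|\log t|$.
\item Joe: $1-(1-t)^\theta\sim\theta t$, so again $\varphi_\theta(t)\sim c_\theta|\log t|$.
\end{itemize}
In all three cases $\varphi_\theta^{1/(1-\tau_A)}$ is a power of $|\log t|$, and every such power is integrable on $(0,\tfrac12]$. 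For Clayton, $\varphi_\theta(t)\sim t^{-\theta}/(2^\theta-1)$, so $\varphi_\theta^{1/(1-\tau_A)}\asymp t^{-\theta/(1-\tau_A)}$. This is integrable at $0$ if and only if $\theta/(1-\tau_A)<1$, that is, $\theta<1-\tau_A$. The same comparison gives both directions of the claimed equivalence.

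For $\mathbb{E}[|\log Z_\theta|]$ both endpoints matter. At $t\to0$ the integrand behaves like $\log|\log t|$ for Frank, Gumbel and Joe, and like $\theta|\log t|$ for Clayton; both are integrable. At $t\to1$, Clayton and Frank have $D^-\varphi_\theta(1)\neq0$, so $\varphi_\theta(t)\sim c(1-t)$ and $|\log\varphi_\theta|\sim|\log(1-t)|$. Gumbel gives $\varphi_\theta(t)\sim((1-t)/\log2)^\theta$. For Joe, $\log(1-(1-t)^\theta)\approx-(1-t)^\theta$ near $t=1$ (where $(1-t)^\theta\to0$), so $\varphi_\theta(t)\sim c(1-t)^\theta$. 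In all of these cases $|\log\varphi_\theta|\lesssim|\log(1-t)|$, which is integrable. The only real work is this case-by-case asymptotic bookkeeping. The steps needing most care are the sign conventions for Frank with $\theta<0$, where the constants change sign but the asymptotic order is unchanged, and the Joe expansion near $t=1$.
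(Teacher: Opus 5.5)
Your proposal is correct and follows essentially the paper's route: reduce both expectations to integrals of powers or logarithms of the explicit pseudo-inverses via the Cavalieri/quantile identity, then check integrability family by family. The difference is only one of technique. You use endpoint asymptotics and limit comparison, whereas the paper uses explicit global bounds and substitutions that produce $\Gamma$-function values. Your two-endpoint analysis of $|\log\varphi_\theta|$ for Frank, Gumbel and Joe also fills in what the paper only dismisses as ``analogous''.
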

\begin{proof}
(i) We first consider the stated equivalence for the Clayton copula
and assume that $\psi_\theta$ is a Clayton generator. If $\theta \in [0,1-\tau_A)$, we obviously have
$$
\mathbb{E}[Z_\theta] = \int_{\mathbb{I}}\left[\dfrac{t^{-\theta}-1}{0.5^{-\theta}-1}\right]^\frac{1}{1-\tau_A} \mathrm{d}\lambda(t) \leq \frac{1}{(0.5^{-\theta}-1)^\frac{1}{1-\tau_A}} \int_\mathbb{I} t^{-\frac{\theta}{1-\tau_A}} \mathrm{d}\lambda(t) < \infty.
$$
We assume now that $\theta \in [1-\tau_A,\infty)$ and show $\mathbb{E}[Z_\theta] = \infty$; the following inequality holds:
$$
\mathbb{E}[Z_\theta] \geq \int_{[0,2^{-\frac{1}{\theta}}]}\left[\dfrac{t^{-\theta}-1}{0.5^{-\theta}-1}\right]^\frac{1}{1-\tau_A} \mathrm{d}\lambda(t) \geq \frac{1}{2^\frac{1}{1-\tau_A}(0.5^{-\theta}-1)^\frac{1}{1-\tau_A}} \int_{[0,2^{-\frac{1}{\theta}}]} t^{-\frac{\theta}{1-\tau_A}} \mathrm{d}\lambda(t) = \infty.
$$
(ii) As second family we consider the case where $\psi_\theta$ is the generator of the Frank copula. For $\theta \in \mathbb{R} \setminus \{0\}$, applying change of coordinates yields:
\begin{align*}
\mathbb{E}[Z_\theta] &= \frac{1}{\log(1+\exp(\frac{\theta}{2}))^\frac{1}{1-\tau_A}}\int_{\mathbb{I}}\left[-\log\left(\frac{\exp(-\theta t)-1}{\exp(-\theta)-1}\right)\right]^\frac{1}{1-\tau_A} \mathrm{d}\lambda(t) \\&=
\frac{1- \exp(-\theta)}{\theta\log(1+\exp(\frac{\theta}{2}))^\frac{1}{1-\tau_A}}\int_{\mathbb{I}}\left[-\log\left(z\right)\right]^\frac{1}{1-\tau_A}\frac{1}{z(\exp(-\theta)-1) + 1} \mathrm{d}\lambda(z)
\end{align*}
For $\theta > 0$, we obtain that
$$
\frac{1}{z(\exp(-\theta)-1) + 1} \leq \frac{1}{\exp(-\theta)},
$$
while for $\theta < 0$ we have
$$
\frac{1}{z(\exp(-\theta)-1) + 1} \leq 1.
$$
Combining both cases, we obtain for every $\theta \in \mathbb{R} \setminus \{0\}$
\begin{align*}
    \mathbb{E}[Z_\theta] \;\leq\; \frac{\big(1-\exp(-\theta)\big)\max\{\exp(\theta),1\}}{\theta \,\log\!\big(1+\exp(\tfrac{\theta}{2})\big)^\frac{1}{1-\tau_A}}\int_\mathbb{I}[-\log z]^\frac{1}{1-\tau_A} \mathrm{d}\lambda(z) \;=\; c_1(\theta,\tau_A)\,\Gamma\!\left(\tfrac{2-\tau_A}{1-\tau_A}\right) < \infty,
\end{align*}
where $\Gamma$ denotes the $\Gamma$-function and
$$
c_1(\theta,\tau_A) := \frac{\big(1-\exp(-\theta)\big)\max\{\exp(\theta),1\}}{\theta \,\log\!\big(1+\exp(\tfrac{\theta}{2})\big)^\frac{1}{1-\tau_A}}.
$$\\
(iii) If $\psi_\theta$ is the generator of a Gumbel copula and $\theta \in [1,\infty)$, applying change of coordinates we have that
\begin{align*}
    \mathbb{E}[Z_\theta] &= \int_\mathbb{I}\left[\frac{-\log t}{\log 2}\right]^\frac{\theta}{1-\tau_A} \mathrm{d}\lambda(t) = \frac{\Gamma(\tfrac{\theta}{1-\tau_A}+1)}{\left[\log 2\right]^\frac{\theta}{1-\tau_A}} < \infty.
\end{align*}
(iv) If $\psi_\theta$ is the generator of a Joe copula, where $\theta \in [1,\infty)$, applying change of coordinates, we obtain that
\begin{align*}
\mathbb{E}[Z_\theta] &= \int_\mathbb{I} \left[\dfrac{\log\!\left(1-(1-t)^{\theta}\right)}{\log\!\left(1-2^{-\theta}\right)}\right]^\frac{1}{1-\tau_A} \mathrm{d}\lambda(t) \\&\leq
    \frac{1}{[-\log\!\left(1-2^{-\theta}\right)]^\frac{1}{1-\tau_A}}\int_\mathbb{I} [-\log\!\left(1-(1-t)^{\theta}\right)]^\frac{1}{1-\tau_A} \mathrm{d}\lambda(t) \\&=
    c_2(\theta,\tau_A)\int_\mathbb{I} [-\log(z)]^\frac{1}{1-\tau_A} (1-z)^{\frac{1}{\theta}-1} \mathrm{d}\lambda(z),
\end{align*}
where $c_2(\theta,\tau_A) := \frac{1}{\theta[-\log\!\left(1-2^{-\theta}\right)]^\frac{1}{1-\tau_A}}$. Considering the remaining integral and proceeding as in the previous case, we further obtain:
\begin{align*}
    \int_\mathbb{I} [-\log(z)]^\frac{1}{1-\tau_A} (1-z)^{\frac{1}{\theta}-1} \mathrm{d}\lambda(z) &= \int_{[0,\frac{1}{2}]} [-\log(z)]^\frac{1}{1-\tau_A} (1-z)^{\frac{1}{\theta}-1} \mathrm{d}\lambda(z) + \int_{[\frac{1}{2},1]} [-\log(z)]^\frac{1}{1-\tau_A} (1-z)^{\frac{1}{\theta}-1} \mathrm{d}\lambda(z) \\&\leq
    2^{1-\frac{1}{\theta}}\int_{[0,\frac{1}{2}]} [-\log(z)]^\frac{1}{1-\tau_A} \mathrm{d}\lambda(z) + \int_{[\frac{1}{2},1]}(1-z)^\frac{1+\tau_A(\theta -1)}{\theta(1-\tau_A)}z^{-\frac{1}{1-\tau_A}} \mathrm{d}\lambda(z) < \infty.
\end{align*}
(v) We show that $\mathbb{E}[|\log Z_\theta|] < \infty$, when $\psi_\theta$ is the Clayton generator. We have that
$$
\int_\mathbb{I} |\log(\varphi_\theta(t))|\mathrm{d}\lambda(t) \leq \int_{\mathbb{I}} |\log(t^{-\theta}-1)|\mathrm{d}\lambda(t) + c_3
$$
for $c_3 := -\log(2^\theta - 1)$. Furthermore it holds that
$$
\int_{\mathbb{I}} |\log(t^{-\theta}-1)|\mathrm{d}\lambda(t) \leq \theta \int_\mathbb{I}|\log(t)| \mathrm{d}\lambda(t) + \int_\mathbb{I}|\log(1-t^\theta)| \mathrm{d}\lambda(t).
$$
Obviously, the first integral on the right-hand side is finite. Moreover, using change of coordinates and distinguishing the cases $\theta \in (0,1)$ and $\theta \in [1,\infty)$ we have
$$
 \int_\mathbb{I}|\log(1-t^\theta)| \mathrm{d}\lambda(t) = \frac{1}{\theta}\int_\mathbb{I} \frac{|\log(z)|}{(1-z)^{1-\frac{1}{\theta}}} \mathrm{d}\lambda(z) < \infty.
$$
The remaining cases follow analogously. For each fixed parameter value
$\theta$, the function $|\log \varphi_\theta(t)|$ exhibits at most
logarithmic singularities at the endpoints of $\mathbb{I}$, and such
singularities are integrable with respect to $\lambda$. Hence,
$\mathbb{E}[|\log Z_\theta|]<\infty$ for all considered families.
\end{proof}
Proving strong consistency in Theorem \ref{thm:pick_est_empirical}, we used 
the following regularity assumptions. 
Assumption \ref{assumption1_pick} ensures consistency for the Pickands type estimator,  Assumptions \ref{assumption1_cfg} and \ref{assumption2_cfg} for the CFG type estimator.
\begin{assump}\label{assumption1_pick}
 $$
 \inf_{\delta > 0}\sup_{n \in \mathbb{N}}\int_{[0,\delta]}\varphi_n(t)\mathrm{d}\lambda(t) = 0;
 $$
\end{assump}
\begin{assump}\label{assumption1_cfg}
 $$
 \inf_{\delta > 0}\sup_{n \in \mathbb{N}}\int_{[0,\delta]}\log(\varphi_n(t)) \mathrm{d}\lambda(t) = 0;
 $$
\end{assump}
\begin{assump}\label{assumption2_cfg}
         \begin{align*}
        \inf_{\delta > 0} \sup_{n \in \mathbb{N}}\int_{[1-\delta,1]}|\log(\varphi_n(t))| \mathrm{d}\lambda(t) = 0.
     \end{align*}
\end{assump}
Proving Theorem \ref{thm:pick_est_empirical}, the following lemma will be key:
\begin{lemma}\label{lem:implication_regularity}
    Let  $(\psi_n)_{n \in \mathbb{N}}$ converge uniformly to $(\psi)_{1-\tau_A}$. If Assumption \ref{assumption1_pick} holds, then
    $$
    \inf_{m > 0}\sup_{n \in \mathbb{N}}\int_{[m,\infty)} \psi_n(x) \mathrm{d}\lambda(x) = 0.
    $$
    If Assumption  \ref{assumption1_cfg} holds, then
     $$
 \inf_{m > 0}\sup_{n \in \mathbb{N}}\int_{[m,\infty)}\frac{\psi_n(z)}{z} \mathrm{d}\lambda(t) = 0.
 $$
     If Assumption  \ref{assumption2_cfg} holds, then
      $$
 \inf_{\delta > 0}\sup_{n \in \mathbb{N}}\int_{[0,\delta]}\frac{1-\psi_n(z)}{z} \mathrm{d}\lambda(t) = 0.
 $$
\end{lemma}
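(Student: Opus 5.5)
The plan is to handle the three implications one at a time, in each case by the same Cavalieri-type change of variables used in eqs. \eqref{eq:int.psi.phi.gleich} and \eqref{eq:int.log.psi.log.phi.gleich}. Each $\psi_n$ is a non-strict, piecewise linear generator with pseudo-inverse $\varphi_n$. For a non-increasing $\psi_n$ taking values in $\mathbb{I}$ we have $\{x\ge m\colon \psi_n(x)>s\}=[m,\varphi_n(s))$ whenever $\varphi_n(s)>m$. Fubini then gives
$$
\int_{[m,\infty)}\psi_n(x)\,\mathrm{d}\lambda(x)=\int_{[0,\psi_n(m)]}\big(\varphi_n(s)-m\big)\,\mathrm{d}\lambda(s)\le \int_{[0,\psi_n(m)]}\varphi_n(s)\,\mathrm{d}\lambda(s).
$$
Analogously, using $\int_m^{\varphi_n(s)}\frac{\mathrm{d}x}{x}=\log\varphi_n(s)-\log m$, we get
$$
\int_{[m,\infty)}\frac{\psi_n(x)}{x}\,\mathrm{d}\lambda(x)\le\int_{[0,\psi_n(m)]}\log\varphi_n(s)\,\mathrm{d}\lambda(s)
$$
for $m\ge1$, since the integrand is then nonnegative on the relevant range. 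For the third assertion we use $1-\psi_n(x)=\lambda([0,1]\cap\{s:\varphi_n(s)<x\})$, or equivalently $\{x\le\delta\colon \psi_n(x)<s\}=(\varphi_n(s),\delta]$. This yields
$$
\int_{[0,\delta]}\frac{1-\psi_n(x)}{x}\,\mathrm{d}\lambda(x)=\int_{[\psi_n(\delta),1]}\big(\log\delta-\log\varphi_n(s)\big)\,\mathrm{d}\lambda(s)\le\int_{[\psi_n(\delta),1]}|\log\varphi_n(s)|\,\mathrm{d}\lambda(s)
$$
for $\delta\le1$.

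With these identities, each claim reduces to controlling the upper limit $\psi_n(m)$, respectively the lower limit $\psi_n(\delta)$, uniformly in $n$, and this is where the uniform convergence $\psi_n\to(\psi)_{1-\tau_A}$ enters. Given $\eta>0$, choose $m_0$ with $(\psi)_{1-\tau_A}(m_0)<\eta/2$; this is possible since a generator tends to $0$ at infinity. By uniform convergence there is $N$ with $\psi_n(m_0)<\eta$ for all $n\ge N$. The finitely many $n<N$ are handled separately: each $\psi_n$ is non-strict, so $\psi_n(m)=0$ for $m\ge\varphi_n(0)$, and a single $m\ge\max\{m_0,\varphi_1(0),\dots,\varphi_{N-1}(0)\}$ works for all $n$. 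Hence $\sup_n\psi_n(m)\le\eta$ for $m$ large enough, and the first two claims follow from Assumptions \ref{assumption1_pick} and \ref{assumption1_cfg}, choosing $\eta$ as the $\delta$ furnished there. Note that the monotonicity of the integrals in the upper limit lets us replace $[0,\psi_n(m)]$ by $[0,\eta]$.

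For the third claim, $(\psi)_{1-\tau_A}$ is continuous with $(\psi)_{1-\tau_A}(0)=1$. So for $\eta>0$ there is $\delta_0$ with $(\psi)_{1-\tau_A}(\delta_0)>1-\eta/2$, and uniform convergence gives $\psi_n(\delta_0)>1-\eta$ for $n\ge N$. For $n<N$ we shrink $\delta$ using the continuity of each $\psi_n$ at $0$. Then $[\psi_n(\delta),1]\subseteq[1-\eta,1]$ uniformly in $n$, and Assumption \ref{assumption2_cfg} concludes.

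The main obstacle is the careful bookkeeping in the Fubini step. In particular one must check the sign of $\log\varphi_n$ on the integration ranges: $\varphi_n(s)\ge m\ge1$ on the first range and $\varphi_n(s)\le\delta\le1$ on the second, which is what allows the absolute values to be dropped or inserted. One must also check that the sets $\{\psi_n>s\}$ are exactly intervals ending at $\varphi_n(s)$, which follows from strict monotonicity of $\psi_n$ below $\varphi_n(0)$. Everything else is routine.
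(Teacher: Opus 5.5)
Your proposal is correct and follows essentially the paper's route: a Cavalieri/layer-cake identity plus uniform-in-$n$ control of the cut-off. You bound $\sup_n\psi_n(m)$ directly via uniform convergence of the generators, while the paper equivalently bounds $\sup_n\varphi_n(\delta_0)$ (resp.\ $\inf_n\varphi_n(1-\delta_0)$) via pointwise convergence of the pseudo-inverses. In the logarithmic case, take the level $\eta\le\frac{1}{2}$ so that $\log\varphi_n\ge 0$ on $[0,\eta]$, which is what justifies replacing $[0,\psi_n(m)]$ by $[0,\eta]$.
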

\begin{proof}
    We assume that $(\psi_n)_{n \in \mathbb{N}}$ converges uniformly to $(\psi)_{1-\tau_A}$ and prove the first implication. Let 
$\delta_0 \in (0,1]$ be arbitrary but fixed. Then Cavalieri implies  
\begin{align*}
   \int_{[\varphi_n(\delta_0),\infty)} \psi_n(z) d\lambda(z) &= 
   \int_{(0,\delta_0]} \lambda\left(\{t \geq \varphi_n(\delta_0): \psi_n(t) \geq x\} \right) 
   d\lambda(x) = \int_{(0,\delta_0]}  \varphi_n(x)-\varphi_n(\delta_0) d\lambda(x) \\
   &= \int_{(0,\delta_0]}  \varphi_n(x) d\lambda(x) - \delta_0 \varphi_n(\delta_0).
\end{align*}
Considering $\lim_{n \rightarrow \infty} \varphi_n(\delta_0)=(\varphi)_{1-\tau_A}(\delta_0) >0$, 
we can find some $M> (\varphi)_{1-\tau_A}(\delta_0)$ such that for every $n \in \mathbb{N}$ we have
$\varphi_n(\delta_0) < M$, implying 
\begin{align*}
  \int_{[M,\infty)} \psi_n(z) d\lambda(z) & \leq  \int_{[\varphi_n(\delta_0),\infty)} \psi_n(z) d\lambda(z) + \delta_0 \varphi_n(\delta_0)  = \int_{(0,\delta_0]}  \varphi_n(x) d\lambda(x),
\end{align*}
so in particular
\begin{align*}
\sup_{n \in \mathbb{N}}  \int_{[M,\infty)} \psi_n(z) d\lambda(z) & \leq  
\sup_{n \in \mathbb{N}} \int_{(0,\delta_0]}  \varphi_n(x) d\lambda(x).
\end{align*}
We prove the second implication and assume that Assumption \ref{assumption1_cfg} holds. For $\delta_0 \in (0,\frac{1}{2})$, applying Cavalieri's principle and change of coordinates 
we have that
\begin{align*}
\int_{[\varphi_n(\delta_0),\infty)}\frac{\psi_n(z)}{z} \mathrm{d}\lambda(z) &= 
\int_{[\log(\varphi_n(\delta_0)),\infty)} \psi_n(\exp(s)) \mathrm{d}\lambda(s) = 
\int_{[0,\delta_0]} \lambda(\{t \in [\log(\varphi_n(\delta_0),\infty) \colon \psi_n(\exp(t)) >z\}) \mathrm{d}\lambda(z) \\&=
\int_{[0,\delta_0]}[\log(\varphi_n(z)) - \log(\varphi_n(\delta_0))]\mathrm{d}\lambda(z) = 
\int_{[0,\delta_0]}\log(\varphi_n(z))\mathrm{d}\lambda(z) - \delta_0\log(\varphi_n(\delta_0))
\end{align*}
Applying the same arguments as before we have that
$$
\int_{[M,\infty)}\frac{\psi_n(z)}{z} \mathrm{d}\lambda(z) \leq  \int_{[\varphi_n(\delta_0),\infty)}\frac{\psi_n(z)}{z} \mathrm{d}\lambda(z) + \delta_0\log(\varphi_n(\delta_0))= \int_{[0,\delta_0]}\log(\varphi_n(z))\mathrm{d}\lambda(z),
$$
implying that
$$
\sup_{n \in \mathbb{N}} \int_{[M,\infty)}\frac{\psi_n(z)}{z} \mathrm{d}\lambda(z) \leq \sup_{n \in \mathbb{N}}\int_{[0,\delta_0]}\log(\varphi_n(z))\mathrm{d}\lambda(z).
$$
Assuming \ref{assumption2_cfg}, we proceed similarly and observe
\begin{align*}
    \int_{[0,\varphi_n(1-\delta_0)]}\frac{1-\psi_n(z)}{z} \mathrm{d}\lambda(z) &= 
\int_{(-\infty,\log(\varphi_n(1-\delta_0))]} [1- \psi_n(\exp(s))] \mathrm{d}\lambda(s) \\&= \int_{[0,\delta_0]} \lambda(\{t \in (-\infty,\log(\varphi_n(1-\delta_0))] \colon \log(\varphi_n(1-z)) \leq t\}) \mathrm{d}\lambda(z) \\&=
-\delta_0|\log(\varphi_n(1-\delta_0))| + \int_{[1-\delta_0,1]}|\log(\varphi_n(s))| \mathrm{d}\lambda(s).
\end{align*}
Again, we can find some $\varepsilon > 0$ such that for every $n \in \mathbb{N}$ we have $\varepsilon < \varphi_n(1-\delta_0)$ and therefore
$$
\int_{[0,\varepsilon]}\frac{1-\psi_n(z)}{z} \mathrm{d}\lambda(z)\leq \int_{[0,\varphi_n(1-\delta_0)]}\frac{1-\psi_n(z)}{z} \mathrm{d}\lambda(z) + \delta_0|\log(\varphi_n(1-\delta_0))| = \int_{[1-\delta_0,1]}|\log(\varphi_n(s))| \mathrm{d}\lambda(s).
$$
This proves the result.
\end{proof}
The next remarks discuss the regularity conditions in eqs. \eqref{assumption1_pick}, \eqref{assumption1_cfg} and \eqref{assumption2_cfg}:
\begin{remark}
Assumption~\ref{assumption2_cfg} is not overly restrictive. We justify this from two angles.\\
\noindent\textbf{(i) A single generator.} Let $\varphi$ be the pseudo-inverse of a generator $\psi \in \Psi$ satisfying $D^-\varphi(1)<0$. By convexity of $\varphi$, we have that
$$
\varphi(t) \geq (t-1)\,D^-\varphi(1), \qquad t \in \mathbb I.
$$
For $\delta \leq \tfrac12$ we have $\varphi(t)\leq \varphi(1/2)=1$ for every $t \in [1-\delta,1]$ by normalization, so $|\log\varphi(t)| = -\log\varphi(t)$ there, and the bound above gives
\begin{align*}
\int_{[1-\delta,1]} |\log \varphi(t)|\, \mathrm d\lambda(t)
&\leq - \int_{[1-\delta,1]} \log\big((t-1)\,D^-\varphi(1)\big)\, \mathrm d\lambda(t) \\
&= - \int_{[1-\delta,1]} \log(1-t)\, \mathrm d\lambda(t) \;-\; \delta \log\big({-}D^-\varphi(1)\big) \\
&= -\delta\log\delta + \delta - \delta \log\big({-}D^-\varphi(1)\big)
\;\overset{\delta \to 0^+}{\longrightarrow}\; 0.
\end{align*}
In particular, this applies whenever $\varphi$ is the pseudo-inverse of a piecewise linear generator $\psi$: in that case $\varphi$ itself is piecewise linear and fulfills $D^-\varphi(1)<0$. The argument above then yields
$$
\lim_{\delta \to 0^+} \int_{[1-\delta,1]} |\log\varphi(t)|\, \mathrm d\lambda(t) = 0
$$
for every piecewise linear generator, with no further assumption needed.\\
\noindent\textbf{(ii) A converging sequence of piecewise linear generators.}
Now let $(\psi_n)_{n\in\mathbb N}$ be a sequence of piecewise linear generators converging uniformly to a generator $\psi\in\Psi$, with corresponding pseudo-inverses $\varphi_n$ and $\varphi$. Suppose further that $\limsup_{n\to\infty}D^-\varphi_n(1)<0$. Since each $\varphi_n$ is piecewise linear, this condition already forces the stronger statement
$$
a:=\sup_{n \in \mathbb{N}}  D^-\varphi_n(1) <0.
$$
We use this to verify Assumption~\ref{assumption2_cfg}. First, taking suprema directly gives
$$
\sup_{n\in\mathbb N} \big[-\log(-D^-\varphi_n(1))\big]
= -\log\Big({-}\sup_{n\in\mathbb N} D^-\varphi_n(1)\Big) = -\log(-a).
$$

Next, convexity of $\varphi_n$ yields the linear lower bound
$$
\varphi_n(t) \geq (t-1)\,D^-\varphi_n(1), \qquad t \in \mathbb I,\ n \in \mathbb N.
$$

Integrating this bound over $[1-\delta,1]$ as in (i), and applying the uniform estimate above, gives
\begin{align*}
\sup_{n\in\mathbb N} \int_{[1-\delta,1]} |\log \varphi_n(t)|\, \mathrm d\lambda(t)
&\leq \int_{[1-\delta,1]} -\log(1-t)\, \mathrm d\lambda(t)
\;+\; \delta \sup_{n\in\mathbb N} \big[-\log(-D^-\varphi_n(1))\big] \\
&= -\delta\log\delta + \delta
+ \delta \sup_{n\in\mathbb N} \big[-\log(-D^-\varphi_n(1))\big]
\;\overset{\delta \to 0^+}{\longrightarrow}\; 0,
\end{align*}
which establishes Assumption~\ref{assumption2_cfg}.
\end{remark}
\begin{remark}
    For Assumption \ref{assumption1_cfg} the situation is more ambivalent: 
    On the one hand, considering that each $\varphi_n$ is piecewise linear,
    we obviously have that 
    $$
    \lim_{\delta \rightarrow 0+} \int_{[0,\delta]} \log \varphi_n(t) \mathrm{d} \lambda(t)=0
    $$
    and the same holds for every non-strict $\varphi$. On the other hand, for every
    $n$ with positive probability we may be in the situation that $W_i$, 
    defined according to eq. \eqref{eq:pseudo_sample} fulfills    
    $W_i=\frac{i-1}{n+1}$ for every $i \in \{1,\ldots,n\}$. 
    In this case we have $K_n=\underline{K}_n$ with 
    $\underline{K}_n$ defined according to eq. \eqref{eq:lower.bound.K_n}, so 
    for $t \in [0,\frac{1}{2}]$ using Fubini's theorem we get   
    \begin{align*}
        \int_{[0,\frac{1}{2}]} \log \varphi_n(t) \mathrm{d} \lambda(t) 
        &=  \int_{[0,\frac{1}{2}]} \int_{[t,\frac{1}{2}]} 
        \frac{1}{\underline{K}_n(s)-s} \, \mathrm{d} \lambda(s) \,\,
        \mathrm{d} \lambda(t) \\&
        = \int_{[0,\frac{1}{2}]} \int_{[0,\frac{1}{2}]} \mathbf{1}_{[t,\frac{1}{2}]}(s) 
        \frac{1}{\underline{K}_n(s)-s} \, \mathrm{d} \lambda(s) \,\,
        \mathrm{d} \lambda(t) \\
        &= \int_{[0,\frac{1}{2}]} \frac{1}{\underline{K}_n(s)-s} \,  \int_{[0,\frac{1}{2}]} \mathbf{1}_{[t,\frac{1}{2}]}(s) 
        \,\mathrm{d} \lambda(t) \, \mathrm{d} \lambda(s) \,\,
         \\& = \int_{[0,\frac{1}{2}]} \frac{s}{\underline{K}_n(s)-s} \,\, \mathrm{d} \lambda(s) 
    \end{align*}
     Considering that for every $s \in [0,1]$ we have 
    $\underline{K}_n(s)-s \leq \frac{2}{n+1}$, it follows that
    \begin{align*}
        \int_{[0,\frac{1}{2}]} \frac{s}{\underline{K}_n(s)-s} \,\, \mathrm{d} \lambda(s) \geq \frac{n+1}{2} \int_{[0,\frac{1}{2}]} s \, \mathrm{d} \lambda(s)
        = \frac{n+1}{16},
    \end{align*}
    implying 
    \begin{align*}
       \sup_{n \in \mathbb{N}}  \int_{[0,\frac{1}{2}]} \log \varphi_n(t) \mathrm{d} \lambda(t) 
        &\geq  \sup_{n \in \mathbb{N}} \frac{n+1}{16}=\infty.
    \end{align*}
    Since replacing $[0,\frac{1}{2}]$ with $[0,\delta]$ for 
    some fixed $\delta \in (0,\frac{1}{2})$ 
    yields a similar result, this shows that in this situation Assumption \ref{assumption1_cfg} does not hold.
\end{remark}
\subsection{Violation of $2$-Increasingness of $D_n^\mathbf{P}$ and $D_n^\mathbf{CFG}$ } \label{subsec:discussion_2_increasing}
In Section \ref{sec:estimator} we noted that $D_n^\mathbf{P}$ and $D_n^\mathbf{CFG}$ as defined in eq. \eqref{eq:D_n}, are not copulas in general. 
In fact, contrary to the regularized copula estimators $C_{\alpha_n,n}^\mathbf{P}$ and $C_{\alpha_n,n}^\mathbf{CFG}$ defined in eq. \eqref{eq:C_alpha_n}, which are real copulas, 
these functions even fail to be distribution functions since 
they violate the $2$-increasingness property:
In fact, for $\Xi \in \{\mathbf{P}, \mathbf{CFG}\}$ and fixed $h > 0$, 
defining the quantities $\Delta_{D_n^\Xi}^h(x,y)$ and 
$\Delta_{C_{\alpha_n,n}^\Xi}^h(x,y)$ for every $(x,y) \in \mathbb{I}$ by
$$
 \Delta_{D_n^\Xi}^h(x,y) := D_n^\Xi(x + h,y + h) - D_n^\Xi(x + h,y) - D_n^\Xi(x,y + h) + D_n^\Xi(x,y)
$$
and
$$
 \Delta_{C_{\alpha_n,n}^\Xi}^h(x,y) := C_{\alpha_n,n}^\Xi(x + h,y + h) - C_{\alpha_n,n}^\Xi(x + h,y) - C_{\alpha_n,n}^\Xi(x,y + h) + C_{\alpha_n,n}^\Xi(x,y).
$$
It is straightforward to verify that $\Delta_{D_n^\Xi}^h$ and 
$\Delta_{C_{\alpha_n,n}^\Xi}^h$ can indeed attain negative values. 
For illustrative purposes, we drew a sample of size $n = 5$ from an Archimax copula 
(with generator and Pickands dependence function mentioned 
in Figure \ref{fig:rect_mon_viol}) and computed $\Delta_{D_n^\Xi}^h$ and $\Delta_{C_{\alpha_n,n}^\Xi}^h$ for $h = 0.05$ over a fine grid. As depicted 
in Figure \ref{fig:rect_mon_viol}, $\Delta_{D_n^\Xi}^h$ indeed becomes negative at several grid points, while $\Delta_{C_{\alpha_n,n}^\Xi}^h(x,y) \geq 0$ holds everywhere.
\begin{figure}[!ht]
	\centering
	\includegraphics[width=0.8\textwidth]{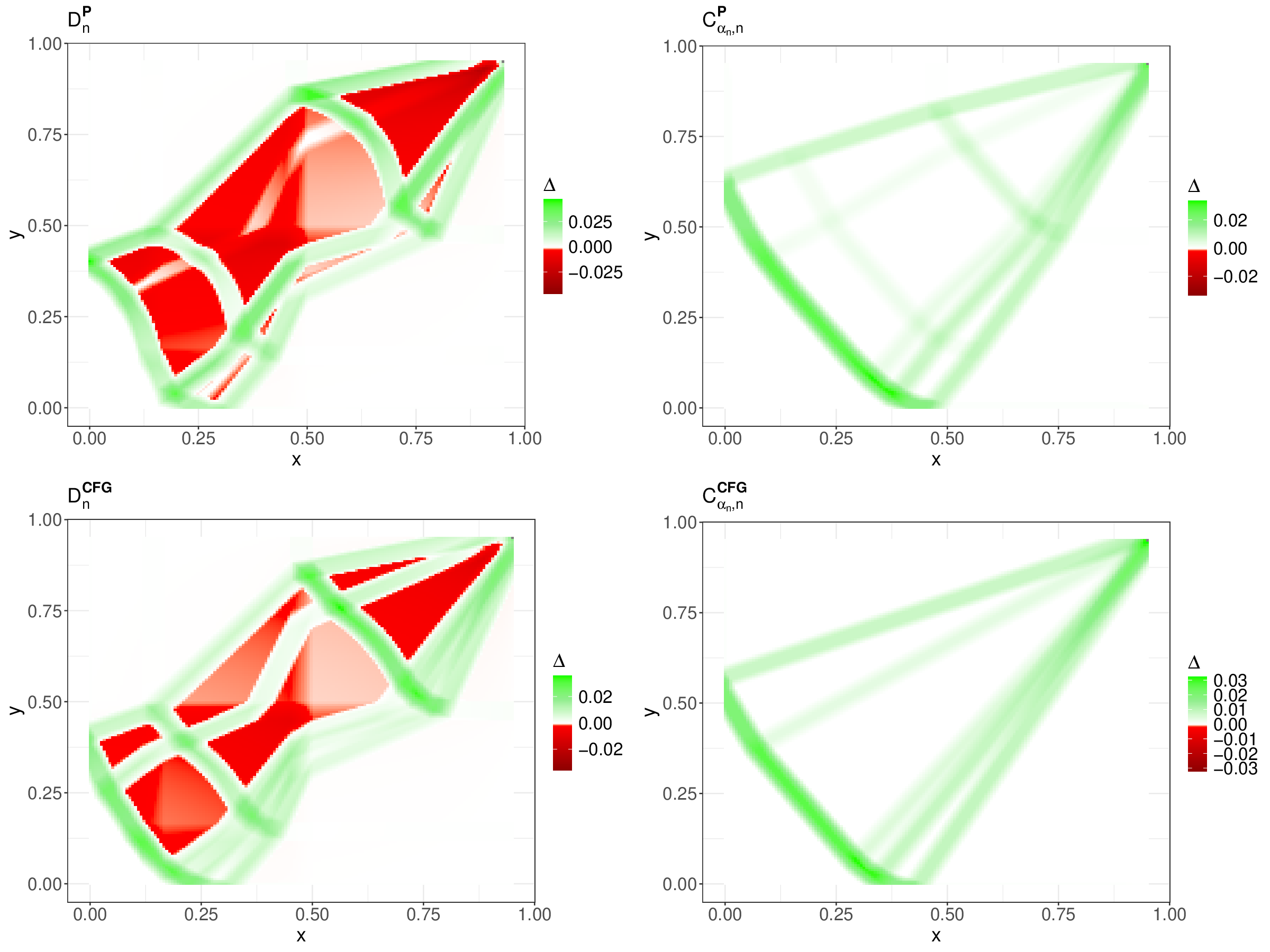}
	\caption{Heatmaps of $\Delta_{D_n^\Xi}^h$ and $\Delta_{C_{\alpha_n,n}^\Xi}^h$ for $\Xi \in \{\mathbf{P}, \mathbf{CFG}\}$, with sample size $n = 5$ and $h = 0.05$. The underlying Archimax copula has a Clayton generator with parameter $\theta = \frac{1}{5}$ and the Pickands dependence function from Figure \ref{fig:discrete.approx}. The upper panel depicts the Pickands-based estimators, the lower panel shows the CFG-based estimators.}
	\label{fig:rect_mon_viol}
\end{figure}
\section{Results of the simulation study}\label{sec:sim_study_app}
This section gathers the results of the simulation study evaluating the estimators proposed in Section~\ref{sec:estimator}; the results are discussed in detail in Section~\ref{sec:sim_study}. 
Aiming for well structured summaries, we proceed in three steps, first examining the estimators of $(A)_{1-\tau_A}$, then the estimators of $(\beta)_{1-\tau_A}$, and finally the estimators of the copula $C_{\psi,A}$.
\begin{figure}[!ht]
	\centering
	\includegraphics[width=1\textwidth]{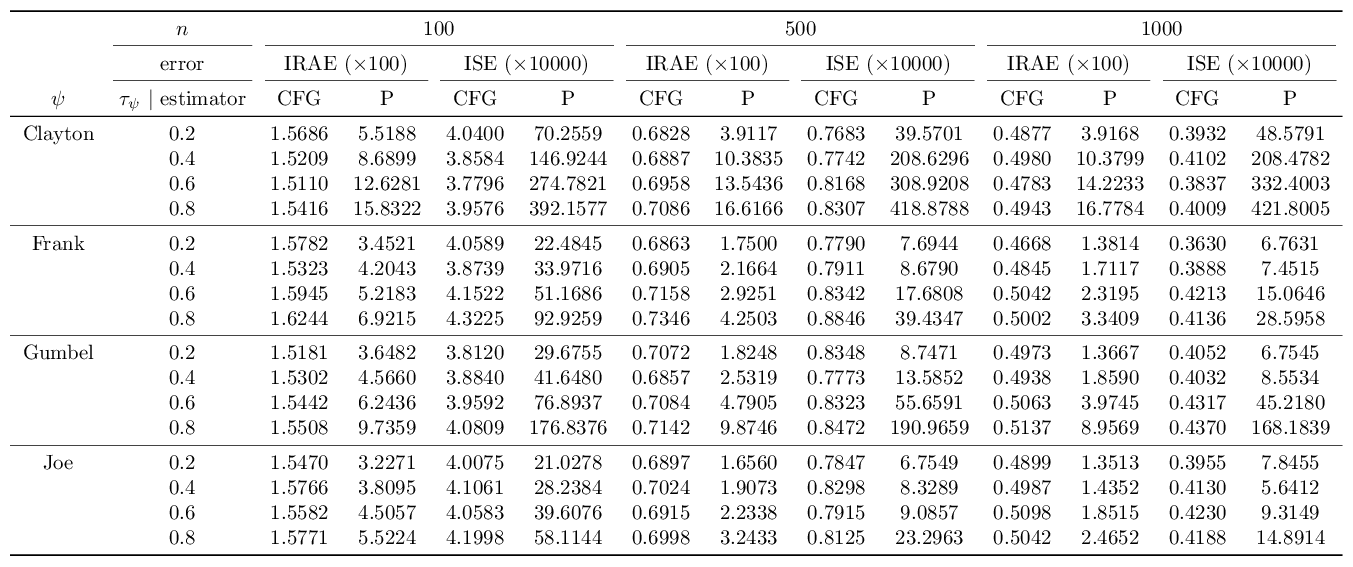}
	\caption{Average performance of the CFG-based estimator $(A_{\alpha_n,n}^\mathbf{CFG})_{1-\tau_{A_{\alpha_n,n}^\mathbf{CFG}}}$ and the Pickands-based estimator $(A_{\alpha_n,n}^\mathbf{P})_{1-\tau_{A_{\alpha_n,n}^\mathbf{P}}}$,  measured via the integrated relative absolute error IRAE ($\times 100$) and the integrated squared error ISE ($\times 10000$), based on samples drawn from a $2$-dimensional Archimax copula $C_{\psi,A}$ with sample sizes $n \in \{100, 500, 1000\}$. The Pickands dependence function $A$ follows the Gumbel family with parameter $\alpha = \frac{5}{3}$, chosen such that $\tau_A = \frac{2}{5}$. Four Archimedean generators $\psi$ are considered -- Clayton, Frank, Gumbel and Joe -- each for four values of Kendall's $\tau$, $\tau_\psi \in \{\tfrac{1}{5}, \tfrac{2}{5}, \tfrac{3}{5}, \tfrac{4}{5}\}$. Reported values are averages over $1000$ Monte Carlo repetitions.}\label{fig:cfg_vs_pick_gumbel_0.4}
\end{figure}
\begin{figure}[!ht]
	\centering
	\includegraphics[width=1\textwidth]{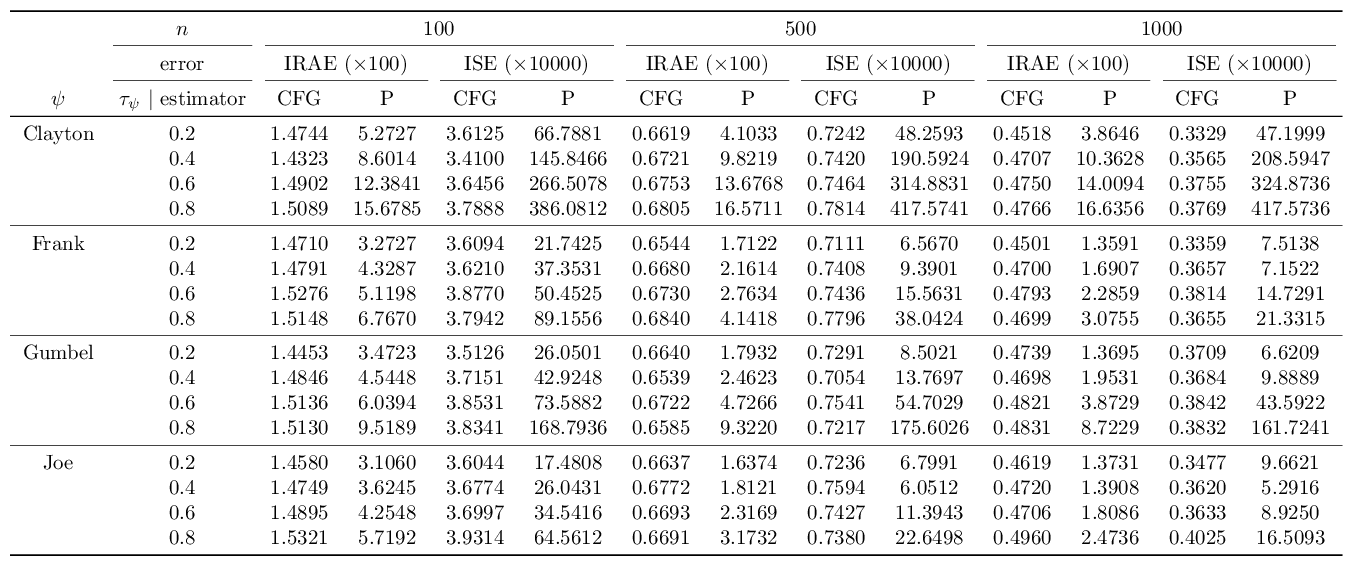}
	\caption{Average performance of the CFG-based estimator $(A_{\alpha_n,n}^\mathbf{CFG})_{1-\tau_{A_{\alpha_n,n}^\mathbf{CFG}}}$ and the Pickands-based estimator $(A_{\alpha_n,n}^\mathbf{P})_{1-\tau_{A_{\alpha_n,n}^\mathbf{P}}}$,  measured via the integrated relative absolute error IRAE ($\times 100$) and the integrated squared error ISE ($\times 10000$), based on samples drawn from a $2$-dimensional Archimax copula $C_{\psi,A}$ with sample sizes $n \in \{100, 500, 1000\}$. The Pickands dependence function $A$ follows the Galambos family with parameter $\alpha = 0.9460773$, chosen such that $\tau_A = \frac{2}{5}$. Four Archimedean generators $\psi$ are considered -- Clayton, Frank, Gumbel and Joe -- each for four values of Kendall's $\tau$, $\tau_\psi \in \{\tfrac{1}{5}, \tfrac{2}{5}, \tfrac{3}{5}, \tfrac{4}{5}\}$. Reported values are averages over $1000$ Monte Carlo repetitions.}\label{fig:cfg_vs_pick_galambos_0.4}
\end{figure}
\begin{figure}[!ht]
	\centering
	\includegraphics[width=1\textwidth]{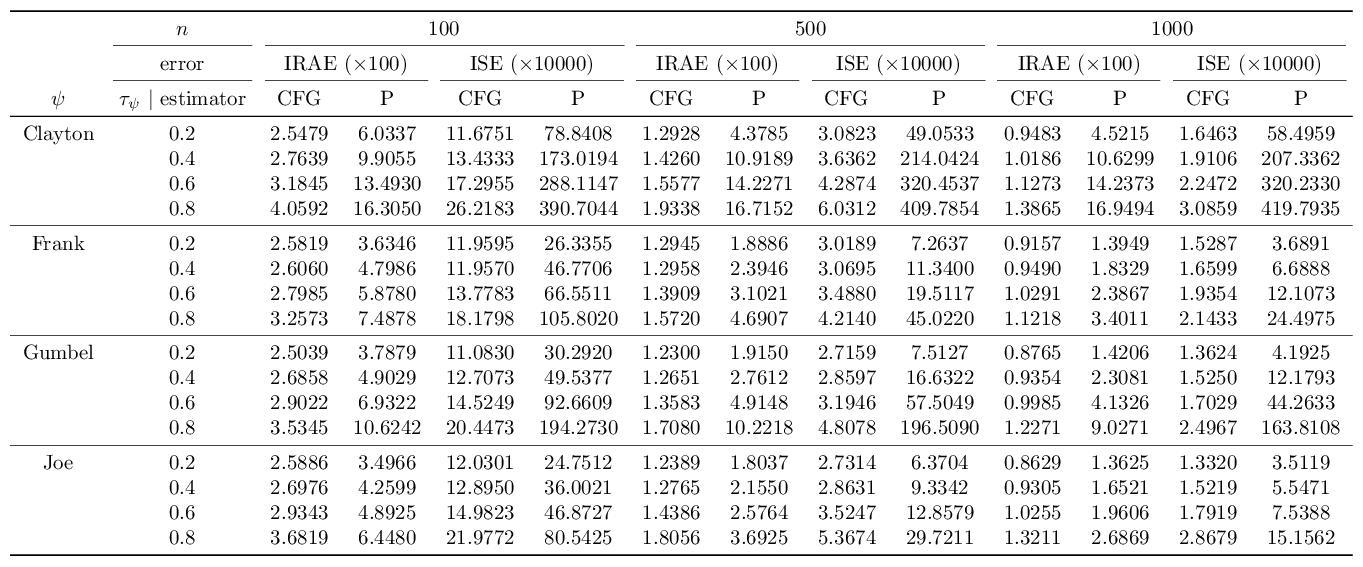}
	\caption{Average performance of the CFG-based estimator $(A_{\alpha_n,n}^\mathbf{CFG})_{1-\tau_{A_{\alpha_n,n}^\mathbf{CFG}}}$ and the Pickands-based estimator $(A_{\alpha_n,n}^\mathbf{P})_{1-\tau_{A_{\alpha_n,n}^\mathbf{P}}}$,  measured via the integrated relative absolute error IRAE ($\times 100$) and the integrated squared error ISE ($\times 10000$), based on samples drawn from a $2$-dimensional Archimax copula $C_{\psi,A}$ with sample sizes $n \in \{100, 500, 1000\}$. The Pickands dependence function $A$ follows the Marshall-Olkin family with parameters $\alpha_1 = \frac{3}{7}$ and $\alpha_2 = \frac{6}{7}$, chosen such that $\tau_A = \frac{2}{5}$. Four Archimedean generators $\psi$ are considered -- Clayton, Frank, Gumbel and Joe -- each for four values of Kendall's $\tau$, $\tau_\psi \in \{\tfrac{1}{5}, \tfrac{2}{5}, \tfrac{3}{5}, \tfrac{4}{5}\}$. Reported values are averages over $1000$ Monte Carlo repetitions.}\label{fig:cfg_vs_pick_mar_ol_0.4}
\end{figure}
\begin{figure}[!ht]
	\centering
	\includegraphics[width=0.8\textwidth]{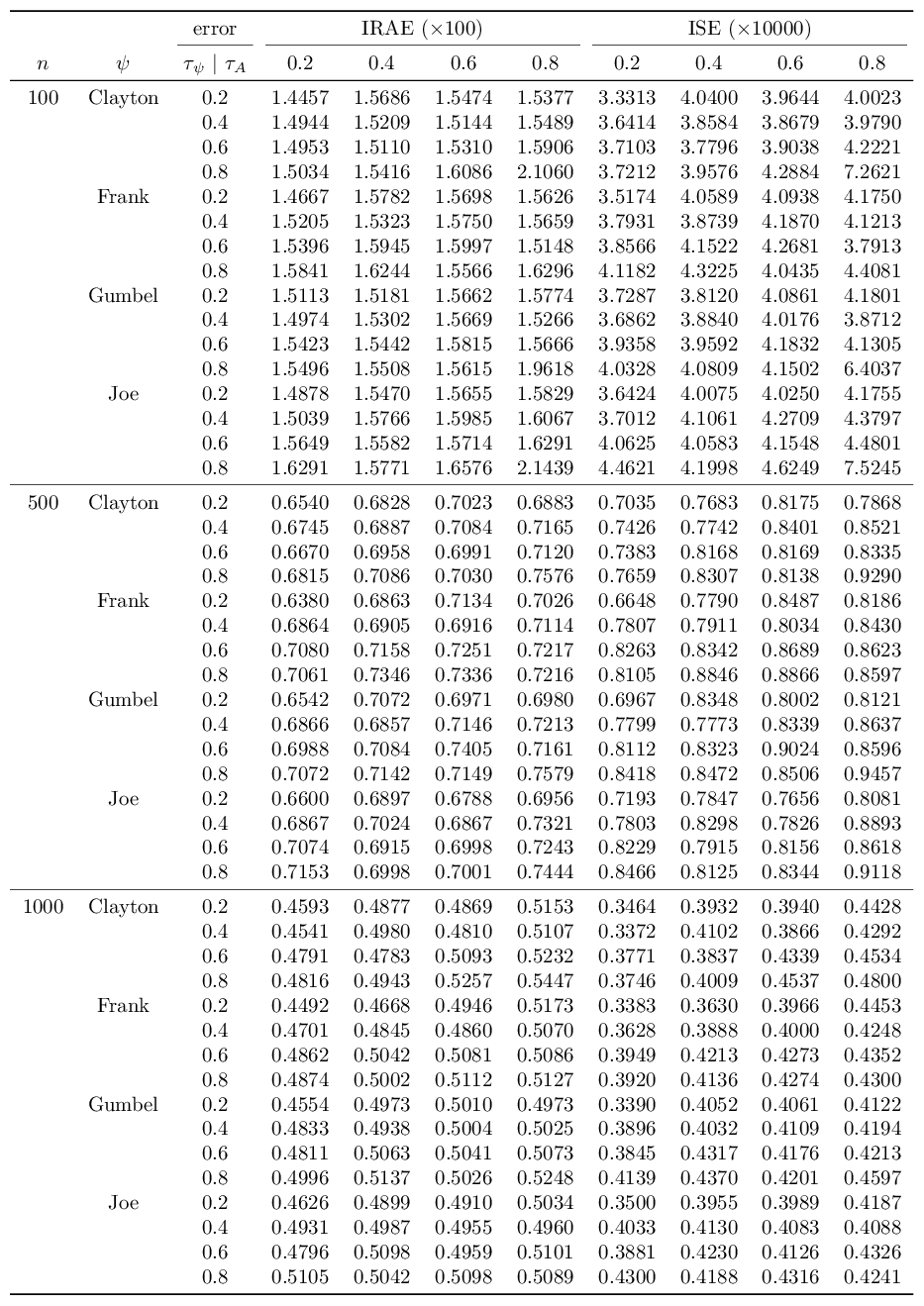}
	\caption{Average performance of the CFG-based estimator $(A_{\alpha_n,n}^\mathbf{CFG})_{1-\tau_{A_{\alpha_n,n}^\mathbf{CFG}}}$,  measured via the integrated relative absolute error IRAE ($\times 100$) and the integrated squared error ISE ($\times 10000$), based on samples drawn from a $2$-dimensional Archimax copula $C_{\psi,A}$ with sample sizes $n \in \{100, 500, 1000\}$. The Pickands dependence function $A$ follows the Gumbel family with parameters $\alpha \in [1,\infty)$ chosen such that $\tau_A \in \{\frac{1}{5}, \frac{2}{5}, \frac{3}{5}, \frac{4}{5}\}$. Four Archimedean generators $\psi$ are considered -- Clayton, Frank, Gumbel and Joe -- each for four values of Kendall's $\tau$, $\tau_\psi \in \{\tfrac{1}{5}, \tfrac{2}{5}, \tfrac{3}{5}, \tfrac{4}{5}\}$. Reported values are averages over $1000$ Monte Carlo repetitions.}\label{fig:cfg_vs_pick_gumbel_all_vs_all}
\end{figure}
\begin{figure}[!ht]
	\centering
	\includegraphics[width=0.8\textwidth]{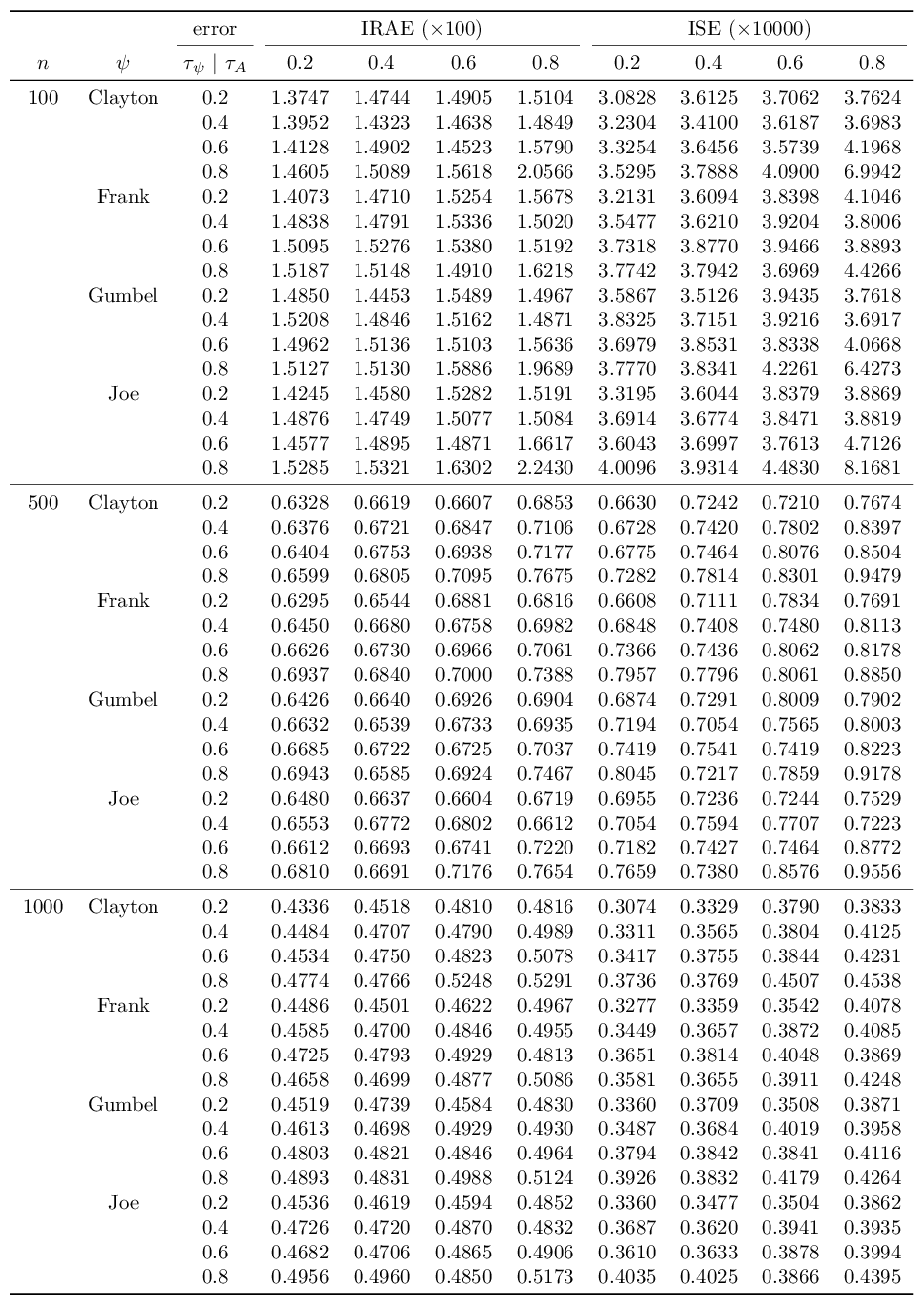}
	\caption{Average performance of the CFG-based estimator $(A_{\alpha_n,n}^\mathbf{CFG})_{1-\tau_{A_{\alpha_n,n}^\mathbf{CFG}}}$,  measured via the integrated relative absolute error IRAE ($\times 100$) and the integrated squared error ISE ($\times 10000$), based on samples drawn from a $2$-dimensional Archimax copula $C_{\psi,A}$ with sample sizes $n \in \{100, 500, 1000\}$. The Pickands dependence function $A$ follows the Galambos family with parameters $\alpha \in (0,\infty)$ chosen such that $\tau_A \in \{\frac{1}{5}, \frac{2}{5}, \frac{3}{5}, \frac{4}{5}\}$. Four Archimedean generators $\psi$ are considered -- Clayton, Frank, Gumbel and Joe -- each for four values of Kendall's $\tau$, $\tau_\psi \in \{\tfrac{1}{5}, \tfrac{2}{5}, \tfrac{3}{5}, \tfrac{4}{5}\}$. Reported values are averages over $1000$ Monte Carlo repetitions.}\label{fig:cfg_vs_pick_galambos_all_vs_all}
\end{figure}
\begin{figure}[!ht]
	\centering
	\includegraphics[width=0.8\textwidth]{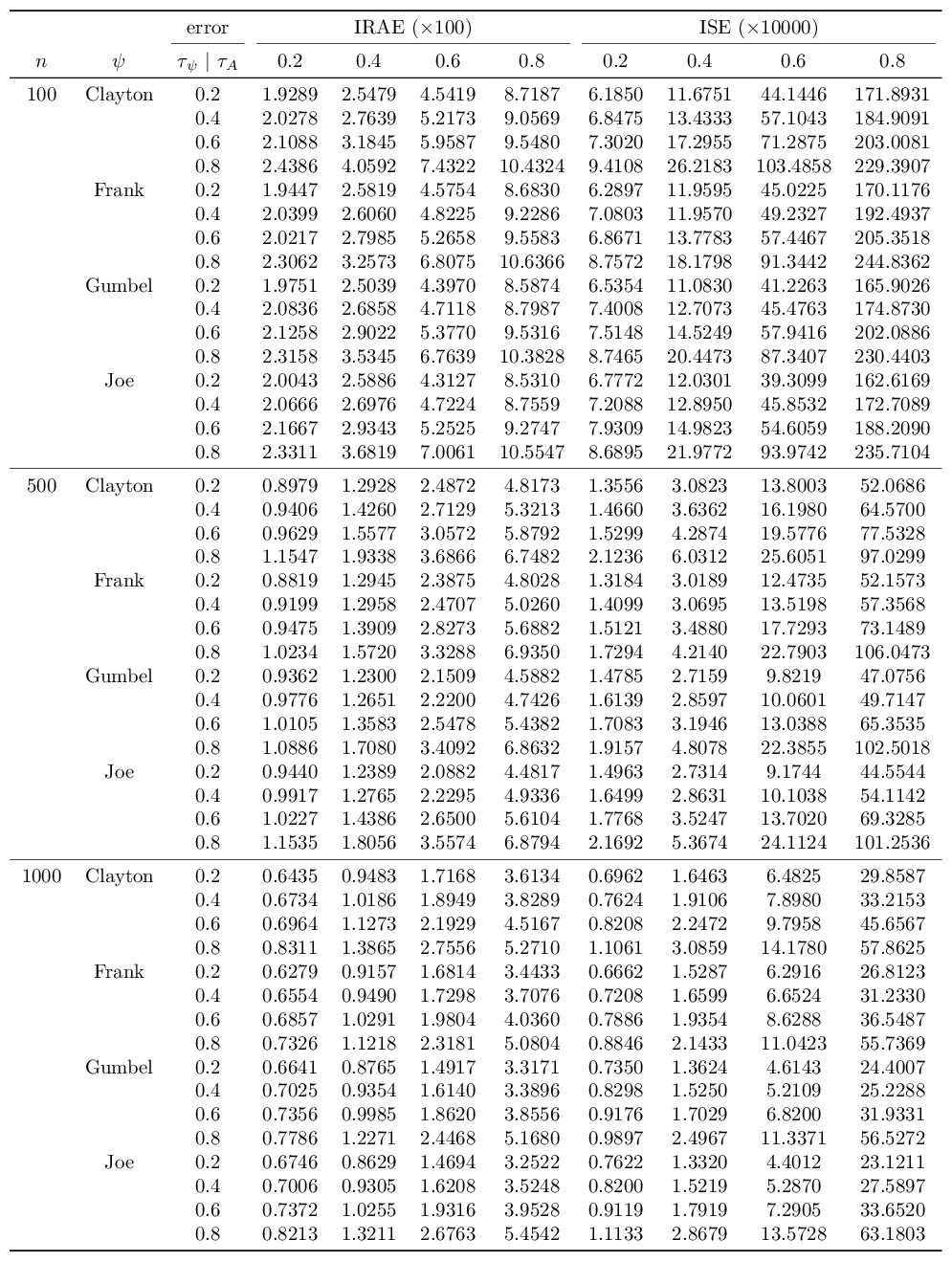}
	\caption{Average performance of the CFG-based estimator $(A_{\alpha_n,n}^\mathbf{CFG})_{1-\tau_{A_{\alpha_n,n}^\mathbf{CFG}}}$,  measured via the integrated relative absolute error IRAE ($\times 100$) and the integrated squared error ISE ($\times 10000$), based on samples drawn from a $2$-dimensional Archimax copula $C_{\psi,A}$ with sample sizes $n \in \{100, 500, 1000\}$. The Pickands dependence function $A$ follows the Marshall-Olkin family with parameters $\alpha_1, \alpha_2 \in (0,1)$ chosen such that $\tau_A \in \{\frac{1}{5}, \frac{2}{5}, \frac{3}{5}, \frac{4}{5}\}$. Four Archimedean generators $\psi$ are considered -- Clayton, Frank, Gumbel and Joe -- each for four values of Kendall's $\tau$, $\tau_\psi \in \{\tfrac{1}{5}, \tfrac{2}{5}, \tfrac{3}{5}, \tfrac{4}{5}\}$. Reported values are averages over $1000$ Monte Carlo repetitions.}\label{fig:cfg_vs_pick_mar_ol_all_vs_all}
\end{figure}
\begin{figure}[!ht]
	\centering
	\includegraphics[width=0.55\textwidth]{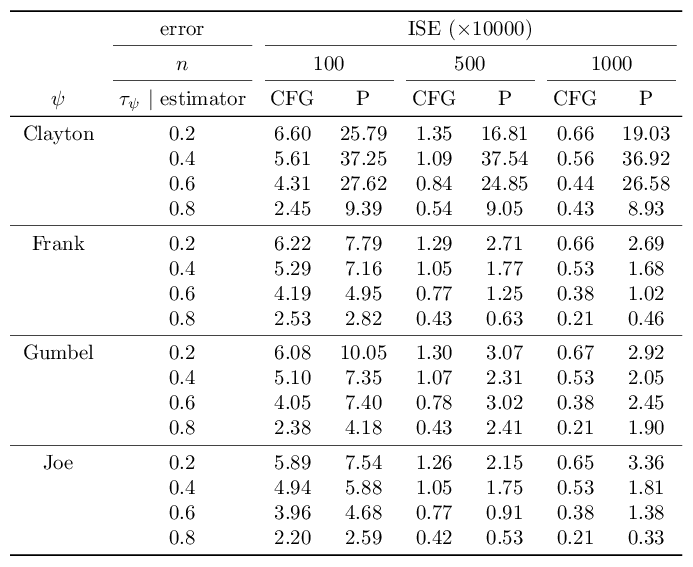}
	\caption{Average performance of the CFG-based estimator $(\beta_{\alpha_n,n}^\mathbf{CFG})_{1-\tau_{A_{\alpha_n,n}^\mathbf{CFG}}}$ and the Pickands-based estimator $(\beta_{\alpha_n,n}^\mathbf{P})_{1-\tau_{A_{\alpha_n,n}^\mathbf{P}}}$, measured via the integrated squared error ISE ($\times 10000$), based on samples drawn from a $2$-dimensional Archimax copula $C_{\psi,A}$ with sample sizes $n \in \{100, 500, 1000\}$. The Pickands dependence function $A$ follows the Gumbel family with parameter $\alpha = \frac{5}{3}$, chosen such that $\tau_A = \tfrac{2}{5}$. Four Archimedean generators $\psi$ are considered -- Clayton, Frank, Gumbel and Joe -- each for four values of Kendall's $\tau$, $\tau_\psi \in \{\tfrac{1}{5}, \tfrac{2}{5}, \tfrac{3}{5}, \tfrac{4}{5}\}$. Reported values are averages over $1000$ Monte Carlo repetitions.}\label{fig:beta_cfg_vs_pick_gumbel_0.4}
\end{figure}
\begin{figure}[!ht]
	\centering
	\includegraphics[width=0.55\textwidth]{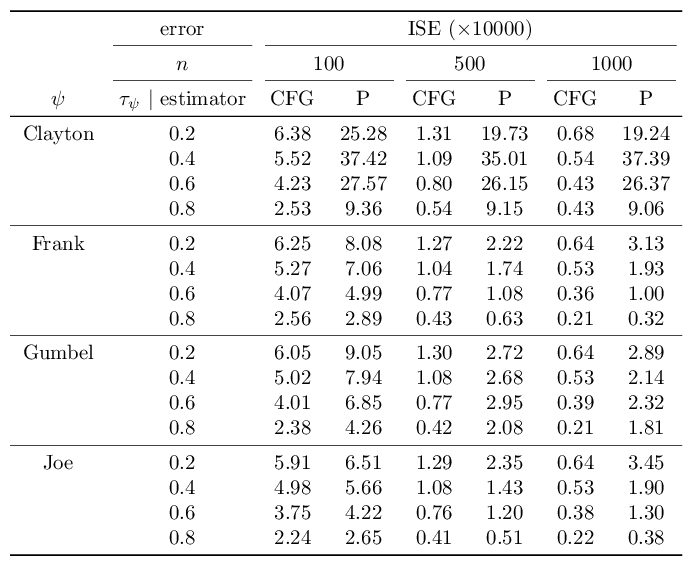}
	\caption{Average performance of the CFG-based estimator $(\beta_{\alpha_n,n}^\mathbf{CFG})_{1-\tau_{A_{\alpha_n,n}^\mathbf{CFG}}}$ and the Pickands-based estimator $(\beta_{\alpha_n,n}^\mathbf{P})_{1-\tau_{A_{\alpha_n,n}^\mathbf{P}}}$, measured via the integrated squared error ISE ($\times 10000$), based on samples drawn from a $2$-dimensional Archimax copula $C_{\psi,A}$ with sample sizes $n \in \{100, 500, 1000\}$. The Pickands dependence function $A$ follows the Galambos family with parameter $\alpha = 0.9460773$, chosen so that $\tau_A = \tfrac{2}{5}$. Four Archimedean generators $\psi$ are considered -- Clayton, Frank, Gumbel and Joe -- each for four values of Kendall's $\tau$, $\tau_\psi \in \{\tfrac{1}{5}, \tfrac{2}{5}, \tfrac{3}{5}, \tfrac{4}{5}\}$. Reported values are averages over $1000$ Monte Carlo repetitions.}\label{fig:beta_cfg_vs_pick_galambos_0.4}
\end{figure}
\begin{figure}[!ht]
	\centering
	\includegraphics[width=0.55\textwidth]{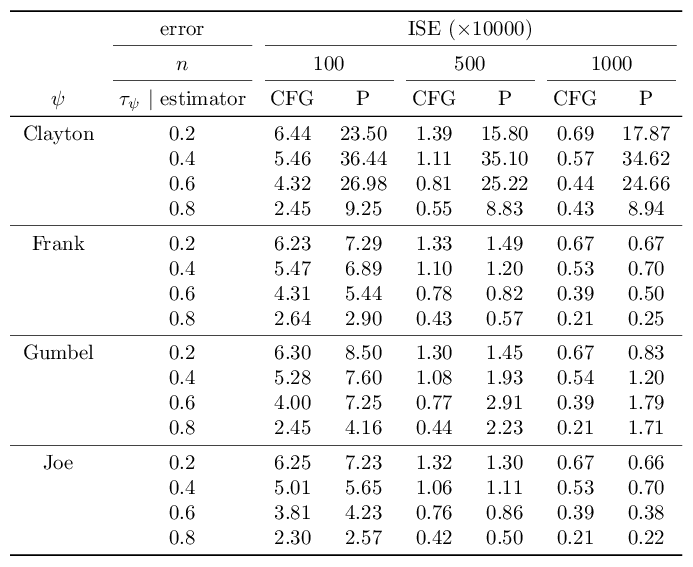}
	\caption{Average performance of the CFG-based estimator $(\beta_{\alpha_n,n}^\mathbf{CFG})_{1-\tau_{A_{\alpha_n,n}^\mathbf{CFG}}}$ and the Pickands-based estimator $(\beta_{\alpha_n,n}^\mathbf{P})_{1-\tau_{A_{\alpha_n,n}^\mathbf{P}}}$, measured via the integrated squared error ISE ($\times 10000$), based on samples drawn from a $2$-dimensional Archimax copula $C_{\psi,A}$ with sample sizes $n \in \{100, 500, 1000\}$. The Pickands dependence function $A$ follows the Marshall-Olkin family with parameters $\alpha_1 = \frac{3}{7}$ and $\alpha_2 = \frac{6}{7}$, chosen such that $\tau_A = \tfrac{2}{5}$. Four Archimedean generators $\psi$ are considered -- Clayton, Frank, Gumbel and Joe -- each for four values of Kendall's $\tau$, $\tau_\psi \in \{\tfrac{1}{5}, \tfrac{2}{5}, \tfrac{3}{5}, \tfrac{4}{5}\}$. Reported values are averages over $1000$ Monte Carlo repetitions.}\label{fig:beta_cfg_vs_pick_mar_ol_0.4}
\end{figure}
\begin{figure}[!ht]
	\centering
	\includegraphics[width=.47\textwidth]{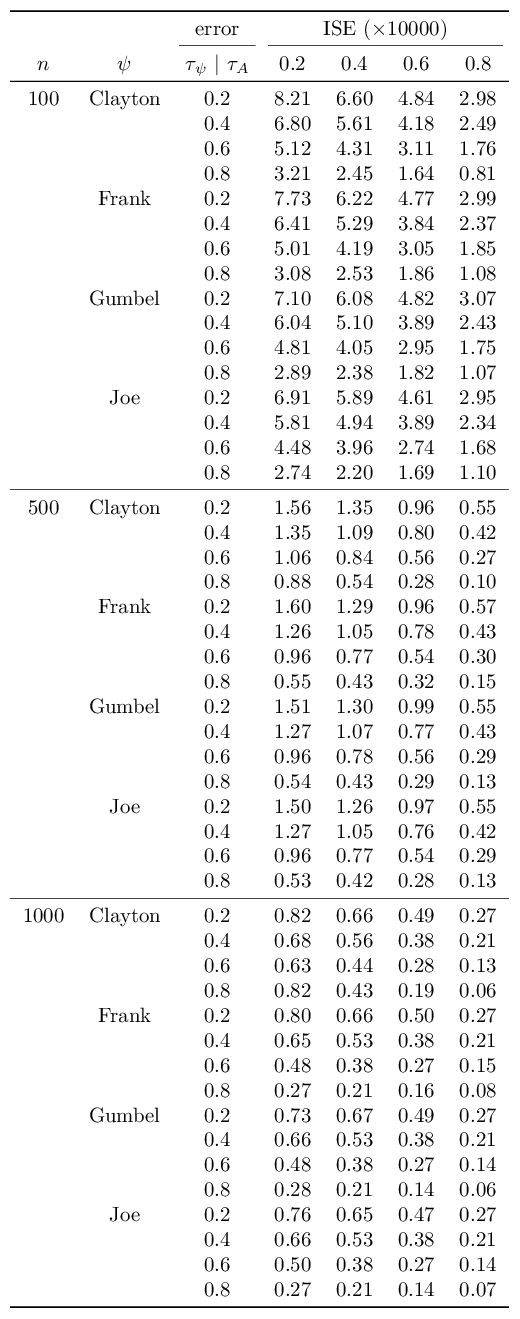}
	\caption{Average performance of the CFG-based estimator $(\beta_{\alpha_n,n}^\mathbf{CFG})_{1-\tau_{A_{\alpha_n,n}^\mathbf{CFG}}}$, measured via the integrated squared error ISE ($\times 10000$), based on samples drawn from a $2$-dimensional Archimax copula $C_{\psi,A}$ with sample sizes $n \in \{100, 500, 1000\}$. The Pickands dependence function $A$ follows the Gumbel family with parameters $\alpha \in [1,\infty)$, chosen such that $\tau_A \in \{\tfrac{1}{5}, \tfrac{2}{5}, \tfrac{3}{5}, \tfrac{4}{5}\}$. Four Archimedean generators $\psi$ are considered -- Clayton, Frank, Gumbel and Joe -- each for four values of Kendall's $\tau$, $\tau_\psi \in \{\tfrac{1}{5}, \tfrac{2}{5}, \tfrac{3}{5}, \tfrac{4}{5}\}$. Reported values are averages over $1000$ Monte Carlo repetitions.}\label{fig:beta_cfg_vs_pick_gumbel_all_vs_all}
\end{figure}
\begin{figure}[!ht]
	\centering
	\includegraphics[width=.47\textwidth]{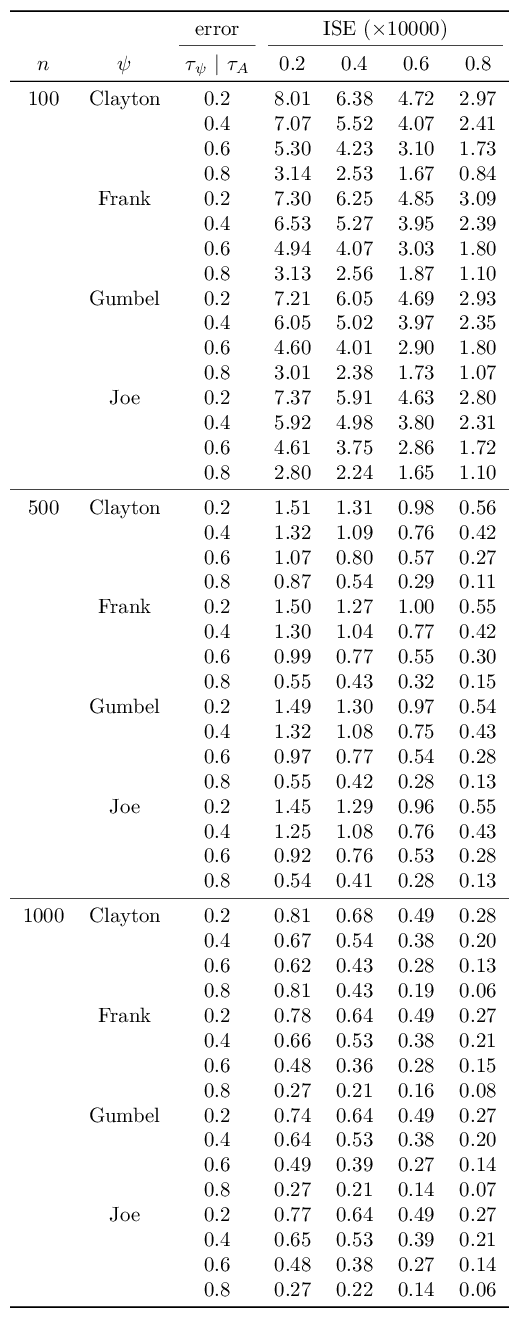}
	\caption{Average performance of the CFG-based estimator $(\beta_{\alpha_n,n}^\mathbf{CFG})_{1-\tau_{A_{\alpha_n,n}^\mathbf{CFG}}}$, measured via the integrated squared error ISE ($\times 10000$), based on samples drawn from a $2$-dimensional Archimax copula $C_{\psi,A}$ with sample sizes $n \in \{100, 500, 1000\}$. The Pickands dependence function $A$ follows the Galambos family with parameters $\alpha \in (0,\infty)$, chosen such that $\tau_A \in \{\tfrac{1}{5}, \tfrac{2}{5}, \tfrac{3}{5}, \tfrac{4}{5}\}$. Four Archimedean generators $\psi$ are considered -- Clayton, Frank, Gumbel and Joe -- each for four values of Kendall's $\tau$, $\tau_\psi \in \{\tfrac{1}{5}, \tfrac{2}{5}, \tfrac{3}{5}, \tfrac{4}{5}\}$. Reported values are averages over $1000$ Monte Carlo repetitions.}\label{fig:beta_cfg_vs_pick_galambos_all_vs_all}
\end{figure}
\begin{figure}[!ht]
	\centering
	\includegraphics[width=.47\textwidth]{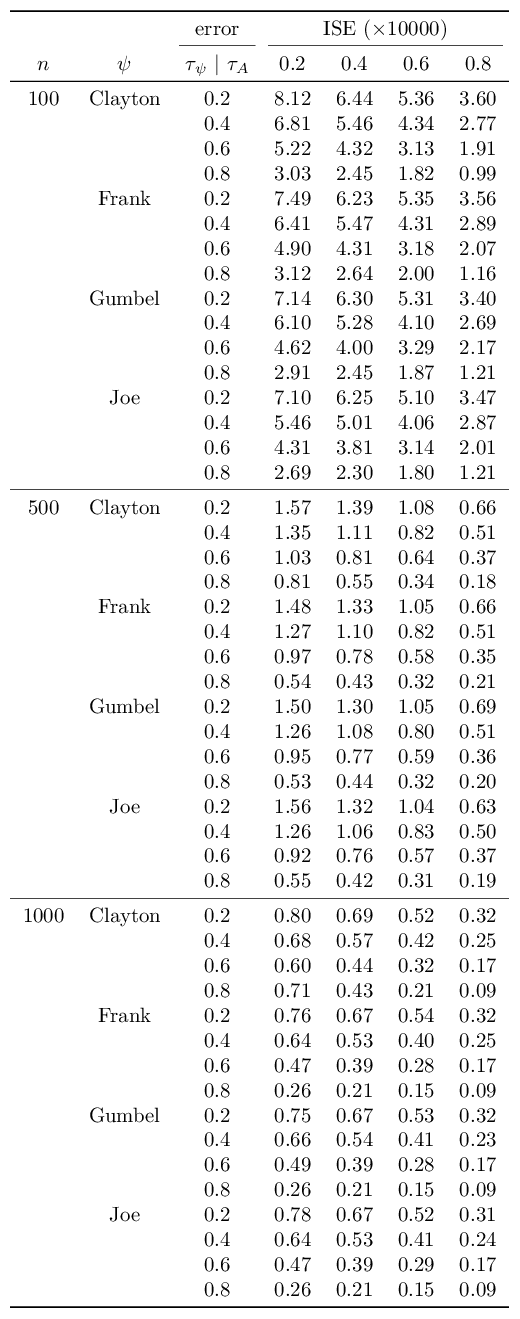}
	\caption{Average performance of the CFG-based estimator $(\beta_{\alpha_n,n}^\mathbf{CFG})_{1-\tau_{A_{\alpha_n,n}^\mathbf{CFG}}}$, measured via the integrated squared error ISE ($\times 10000$), based on samples drawn from a $2$-dimensional Archimax copula $C_{\psi,A}$ with sample sizes $n \in \{100, 500, 1000\}$. The Pickands dependence function $A$ follows the Marshall-Olkin family with parameters $\alpha_1, \alpha_2 \in (0,1)$, chosen such that $\tau_A \in \{\tfrac{1}{5}, \tfrac{2}{5}, \tfrac{3}{5}, \tfrac{4}{5}\}$. Four Archimedean generators $\psi$ are considered -- Clayton, Frank, Gumbel and Joe -- each for four values of Kendall's $\tau$, $\tau_\psi \in \{\tfrac{1}{5}, \tfrac{2}{5}, \tfrac{3}{5}, \tfrac{4}{5}\}$. Reported values are averages over $1000$ Monte Carlo repetitions.}\label{fig:beta_cfg_vs_pick_mar_ol_all_vs_all}
\end{figure}
\begin{figure}[!ht]
	\centering
	\includegraphics[width=1\textwidth]{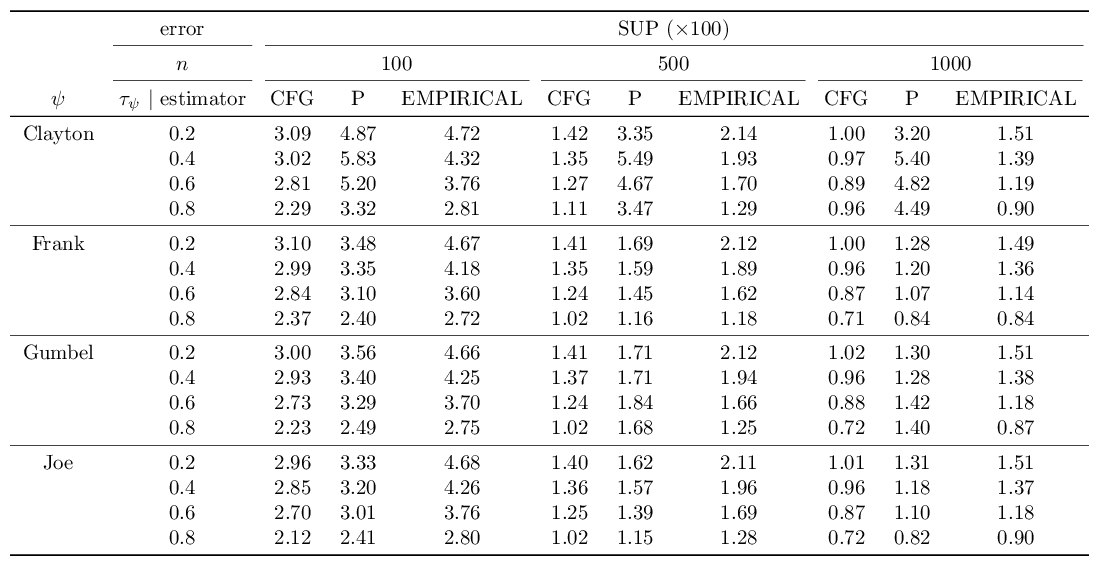}
	\caption{Average performance of the CFG-based estimator $C_{\alpha_n,n}^\mathbf{CFG}$, the Pickands-based estimator $C_{\alpha_n,n}^\mathbf{P}$ and the (bilinear extension of the) empirical copula estimator $\hat{C}_n$, measured via the uniform metric ($\times 100$), based on samples drawn from a $2$-dimensional Archimax copula $C_{\psi,A}$ with sample sizes $n \in \{100, 500, 1000\}$. The Pickands dependence function $A$ follows the Gumbel family with parameter $\alpha = \frac{5}{3}$, chosen such that $\tau_A = \tfrac{2}{5}$. Four Archimedean generators $\psi$ are considered -- Clayton, Frank, Gumbel and Joe -- each for four values of Kendall's $\tau$, $\tau_\psi \in \{\tfrac{1}{5}, \tfrac{2}{5}, \tfrac{3}{5}, \tfrac{4}{5}\}$. Reported values are averages over $1000$ Monte Carlo repetitions.}\label{fig:cop_cfg_vs_pick_gumbel_0.4}
\end{figure}
\begin{figure}[!ht]
	\centering
	\includegraphics[width=1\textwidth]{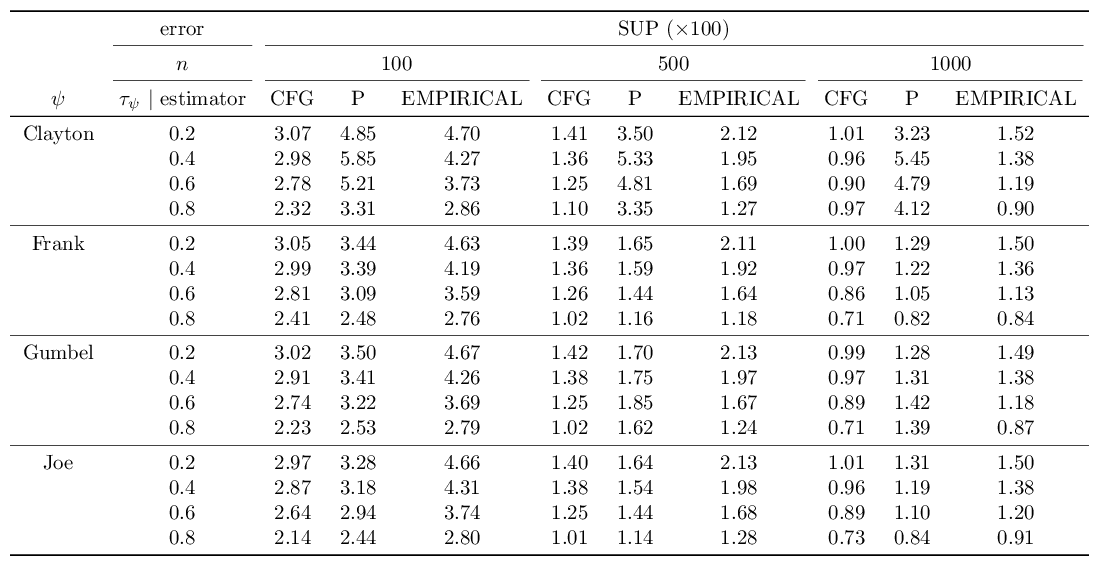}
	\caption{Average performance of the CFG-based estimator $C_{\alpha_n,n}^\mathbf{CFG}$, the Pickands-based estimator $C_{\alpha_n,n}^\mathbf{P}$ and the (bilinear extension of the) empirical copula estimator $\hat{C}_n$, measured via the uniform metric ($\times 100$), based on samples drawn from a $2$-dimensional Archimax copula $C_{\psi,A}$ with sample sizes $n \in \{100, 500, 1000\}$. The Pickands dependence function $A$ follows the Galambos family with parameter $\alpha = 0.9460773$, chosen such that $\tau_A = \tfrac{2}{5}$. Four Archimedean generators $\psi$ are considered -- Clayton, Frank, Gumbel and Joe -- each for four values of Kendall's $\tau$, $\tau_\psi \in \{\tfrac{1}{5}, \tfrac{2}{5}, \tfrac{3}{5}, \tfrac{4}{5}\}$. Reported values are averages over $1000$ Monte Carlo repetitions.}\label{fig:cop_cfg_vs_pick_galambos_0.4}
\end{figure}
\begin{figure}[!ht]
	\centering
	\includegraphics[width=1\textwidth]{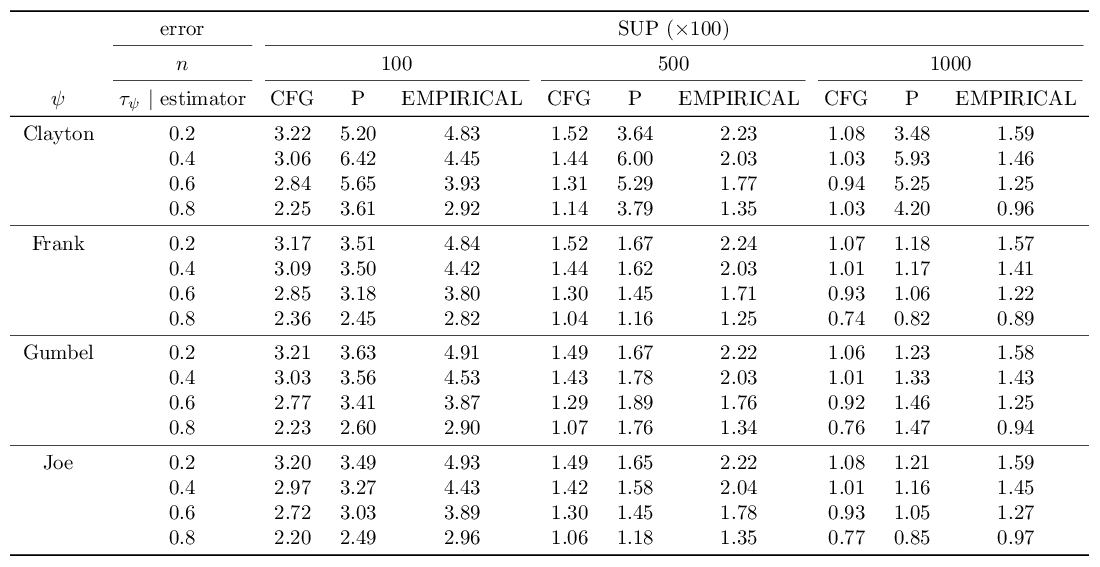}
	\caption{Average performance of the CFG-based estimator $C_{\alpha_n,n}^\mathbf{CFG}$, the Pickands-based estimator $C_{\alpha_n,n}^\mathbf{P}$ and the (bilinear extension of the) empirical copula estimator $\hat{C}_n$, measured via the uniform metric ($\times 100$), based on samples drawn from a $2$-dimensional Archimax copula $C_{\psi,A}$ with sample sizes $n \in \{100, 500, 1000\}$. The Pickands dependence function $A$ follows the Marshall-Olkin family with parameters $\alpha_1 = \frac{3}{7}$ and $\alpha_2 = \frac{6}{7}$, chosen such that $\tau_A = \tfrac{2}{5}$. Four Archimedean generators $\psi$ are considered -- Clayton, Frank, Gumbel and Joe -- each for four values of Kendall's $\tau$, $\tau_\psi \in \{\tfrac{1}{5}, \tfrac{2}{5}, \tfrac{3}{5}, \tfrac{4}{5}\}$. Reported values are averages over $1000$ Monte Carlo repetitions.}\label{fig:cop_cfg_vs_pick_mar_ol_0.4}
\end{figure}
\begin{figure}[!ht]
	\centering
	\includegraphics[width=.7\textwidth]{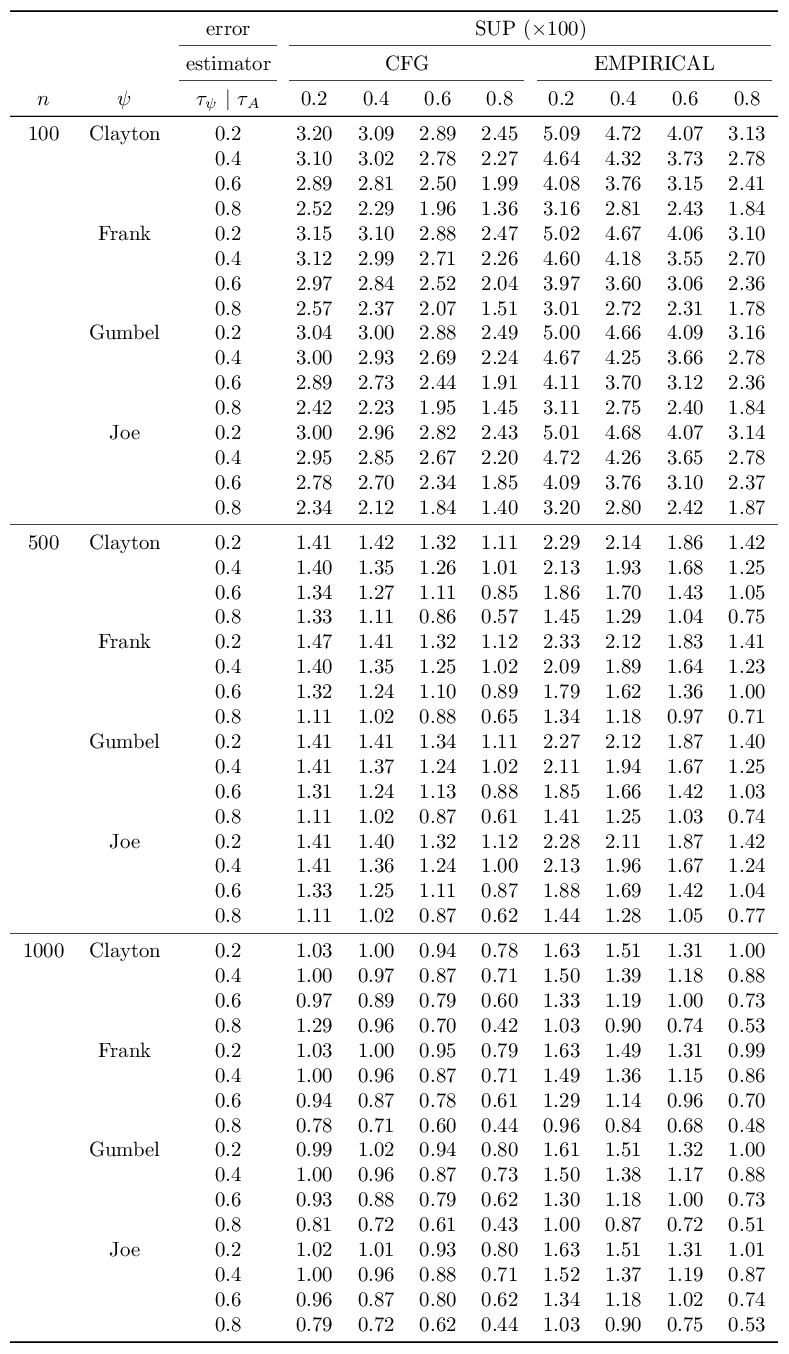}
	\caption{Average performance of the CFG-based estimator $C_{\alpha_n,n}^\mathbf{CFG}$ and the (bilinear extension of the) empirical copula estimator $\hat{C}_n$, measured via the uniform metric ($\times 100$), based on samples drawn from a $2$-dimensional Archimax copula $C_{\psi,A}$ with sample sizes $n \in \{100, 500, 1000\}$. The Pickands dependence function $A$ follows the Gumbel family with parameters $\alpha \in [1,\infty)$, chosen such that $\tau_A \in \{\tfrac{1}{5},\tfrac{2}{5},\tfrac{3}{5},\tfrac{4}{5}\}$. Four Archimedean generators $\psi$ are considered -- Clayton, Frank, Gumbel and Joe -- each for four values of Kendall's $\tau$, $\tau_\psi \in \{\tfrac{1}{5}, \tfrac{2}{5}, \tfrac{3}{5}, \tfrac{4}{5}\}$. Reported values are averages over $1000$ Monte Carlo repetitions.}\label{fig:cop_cfg_vs_pick_gumbel_all_vs_all}
\end{figure}
\begin{figure}[!ht]
	\centering
	\includegraphics[width=.7\textwidth]{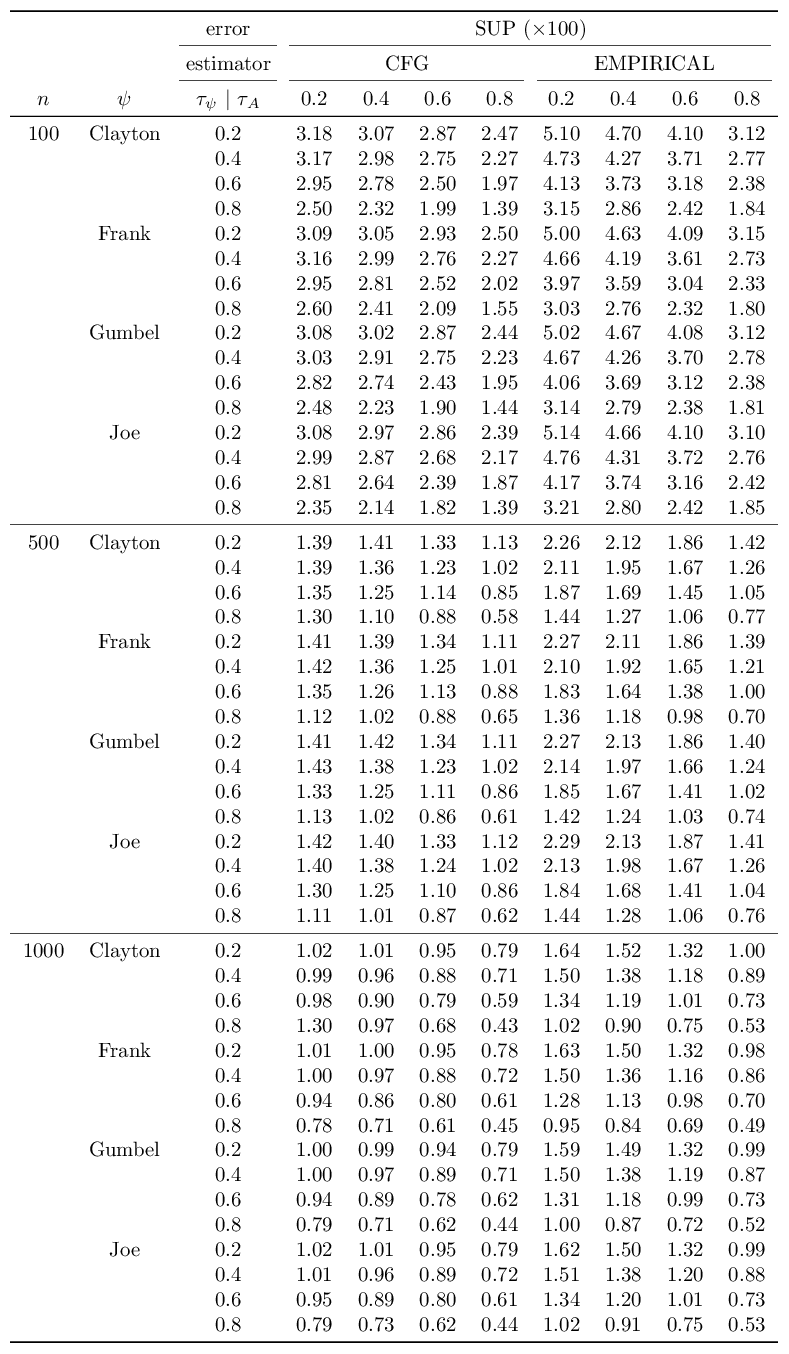}
	\caption{Average performance of the CFG-based estimator $C_{\alpha_n,n}^\mathbf{CFG}$ and the (bilinear extension of the) empirical copula estimator $\hat{C}_n$, measured via the uniform metric ($\times 100$), based on samples drawn from a $2$-dimensional Archimax copula $C_{\psi,A}$ with sample sizes $n \in \{100, 500, 1000\}$. The Pickands dependence function $A$ follows the Galambos family with parameters $\alpha \in (0,\infty)$, chosen such that $\tau_A \in \{\tfrac{1}{5},\tfrac{2}{5},\tfrac{3}{5},\tfrac{4}{5}\}$. Four Archimedean generators $\psi$ are considered -- Clayton, Frank, Gumbel and Joe -- each for four values of Kendall's $\tau$, $\tau_\psi \in \{\tfrac{1}{5}, \tfrac{2}{5}, \tfrac{3}{5}, \tfrac{4}{5}\}$. Reported values are averages over $1000$ Monte Carlo repetitions.}\label{fig:cop_cfg_vs_pick_galambos_all_vs_all}
\end{figure}
\begin{figure}[!ht]
	\centering
	\includegraphics[width=.7\textwidth]{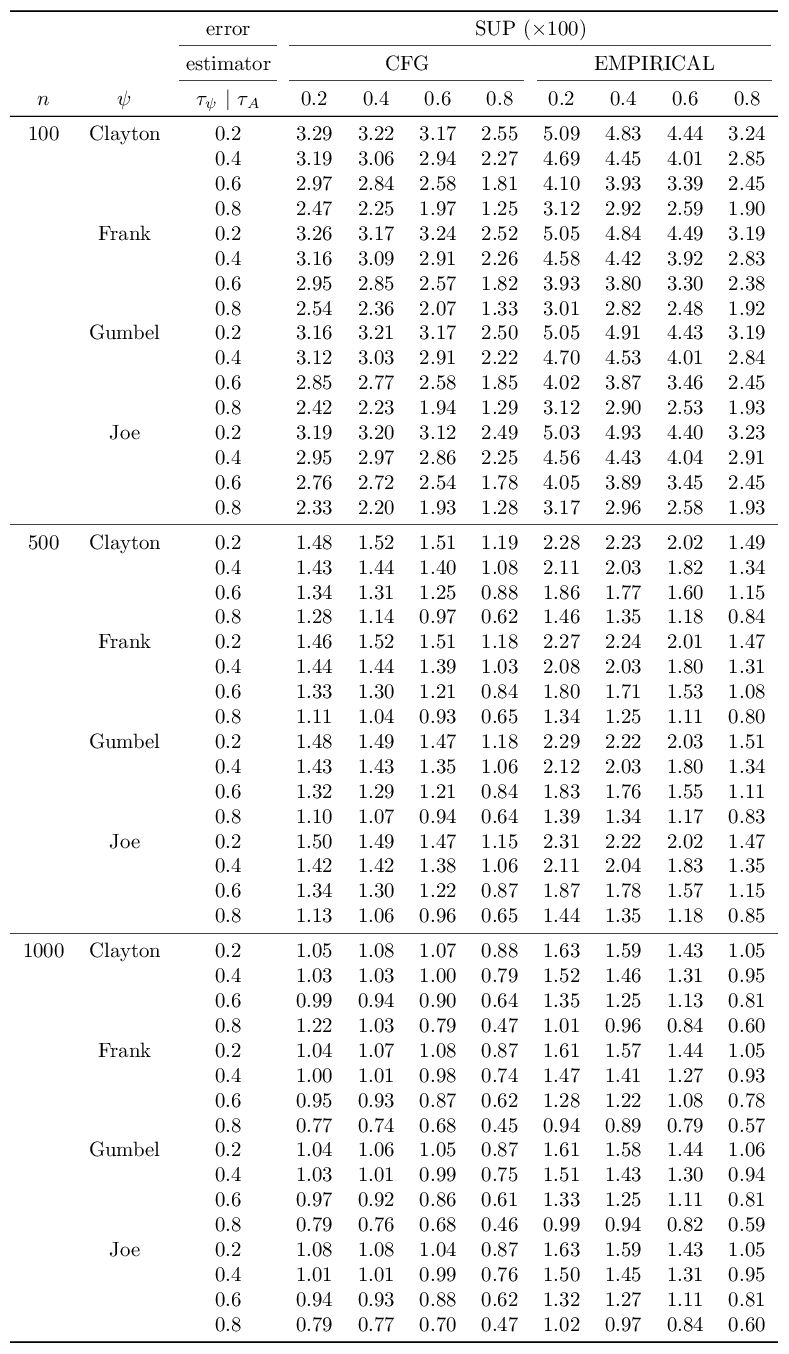}
	\caption{Average performance of the CFG-based estimator $C_{\alpha_n,n}^\mathbf{CFG}$ and the (bilinear extension of the) empirical copula estimator $\hat{C}_n$, measured via the uniform metric ($\times 100$), based on samples drawn from a $2$-dimensional Archimax copula $C_{\psi,A}$ with sample sizes $n \in \{100, 500, 1000\}$. The Pickands dependence function $A$ follows the Marshall-Olkin family with parameters $\alpha_1,\alpha_2 \in (0,1)$, chosen such that $\tau_A \in \{\tfrac{1}{5},\tfrac{2}{5},\tfrac{3}{5},\tfrac{4}{5}\}$. Four Archimedean generators $\psi$ are considered -- Clayton, Frank, Gumbel and Joe -- each for four values of Kendall's $\tau$, $\tau_\psi \in \{\tfrac{1}{5}, \tfrac{2}{5}, \tfrac{3}{5}, \tfrac{4}{5}\}$. Reported values are averages over $1000$ Monte Carlo repetitions.}\label{fig:cop_cfg_vs_pick_mar_ol_all_vs_all}
\end{figure}

\end{document}